\documentclass[letterpaper, 11pt, thm-restate]{article}

\usepackage{import}
\usepackage{amsthm}
\usepackage{thm-restate}
\newtheorem{theorem}{Theorem}[section]
\newtheorem{lemma}[theorem]{Lemma}
\newtheorem{proposition}[theorem]{Proposition}

\theoremstyle{definition}

\theoremstyle{definition}
\newtheorem{example}[theorem]{Example}
\theoremstyle{definition}

\usepackage{amsmath}
\usepackage{amssymb}
\usepackage{amsfonts}
\usepackage{mathtools}
\usepackage{leftindex}

\usepackage[margin=1in]{geometry}
\usepackage{changepage}

\usepackage{todonotes}
\usepackage{footmisc}
\usepackage[colorlinks = true]{hyperref}
\hypersetup{
    linkcolor=black,
    citecolor=blue,
    urlcolor=blue}
    
\usepackage{enumitem}
\usepackage{graphicx}
\usepackage{xcolor}
\usepackage[title]{appendix}
\usepackage{array}
\usepackage{extarrows}
\usepackage[ruled]{algorithm}

\usepackage{titlesec}
\titleformat{\subsection}[runin]
       {\normalfont\bfseries}
       {\thesubsection}
       {0.5em}
       {}
       [.]

\allowdisplaybreaks
\newcommand{\Z}{\mathbb{Z}}
\newcommand{\N}{\mathbb{N}}

\newcommand{\Sym}{\operatorname{Sym}}
\newcommand{\FSym}{\operatorname{FSym}}

\newcommand{\FAlt}{\operatorname{FAlt}}

\newcommand{\SL}{\mathsf{SL}}
\DeclareMathOperator{\supp}{supp}
\DeclareMathOperator{\sgn}{sgn}
\DeclareMathOperator{\dist}{dist}
\DeclareMathOperator{\diam}{diam}
\DeclareMathOperator{\id}{id}

\newcommand{\la}{\left\langle}
\newcommand{\ra}{\right\rangle}
\newcommand{\gen}[1]{\left\langle {#1} \right\rangle}
\newcommand{\card}[1]{\left\lvert #1 \right\rvert}
\newcommand{\abs}[1]{\left\lvert #1 \right\rvert}

\newcommand{\mF}{\mathcal{F}}

\newcommand{\mA}{\mathcal{A}}

\newcommand{\mI}{\mathcal{I}}
\newcommand{\mO}{\mathcal{O}}

\DeclareMathOperator{\fin}{fin}

\newcommand{\say}[1]{``#1''}
\newcommand{\figureexampleone}{

\tikzset{every picture/.style={line width=0.75pt}} 

\begin{tikzpicture}[x=0.75pt,y=0.75pt,yscale=-1,xscale=1]

\draw  [color={rgb, 255:red, 0; green, 0; blue, 0 }  ,draw opacity=1 ][fill={rgb, 255:red, 155; green, 155; blue, 155 }  ,fill opacity=0.4 ] (250,50) .. controls (250,44.48) and (254.48,40) .. (260,40) .. controls (265.52,40) and (270,44.48) .. (270,50) .. controls (270,55.52) and (265.52,60) .. (260,60) .. controls (254.48,60) and (250,55.52) .. (250,50) -- cycle ;
\draw  [color={rgb, 255:red, 0; green, 0; blue, 0 }  ,draw opacity=1 ][fill={rgb, 255:red, 155; green, 155; blue, 155 }  ,fill opacity=0.4 ] (280,50) .. controls (280,44.48) and (284.48,40) .. (290,40) .. controls (295.52,40) and (300,44.48) .. (300,50) .. controls (300,55.52) and (295.52,60) .. (290,60) .. controls (284.48,60) and (280,55.52) .. (280,50) -- cycle ;
\draw  [color={rgb, 255:red, 0; green, 0; blue, 0 }  ,draw opacity=1 ][fill={rgb, 255:red, 155; green, 155; blue, 155 }  ,fill opacity=0.4 ] (310,50) .. controls (310,44.48) and (314.48,40) .. (320,40) .. controls (325.52,40) and (330,44.48) .. (330,50) .. controls (330,55.52) and (325.52,60) .. (320,60) .. controls (314.48,60) and (310,55.52) .. (310,50) -- cycle ;
\draw [color={rgb, 255:red, 0; green, 0; blue, 0 }  ,draw opacity=1 ]   (260,90) -- (288,90) ;
\draw [shift={(290,90)}, rotate = 180] [color={rgb, 255:red, 0; green, 0; blue, 0 }  ,draw opacity=1 ][line width=0.75]    (7.65,-2.3) .. controls (4.86,-0.97) and (2.31,-0.21) .. (0,0) .. controls (2.31,0.21) and (4.86,0.98) .. (7.65,2.3)   ;
\draw [color={rgb, 255:red, 208; green, 2; blue, 27 }  ,draw opacity=1 ]   (290,50) .. controls (280.96,60.04) and (271.63,60.69) .. (262.01,51.96) ;
\draw [shift={(260,50)}, rotate = 46.09] [fill={rgb, 255:red, 208; green, 2; blue, 27 }  ,fill opacity=1 ][line width=0.08]  [draw opacity=0] (5.36,-2.57) -- (0,0) -- (5.36,2.57) -- cycle    ;
\draw  [color={rgb, 255:red, 0; green, 0; blue, 0 }  ,draw opacity=1 ][fill={rgb, 255:red, 155; green, 155; blue, 155 }  ,fill opacity=0.4 ] (340,50) .. controls (340,44.48) and (344.48,40) .. (350,40) .. controls (355.52,40) and (360,44.48) .. (360,50) .. controls (360,55.52) and (355.52,60) .. (350,60) .. controls (344.48,60) and (340,55.52) .. (340,50) -- cycle ;
\draw  [color={rgb, 255:red, 0; green, 0; blue, 0 }  ,draw opacity=1 ][fill={rgb, 255:red, 155; green, 155; blue, 155 }  ,fill opacity=0.4 ] (370,50) .. controls (370,44.48) and (374.48,40) .. (380,40) .. controls (385.52,40) and (390,44.48) .. (390,50) .. controls (390,55.52) and (385.52,60) .. (380,60) .. controls (374.48,60) and (370,55.52) .. (370,50) -- cycle ;
\draw  [color={rgb, 255:red, 0; green, 0; blue, 0 }  ,draw opacity=1 ][fill={rgb, 255:red, 155; green, 155; blue, 155 }  ,fill opacity=0.4 ] (400,50) .. controls (400,44.48) and (404.48,40) .. (410,40) .. controls (415.52,40) and (420,44.48) .. (420,50) .. controls (420,55.52) and (415.52,60) .. (410,60) .. controls (404.48,60) and (400,55.52) .. (400,50) -- cycle ;
\draw [color={rgb, 255:red, 208; green, 2; blue, 27 }  ,draw opacity=1 ]   (260,50) .. controls (269.25,40.31) and (278.79,40.33) .. (287.81,47.99) ;
\draw [shift={(290,50)}, rotate = 224.8] [fill={rgb, 255:red, 208; green, 2; blue, 27 }  ,fill opacity=1 ][line width=0.08]  [draw opacity=0] (5.36,-2.57) -- (0,0) -- (5.36,2.57) -- cycle    ;
\draw [color={rgb, 255:red, 208; green, 2; blue, 27 }  ,draw opacity=0.5 ] [dash pattern={on 3pt off 1.5pt}]  (320,50) .. controls (310.96,60.04) and (301.63,60.69) .. (292.01,51.96) ;
\draw [shift={(290,50)}, rotate = 46.09] [fill={rgb, 255:red, 208; green, 2; blue, 27 }  ,fill opacity=0.5 ][line width=0.08]  [draw opacity=0] (5.36,-2.57) -- (0,0) -- (5.36,2.57) -- cycle    ;
\draw [color={rgb, 255:red, 208; green, 2; blue, 27 }  ,draw opacity=0.5 ] [dash pattern={on 3pt off 1.5pt}]  (290,50) .. controls (299.25,40.31) and (308.79,40.33) .. (317.81,47.99) ;
\draw [shift={(320,50)}, rotate = 224.8] [fill={rgb, 255:red, 208; green, 2; blue, 27 }  ,fill opacity=0.5 ][line width=0.08]  [draw opacity=0] (5.36,-2.57) -- (0,0) -- (5.36,2.57) -- cycle    ;
\draw [color={rgb, 255:red, 208; green, 2; blue, 27 }  ,draw opacity=0.5 ] [dash pattern={on 3pt off 1.5pt}]  (350,50) .. controls (359.25,40.31) and (368.79,40.33) .. (377.81,47.99) ;
\draw [shift={(380,50)}, rotate = 224.8] [fill={rgb, 255:red, 208; green, 2; blue, 27 }  ,fill opacity=0.5 ][line width=0.08]  [draw opacity=0] (5.36,-2.57) -- (0,0) -- (5.36,2.57) -- cycle    ;
\draw [color={rgb, 255:red, 208; green, 2; blue, 27 }  ,draw opacity=0.5 ] [dash pattern={on 3pt off 1.5pt}]  (380,50) .. controls (370.96,60.04) and (361.63,60.69) .. (352.01,51.96) ;
\draw [shift={(350,50)}, rotate = 46.09] [fill={rgb, 255:red, 208; green, 2; blue, 27 }  ,fill opacity=0.5 ][line width=0.08]  [draw opacity=0] (5.36,-2.57) -- (0,0) -- (5.36,2.57) -- cycle    ;
\draw [color={rgb, 255:red, 208; green, 2; blue, 27 }  ,draw opacity=0.5 ]   (320,50) .. controls (329.25,40.31) and (338.79,40.33) .. (347.81,47.99) ;
\draw [shift={(350,50)}, rotate = 224.8] [fill={rgb, 255:red, 208; green, 2; blue, 27 }  ,fill opacity=0.5 ][line width=0.08]  [draw opacity=0] (5.36,-2.57) -- (0,0) -- (5.36,2.57) -- cycle    ;
\draw [color={rgb, 255:red, 208; green, 2; blue, 27 }  ,draw opacity=0.5 ]   (350,50) .. controls (340.96,60.04) and (331.63,60.69) .. (322.01,51.96) ;
\draw [shift={(320,50)}, rotate = 46.09] [fill={rgb, 255:red, 208; green, 2; blue, 27 }  ,fill opacity=0.5 ][line width=0.08]  [draw opacity=0] (5.36,-2.57) -- (0,0) -- (5.36,2.57) -- cycle    ;
\draw [color={rgb, 255:red, 208; green, 2; blue, 27 }  ,draw opacity=0.5 ]   (380,50) .. controls (389.25,40.31) and (398.79,40.33) .. (407.81,47.99) ;
\draw [shift={(410,50)}, rotate = 224.8] [fill={rgb, 255:red, 208; green, 2; blue, 27 }  ,fill opacity=0.5 ][line width=0.08]  [draw opacity=0] (5.36,-2.57) -- (0,0) -- (5.36,2.57) -- cycle    ;
\draw [color={rgb, 255:red, 208; green, 2; blue, 27 }  ,draw opacity=0.5 ]   (410,50) .. controls (400.96,60.04) and (391.63,60.69) .. (382.01,51.96) ;
\draw [shift={(380,50)}, rotate = 46.09] [fill={rgb, 255:red, 208; green, 2; blue, 27 }  ,fill opacity=0.5 ][line width=0.08]  [draw opacity=0] (5.36,-2.57) -- (0,0) -- (5.36,2.57) -- cycle    ;

\draw (267,92.4) node [anchor=north west][inner sep=0.75pt]  [color={rgb, 255:red, 0; green, 0; blue, 0 }  ,opacity=1 ]  {$\rho $};
\draw (268,27.4) node [anchor=north west][inner sep=0.75pt]  [color={rgb, 255:red, 208; green, 2; blue, 27 }  ,opacity=1 ]  {$\tau $};
\draw (287,58.4) node [anchor=north west][inner sep=0.75pt]  [font=\footnotesize,color={rgb, 255:red, 208; green, 2; blue, 27 }  ,opacity=0.5 ]  {$\rho \tau \rho ^{-1}$};
\draw (257,73.4) node [anchor=north west][inner sep=0.75pt]  [color={rgb, 255:red, 0; green, 0; blue, 0 }  ,opacity=1 ]  {$0$};
\draw (287,73.4) node [anchor=north west][inner sep=0.75pt]  [color={rgb, 255:red, 0; green, 0; blue, 0 }  ,opacity=1 ]  {$1$};
\draw (317,73.4) node [anchor=north west][inner sep=0.75pt]  [color={rgb, 255:red, 0; green, 0; blue, 0 }  ,opacity=1 ]  {$2$};
\draw (348,73.4) node [anchor=north west][inner sep=0.75pt]  [color={rgb, 255:red, 0; green, 0; blue, 0 }  ,opacity=1 ]  {$3$};
\draw (377,73.4) node [anchor=north west][inner sep=0.75pt]  [color={rgb, 255:red, 0; green, 0; blue, 0 }  ,opacity=1 ]  {$4$};
\draw (407,73.4) node [anchor=north west][inner sep=0.75pt]  [color={rgb, 255:red, 0; green, 0; blue, 0 }  ,opacity=1 ]  {$5$};
\draw (317,22.4) node [anchor=north west][inner sep=0.75pt]  [font=\footnotesize,color={rgb, 255:red, 208; green, 2; blue, 27 }  ,opacity=0.5 ]  {$\rho ^{2} \tau \rho ^{-2}$};
\draw (346,58.4) node [anchor=north west][inner sep=0.75pt]  [font=\footnotesize,color={rgb, 255:red, 208; green, 2; blue, 27 }  ,opacity=0.5 ]  {$\rho ^{3} \tau \rho ^{-3}$};
\draw (377,23.4) node [anchor=north west][inner sep=0.75pt]  [font=\footnotesize,color={rgb, 255:red, 208; green, 2; blue, 27 }  ,opacity=0.5 ]  {$\rho ^{4} \tau \rho ^{-4}$};

\end{tikzpicture}

}
\newcommand{\figureexampletwo}{

\tikzset{every picture/.style={line width=0.75pt}} 

\begin{tikzpicture}[x=0.7pt,y=0.7pt,yscale=-1,xscale=1]

\draw  [color={rgb, 255:red, 0; green, 0; blue, 0 }  ,draw opacity=1 ][fill={rgb, 255:red, 155; green, 155; blue, 155 }  ,fill opacity=0.4 ] (250,40) .. controls (250,34.48) and (254.48,30) .. (260,30) .. controls (265.52,30) and (270,34.48) .. (270,40) .. controls (270,45.52) and (265.52,50) .. (260,50) .. controls (254.48,50) and (250,45.52) .. (250,40) -- cycle ;
\draw  [color={rgb, 255:red, 0; green, 0; blue, 0 }  ,draw opacity=1 ][fill={rgb, 255:red, 155; green, 155; blue, 155 }  ,fill opacity=0.4 ] (280,70) .. controls (280,64.48) and (284.48,60) .. (290,60) .. controls (295.52,60) and (300,64.48) .. (300,70) .. controls (300,75.52) and (295.52,80) .. (290,80) .. controls (284.48,80) and (280,75.52) .. (280,70) -- cycle ;
\draw [color={rgb, 255:red, 0; green, 0; blue, 0 }  ,draw opacity=1 ]   (260,105) -- (318,105) ;
\draw [shift={(320,105)}, rotate = 180] [color={rgb, 255:red, 0; green, 0; blue, 0 }  ,draw opacity=1 ][line width=0.75]    (7.65,-2.3) .. controls (4.86,-0.97) and (2.31,-0.21) .. (0,0) .. controls (2.31,0.21) and (4.86,0.98) .. (7.65,2.3)   ;
\draw [color={rgb, 255:red, 208; green, 2; blue, 27 }  ,draw opacity=1 ]   (290,70) .. controls (271.28,70) and (261.14,61.61) .. (260.09,42.74) ;
\draw [shift={(260,40)}, rotate = 89.45] [fill={rgb, 255:red, 208; green, 2; blue, 27 }  ,fill opacity=1 ][line width=0.08]  [draw opacity=0] (5.36,-2.57) -- (0,0) -- (5.36,2.57) -- cycle    ;
\draw [color={rgb, 255:red, 208; green, 2; blue, 27 }  ,draw opacity=1 ]   (260,40) .. controls (279,40.19) and (288.98,49.04) .. (289.93,67.08) ;
\draw [shift={(290,70)}, rotate = 270] [fill={rgb, 255:red, 208; green, 2; blue, 27 }  ,fill opacity=1 ][line width=0.08]  [draw opacity=0] (5.36,-2.57) -- (0,0) -- (5.36,2.57) -- cycle    ;
\draw  [color={rgb, 255:red, 0; green, 0; blue, 0 }  ,draw opacity=1 ][fill={rgb, 255:red, 155; green, 155; blue, 155 }  ,fill opacity=0.4 ] (310,40) .. controls (310,34.48) and (314.48,30) .. (320,30) .. controls (325.52,30) and (330,34.48) .. (330,40) .. controls (330,45.52) and (325.52,50) .. (320,50) .. controls (314.48,50) and (310,45.52) .. (310,40) -- cycle ;
\draw  [color={rgb, 255:red, 0; green, 0; blue, 0 }  ,draw opacity=1 ][fill={rgb, 255:red, 155; green, 155; blue, 155 }  ,fill opacity=0.4 ] (340,70) .. controls (340,64.48) and (344.48,60) .. (350,60) .. controls (355.52,60) and (360,64.48) .. (360,70) .. controls (360,75.52) and (355.52,80) .. (350,80) .. controls (344.48,80) and (340,75.52) .. (340,70) -- cycle ;
\draw [color={rgb, 255:red, 208; green, 2; blue, 27 }  ,draw opacity=0.5 ]   (350,70) .. controls (331.28,70) and (321.14,61.61) .. (320.09,42.74) ;
\draw [shift={(320,40)}, rotate = 89.45] [fill={rgb, 255:red, 208; green, 2; blue, 27 }  ,fill opacity=0.5 ][line width=0.08]  [draw opacity=0] (5.36,-2.57) -- (0,0) -- (5.36,2.57) -- cycle    ;
\draw [color={rgb, 255:red, 208; green, 2; blue, 27 }  ,draw opacity=0.5 ]   (320,40) .. controls (339,40.19) and (348.98,49.04) .. (349.93,67.08) ;
\draw [shift={(350,70)}, rotate = 270] [fill={rgb, 255:red, 208; green, 2; blue, 27 }  ,fill opacity=0.5 ][line width=0.08]  [draw opacity=0] (5.36,-2.57) -- (0,0) -- (5.36,2.57) -- cycle    ;
\draw  [color={rgb, 255:red, 0; green, 0; blue, 0 }  ,draw opacity=1 ][fill={rgb, 255:red, 155; green, 155; blue, 155 }  ,fill opacity=0.4 ] (370,40) .. controls (370,34.48) and (374.48,30) .. (380,30) .. controls (385.52,30) and (390,34.48) .. (390,40) .. controls (390,45.52) and (385.52,50) .. (380,50) .. controls (374.48,50) and (370,45.52) .. (370,40) -- cycle ;
\draw  [color={rgb, 255:red, 0; green, 0; blue, 0 }  ,draw opacity=1 ][fill={rgb, 255:red, 155; green, 155; blue, 155 }  ,fill opacity=0.4 ] (400,70) .. controls (400,64.48) and (404.48,60) .. (410,60) .. controls (415.52,60) and (420,64.48) .. (420,70) .. controls (420,75.52) and (415.52,80) .. (410,80) .. controls (404.48,80) and (400,75.52) .. (400,70) -- cycle ;
\draw [color={rgb, 255:red, 208; green, 2; blue, 27 }  ,draw opacity=0.5 ]   (410,70) .. controls (391.28,70) and (381.14,61.61) .. (380.09,42.74) ;
\draw [shift={(380,40)}, rotate = 89.45] [fill={rgb, 255:red, 208; green, 2; blue, 27 }  ,fill opacity=0.5 ][line width=0.08]  [draw opacity=0] (5.36,-2.57) -- (0,0) -- (5.36,2.57) -- cycle    ;
\draw [color={rgb, 255:red, 208; green, 2; blue, 27 }  ,draw opacity=0.5 ]   (380,40) .. controls (399,40.19) and (408.98,49.04) .. (409.93,67.08) ;
\draw [shift={(410,70)}, rotate = 270] [fill={rgb, 255:red, 208; green, 2; blue, 27 }  ,fill opacity=0.5 ][line width=0.08]  [draw opacity=0] (5.36,-2.57) -- (0,0) -- (5.36,2.57) -- cycle    ;

\draw (286,107.4) node [anchor=north west][inner sep=0.75pt]  [color={rgb, 255:red, 0; green, 0; blue, 0 }  ,opacity=1 ]  {$\rho $};
\draw (281,38.4) node [anchor=north west][inner sep=0.75pt]  [color={rgb, 255:red, 208; green, 2; blue, 27 }  ,opacity=1 ]  {$\tau $};
\draw (257,88.4) node [anchor=north west][inner sep=0.75pt]  [color={rgb, 255:red, 0; green, 0; blue, 0 }  ,opacity=1 ]  {$0$};
\draw (287,88.4) node [anchor=north west][inner sep=0.75pt]  [color={rgb, 255:red, 0; green, 0; blue, 0 }  ,opacity=1 ]  {$1$};
\draw (317,88.4) node [anchor=north west][inner sep=0.75pt]  [color={rgb, 255:red, 0; green, 0; blue, 0 }  ,opacity=1 ]  {$2$};
\draw (348,88.4) node [anchor=north west][inner sep=0.75pt]  [color={rgb, 255:red, 0; green, 0; blue, 0 }  ,opacity=1 ]  {$3$};
\draw (377,88.4) node [anchor=north west][inner sep=0.75pt]  [color={rgb, 255:red, 0; green, 0; blue, 0 }  ,opacity=1 ]  {$4$};
\draw (407,88.4) node [anchor=north west][inner sep=0.75pt]  [color={rgb, 255:red, 0; green, 0; blue, 0 }  ,opacity=1 ]  {$5$};
\draw (331,32.4) node [anchor=north west][inner sep=0.75pt]  [font=\footnotesize,color={rgb, 255:red, 208; green, 2; blue, 27 }  ,opacity=0.5 ]  {$\rho \tau \rho ^{-1}$};
\draw (391,32.4) node [anchor=north west][inner sep=0.75pt]  [font=\footnotesize,color={rgb, 255:red, 208; green, 2; blue, 27 }  ,opacity=0.5 ]  {$\rho ^{2} \tau \rho ^{-2}$};
\draw (230,32.4) node [anchor=north west][inner sep=0.75pt]  [color={rgb, 255:red, 0; green, 0; blue, 0 }  ,opacity=1 ]  {$\mathcal{I}_{1}{}$};
\draw (230,60.4) node [anchor=north west][inner sep=0.75pt]  [color={rgb, 255:red, 0; green, 0; blue, 0 }  ,opacity=1 ]  {$\mathcal{I}_{2}{}$};

\end{tikzpicture}

}
\newcommand{\figureexamplethree}{

\tikzset{every picture/.style={line width=0.75pt}} 

\begin{tikzpicture}[x=0.7pt,y=0.7pt,yscale=-1,xscale=1]

\draw  [color={rgb, 255:red, 0; green, 0; blue, 0 }  ,draw opacity=1 ][fill={rgb, 255:red, 155; green, 155; blue, 155 }  ,fill opacity=0.4 ] (250,40) .. controls (250,34.48) and (254.48,30) .. (260,30) .. controls (265.52,30) and (270,34.48) .. (270,40) .. controls (270,45.52) and (265.52,50) .. (260,50) .. controls (254.48,50) and (250,45.52) .. (250,40) -- cycle ;
\draw  [color={rgb, 255:red, 0; green, 0; blue, 0 }  ,draw opacity=1 ][fill={rgb, 255:red, 155; green, 155; blue, 155 }  ,fill opacity=0.4 ] (280,70) .. controls (280,64.48) and (284.48,60) .. (290,60) .. controls (295.52,60) and (300,64.48) .. (300,70) .. controls (300,75.52) and (295.52,80) .. (290,80) .. controls (284.48,80) and (280,75.52) .. (280,70) -- cycle ;
\draw [color={rgb, 255:red, 0; green, 0; blue, 0 }  ,draw opacity=1 ]   (260,105) -- (318,105) ;
\draw [shift={(320,105)}, rotate = 180] [color={rgb, 255:red, 0; green, 0; blue, 0 }  ,draw opacity=1 ][line width=0.75]    (7.65,-2.3) .. controls (4.86,-0.97) and (2.31,-0.21) .. (0,0) .. controls (2.31,0.21) and (4.86,0.98) .. (7.65,2.3)   ;
\draw [color={rgb, 255:red, 208; green, 2; blue, 27 }  ,draw opacity=1 ]   (290,70) .. controls (271.28,70) and (261.14,61.61) .. (260.09,42.74) ;
\draw [shift={(260,40)}, rotate = 89.45] [fill={rgb, 255:red, 208; green, 2; blue, 27 }  ,fill opacity=1 ][line width=0.08]  [draw opacity=0] (5.36,-2.57) -- (0,0) -- (5.36,2.57) -- cycle    ;
\draw [color={rgb, 255:red, 208; green, 2; blue, 27 }  ,draw opacity=1 ]   (260,40) .. controls (279,40.19) and (288.98,49.04) .. (289.93,67.08) ;
\draw [shift={(290,70)}, rotate = 270] [fill={rgb, 255:red, 208; green, 2; blue, 27 }  ,fill opacity=1 ][line width=0.08]  [draw opacity=0] (5.36,-2.57) -- (0,0) -- (5.36,2.57) -- cycle    ;
\draw  [color={rgb, 255:red, 0; green, 0; blue, 0 }  ,draw opacity=1 ][fill={rgb, 255:red, 155; green, 155; blue, 155 }  ,fill opacity=0.4 ] (310,40) .. controls (310,34.48) and (314.48,30) .. (320,30) .. controls (325.52,30) and (330,34.48) .. (330,40) .. controls (330,45.52) and (325.52,50) .. (320,50) .. controls (314.48,50) and (310,45.52) .. (310,40) -- cycle ;
\draw  [color={rgb, 255:red, 0; green, 0; blue, 0 }  ,draw opacity=1 ][fill={rgb, 255:red, 155; green, 155; blue, 155 }  ,fill opacity=0.4 ] (340,70) .. controls (340,64.48) and (344.48,60) .. (350,60) .. controls (355.52,60) and (360,64.48) .. (360,70) .. controls (360,75.52) and (355.52,80) .. (350,80) .. controls (344.48,80) and (340,75.52) .. (340,70) -- cycle ;
\draw [color={rgb, 255:red, 208; green, 2; blue, 27 }  ,draw opacity=0.5 ]   (350,70) .. controls (331.28,70) and (321.14,61.61) .. (320.09,42.74) ;
\draw [shift={(320,40)}, rotate = 89.45] [fill={rgb, 255:red, 208; green, 2; blue, 27 }  ,fill opacity=0.5 ][line width=0.08]  [draw opacity=0] (5.36,-2.57) -- (0,0) -- (5.36,2.57) -- cycle    ;
\draw [color={rgb, 255:red, 208; green, 2; blue, 27 }  ,draw opacity=0.5 ]   (320,40) .. controls (339,40.19) and (348.98,49.04) .. (349.93,67.08) ;
\draw [shift={(350,70)}, rotate = 270] [fill={rgb, 255:red, 208; green, 2; blue, 27 }  ,fill opacity=0.5 ][line width=0.08]  [draw opacity=0] (5.36,-2.57) -- (0,0) -- (5.36,2.57) -- cycle    ;
\draw  [color={rgb, 255:red, 0; green, 0; blue, 0 }  ,draw opacity=1 ][fill={rgb, 255:red, 155; green, 155; blue, 155 }  ,fill opacity=0.4 ] (370,40) .. controls (370,34.48) and (374.48,30) .. (380,30) .. controls (385.52,30) and (390,34.48) .. (390,40) .. controls (390,45.52) and (385.52,50) .. (380,50) .. controls (374.48,50) and (370,45.52) .. (370,40) -- cycle ;
\draw  [color={rgb, 255:red, 0; green, 0; blue, 0 }  ,draw opacity=1 ][fill={rgb, 255:red, 155; green, 155; blue, 155 }  ,fill opacity=0.4 ] (400,70) .. controls (400,64.48) and (404.48,60) .. (410,60) .. controls (415.52,60) and (420,64.48) .. (420,70) .. controls (420,75.52) and (415.52,80) .. (410,80) .. controls (404.48,80) and (400,75.52) .. (400,70) -- cycle ;
\draw [color={rgb, 255:red, 208; green, 2; blue, 27 }  ,draw opacity=0.5 ]   (410,70) .. controls (391.28,70) and (381.14,61.61) .. (380.09,42.74) ;
\draw [shift={(380,40)}, rotate = 89.45] [fill={rgb, 255:red, 208; green, 2; blue, 27 }  ,fill opacity=0.5 ][line width=0.08]  [draw opacity=0] (5.36,-2.57) -- (0,0) -- (5.36,2.57) -- cycle    ;
\draw [color={rgb, 255:red, 208; green, 2; blue, 27 }  ,draw opacity=0.5 ]   (380,40) .. controls (399,40.19) and (408.98,49.04) .. (409.93,67.08) ;
\draw [shift={(410,70)}, rotate = 270] [fill={rgb, 255:red, 208; green, 2; blue, 27 }  ,fill opacity=0.5 ][line width=0.08]  [draw opacity=0] (5.36,-2.57) -- (0,0) -- (5.36,2.57) -- cycle    ;
\draw  [color={rgb, 255:red, 74; green, 144; blue, 226 }  ,draw opacity=1 ] (248.5,28.78) -- (301.5,28.78) -- (301.5,81.28) -- (248.5,81.28) -- cycle ;
\draw  [color={rgb, 255:red, 74; green, 144; blue, 226 }  ,draw opacity=1 ] (308.75,28.78) -- (361.75,28.78) -- (361.75,81.28) -- (308.75,81.28) -- cycle ;
\draw  [color={rgb, 255:red, 74; green, 144; blue, 226 }  ,draw opacity=0.5 ] (368.75,28.78) -- (421.75,28.78) -- (421.75,81.28) -- (368.75,81.28) -- cycle ;
\draw [color={rgb, 255:red, 74; green, 144; blue, 226 }  ,draw opacity=1 ]   (280.5,28.78) .. controls (291.78,18.2) and (314.77,17.57) .. (327.66,26.25) ;
\draw [shift={(330,28.03)}, rotate = 220.6] [fill={rgb, 255:red, 74; green, 144; blue, 226 }  ,fill opacity=1 ][line width=0.08]  [draw opacity=0] (5.36,-2.57) -- (0,0) -- (5.36,2.57) -- cycle    ;
\draw [color={rgb, 255:red, 74; green, 144; blue, 226 }  ,draw opacity=1 ]   (282.1,83.64) .. controls (295.12,92.79) and (319.15,92.54) .. (329.25,81.03) ;
\draw [shift={(279.75,81.78)}, rotate = 41.92] [fill={rgb, 255:red, 74; green, 144; blue, 226 }  ,fill opacity=1 ][line width=0.08]  [draw opacity=0] (5.36,-2.57) -- (0,0) -- (5.36,2.57) -- cycle    ;
\draw [color={rgb, 255:red, 74; green, 144; blue, 226 }  ,draw opacity=0.5 ] [dash pattern={on 3pt off 1.5pt}]  (340.25,28.78) .. controls (351.53,18.2) and (374.52,17.57) .. (387.41,26.25) ;
\draw [shift={(389.75,28.03)}, rotate = 220.6] [fill={rgb, 255:red, 74; green, 144; blue, 226 }  ,fill opacity=0.5 ][line width=0.08]  [draw opacity=0] (5.36,-2.57) -- (0,0) -- (5.36,2.57) -- cycle    ;
\draw [color={rgb, 255:red, 74; green, 144; blue, 226 }  ,draw opacity=0.5 ] [dash pattern={on 3pt off 1.5pt}]  (342.85,83.89) .. controls (355.87,93.04) and (379.9,92.79) .. (390,81.28) ;
\draw [shift={(340.5,82.03)}, rotate = 41.92] [fill={rgb, 255:red, 74; green, 144; blue, 226 }  ,fill opacity=0.5 ][line width=0.08]  [draw opacity=0] (5.36,-2.57) -- (0,0) -- (5.36,2.57) -- cycle    ;

\draw (286,107.4) node [anchor=north west][inner sep=0.75pt]  [color={rgb, 255:red, 0; green, 0; blue, 0 }  ,opacity=1 ]  {$\rho $};
\draw (281,38.4) node [anchor=north west][inner sep=0.75pt]  [color={rgb, 255:red, 208; green, 2; blue, 27 }  ,opacity=1 ]  {$\tau $};
\draw (257,88.4) node [anchor=north west][inner sep=0.75pt]  [color={rgb, 255:red, 0; green, 0; blue, 0 }  ,opacity=1 ]  {$0$};
\draw (287,88.4) node [anchor=north west][inner sep=0.75pt]  [color={rgb, 255:red, 0; green, 0; blue, 0 }  ,opacity=1 ]  {$1$};
\draw (317,88.4) node [anchor=north west][inner sep=0.75pt]  [color={rgb, 255:red, 0; green, 0; blue, 0 }  ,opacity=1 ]  {$2$};
\draw (348,88.4) node [anchor=north west][inner sep=0.75pt]  [color={rgb, 255:red, 0; green, 0; blue, 0 }  ,opacity=1 ]  {$3$};
\draw (377,88.4) node [anchor=north west][inner sep=0.75pt]  [color={rgb, 255:red, 0; green, 0; blue, 0 }  ,opacity=1 ]  {$4$};
\draw (407,88.4) node [anchor=north west][inner sep=0.75pt]  [color={rgb, 255:red, 0; green, 0; blue, 0 }  ,opacity=1 ]  {$5$};
\draw (331,32.4) node [anchor=north west][inner sep=0.75pt]  [font=\footnotesize,color={rgb, 255:red, 208; green, 2; blue, 27 }  ,opacity=0.5 ]  {$\rho \tau \rho ^{-1}$};
\draw (391,32.4) node [anchor=north west][inner sep=0.75pt]  [font=\footnotesize,color={rgb, 255:red, 208; green, 2; blue, 27 }  ,opacity=0.5 ]  {$\rho ^{2} \tau \rho ^{-2}$};
\draw (230,32.4) node [anchor=north west][inner sep=0.75pt]  [color={rgb, 255:red, 0; green, 0; blue, 0 }  ,opacity=1 ]  {$\mathcal{I}_{1}{}$};
\draw (230,60.4) node [anchor=north west][inner sep=0.75pt]  [color={rgb, 255:red, 0; green, 0; blue, 0 }  ,opacity=1 ]  {$\mathcal{I}_{2}{}$};
\draw (297,3.25) node [anchor=north west][inner sep=0.75pt]  [color={rgb, 255:red, 74; green, 144; blue, 226 }  ,opacity=1 ]  {$\tau '$};
\draw (344.67,1.4) node [anchor=north west][inner sep=0.75pt]  [color={rgb, 255:red, 74; green, 144; blue, 226 }  ,opacity=0.5 ]  {$\rho \tau '\rho ^{-1}$};

\end{tikzpicture}

}
\newcommand{\figureexamplefour}{

\tikzset{every picture/.style={line width=0.75pt}} 

\begin{tikzpicture}[x=0.7pt,y=0.7pt,yscale=-1,xscale=1]

\draw  [color={rgb, 255:red, 0; green, 0; blue, 0 }  ,draw opacity=1 ][fill={rgb, 255:red, 155; green, 155; blue, 155 }  ,fill opacity=0.4 ] (189.33,40.33) .. controls (189.33,34.81) and (193.81,30.33) .. (199.33,30.33) .. controls (204.86,30.33) and (209.33,34.81) .. (209.33,40.33) .. controls (209.33,45.86) and (204.86,50.33) .. (199.33,50.33) .. controls (193.81,50.33) and (189.33,45.86) .. (189.33,40.33) -- cycle ;
\draw  [color={rgb, 255:red, 0; green, 0; blue, 0 }  ,draw opacity=1 ][fill={rgb, 255:red, 155; green, 155; blue, 155 }  ,fill opacity=0.4 ] (219.33,70.33) .. controls (219.33,64.81) and (223.81,60.33) .. (229.33,60.33) .. controls (234.86,60.33) and (239.33,64.81) .. (239.33,70.33) .. controls (239.33,75.86) and (234.86,80.33) .. (229.33,80.33) .. controls (223.81,80.33) and (219.33,75.86) .. (219.33,70.33) -- cycle ;
\draw [color={rgb, 255:red, 0; green, 0; blue, 0 }  ,draw opacity=1 ]   (197.33,167) -- (319.67,167.24) ;
\draw [shift={(321.67,167.24)}, rotate = 180.11] [color={rgb, 255:red, 0; green, 0; blue, 0 }  ,draw opacity=1 ][line width=0.75]    (7.65,-2.3) .. controls (4.86,-0.97) and (2.31,-0.21) .. (0,0) .. controls (2.31,0.21) and (4.86,0.98) .. (7.65,2.3)   ;
\draw [color={rgb, 255:red, 208; green, 2; blue, 27 }  ,draw opacity=1 ]   (229.33,70.33) .. controls (210.62,70.33) and (200.47,61.94) .. (199.42,43.07) ;
\draw [shift={(199.33,40.33)}, rotate = 89.45] [fill={rgb, 255:red, 208; green, 2; blue, 27 }  ,fill opacity=1 ][line width=0.08]  [draw opacity=0] (5.36,-2.57) -- (0,0) -- (5.36,2.57) -- cycle    ;
\draw [color={rgb, 255:red, 208; green, 2; blue, 27 }  ,draw opacity=1 ]   (199.33,40.33) .. controls (218.33,40.52) and (228.31,49.38) .. (229.26,67.41) ;
\draw [shift={(229.33,70.33)}, rotate = 270] [fill={rgb, 255:red, 208; green, 2; blue, 27 }  ,fill opacity=1 ][line width=0.08]  [draw opacity=0] (5.36,-2.57) -- (0,0) -- (5.36,2.57) -- cycle    ;
\draw  [color={rgb, 255:red, 0; green, 0; blue, 0 }  ,draw opacity=1 ][fill={rgb, 255:red, 155; green, 155; blue, 155 }  ,fill opacity=0.4 ] (309.67,40.33) .. controls (309.67,34.81) and (314.14,30.33) .. (319.67,30.33) .. controls (325.19,30.33) and (329.67,34.81) .. (329.67,40.33) .. controls (329.67,45.86) and (325.19,50.33) .. (319.67,50.33) .. controls (314.14,50.33) and (309.67,45.86) .. (309.67,40.33) -- cycle ;
\draw  [color={rgb, 255:red, 0; green, 0; blue, 0 }  ,draw opacity=1 ][fill={rgb, 255:red, 155; green, 155; blue, 155 }  ,fill opacity=0.4 ] (339.67,70.33) .. controls (339.67,64.81) and (344.14,60.33) .. (349.67,60.33) .. controls (355.19,60.33) and (359.67,64.81) .. (359.67,70.33) .. controls (359.67,75.86) and (355.19,80.33) .. (349.67,80.33) .. controls (344.14,80.33) and (339.67,75.86) .. (339.67,70.33) -- cycle ;
\draw [color={rgb, 255:red, 208; green, 2; blue, 27 }  ,draw opacity=0.5 ]   (349.67,70.33) .. controls (330.95,70.33) and (320.8,61.94) .. (319.75,43.07) ;
\draw [shift={(319.67,40.33)}, rotate = 89.45] [fill={rgb, 255:red, 208; green, 2; blue, 27 }  ,fill opacity=0.5 ][line width=0.08]  [draw opacity=0] (5.36,-2.57) -- (0,0) -- (5.36,2.57) -- cycle    ;
\draw [color={rgb, 255:red, 208; green, 2; blue, 27 }  ,draw opacity=0.5 ]   (319.67,40.33) .. controls (338.67,40.52) and (348.64,49.38) .. (349.59,67.41) ;
\draw [shift={(349.67,70.33)}, rotate = 270] [fill={rgb, 255:red, 208; green, 2; blue, 27 }  ,fill opacity=0.5 ][line width=0.08]  [draw opacity=0] (5.36,-2.57) -- (0,0) -- (5.36,2.57) -- cycle    ;
\draw [color={rgb, 255:red, 74; green, 144; blue, 226 }  ,draw opacity=1 ]   (301,110.41) .. controls (317.32,98.85) and (347.16,98.42) .. (365.74,109.32) ;
\draw [shift={(368,110.74)}, rotate = 213.93] [fill={rgb, 255:red, 74; green, 144; blue, 226 }  ,fill opacity=1 ][line width=0.08]  [draw opacity=0] (5.36,-2.57) -- (0,0) -- (5.36,2.57) -- cycle    ;
\draw [color={rgb, 255:red, 74; green, 144; blue, 226 }  ,draw opacity=1 ]   (303.04,121.66) .. controls (322.07,132.14) and (350.83,132.5) .. (368.33,120.41) ;
\draw [shift={(300.42,120.11)}, rotate = 32.31] [fill={rgb, 255:red, 74; green, 144; blue, 226 }  ,fill opacity=1 ][line width=0.08]  [draw opacity=0] (5.36,-2.57) -- (0,0) -- (5.36,2.57) -- cycle    ;
\draw  [color={rgb, 255:red, 0; green, 0; blue, 0 }  ,draw opacity=1 ][fill={rgb, 255:red, 155; green, 155; blue, 155 }  ,fill opacity=0.4 ] (249.67,100.67) .. controls (249.67,95.14) and (254.14,90.67) .. (259.67,90.67) .. controls (265.19,90.67) and (269.67,95.14) .. (269.67,100.67) .. controls (269.67,106.19) and (265.19,110.67) .. (259.67,110.67) .. controls (254.14,110.67) and (249.67,106.19) .. (249.67,100.67) -- cycle ;
\draw  [color={rgb, 255:red, 0; green, 0; blue, 0 }  ,draw opacity=1 ][fill={rgb, 255:red, 155; green, 155; blue, 155 }  ,fill opacity=0.4 ] (279.67,130.67) .. controls (279.67,125.14) and (284.14,120.67) .. (289.67,120.67) .. controls (295.19,120.67) and (299.67,125.14) .. (299.67,130.67) .. controls (299.67,136.19) and (295.19,140.67) .. (289.67,140.67) .. controls (284.14,140.67) and (279.67,136.19) .. (279.67,130.67) -- cycle ;
\draw [color={rgb, 255:red, 208; green, 2; blue, 27 }  ,draw opacity=1 ]   (289.67,130.67) .. controls (270.95,130.67) and (260.8,122.28) .. (259.75,103.4) ;
\draw [shift={(259.67,100.67)}, rotate = 89.45] [fill={rgb, 255:red, 208; green, 2; blue, 27 }  ,fill opacity=1 ][line width=0.08]  [draw opacity=0] (5.36,-2.57) -- (0,0) -- (5.36,2.57) -- cycle    ;
\draw [color={rgb, 255:red, 208; green, 2; blue, 27 }  ,draw opacity=1 ]   (259.67,100.67) .. controls (278.67,100.86) and (288.64,109.71) .. (289.59,127.74) ;
\draw [shift={(289.67,130.67)}, rotate = 270] [fill={rgb, 255:red, 208; green, 2; blue, 27 }  ,fill opacity=1 ][line width=0.08]  [draw opacity=0] (5.36,-2.57) -- (0,0) -- (5.36,2.57) -- cycle    ;
\draw  [color={rgb, 255:red, 74; green, 144; blue, 226 }  ,draw opacity=1 ] (248.17,89.44) -- (301.17,89.44) -- (301.17,141.94) -- (248.17,141.94) -- cycle ;
\draw  [color={rgb, 255:red, 0; green, 0; blue, 0 }  ,draw opacity=1 ][fill={rgb, 255:red, 155; green, 155; blue, 155 }  ,fill opacity=0.4 ] (369.33,100.33) .. controls (369.33,94.81) and (373.81,90.33) .. (379.33,90.33) .. controls (384.86,90.33) and (389.33,94.81) .. (389.33,100.33) .. controls (389.33,105.86) and (384.86,110.33) .. (379.33,110.33) .. controls (373.81,110.33) and (369.33,105.86) .. (369.33,100.33) -- cycle ;
\draw  [color={rgb, 255:red, 0; green, 0; blue, 0 }  ,draw opacity=1 ][fill={rgb, 255:red, 155; green, 155; blue, 155 }  ,fill opacity=0.4 ] (399.33,130.33) .. controls (399.33,124.81) and (403.81,120.33) .. (409.33,120.33) .. controls (414.86,120.33) and (419.33,124.81) .. (419.33,130.33) .. controls (419.33,135.86) and (414.86,140.33) .. (409.33,140.33) .. controls (403.81,140.33) and (399.33,135.86) .. (399.33,130.33) -- cycle ;
\draw [color={rgb, 255:red, 208; green, 2; blue, 27 }  ,draw opacity=0.5 ]   (409.33,130.33) .. controls (390.62,130.33) and (380.47,121.94) .. (379.42,103.07) ;
\draw [shift={(379.33,100.33)}, rotate = 89.45] [fill={rgb, 255:red, 208; green, 2; blue, 27 }  ,fill opacity=0.5 ][line width=0.08]  [draw opacity=0] (5.36,-2.57) -- (0,0) -- (5.36,2.57) -- cycle    ;
\draw [color={rgb, 255:red, 208; green, 2; blue, 27 }  ,draw opacity=0.5 ]   (379.33,100.33) .. controls (398.33,100.52) and (408.31,109.38) .. (409.26,127.41) ;
\draw [shift={(409.33,130.33)}, rotate = 270] [fill={rgb, 255:red, 208; green, 2; blue, 27 }  ,fill opacity=0.5 ][line width=0.08]  [draw opacity=0] (5.36,-2.57) -- (0,0) -- (5.36,2.57) -- cycle    ;
\draw  [color={rgb, 255:red, 74; green, 144; blue, 226 }  ,draw opacity=1 ] (368.08,89.11) -- (421.08,89.11) -- (421.08,141.61) -- (368.08,141.61) -- cycle ;
\draw [color={rgb, 255:red, 74; green, 144; blue, 226 }  ,draw opacity=0.5 ] [dash pattern={on 3pt off 1.5pt}]  (421.5,109.91) .. controls (437.82,98.35) and (467.66,97.92) .. (486.24,108.82) ;
\draw [shift={(488.5,110.24)}, rotate = 213.93] [fill={rgb, 255:red, 74; green, 144; blue, 226 }  ,fill opacity=0.5 ][line width=0.08]  [draw opacity=0] (5.36,-2.57) -- (0,0) -- (5.36,2.57) -- cycle    ;
\draw [color={rgb, 255:red, 74; green, 144; blue, 226 }  ,draw opacity=0.5 ] [dash pattern={on 3pt off 1.5pt}]  (423.54,121.16) .. controls (442.57,131.64) and (471.33,132) .. (488.83,119.91) ;
\draw [shift={(420.92,119.61)}, rotate = 32.31] [fill={rgb, 255:red, 74; green, 144; blue, 226 }  ,fill opacity=0.5 ][line width=0.08]  [draw opacity=0] (5.36,-2.57) -- (0,0) -- (5.36,2.57) -- cycle    ;
\draw  [color={rgb, 255:red, 0; green, 0; blue, 0 }  ,draw opacity=1 ][fill={rgb, 255:red, 155; green, 155; blue, 155 }  ,fill opacity=0.4 ] (429.77,40.53) .. controls (429.77,35.01) and (434.24,30.53) .. (439.77,30.53) .. controls (445.29,30.53) and (449.77,35.01) .. (449.77,40.53) .. controls (449.77,46.06) and (445.29,50.53) .. (439.77,50.53) .. controls (434.24,50.53) and (429.77,46.06) .. (429.77,40.53) -- cycle ;
\draw  [color={rgb, 255:red, 0; green, 0; blue, 0 }  ,draw opacity=1 ][fill={rgb, 255:red, 155; green, 155; blue, 155 }  ,fill opacity=0.4 ] (459.77,70.53) .. controls (459.77,65.01) and (464.24,60.53) .. (469.77,60.53) .. controls (475.29,60.53) and (479.77,65.01) .. (479.77,70.53) .. controls (479.77,76.06) and (475.29,80.53) .. (469.77,80.53) .. controls (464.24,80.53) and (459.77,76.06) .. (459.77,70.53) -- cycle ;
\draw [color={rgb, 255:red, 208; green, 2; blue, 27 }  ,draw opacity=0.5 ]   (469.77,70.53) .. controls (451.05,70.53) and (440.9,62.14) .. (439.85,43.27) ;
\draw [shift={(439.77,40.53)}, rotate = 89.45] [fill={rgb, 255:red, 208; green, 2; blue, 27 }  ,fill opacity=0.5 ][line width=0.08]  [draw opacity=0] (5.36,-2.57) -- (0,0) -- (5.36,2.57) -- cycle    ;
\draw [color={rgb, 255:red, 208; green, 2; blue, 27 }  ,draw opacity=0.5 ]   (439.77,40.53) .. controls (458.77,40.72) and (468.74,49.58) .. (469.69,67.61) ;
\draw [shift={(469.77,70.53)}, rotate = 270] [fill={rgb, 255:red, 208; green, 2; blue, 27 }  ,fill opacity=0.5 ][line width=0.08]  [draw opacity=0] (5.36,-2.57) -- (0,0) -- (5.36,2.57) -- cycle    ;
\draw  [color={rgb, 255:red, 0; green, 0; blue, 0 }  ,draw opacity=1 ][fill={rgb, 255:red, 155; green, 155; blue, 155 }  ,fill opacity=0.4 ] (489.33,100.73) .. controls (489.33,95.21) and (493.81,90.73) .. (499.33,90.73) .. controls (504.86,90.73) and (509.33,95.21) .. (509.33,100.73) .. controls (509.33,106.26) and (504.86,110.73) .. (499.33,110.73) .. controls (493.81,110.73) and (489.33,106.26) .. (489.33,100.73) -- cycle ;
\draw  [color={rgb, 255:red, 0; green, 0; blue, 0 }  ,draw opacity=1 ][fill={rgb, 255:red, 155; green, 155; blue, 155 }  ,fill opacity=0.4 ] (519.33,130.73) .. controls (519.33,125.21) and (523.81,120.73) .. (529.33,120.73) .. controls (534.86,120.73) and (539.33,125.21) .. (539.33,130.73) .. controls (539.33,136.26) and (534.86,140.73) .. (529.33,140.73) .. controls (523.81,140.73) and (519.33,136.26) .. (519.33,130.73) -- cycle ;
\draw [color={rgb, 255:red, 208; green, 2; blue, 27 }  ,draw opacity=0.5 ]   (529.33,130.73) .. controls (510.62,130.73) and (500.47,122.34) .. (499.42,103.47) ;
\draw [shift={(499.33,100.73)}, rotate = 89.45] [fill={rgb, 255:red, 208; green, 2; blue, 27 }  ,fill opacity=0.5 ][line width=0.08]  [draw opacity=0] (5.36,-2.57) -- (0,0) -- (5.36,2.57) -- cycle    ;
\draw [color={rgb, 255:red, 208; green, 2; blue, 27 }  ,draw opacity=0.5 ]   (499.33,100.73) .. controls (518.33,100.92) and (528.31,109.78) .. (529.26,127.81) ;
\draw [shift={(529.33,130.73)}, rotate = 270] [fill={rgb, 255:red, 208; green, 2; blue, 27 }  ,fill opacity=0.5 ][line width=0.08]  [draw opacity=0] (5.36,-2.57) -- (0,0) -- (5.36,2.57) -- cycle    ;
\draw  [color={rgb, 255:red, 74; green, 144; blue, 226 }  ,draw opacity=1 ] (488.08,89.51) -- (541.08,89.51) -- (541.08,142.01) -- (488.08,142.01) -- cycle ;

\draw (253.33,170) node [anchor=north west][inner sep=0.75pt]  [color={rgb, 255:red, 0; green, 0; blue, 0 }  ,opacity=1 ]  {$\rho $};
\draw (220.33,38) node [anchor=north west][inner sep=0.75pt]  [color={rgb, 255:red, 208; green, 2; blue, 27 }  ,opacity=1 ]  {$\tau $};
\draw (194.33,149.4) node [anchor=north west][inner sep=0.75pt]  [color={rgb, 255:red, 0; green, 0; blue, 0 }  ,opacity=1 ]  {$0$};
\draw (224.33,149.4) node [anchor=north west][inner sep=0.75pt]  [color={rgb, 255:red, 0; green, 0; blue, 0 }  ,opacity=1 ]  {$1$};
\draw (254.33,149.4) node [anchor=north west][inner sep=0.75pt]  [color={rgb, 255:red, 0; green, 0; blue, 0 }  ,opacity=1 ]  {$2$};
\draw (285.33,149.4) node [anchor=north west][inner sep=0.75pt]  [color={rgb, 255:red, 0; green, 0; blue, 0 }  ,opacity=1 ]  {$3$};
\draw (314.33,149.4) node [anchor=north west][inner sep=0.75pt]  [color={rgb, 255:red, 0; green, 0; blue, 0 }  ,opacity=1 ]  {$4$};
\draw (344.33,149.4) node [anchor=north west][inner sep=0.75pt]  [color={rgb, 255:red, 0; green, 0; blue, 0 }  ,opacity=1 ]  {$5$};
\draw (330.67,32.73) node [anchor=north west][inner sep=0.75pt]  [font=\footnotesize,color={rgb, 255:red, 208; green, 2; blue, 27 }  ,opacity=0.5 ]  {$\rho \tau \rho ^{-1}$};
\draw (160.33,32.73) node [anchor=north west][inner sep=0.75pt]  [color={rgb, 255:red, 0; green, 0; blue, 0 }  ,opacity=1 ]  {$\mathcal{I}_{1}{}$};
\draw (160.33,60.73) node [anchor=north west][inner sep=0.75pt]  [color={rgb, 255:red, 0; green, 0; blue, 0 }  ,opacity=1 ]  {$\mathcal{I}_{2}{}$};
\draw (328.5,110.25) node [anchor=north west][inner sep=0.75pt]  [color={rgb, 255:red, 74; green, 144; blue, 226 }  ,opacity=1 ]  {$\tau '$};
\draw (433.33,105.9) node [anchor=north west][inner sep=0.75pt]  [color={rgb, 255:red, 74; green, 144; blue, 226 }  ,opacity=0.5 ]  {$\rho \tau '\rho ^{-1}$};
\draw (374.67,149.73) node [anchor=north west][inner sep=0.75pt]  [color={rgb, 255:red, 0; green, 0; blue, 0 }  ,opacity=1 ]  {$6$};
\draw (404.67,149.4) node [anchor=north west][inner sep=0.75pt]  [color={rgb, 255:red, 0; green, 0; blue, 0 }  ,opacity=1 ]  {$7$};
\draw (435,149.73) node [anchor=north west][inner sep=0.75pt]  [color={rgb, 255:red, 0; green, 0; blue, 0 }  ,opacity=1 ]  {$8$};
\draw (280.67,98) node [anchor=north west][inner sep=0.75pt]  [color={rgb, 255:red, 208; green, 2; blue, 27 }  ,opacity=1 ]  {$\tau $};
\draw (390.33,92.73) node [anchor=north west][inner sep=0.75pt]  [font=\footnotesize,color={rgb, 255:red, 208; green, 2; blue, 27 }  ,opacity=0.5 ]  {$\rho \tau \rho ^{-1}$};
\draw (160.67,92.73) node [anchor=north west][inner sep=0.75pt]  [color={rgb, 255:red, 0; green, 0; blue, 0 }  ,opacity=1 ]  {$\mathcal{I}_{3}{}$};
\draw (160.67,120.73) node [anchor=north west][inner sep=0.75pt]  [color={rgb, 255:red, 0; green, 0; blue, 0 }  ,opacity=1 ]  {$\mathcal{I}_{4}{}$};
\draw (464.83,149.4) node [anchor=north west][inner sep=0.75pt]  [color={rgb, 255:red, 0; green, 0; blue, 0 }  ,opacity=1 ]  {$9$};
\draw (489.67,149.73) node [anchor=north west][inner sep=0.75pt]  [color={rgb, 255:red, 0; green, 0; blue, 0 }  ,opacity=1 ]  {$10$};
\draw (520.67,149.4) node [anchor=north west][inner sep=0.75pt]  [color={rgb, 255:red, 0; green, 0; blue, 0 }  ,opacity=1 ]  {$11$};
\draw (450.77,32.93) node [anchor=north west][inner sep=0.75pt]  [font=\footnotesize,color={rgb, 255:red, 208; green, 2; blue, 27 }  ,opacity=0.5 ]  {$\rho ^{2} \tau \rho ^{-2}$};
\draw (510.33,93.13) node [anchor=north west][inner sep=0.75pt]  [font=\footnotesize,color={rgb, 255:red, 208; green, 2; blue, 27 }  ,opacity=0.5 ]  {$\rho ^{2} \tau \rho ^{-2}$};

\end{tikzpicture}

}
\import{figures/}{orbits.tex}
\newcommand{\figureinfiniteorbits}{

\tikzset{every picture/.style={line width=0.75pt}} 

\begin{tikzpicture}[x=0.75pt,y=0.75pt,yscale=-1,xscale=1]

\draw  [color={rgb, 255:red, 208; green, 2; blue, 27 }  ,draw opacity=1 ][fill={rgb, 255:red, 208; green, 2; blue, 27 }  ,fill opacity=1 ] (190,90) .. controls (190,84.48) and (194.48,80) .. (200,80) .. controls (205.52,80) and (210,84.48) .. (210,90) .. controls (210,95.52) and (205.52,100) .. (200,100) .. controls (194.48,100) and (190,95.52) .. (190,90) -- cycle ;
\draw  [color={rgb, 255:red, 208; green, 2; blue, 27 }  ,draw opacity=1 ][fill={rgb, 255:red, 208; green, 2; blue, 27 }  ,fill opacity=1 ] (250,90) .. controls (250,84.48) and (254.48,80) .. (260,80) .. controls (265.52,80) and (270,84.48) .. (270,90) .. controls (270,95.52) and (265.52,100) .. (260,100) .. controls (254.48,100) and (250,95.52) .. (250,90) -- cycle ;
\draw  [color={rgb, 255:red, 208; green, 2; blue, 27 }  ,draw opacity=1 ][fill={rgb, 255:red, 208; green, 2; blue, 27 }  ,fill opacity=1 ] (310,90) .. controls (310,84.48) and (314.48,80) .. (320,80) .. controls (325.52,80) and (330,84.48) .. (330,90) .. controls (330,95.52) and (325.52,100) .. (320,100) .. controls (314.48,100) and (310,95.52) .. (310,90) -- cycle ;
\draw  [color={rgb, 255:red, 208; green, 2; blue, 27 }  ,draw opacity=1 ][fill={rgb, 255:red, 208; green, 2; blue, 27 }  ,fill opacity=1 ] (190,120) .. controls (190,114.48) and (194.48,110) .. (200,110) .. controls (205.52,110) and (210,114.48) .. (210,120) .. controls (210,125.52) and (205.52,130) .. (200,130) .. controls (194.48,130) and (190,125.52) .. (190,120) -- cycle ;
\draw  [color={rgb, 255:red, 208; green, 2; blue, 27 }  ,draw opacity=1 ][fill={rgb, 255:red, 208; green, 2; blue, 27 }  ,fill opacity=1 ] (250,120) .. controls (250,114.48) and (254.48,110) .. (260,110) .. controls (265.52,110) and (270,114.48) .. (270,120) .. controls (270,125.52) and (265.52,130) .. (260,130) .. controls (254.48,130) and (250,125.52) .. (250,120) -- cycle ;
\draw  [color={rgb, 255:red, 208; green, 2; blue, 27 }  ,draw opacity=1 ][fill={rgb, 255:red, 208; green, 2; blue, 27 }  ,fill opacity=1 ] (310,120) .. controls (310,114.48) and (314.48,110) .. (320,110) .. controls (325.52,110) and (330,114.48) .. (330,120) .. controls (330,125.52) and (325.52,130) .. (320,130) .. controls (314.48,130) and (310,125.52) .. (310,120) -- cycle ;
\draw  [color={rgb, 255:red, 208; green, 2; blue, 27 }  ,draw opacity=1 ][fill={rgb, 255:red, 208; green, 2; blue, 27 }  ,fill opacity=1 ] (190,30) .. controls (190,24.48) and (194.48,20) .. (200,20) .. controls (205.52,20) and (210,24.48) .. (210,30) .. controls (210,35.52) and (205.52,40) .. (200,40) .. controls (194.48,40) and (190,35.52) .. (190,30) -- cycle ;
\draw  [color={rgb, 255:red, 208; green, 2; blue, 27 }  ,draw opacity=1 ][fill={rgb, 255:red, 208; green, 2; blue, 27 }  ,fill opacity=1 ] (250,30) .. controls (250,24.48) and (254.48,20) .. (260,20) .. controls (265.52,20) and (270,24.48) .. (270,30) .. controls (270,35.52) and (265.52,40) .. (260,40) .. controls (254.48,40) and (250,35.52) .. (250,30) -- cycle ;
\draw  [color={rgb, 255:red, 208; green, 2; blue, 27 }  ,draw opacity=1 ][fill={rgb, 255:red, 208; green, 2; blue, 27 }  ,fill opacity=1 ] (310,30) .. controls (310,24.48) and (314.48,20) .. (320,20) .. controls (325.52,20) and (330,24.48) .. (330,30) .. controls (330,35.52) and (325.52,40) .. (320,40) .. controls (314.48,40) and (310,35.52) .. (310,30) -- cycle ;
\draw  [color={rgb, 255:red, 208; green, 2; blue, 27 }  ,draw opacity=1 ][fill={rgb, 255:red, 208; green, 2; blue, 27 }  ,fill opacity=1 ] (190,60) .. controls (190,54.48) and (194.48,50) .. (200,50) .. controls (205.52,50) and (210,54.48) .. (210,60) .. controls (210,65.52) and (205.52,70) .. (200,70) .. controls (194.48,70) and (190,65.52) .. (190,60) -- cycle ;
\draw  [color={rgb, 255:red, 208; green, 2; blue, 27 }  ,draw opacity=1 ][fill={rgb, 255:red, 208; green, 2; blue, 27 }  ,fill opacity=1 ] (250,60) .. controls (250,54.48) and (254.48,50) .. (260,50) .. controls (265.52,50) and (270,54.48) .. (270,60) .. controls (270,65.52) and (265.52,70) .. (260,70) .. controls (254.48,70) and (250,65.52) .. (250,60) -- cycle ;
\draw  [color={rgb, 255:red, 208; green, 2; blue, 27 }  ,draw opacity=1 ][fill={rgb, 255:red, 208; green, 2; blue, 27 }  ,fill opacity=1 ] (310,60) .. controls (310,54.48) and (314.48,50) .. (320,50) .. controls (325.52,50) and (330,54.48) .. (330,60) .. controls (330,65.52) and (325.52,70) .. (320,70) .. controls (314.48,70) and (310,65.52) .. (310,60) -- cycle ;
\draw  [color={rgb, 255:red, 208; green, 2; blue, 27 }  ,draw opacity=1 ][fill={rgb, 255:red, 208; green, 2; blue, 27 }  ,fill opacity=1 ] (490,90) .. controls (490,84.48) and (494.48,80) .. (500,80) .. controls (505.52,80) and (510,84.48) .. (510,90) .. controls (510,95.52) and (505.52,100) .. (500,100) .. controls (494.48,100) and (490,95.52) .. (490,90) -- cycle ;
\draw  [color={rgb, 255:red, 208; green, 2; blue, 27 }  ,draw opacity=1 ][fill={rgb, 255:red, 208; green, 2; blue, 27 }  ,fill opacity=1 ] (490,120) .. controls (490,114.48) and (494.48,110) .. (500,110) .. controls (505.52,110) and (510,114.48) .. (510,120) .. controls (510,125.52) and (505.52,130) .. (500,130) .. controls (494.48,130) and (490,125.52) .. (490,120) -- cycle ;
\draw  [color={rgb, 255:red, 208; green, 2; blue, 27 }  ,draw opacity=1 ][fill={rgb, 255:red, 208; green, 2; blue, 27 }  ,fill opacity=1 ] (490,30) .. controls (490,24.48) and (494.48,20) .. (500,20) .. controls (505.52,20) and (510,24.48) .. (510,30) .. controls (510,35.52) and (505.52,40) .. (500,40) .. controls (494.48,40) and (490,35.52) .. (490,30) -- cycle ;
\draw  [color={rgb, 255:red, 208; green, 2; blue, 27 }  ,draw opacity=1 ][fill={rgb, 255:red, 208; green, 2; blue, 27 }  ,fill opacity=1 ] (490,60) .. controls (490,54.48) and (494.48,50) .. (500,50) .. controls (505.52,50) and (510,54.48) .. (510,60) .. controls (510,65.52) and (505.52,70) .. (500,70) .. controls (494.48,70) and (490,65.52) .. (490,60) -- cycle ;
\draw [color={rgb, 255:red, 144; green, 19; blue, 254 }  ,draw opacity=1 ]   (320,30) .. controls (310.47,39.01) and (310.19,48.53) .. (318.08,57.89) ;
\draw [shift={(320,60)}, rotate = 225.67] [fill={rgb, 255:red, 144; green, 19; blue, 254 }  ,fill opacity=1 ][line width=0.08]  [draw opacity=0] (5.36,-2.57) -- (0,0) -- (5.36,2.57) -- cycle    ;
\draw [color={rgb, 255:red, 144; green, 19; blue, 254 }  ,draw opacity=1 ]   (320,60) .. controls (329.49,50.76) and (330.15,41.47) .. (321.99,32.11) ;
\draw [shift={(320,30)}, rotate = 44.62] [fill={rgb, 255:red, 144; green, 19; blue, 254 }  ,fill opacity=1 ][line width=0.08]  [draw opacity=0] (5.36,-2.57) -- (0,0) -- (5.36,2.57) -- cycle    ;
\draw  [color={rgb, 255:red, 128; green, 128; blue, 128 }  ,draw opacity=1 ] (188,18) -- (211.75,18) -- (211.75,72.03) -- (188,72.03) -- cycle ;
\draw  [color={rgb, 255:red, 128; green, 128; blue, 128 }  ,draw opacity=1 ] (188,78.75) -- (211.75,78.75) -- (211.75,132.78) -- (188,132.78) -- cycle ;
\draw  [color={rgb, 255:red, 128; green, 128; blue, 128 }  ,draw opacity=1 ] (248,18.08) -- (271.75,18.08) -- (271.75,72.1) -- (248,72.1) -- cycle ;
\draw  [color={rgb, 255:red, 128; green, 128; blue, 128 }  ,draw opacity=1 ] (248,78.83) -- (271.75,78.83) -- (271.75,132.85) -- (248,132.85) -- cycle ;
\draw  [color={rgb, 255:red, 208; green, 2; blue, 27 }  ,draw opacity=1 ][fill={rgb, 255:red, 208; green, 2; blue, 27 }  ,fill opacity=1 ] (370,90) .. controls (370,84.48) and (374.48,80) .. (380,80) .. controls (385.52,80) and (390,84.48) .. (390,90) .. controls (390,95.52) and (385.52,100) .. (380,100) .. controls (374.48,100) and (370,95.52) .. (370,90) -- cycle ;
\draw  [color={rgb, 255:red, 208; green, 2; blue, 27 }  ,draw opacity=1 ][fill={rgb, 255:red, 208; green, 2; blue, 27 }  ,fill opacity=1 ] (430,90) .. controls (430,84.48) and (434.48,80) .. (440,80) .. controls (445.52,80) and (450,84.48) .. (450,90) .. controls (450,95.52) and (445.52,100) .. (440,100) .. controls (434.48,100) and (430,95.52) .. (430,90) -- cycle ;
\draw  [color={rgb, 255:red, 208; green, 2; blue, 27 }  ,draw opacity=1 ][fill={rgb, 255:red, 208; green, 2; blue, 27 }  ,fill opacity=1 ] (370,120) .. controls (370,114.48) and (374.48,110) .. (380,110) .. controls (385.52,110) and (390,114.48) .. (390,120) .. controls (390,125.52) and (385.52,130) .. (380,130) .. controls (374.48,130) and (370,125.52) .. (370,120) -- cycle ;
\draw  [color={rgb, 255:red, 208; green, 2; blue, 27 }  ,draw opacity=1 ][fill={rgb, 255:red, 208; green, 2; blue, 27 }  ,fill opacity=1 ] (430,120) .. controls (430,114.48) and (434.48,110) .. (440,110) .. controls (445.52,110) and (450,114.48) .. (450,120) .. controls (450,125.52) and (445.52,130) .. (440,130) .. controls (434.48,130) and (430,125.52) .. (430,120) -- cycle ;
\draw  [color={rgb, 255:red, 208; green, 2; blue, 27 }  ,draw opacity=1 ][fill={rgb, 255:red, 208; green, 2; blue, 27 }  ,fill opacity=1 ] (370,30) .. controls (370,24.48) and (374.48,20) .. (380,20) .. controls (385.52,20) and (390,24.48) .. (390,30) .. controls (390,35.52) and (385.52,40) .. (380,40) .. controls (374.48,40) and (370,35.52) .. (370,30) -- cycle ;
\draw  [color={rgb, 255:red, 208; green, 2; blue, 27 }  ,draw opacity=1 ][fill={rgb, 255:red, 208; green, 2; blue, 27 }  ,fill opacity=1 ] (430,30) .. controls (430,24.48) and (434.48,20) .. (440,20) .. controls (445.52,20) and (450,24.48) .. (450,30) .. controls (450,35.52) and (445.52,40) .. (440,40) .. controls (434.48,40) and (430,35.52) .. (430,30) -- cycle ;
\draw  [color={rgb, 255:red, 208; green, 2; blue, 27 }  ,draw opacity=1 ][fill={rgb, 255:red, 208; green, 2; blue, 27 }  ,fill opacity=1 ] (370,60) .. controls (370,54.48) and (374.48,50) .. (380,50) .. controls (385.52,50) and (390,54.48) .. (390,60) .. controls (390,65.52) and (385.52,70) .. (380,70) .. controls (374.48,70) and (370,65.52) .. (370,60) -- cycle ;
\draw  [color={rgb, 255:red, 208; green, 2; blue, 27 }  ,draw opacity=1 ][fill={rgb, 255:red, 208; green, 2; blue, 27 }  ,fill opacity=1 ] (430,60) .. controls (430,54.48) and (434.48,50) .. (440,50) .. controls (445.52,50) and (450,54.48) .. (450,60) .. controls (450,65.52) and (445.52,70) .. (440,70) .. controls (434.48,70) and (430,65.52) .. (430,60) -- cycle ;
\draw    (381,31.1) .. controls (371.45,50.14) and (371.25,69.24) .. (379.74,88.39) ;
\draw [shift={(381,91.1)}, rotate = 244.08] [fill={rgb, 255:red, 0; green, 0; blue, 0 }  ][line width=0.08]  [draw opacity=0] (5.36,-2.57) -- (0,0) -- (5.36,2.57) -- cycle    ;
\draw    (380,90) .. controls (399.2,80.34) and (427.62,52.31) .. (438.71,32.44) ;
\draw [shift={(440,30)}, rotate = 116.63] [fill={rgb, 255:red, 0; green, 0; blue, 0 }  ][line width=0.08]  [draw opacity=0] (5.36,-2.57) -- (0,0) -- (5.36,2.57) -- cycle    ;
\draw    (381,61.1) .. controls (371.45,80.14) and (371.25,99.24) .. (379.74,118.39) ;
\draw [shift={(381,121.1)}, rotate = 244.08] [fill={rgb, 255:red, 0; green, 0; blue, 0 }  ][line width=0.08]  [draw opacity=0] (5.36,-2.57) -- (0,0) -- (5.36,2.57) -- cycle    ;
\draw    (440,60) .. controls (421.04,45.54) and (401.62,44.5) .. (382.4,58.21) ;
\draw [shift={(380,60)}, rotate = 322.12] [fill={rgb, 255:red, 0; green, 0; blue, 0 }  ][line width=0.08]  [draw opacity=0] (5.36,-2.57) -- (0,0) -- (5.36,2.57) -- cycle    ;
\draw    (440,30) .. controls (421.04,15.54) and (401.62,14.5) .. (382.4,28.21) ;
\draw [shift={(380,30)}, rotate = 322.12] [fill={rgb, 255:red, 0; green, 0; blue, 0 }  ][line width=0.08]  [draw opacity=0] (5.36,-2.57) -- (0,0) -- (5.36,2.57) -- cycle    ;
\draw    (380,120) .. controls (399.2,110.34) and (427.62,82.31) .. (438.71,62.44) ;
\draw [shift={(440,60)}, rotate = 116.63] [fill={rgb, 255:red, 0; green, 0; blue, 0 }  ][line width=0.08]  [draw opacity=0] (5.36,-2.57) -- (0,0) -- (5.36,2.57) -- cycle    ;
\draw  [color={rgb, 255:red, 128; green, 128; blue, 128 }  ,draw opacity=1 ] (308.25,18.22) -- (332,18.22) -- (332,72.25) -- (308.25,72.25) -- cycle ;
\draw  [color={rgb, 255:red, 128; green, 128; blue, 128 }  ,draw opacity=1 ] (308.25,78.97) -- (332,78.97) -- (332,133) -- (308.25,133) -- cycle ;
\draw  [color={rgb, 255:red, 128; green, 128; blue, 128 }  ,draw opacity=1 ] (368.25,17.58) -- (392,17.58) -- (392,71.6) -- (368.25,71.6) -- cycle ;
\draw  [color={rgb, 255:red, 128; green, 128; blue, 128 }  ,draw opacity=1 ] (368.25,78.33) -- (392,78.33) -- (392,132.35) -- (368.25,132.35) -- cycle ;
\draw  [color={rgb, 255:red, 128; green, 128; blue, 128 }  ,draw opacity=1 ] (428.25,17.58) -- (452,17.58) -- (452,71.6) -- (428.25,71.6) -- cycle ;
\draw  [color={rgb, 255:red, 128; green, 128; blue, 128 }  ,draw opacity=1 ] (428.25,78.33) -- (452,78.33) -- (452,132.35) -- (428.25,132.35) -- cycle ;
\draw  [color={rgb, 255:red, 128; green, 128; blue, 128 }  ,draw opacity=1 ] (488,17.58) -- (511.75,17.58) -- (511.75,71.6) -- (488,71.6) -- cycle ;
\draw  [color={rgb, 255:red, 128; green, 128; blue, 128 }  ,draw opacity=1 ] (488,78.33) -- (511.75,78.33) -- (511.75,132.35) -- (488,132.35) -- cycle ;
\draw    (200,150) -- (258,150) ;
\draw [shift={(260,150)}, rotate = 180] [color={rgb, 255:red, 0; green, 0; blue, 0 }  ][line width=0.75]    (7.65,-2.3) .. controls (4.86,-0.97) and (2.31,-0.21) .. (0,0) .. controls (2.31,0.21) and (4.86,0.98) .. (7.65,2.3)   ;

\draw (274,33.4) node [anchor=north west][inner sep=0.75pt]  [font=\small,color={rgb, 255:red, 144; green, 19; blue, 254 }  ,opacity=1 ]  {$\rho \tau_{2} \rho^{-1}$};
\draw (403,7) node [anchor=north west][inner sep=0.75pt]  [font=\small]  {$\tau_{1}$};
\draw (222,133) node [anchor=north west][inner sep=0.75pt]  [font=\small]  {$\rho^{2}$};
\draw (173,37) node [anchor=north west][inner sep=0.75pt]  [color={rgb, 255:red, 128; green, 128; blue, 128 }  ,opacity=1 ]  {$\Gamma$};

\end{tikzpicture}
}
\newcommand{\figuretwist}{

\tikzset{every picture/.style={line width=0.75pt}} 

\begin{tikzpicture}[x=0.75pt,y=0.75pt,yscale=-1,xscale=1]

\draw  [color={rgb, 255:red, 208; green, 2; blue, 27 }  ,draw opacity=1 ][fill={rgb, 255:red, 208; green, 2; blue, 27 }  ,fill opacity=1 ] (194.67,31.91) .. controls (194.67,26.39) and (199.14,21.91) .. (204.67,21.91) .. controls (210.19,21.91) and (214.67,26.39) .. (214.67,31.91) .. controls (214.67,37.43) and (210.19,41.91) .. (204.67,41.91) .. controls (199.14,41.91) and (194.67,37.43) .. (194.67,31.91) -- cycle ;
\draw  [color={rgb, 255:red, 208; green, 2; blue, 27 }  ,draw opacity=1 ][fill={rgb, 255:red, 208; green, 2; blue, 27 }  ,fill opacity=0 ] (234.42,31.76) .. controls (234.42,26.24) and (238.89,21.76) .. (244.42,21.76) .. controls (249.94,21.76) and (254.42,26.24) .. (254.42,31.76) .. controls (254.42,37.28) and (249.94,41.76) .. (244.42,41.76) .. controls (238.89,41.76) and (234.42,37.28) .. (234.42,31.76) -- cycle ;
\draw  [color={rgb, 255:red, 208; green, 2; blue, 27 }  ,draw opacity=1 ][fill={rgb, 255:red, 208; green, 2; blue, 27 }  ,fill opacity=0 ] (194.67,61.91) .. controls (194.67,56.39) and (199.14,51.91) .. (204.67,51.91) .. controls (210.19,51.91) and (214.67,56.39) .. (214.67,61.91) .. controls (214.67,67.43) and (210.19,71.91) .. (204.67,71.91) .. controls (199.14,71.91) and (194.67,67.43) .. (194.67,61.91) -- cycle ;
\draw  [color={rgb, 255:red, 208; green, 2; blue, 27 }  ,draw opacity=1 ][fill={rgb, 255:red, 208; green, 2; blue, 27 }  ,fill opacity=1 ] (234.42,61.76) .. controls (234.42,56.24) and (238.89,51.76) .. (244.42,51.76) .. controls (249.94,51.76) and (254.42,56.24) .. (254.42,61.76) .. controls (254.42,67.28) and (249.94,71.76) .. (244.42,71.76) .. controls (238.89,71.76) and (234.42,67.28) .. (234.42,61.76) -- cycle ;
\draw  [color={rgb, 255:red, 128; green, 128; blue, 128 }  ,draw opacity=1 ] (192.67,19.98) -- (216.42,19.98) -- (216.42,74.01) -- (192.67,74.01) -- cycle ;
\draw  [color={rgb, 255:red, 208; green, 2; blue, 27 }  ,draw opacity=1 ][fill={rgb, 255:red, 208; green, 2; blue, 27 }  ,fill opacity=1 ] (274.42,31.98) .. controls (274.42,26.46) and (278.89,21.98) .. (284.42,21.98) .. controls (289.94,21.98) and (294.42,26.46) .. (294.42,31.98) .. controls (294.42,37.51) and (289.94,41.98) .. (284.42,41.98) .. controls (278.89,41.98) and (274.42,37.51) .. (274.42,31.98) -- cycle ;
\draw  [color={rgb, 255:red, 208; green, 2; blue, 27 }  ,draw opacity=1 ] (154.42,32.42) .. controls (154.42,26.9) and (158.89,22.42) .. (164.42,22.42) .. controls (169.94,22.42) and (174.42,26.9) .. (174.42,32.42) .. controls (174.42,37.95) and (169.94,42.42) .. (164.42,42.42) .. controls (158.89,42.42) and (154.42,37.95) .. (154.42,32.42) -- cycle ;
\draw  [color={rgb, 255:red, 208; green, 2; blue, 27 }  ,draw opacity=1 ] (274.42,61.98) .. controls (274.42,56.46) and (278.89,51.98) .. (284.42,51.98) .. controls (289.94,51.98) and (294.42,56.46) .. (294.42,61.98) .. controls (294.42,67.51) and (289.94,71.98) .. (284.42,71.98) .. controls (278.89,71.98) and (274.42,67.51) .. (274.42,61.98) -- cycle ;
\draw  [color={rgb, 255:red, 208; green, 2; blue, 27 }  ,draw opacity=1 ][fill={rgb, 255:red, 208; green, 2; blue, 27 }  ,fill opacity=1 ] (154.42,62.42) .. controls (154.42,56.9) and (158.89,52.42) .. (164.42,52.42) .. controls (169.94,52.42) and (174.42,56.9) .. (174.42,62.42) .. controls (174.42,67.95) and (169.94,72.42) .. (164.42,72.42) .. controls (158.89,72.42) and (154.42,67.95) .. (154.42,62.42) -- cycle ;
\draw    (284.42,31.98) .. controls (264.29,10.61) and (225.3,9.59) .. (206.36,29.96) ;
\draw [shift={(204.67,31.91)}, rotate = 309.15] [fill={rgb, 255:red, 0; green, 0; blue, 0 }  ][line width=0.08]  [draw opacity=0] (5.36,-2.57) -- (0,0) -- (5.36,2.57) -- cycle    ;
\draw  [color={rgb, 255:red, 128; green, 128; blue, 128 }  ,draw opacity=1 ] (232.67,19.98) -- (256.42,19.98) -- (256.42,74.01) -- (232.67,74.01) -- cycle ;
\draw  [color={rgb, 255:red, 128; green, 128; blue, 128 }  ,draw opacity=1 ] (272.67,19.56) -- (296.42,19.56) -- (296.42,73.58) -- (272.67,73.58) -- cycle ;
\draw  [color={rgb, 255:red, 128; green, 128; blue, 128 }  ,draw opacity=1 ] (152.67,20) -- (176.42,20) -- (176.42,74.03) -- (152.67,74.03) -- cycle ;
\draw    (284.42,61.98) .. controls (265.55,82.41) and (226.15,83.5) .. (206.7,64.1) ;
\draw [shift={(204.67,61.91)}, rotate = 49.32] [fill={rgb, 255:red, 0; green, 0; blue, 0 }  ][line width=0.08]  [draw opacity=0] (5.36,-2.57) -- (0,0) -- (5.36,2.57) -- cycle    ;
\draw    (244.42,31.76) -- (282.02,60.18) ;
\draw [shift={(284.42,61.98)}, rotate = 217.08] [fill={rgb, 255:red, 0; green, 0; blue, 0 }  ][line width=0.08]  [draw opacity=0] (5.36,-2.57) -- (0,0) -- (5.36,2.57) -- cycle    ;
\draw    (204.67,31.91) -- (242.27,60.33) ;
\draw [shift={(244.67,62.13)}, rotate = 217.08] [fill={rgb, 255:red, 0; green, 0; blue, 0 }  ][line width=0.08]  [draw opacity=0] (5.36,-2.57) -- (0,0) -- (5.36,2.57) -- cycle    ;
\draw    (204.67,61.91) -- (242.03,33.57) ;
\draw [shift={(244.42,31.76)}, rotate = 142.82] [fill={rgb, 255:red, 0; green, 0; blue, 0 }  ][line width=0.08]  [draw opacity=0] (5.36,-2.57) -- (0,0) -- (5.36,2.57) -- cycle    ;
\draw    (244.67,62.13) -- (282.03,33.8) ;
\draw [shift={(284.42,31.98)}, rotate = 142.82] [fill={rgb, 255:red, 0; green, 0; blue, 0 }  ][line width=0.08]  [draw opacity=0] (5.36,-2.57) -- (0,0) -- (5.36,2.57) -- cycle    ;
\draw  [dash pattern={on 0.84pt off 2.51pt}]  (310.92,46.68) -- (382.92,47.06) ;
\draw [shift={(384.92,47.07)}, rotate = 180.3] [color={rgb, 255:red, 0; green, 0; blue, 0 }  ][line width=0.75]    (7.65,-2.3) .. controls (4.86,-0.97) and (2.31,-0.21) .. (0,0) .. controls (2.31,0.21) and (4.86,0.98) .. (7.65,2.3)   ;
\draw  [color={rgb, 255:red, 208; green, 2; blue, 27 }  ,draw opacity=1 ][fill={rgb, 255:red, 208; green, 2; blue, 27 }  ,fill opacity=1 ] (439.33,31.91) .. controls (439.33,26.39) and (443.81,21.91) .. (449.33,21.91) .. controls (454.86,21.91) and (459.33,26.39) .. (459.33,31.91) .. controls (459.33,37.43) and (454.86,41.91) .. (449.33,41.91) .. controls (443.81,41.91) and (439.33,37.43) .. (439.33,31.91) -- cycle ;
\draw  [color={rgb, 255:red, 208; green, 2; blue, 27 }  ,draw opacity=1 ][fill={rgb, 255:red, 208; green, 2; blue, 27 }  ,fill opacity=0 ] (479.33,62.13) .. controls (479.33,56.61) and (483.81,52.13) .. (489.33,52.13) .. controls (494.86,52.13) and (499.33,56.61) .. (499.33,62.13) .. controls (499.33,67.66) and (494.86,72.13) .. (489.33,72.13) .. controls (483.81,72.13) and (479.33,67.66) .. (479.33,62.13) -- cycle ;
\draw  [color={rgb, 255:red, 208; green, 2; blue, 27 }  ,draw opacity=1 ][fill={rgb, 255:red, 208; green, 2; blue, 27 }  ,fill opacity=0 ] (439.33,61.91) .. controls (439.33,56.39) and (443.81,51.91) .. (449.33,51.91) .. controls (454.86,51.91) and (459.33,56.39) .. (459.33,61.91) .. controls (459.33,67.43) and (454.86,71.91) .. (449.33,71.91) .. controls (443.81,71.91) and (439.33,67.43) .. (439.33,61.91) -- cycle ;
\draw  [color={rgb, 255:red, 208; green, 2; blue, 27 }  ,draw opacity=1 ][fill={rgb, 255:red, 208; green, 2; blue, 27 }  ,fill opacity=1 ] (479.08,31.76) .. controls (479.08,26.24) and (483.56,21.76) .. (489.08,21.76) .. controls (494.61,21.76) and (499.08,26.24) .. (499.08,31.76) .. controls (499.08,37.28) and (494.61,41.76) .. (489.08,41.76) .. controls (483.56,41.76) and (479.08,37.28) .. (479.08,31.76) -- cycle ;
\draw  [color={rgb, 255:red, 128; green, 128; blue, 128 }  ,draw opacity=1 ] (437.33,19.98) -- (461.08,19.98) -- (461.08,74.01) -- (437.33,74.01) -- cycle ;
\draw  [color={rgb, 255:red, 208; green, 2; blue, 27 }  ,draw opacity=1 ][fill={rgb, 255:red, 208; green, 2; blue, 27 }  ,fill opacity=1 ] (519.08,31.98) .. controls (519.08,26.46) and (523.56,21.98) .. (529.08,21.98) .. controls (534.61,21.98) and (539.08,26.46) .. (539.08,31.98) .. controls (539.08,37.51) and (534.61,41.98) .. (529.08,41.98) .. controls (523.56,41.98) and (519.08,37.51) .. (519.08,31.98) -- cycle ;
\draw  [color={rgb, 255:red, 208; green, 2; blue, 27 }  ,draw opacity=1 ] (399.08,62.42) .. controls (399.08,56.9) and (403.56,52.42) .. (409.08,52.42) .. controls (414.61,52.42) and (419.08,56.9) .. (419.08,62.42) .. controls (419.08,67.95) and (414.61,72.42) .. (409.08,72.42) .. controls (403.56,72.42) and (399.08,67.95) .. (399.08,62.42) -- cycle ;
\draw  [color={rgb, 255:red, 208; green, 2; blue, 27 }  ,draw opacity=1 ] (519.08,61.98) .. controls (519.08,56.46) and (523.56,51.98) .. (529.08,51.98) .. controls (534.61,51.98) and (539.08,56.46) .. (539.08,61.98) .. controls (539.08,67.51) and (534.61,71.98) .. (529.08,71.98) .. controls (523.56,71.98) and (519.08,67.51) .. (519.08,61.98) -- cycle ;
\draw  [color={rgb, 255:red, 208; green, 2; blue, 27 }  ,draw opacity=1 ][fill={rgb, 255:red, 208; green, 2; blue, 27 }  ,fill opacity=1 ] (399.08,32.42) .. controls (399.08,26.9) and (403.56,22.42) .. (409.08,22.42) .. controls (414.61,22.42) and (419.08,26.9) .. (419.08,32.42) .. controls (419.08,37.95) and (414.61,42.42) .. (409.08,42.42) .. controls (403.56,42.42) and (399.08,37.95) .. (399.08,32.42) -- cycle ;
\draw    (529.08,31.98) .. controls (508.96,10.61) and (469.96,9.59) .. (451.03,29.96) ;
\draw [shift={(449.33,31.91)}, rotate = 309.15] [fill={rgb, 255:red, 0; green, 0; blue, 0 }  ][line width=0.08]  [draw opacity=0] (5.36,-2.57) -- (0,0) -- (5.36,2.57) -- cycle    ;
\draw  [color={rgb, 255:red, 128; green, 128; blue, 128 }  ,draw opacity=1 ] (477.33,19.98) -- (501.08,19.98) -- (501.08,74.01) -- (477.33,74.01) -- cycle ;
\draw  [color={rgb, 255:red, 128; green, 128; blue, 128 }  ,draw opacity=1 ] (517.33,19.56) -- (541.08,19.56) -- (541.08,73.58) -- (517.33,73.58) -- cycle ;
\draw  [color={rgb, 255:red, 128; green, 128; blue, 128 }  ,draw opacity=1 ] (397.33,20) -- (421.08,20) -- (421.08,74.03) -- (397.33,74.03) -- cycle ;
\draw    (529.08,61.98) .. controls (510.22,82.41) and (470.82,83.5) .. (451.37,64.1) ;
\draw [shift={(449.33,61.91)}, rotate = 49.32] [fill={rgb, 255:red, 0; green, 0; blue, 0 }  ][line width=0.08]  [draw opacity=0] (5.36,-2.57) -- (0,0) -- (5.36,2.57) -- cycle    ;
\draw    (489.33,62.13) -- (526.08,62) ;
\draw [shift={(529.08,61.98)}, rotate = 179.78] [fill={rgb, 255:red, 0; green, 0; blue, 0 }  ][line width=0.08]  [draw opacity=0] (5.36,-2.57) -- (0,0) -- (5.36,2.57) -- cycle    ;
\draw    (449.33,31.91) -- (486.08,31.77) ;
\draw [shift={(489.08,31.76)}, rotate = 179.78] [fill={rgb, 255:red, 0; green, 0; blue, 0 }  ][line width=0.08]  [draw opacity=0] (5.36,-2.57) -- (0,0) -- (5.36,2.57) -- cycle    ;
\draw    (449.33,61.91) -- (486.33,62.12) ;
\draw [shift={(489.33,62.13)}, rotate = 180.32] [fill={rgb, 255:red, 0; green, 0; blue, 0 }  ][line width=0.08]  [draw opacity=0] (5.36,-2.57) -- (0,0) -- (5.36,2.57) -- cycle    ;
\draw    (489.08,31.76) -- (526.08,31.97) ;
\draw [shift={(529.08,31.98)}, rotate = 180.32] [fill={rgb, 255:red, 0; green, 0; blue, 0 }  ][line width=0.08]  [draw opacity=0] (5.36,-2.57) -- (0,0) -- (5.36,2.57) -- cycle    ;

\draw (190,85) node [anchor=north west][inner sep=0.75pt]  [color={rgb, 255:red, 128; green, 128; blue, 128 }  ,opacity=1 ]  {$\Gamma \times \Delta$};
\draw (340,28) node [anchor=north west][inner sep=0.75pt]    {$\phi$};
\draw (434,85) node [anchor=north west][inner sep=0.75pt]  [color={rgb, 255:red, 128; green, 128; blue, 128 }  ,opacity=1 ]  {$\Gamma \times \Delta$};

\end{tikzpicture}
}
\newcommand{\figuremerge}{

\tikzset{every picture/.style={line width=0.75pt}} 

\begin{tikzpicture}[x=0.75pt,y=0.75pt,yscale=-1,xscale=1]

\draw  [color={rgb, 255:red, 208; green, 2; blue, 27 }  ,draw opacity=1 ][fill={rgb, 255:red, 208; green, 2; blue, 27 }  ,fill opacity=1 ] (350.67,86.58) .. controls (350.67,81.05) and (355.14,76.58) .. (360.67,76.58) .. controls (366.19,76.58) and (370.67,81.05) .. (370.67,86.58) .. controls (370.67,92.1) and (366.19,96.58) .. (360.67,96.58) .. controls (355.14,96.58) and (350.67,92.1) .. (350.67,86.58) -- cycle ;
\draw  [color={rgb, 255:red, 208; green, 2; blue, 27 }  ,draw opacity=1 ][fill={rgb, 255:red, 208; green, 2; blue, 27 }  ,fill opacity=0 ] (390.67,116.8) .. controls (390.67,111.28) and (395.14,106.8) .. (400.67,106.8) .. controls (406.19,106.8) and (410.67,111.28) .. (410.67,116.8) .. controls (410.67,122.32) and (406.19,126.8) .. (400.67,126.8) .. controls (395.14,126.8) and (390.67,122.32) .. (390.67,116.8) -- cycle ;
\draw  [color={rgb, 255:red, 208; green, 2; blue, 27 }  ,draw opacity=1 ][fill={rgb, 255:red, 208; green, 2; blue, 27 }  ,fill opacity=0 ] (350.67,116.58) .. controls (350.67,111.05) and (355.14,106.58) .. (360.67,106.58) .. controls (366.19,106.58) and (370.67,111.05) .. (370.67,116.58) .. controls (370.67,122.1) and (366.19,126.58) .. (360.67,126.58) .. controls (355.14,126.58) and (350.67,122.1) .. (350.67,116.58) -- cycle ;
\draw  [color={rgb, 255:red, 208; green, 2; blue, 27 }  ,draw opacity=1 ][fill={rgb, 255:red, 208; green, 2; blue, 27 }  ,fill opacity=1 ] (390.42,86.43) .. controls (390.42,80.9) and (394.89,76.43) .. (400.42,76.43) .. controls (405.94,76.43) and (410.42,80.9) .. (410.42,86.43) .. controls (410.42,91.95) and (405.94,96.43) .. (400.42,96.43) .. controls (394.89,96.43) and (390.42,91.95) .. (390.42,86.43) -- cycle ;
\draw  [color={rgb, 255:red, 128; green, 128; blue, 128 }  ,draw opacity=1 ] (348.67,74.65) -- (372.42,74.65) -- (372.42,128.68) -- (348.67,128.68) -- cycle ;
\draw  [color={rgb, 255:red, 208; green, 2; blue, 27 }  ,draw opacity=1 ][fill={rgb, 255:red, 208; green, 2; blue, 27 }  ,fill opacity=1 ] (430.42,86.65) .. controls (430.42,81.13) and (434.89,76.65) .. (440.42,76.65) .. controls (445.94,76.65) and (450.42,81.13) .. (450.42,86.65) .. controls (450.42,92.17) and (445.94,96.65) .. (440.42,96.65) .. controls (434.89,96.65) and (430.42,92.17) .. (430.42,86.65) -- cycle ;
\draw  [color={rgb, 255:red, 208; green, 2; blue, 27 }  ,draw opacity=1 ] (310.42,117.09) .. controls (310.42,111.57) and (314.89,107.09) .. (320.42,107.09) .. controls (325.94,107.09) and (330.42,111.57) .. (330.42,117.09) .. controls (330.42,122.61) and (325.94,127.09) .. (320.42,127.09) .. controls (314.89,127.09) and (310.42,122.61) .. (310.42,117.09) -- cycle ;
\draw  [color={rgb, 255:red, 208; green, 2; blue, 27 }  ,draw opacity=1 ] (430.42,116.65) .. controls (430.42,111.13) and (434.89,106.65) .. (440.42,106.65) .. controls (445.94,106.65) and (450.42,111.13) .. (450.42,116.65) .. controls (450.42,122.17) and (445.94,126.65) .. (440.42,126.65) .. controls (434.89,126.65) and (430.42,122.17) .. (430.42,116.65) -- cycle ;
\draw  [color={rgb, 255:red, 208; green, 2; blue, 27 }  ,draw opacity=1 ][fill={rgb, 255:red, 208; green, 2; blue, 27 }  ,fill opacity=1 ] (310.42,87.09) .. controls (310.42,81.57) and (314.89,77.09) .. (320.42,77.09) .. controls (325.94,77.09) and (330.42,81.57) .. (330.42,87.09) .. controls (330.42,92.61) and (325.94,97.09) .. (320.42,97.09) .. controls (314.89,97.09) and (310.42,92.61) .. (310.42,87.09) -- cycle ;
\draw    (440.42,86.65) .. controls (420.29,65.28) and (381.3,64.26) .. (362.36,84.62) ;
\draw [shift={(360.67,86.58)}, rotate = 309.15] [fill={rgb, 255:red, 0; green, 0; blue, 0 }  ][line width=0.08]  [draw opacity=0] (5.36,-2.57) -- (0,0) -- (5.36,2.57) -- cycle    ;
\draw  [color={rgb, 255:red, 128; green, 128; blue, 128 }  ,draw opacity=1 ] (388.67,74.65) -- (412.42,74.65) -- (412.42,128.68) -- (388.67,128.68) -- cycle ;
\draw  [color={rgb, 255:red, 128; green, 128; blue, 128 }  ,draw opacity=1 ] (428.67,74.23) -- (452.42,74.23) -- (452.42,128.25) -- (428.67,128.25) -- cycle ;
\draw  [color={rgb, 255:red, 128; green, 128; blue, 128 }  ,draw opacity=1 ] (308.67,74.67) -- (332.42,74.67) -- (332.42,128.69) -- (308.67,128.69) -- cycle ;
\draw    (440.42,116.65) .. controls (421.55,137.08) and (382.15,138.16) .. (362.7,118.77) ;
\draw [shift={(360.67,116.58)}, rotate = 49.32] [fill={rgb, 255:red, 0; green, 0; blue, 0 }  ][line width=0.08]  [draw opacity=0] (5.36,-2.57) -- (0,0) -- (5.36,2.57) -- cycle    ;
\draw    (400.67,116.8) -- (437.42,116.66) ;
\draw [shift={(440.42,116.65)}, rotate = 179.78] [fill={rgb, 255:red, 0; green, 0; blue, 0 }  ][line width=0.08]  [draw opacity=0] (5.36,-2.57) -- (0,0) -- (5.36,2.57) -- cycle    ;
\draw    (360.67,86.58) -- (397.42,86.44) ;
\draw [shift={(400.42,86.43)}, rotate = 179.78] [fill={rgb, 255:red, 0; green, 0; blue, 0 }  ][line width=0.08]  [draw opacity=0] (5.36,-2.57) -- (0,0) -- (5.36,2.57) -- cycle    ;
\draw    (360.67,116.58) -- (397.67,116.78) ;
\draw [shift={(400.67,116.8)}, rotate = 180.32] [fill={rgb, 255:red, 0; green, 0; blue, 0 }  ][line width=0.08]  [draw opacity=0] (5.36,-2.57) -- (0,0) -- (5.36,2.57) -- cycle    ;
\draw    (400.42,86.43) -- (437.42,86.63) ;
\draw [shift={(440.42,86.65)}, rotate = 180.32] [fill={rgb, 255:red, 0; green, 0; blue, 0 }  ][line width=0.08]  [draw opacity=0] (5.36,-2.57) -- (0,0) -- (5.36,2.57) -- cycle    ;
\draw  [color={rgb, 255:red, 208; green, 2; blue, 27 }  ,draw opacity=1 ][fill={rgb, 255:red, 208; green, 2; blue, 27 }  ,fill opacity=0.3 ] (310.67,39.16) .. controls (310.67,33.63) and (315.14,29.16) .. (320.67,29.16) .. controls (326.19,29.16) and (330.67,33.63) .. (330.67,39.16) .. controls (330.67,44.68) and (326.19,49.16) .. (320.67,49.16) .. controls (315.14,49.16) and (310.67,44.68) .. (310.67,39.16) -- cycle ;
\draw  [color={rgb, 255:red, 128; green, 128; blue, 128 }  ,draw opacity=1 ] (308.67,27.23) -- (332.42,27.23) -- (332.42,51.48) -- (308.67,51.48) -- cycle ;
\draw  [color={rgb, 255:red, 208; green, 2; blue, 27 }  ,draw opacity=1 ][fill={rgb, 255:red, 208; green, 2; blue, 27 }  ,fill opacity=0.3 ] (350.59,39.31) .. controls (350.59,33.79) and (355.07,29.31) .. (360.59,29.31) .. controls (366.11,29.31) and (370.59,33.79) .. (370.59,39.31) .. controls (370.59,44.83) and (366.11,49.31) .. (360.59,49.31) .. controls (355.07,49.31) and (350.59,44.83) .. (350.59,39.31) -- cycle ;
\draw  [color={rgb, 255:red, 128; green, 128; blue, 128 }  ,draw opacity=1 ] (348.59,27.38) -- (372.34,27.38) -- (372.34,51.63) -- (348.59,51.63) -- cycle ;
\draw    (320.89,39.68) -- (357.64,39.54) ;
\draw [shift={(360.64,39.53)}, rotate = 179.78] [fill={rgb, 255:red, 0; green, 0; blue, 0 }  ][line width=0.08]  [draw opacity=0] (5.36,-2.57) -- (0,0) -- (5.36,2.57) -- cycle    ;
\draw  [color={rgb, 255:red, 208; green, 2; blue, 27 }  ,draw opacity=1 ][fill={rgb, 255:red, 208; green, 2; blue, 27 }  ,fill opacity=0.3 ] (390.64,39.76) .. controls (390.64,34.23) and (395.11,29.76) .. (400.64,29.76) .. controls (406.16,29.76) and (410.64,34.23) .. (410.64,39.76) .. controls (410.64,45.28) and (406.16,49.76) .. (400.64,49.76) .. controls (395.11,49.76) and (390.64,45.28) .. (390.64,39.76) -- cycle ;
\draw  [color={rgb, 255:red, 128; green, 128; blue, 128 }  ,draw opacity=1 ] (388.64,27.83) -- (412.39,27.83) -- (412.39,52.08) -- (388.64,52.08) -- cycle ;
\draw    (360.64,39.53) -- (397.64,39.74) ;
\draw [shift={(400.64,39.76)}, rotate = 180.32] [fill={rgb, 255:red, 0; green, 0; blue, 0 }  ][line width=0.08]  [draw opacity=0] (5.36,-2.57) -- (0,0) -- (5.36,2.57) -- cycle    ;
\draw    (400.42,39.23) .. controls (380.29,17.86) and (341.3,16.84) .. (322.36,37.2) ;
\draw [shift={(320.67,39.16)}, rotate = 309.15] [fill={rgb, 255:red, 0; green, 0; blue, 0 }  ][line width=0.08]  [draw opacity=0] (5.36,-2.57) -- (0,0) -- (5.36,2.57) -- cycle    ;
\draw  [color={rgb, 255:red, 208; green, 2; blue, 27 }  ,draw opacity=1 ][fill={rgb, 255:red, 208; green, 2; blue, 27 }  ,fill opacity=0.3 ] (430.64,39.76) .. controls (430.64,34.23) and (435.11,29.76) .. (440.64,29.76) .. controls (446.16,29.76) and (450.64,34.23) .. (450.64,39.76) .. controls (450.64,45.28) and (446.16,49.76) .. (440.64,49.76) .. controls (435.11,49.76) and (430.64,45.28) .. (430.64,39.76) -- cycle ;
\draw  [color={rgb, 255:red, 128; green, 128; blue, 128 }  ,draw opacity=1 ] (428.64,27.83) -- (452.39,27.83) -- (452.39,52.08) -- (428.64,52.08) -- cycle ;
\draw  [dash pattern={on 0.84pt off 2.51pt}]  (401.5,52.93) -- (437.74,72.37) ;
\draw [shift={(439.5,73.32)}, rotate = 208.21] [color={rgb, 255:red, 0; green, 0; blue, 0 }  ][line width=0.75]    (7.65,-2.3) .. controls (4.86,-0.97) and (2.31,-0.21) .. (0,0) .. controls (2.31,0.21) and (4.86,0.98) .. (7.65,2.3)   ;
\draw [color={rgb, 255:red, 144; green, 19; blue, 254 }  ,draw opacity=1 ]   (320.42,87.09) .. controls (310.88,96.1) and (310.61,105.62) .. (318.5,114.98) ;
\draw [shift={(320.42,117.09)}, rotate = 225.67] [fill={rgb, 255:red, 144; green, 19; blue, 254 }  ,fill opacity=1 ][line width=0.08]  [draw opacity=0] (5.36,-2.57) -- (0,0) -- (5.36,2.57) -- cycle    ;
\draw [color={rgb, 255:red, 144; green, 19; blue, 254 }  ,draw opacity=1 ]   (320.42,117.09) .. controls (329.9,107.85) and (330.57,98.56) .. (322.41,89.2) ;
\draw [shift={(320.42,87.09)}, rotate = 44.62] [fill={rgb, 255:red, 144; green, 19; blue, 254 }  ,fill opacity=1 ][line width=0.08]  [draw opacity=0] (5.36,-2.57) -- (0,0) -- (5.36,2.57) -- cycle    ;

\draw (253,96) node [anchor=north west][inner sep=0.75pt]  [color={rgb, 255:red, 128; green, 128; blue, 128 }  ,opacity=1 ]  {$\Gamma _{j} \times \Delta _{j}$};
\draw (253,32) node [anchor=north west][inner sep=0.75pt]  [color={rgb, 255:red, 128; green, 128; blue, 128 }  ,opacity=1 ]  {$\Gamma _{i} \times \Delta _{i}$};
\draw (415,47) node [anchor=north west][inner sep=0.75pt]    {$\xi $};

\end{tikzpicture}
}
\newcommand{\figuremerged}{

\tikzset{every picture/.style={line width=0.75pt}} 

\begin{tikzpicture}[x=0.75pt,y=0.75pt,yscale=-1,xscale=1]
\path (600,100);

\draw  [color={rgb, 255:red, 208; green, 2; blue, 27 }  ,draw opacity=1 ][fill={rgb, 255:red, 208; green, 2; blue, 27 }  ,fill opacity=1 ] (350.67,86.58) .. controls (350.67,81.05) and (355.14,76.58) .. (360.67,76.58) .. controls (366.19,76.58) and (370.67,81.05) .. (370.67,86.58) .. controls (370.67,92.1) and (366.19,96.58) .. (360.67,96.58) .. controls (355.14,96.58) and (350.67,92.1) .. (350.67,86.58) -- cycle ;
\draw  [color={rgb, 255:red, 208; green, 2; blue, 27 }  ,draw opacity=1 ][fill={rgb, 255:red, 208; green, 2; blue, 27 }  ,fill opacity=0 ] (390.67,116.8) .. controls (390.67,111.28) and (395.14,106.8) .. (400.67,106.8) .. controls (406.19,106.8) and (410.67,111.28) .. (410.67,116.8) .. controls (410.67,122.32) and (406.19,126.8) .. (400.67,126.8) .. controls (395.14,126.8) and (390.67,122.32) .. (390.67,116.8) -- cycle ;
\draw  [color={rgb, 255:red, 208; green, 2; blue, 27 }  ,draw opacity=1 ][fill={rgb, 255:red, 208; green, 2; blue, 27 }  ,fill opacity=0 ] (350.67,116.58) .. controls (350.67,111.05) and (355.14,106.58) .. (360.67,106.58) .. controls (366.19,106.58) and (370.67,111.05) .. (370.67,116.58) .. controls (370.67,122.1) and (366.19,126.58) .. (360.67,126.58) .. controls (355.14,126.58) and (350.67,122.1) .. (350.67,116.58) -- cycle ;
\draw  [color={rgb, 255:red, 208; green, 2; blue, 27 }  ,draw opacity=1 ][fill={rgb, 255:red, 208; green, 2; blue, 27 }  ,fill opacity=1 ] (390.42,86.43) .. controls (390.42,80.9) and (394.89,76.43) .. (400.42,76.43) .. controls (405.94,76.43) and (410.42,80.9) .. (410.42,86.43) .. controls (410.42,91.95) and (405.94,96.43) .. (400.42,96.43) .. controls (394.89,96.43) and (390.42,91.95) .. (390.42,86.43) -- cycle ;
\draw  [color={rgb, 255:red, 208; green, 2; blue, 27 }  ,draw opacity=1 ][fill={rgb, 255:red, 208; green, 2; blue, 27 }  ,fill opacity=1 ] (430.42,86.65) .. controls (430.42,81.13) and (434.89,76.65) .. (440.42,76.65) .. controls (445.94,76.65) and (450.42,81.13) .. (450.42,86.65) .. controls (450.42,92.17) and (445.94,96.65) .. (440.42,96.65) .. controls (434.89,96.65) and (430.42,92.17) .. (430.42,86.65) -- cycle ;
\draw  [color={rgb, 255:red, 208; green, 2; blue, 27 }  ,draw opacity=1 ] (310.42,117.09) .. controls (310.42,111.57) and (314.89,107.09) .. (320.42,107.09) .. controls (325.94,107.09) and (330.42,111.57) .. (330.42,117.09) .. controls (330.42,122.61) and (325.94,127.09) .. (320.42,127.09) .. controls (314.89,127.09) and (310.42,122.61) .. (310.42,117.09) -- cycle ;
\draw  [color={rgb, 255:red, 208; green, 2; blue, 27 }  ,draw opacity=1 ] (430.42,116.65) .. controls (430.42,111.13) and (434.89,106.65) .. (440.42,106.65) .. controls (445.94,106.65) and (450.42,111.13) .. (450.42,116.65) .. controls (450.42,122.17) and (445.94,126.65) .. (440.42,126.65) .. controls (434.89,126.65) and (430.42,122.17) .. (430.42,116.65) -- cycle ;
\draw  [color={rgb, 255:red, 208; green, 2; blue, 27 }  ,draw opacity=1 ][fill={rgb, 255:red, 208; green, 2; blue, 27 }  ,fill opacity=1 ] (310.42,87.09) .. controls (310.42,81.57) and (314.89,77.09) .. (320.42,77.09) .. controls (325.94,77.09) and (330.42,81.57) .. (330.42,87.09) .. controls (330.42,92.61) and (325.94,97.09) .. (320.42,97.09) .. controls (314.89,97.09) and (310.42,92.61) .. (310.42,87.09) -- cycle ;
\draw    (440.42,86.65) .. controls (420.29,65.28) and (381.3,64.26) .. (362.36,84.62) ;
\draw [shift={(360.67,86.58)}, rotate = 309.15] [fill={rgb, 255:red, 0; green, 0; blue, 0 }  ][line width=0.08]  [draw opacity=0] (5.36,-2.57) -- (0,0) -- (5.36,2.57) -- cycle    ;
\draw    (440.42,116.65) .. controls (421.55,137.08) and (382.15,138.16) .. (362.7,118.77) ;
\draw [shift={(360.67,116.58)}, rotate = 49.32] [fill={rgb, 255:red, 0; green, 0; blue, 0 }  ][line width=0.08]  [draw opacity=0] (5.36,-2.57) -- (0,0) -- (5.36,2.57) -- cycle    ;
\draw    (400.67,116.8) -- (437.42,116.66) ;
\draw [shift={(440.42,116.65)}, rotate = 179.78] [fill={rgb, 255:red, 0; green, 0; blue, 0 }  ][line width=0.08]  [draw opacity=0] (5.36,-2.57) -- (0,0) -- (5.36,2.57) -- cycle    ;
\draw    (360.67,86.58) -- (397.42,86.44) ;
\draw [shift={(400.42,86.43)}, rotate = 179.78] [fill={rgb, 255:red, 0; green, 0; blue, 0 }  ][line width=0.08]  [draw opacity=0] (5.36,-2.57) -- (0,0) -- (5.36,2.57) -- cycle    ;
\draw    (360.67,116.58) -- (397.67,116.78) ;
\draw [shift={(400.67,116.8)}, rotate = 180.32] [fill={rgb, 255:red, 0; green, 0; blue, 0 }  ][line width=0.08]  [draw opacity=0] (5.36,-2.57) -- (0,0) -- (5.36,2.57) -- cycle    ;
\draw    (400.42,86.43) -- (437.42,86.63) ;
\draw [shift={(440.42,86.65)}, rotate = 180.32] [fill={rgb, 255:red, 0; green, 0; blue, 0 }  ][line width=0.08]  [draw opacity=0] (5.36,-2.57) -- (0,0) -- (5.36,2.57) -- cycle    ;
\draw  [color={rgb, 255:red, 208; green, 2; blue, 27 }  ,draw opacity=1 ][fill={rgb, 255:red, 208; green, 2; blue, 27 }  ,fill opacity=0.3 ] (310.67,39.16) .. controls (310.67,33.63) and (315.14,29.16) .. (320.67,29.16) .. controls (326.19,29.16) and (330.67,33.63) .. (330.67,39.16) .. controls (330.67,44.68) and (326.19,49.16) .. (320.67,49.16) .. controls (315.14,49.16) and (310.67,44.68) .. (310.67,39.16) -- cycle ;
\draw  [color={rgb, 255:red, 208; green, 2; blue, 27 }  ,draw opacity=1 ][fill={rgb, 255:red, 208; green, 2; blue, 27 }  ,fill opacity=0.3 ] (350.59,39.31) .. controls (350.59,33.79) and (355.07,29.31) .. (360.59,29.31) .. controls (366.11,29.31) and (370.59,33.79) .. (370.59,39.31) .. controls (370.59,44.83) and (366.11,49.31) .. (360.59,49.31) .. controls (355.07,49.31) and (350.59,44.83) .. (350.59,39.31) -- cycle ;
\draw    (320.89,39.68) -- (357.64,39.54) ;
\draw [shift={(360.64,39.53)}, rotate = 179.78] [fill={rgb, 255:red, 0; green, 0; blue, 0 }  ][line width=0.08]  [draw opacity=0] (5.36,-2.57) -- (0,0) -- (5.36,2.57) -- cycle    ;
\draw  [color={rgb, 255:red, 208; green, 2; blue, 27 }  ,draw opacity=1 ][fill={rgb, 255:red, 208; green, 2; blue, 27 }  ,fill opacity=0.3 ] (390.64,39.76) .. controls (390.64,34.23) and (395.11,29.76) .. (400.64,29.76) .. controls (406.16,29.76) and (410.64,34.23) .. (410.64,39.76) .. controls (410.64,45.28) and (406.16,49.76) .. (400.64,49.76) .. controls (395.11,49.76) and (390.64,45.28) .. (390.64,39.76) -- cycle ;
\draw    (360.64,39.53) -- (397.64,39.74) ;
\draw [shift={(400.64,39.76)}, rotate = 180.32] [fill={rgb, 255:red, 0; green, 0; blue, 0 }  ][line width=0.08]  [draw opacity=0] (5.36,-2.57) -- (0,0) -- (5.36,2.57) -- cycle    ;
\draw    (400.42,39.23) .. controls (380.29,17.86) and (341.3,16.84) .. (322.36,37.2) ;
\draw [shift={(320.67,39.16)}, rotate = 309.15] [fill={rgb, 255:red, 0; green, 0; blue, 0 }  ][line width=0.08]  [draw opacity=0] (5.36,-2.57) -- (0,0) -- (5.36,2.57) -- cycle    ;
\draw  [color={rgb, 255:red, 208; green, 2; blue, 27 }  ,draw opacity=1 ][fill={rgb, 255:red, 208; green, 2; blue, 27 }  ,fill opacity=0.3 ] (430.64,39.76) .. controls (430.64,34.23) and (435.11,29.76) .. (440.64,29.76) .. controls (446.16,29.76) and (450.64,34.23) .. (450.64,39.76) .. controls (450.64,45.28) and (446.16,49.76) .. (440.64,49.76) .. controls (435.11,49.76) and (430.64,45.28) .. (430.64,39.76) -- cycle ;
\draw  [color={rgb, 255:red, 128; green, 128; blue, 128 }  ,draw opacity=1 ] (308.71,27.28) -- (332.46,27.28) -- (332.46,51.53) -- (372.54,74.55) -- (372.54,128.58) -- (348.79,128.58) -- (348.79,74.55) -- (308.71,51.53) -- cycle ;
\draw  [color={rgb, 255:red, 128; green, 128; blue, 128 }  ,draw opacity=1 ] (348.71,27.51) -- (372.46,27.51) -- (372.46,51.76) -- (412.54,74.78) -- (412.54,128.8) -- (388.79,128.8) -- (388.79,74.78) -- (348.71,51.76) -- cycle ;
\draw  [color={rgb, 255:red, 128; green, 128; blue, 128 }  ,draw opacity=1 ] (388.46,27.36) -- (412.21,27.36) -- (412.21,51.61) -- (452.29,74.63) -- (452.29,128.65) -- (428.54,128.65) -- (428.54,74.63) -- (388.46,51.61) -- cycle ;
\draw [color={rgb, 255:red, 128; green, 128; blue, 128 }  ,draw opacity=1 ]   (428.76,52) -- (428.76,27.76) ;
\draw [color={rgb, 255:red, 128; green, 128; blue, 128 }  ,draw opacity=1 ]   (452.51,27.76) -- (428.76,27.76) ;
\draw [color={rgb, 255:red, 128; green, 128; blue, 128 }  ,draw opacity=1 ]   (452.51,52) -- (452.51,27.76) ;
\draw [color={rgb, 255:red, 128; green, 128; blue, 128 }  ,draw opacity=1 ]   (308.67,128.69) -- (308.67,74.67) ;
\draw [color={rgb, 255:red, 128; green, 128; blue, 128 }  ,draw opacity=1 ]   (332.42,128.69) -- (332.42,74.67) ;
\draw [color={rgb, 255:red, 128; green, 128; blue, 128 }  ,draw opacity=1 ]   (332.42,128.69) -- (308.67,128.69) ;
\draw [color={rgb, 255:red, 128; green, 128; blue, 128 }  ,draw opacity=1 ]   (452.33,64.73) -- (428.76,52) ;
\draw [color={rgb, 255:red, 128; green, 128; blue, 128 }  ,draw opacity=1 ]   (332.42,74.67) -- (308.33,61.4) ;
\draw [color={rgb, 255:red, 144; green, 19; blue, 254 }  ,draw opacity=1 ]   (320.42,87.09) .. controls (310.88,96.1) and (310.61,105.62) .. (318.5,114.98) ;
\draw [shift={(320.42,117.09)}, rotate = 225.67] [fill={rgb, 255:red, 144; green, 19; blue, 254 }  ,fill opacity=1 ][line width=0.08]  [draw opacity=0] (5.36,-2.57) -- (0,0) -- (5.36,2.57) -- cycle    ;
\draw [color={rgb, 255:red, 144; green, 19; blue, 254 }  ,draw opacity=1 ]   (320.42,117.09) .. controls (329.9,107.85) and (330.57,98.56) .. (322.41,89.2) ;
\draw [shift={(320.42,87.09)}, rotate = 44.62] [fill={rgb, 255:red, 144; green, 19; blue, 254 }  ,fill opacity=1 ][line width=0.08]  [draw opacity=0] (5.36,-2.57) -- (0,0) -- (5.36,2.57) -- cycle    ;

\draw (210, 70) node [anchor=north west][inner sep=0.75pt]  [color={rgb, 255:red, 128; green, 128; blue, 128 }  ,opacity=1 ]  {$( \Gamma _{i} \sqcup \Gamma _{j}) \times \Delta _{i}$};

\end{tikzpicture}

}
\newcommand{\figuremix}{

\tikzset{every picture/.style={line width=0.75pt}} 

\begin{tikzpicture}[x=0.75pt,y=0.75pt,yscale=-1,xscale=1]
\path (450,120); 

\draw  [color={rgb, 255:red, 208; green, 2; blue, 27 }  ,draw opacity=1 ][fill={rgb, 255:red, 208; green, 2; blue, 27 }  ,fill opacity=0.3 ] (310.67,39.16) .. controls (310.67,33.63) and (315.14,29.16) .. (320.67,29.16) .. controls (326.19,29.16) and (330.67,33.63) .. (330.67,39.16) .. controls (330.67,44.68) and (326.19,49.16) .. (320.67,49.16) .. controls (315.14,49.16) and (310.67,44.68) .. (310.67,39.16) -- cycle ;
\draw  [color={rgb, 255:red, 128; green, 128; blue, 128 }  ,draw opacity=1 ] (308.67,27.23) -- (332.42,27.23) -- (332.42,51.48) -- (308.67,51.48) -- cycle ;
\draw  [color={rgb, 255:red, 208; green, 2; blue, 27 }  ,draw opacity=1 ][fill={rgb, 255:red, 208; green, 2; blue, 27 }  ,fill opacity=0.3 ] (350.59,39.31) .. controls (350.59,33.79) and (355.07,29.31) .. (360.59,29.31) .. controls (366.11,29.31) and (370.59,33.79) .. (370.59,39.31) .. controls (370.59,44.83) and (366.11,49.31) .. (360.59,49.31) .. controls (355.07,49.31) and (350.59,44.83) .. (350.59,39.31) -- cycle ;
\draw  [color={rgb, 255:red, 128; green, 128; blue, 128 }  ,draw opacity=1 ] (348.59,27.38) -- (372.34,27.38) -- (372.34,51.63) -- (348.59,51.63) -- cycle ;
\draw  [color={rgb, 255:red, 208; green, 2; blue, 27 }  ,draw opacity=1 ][fill={rgb, 255:red, 208; green, 2; blue, 27 }  ,fill opacity=0.3 ] (390.64,39.76) .. controls (390.64,34.23) and (395.11,29.76) .. (400.64,29.76) .. controls (406.16,29.76) and (410.64,34.23) .. (410.64,39.76) .. controls (410.64,45.28) and (406.16,49.76) .. (400.64,49.76) .. controls (395.11,49.76) and (390.64,45.28) .. (390.64,39.76) -- cycle ;
\draw  [color={rgb, 255:red, 128; green, 128; blue, 128 }  ,draw opacity=1 ] (388.64,27.83) -- (412.39,27.83) -- (412.39,52.08) -- (388.64,52.08) -- cycle ;
\draw  [color={rgb, 255:red, 208; green, 2; blue, 27 }  ,draw opacity=1 ][fill={rgb, 255:red, 208; green, 2; blue, 27 }  ,fill opacity=0.3 ] (430.64,39.76) .. controls (430.64,34.23) and (435.11,29.76) .. (440.64,29.76) .. controls (446.16,29.76) and (450.64,34.23) .. (450.64,39.76) .. controls (450.64,45.28) and (446.16,49.76) .. (440.64,49.76) .. controls (435.11,49.76) and (430.64,45.28) .. (430.64,39.76) -- cycle ;
\draw  [color={rgb, 255:red, 128; green, 128; blue, 128 }  ,draw opacity=1 ] (428.64,27.83) -- (452.39,27.83) -- (452.39,52.08) -- (428.64,52.08) -- cycle ;
\draw  [color={rgb, 255:red, 208; green, 2; blue, 27 }  ,draw opacity=1 ][fill={rgb, 255:red, 208; green, 2; blue, 27 }  ,fill opacity=1 ] (310.67,86.16) .. controls (310.67,80.63) and (315.14,76.16) .. (320.67,76.16) .. controls (326.19,76.16) and (330.67,80.63) .. (330.67,86.16) .. controls (330.67,91.68) and (326.19,96.16) .. (320.67,96.16) .. controls (315.14,96.16) and (310.67,91.68) .. (310.67,86.16) -- cycle ;
\draw  [color={rgb, 255:red, 128; green, 128; blue, 128 }  ,draw opacity=1 ] (308.67,74.23) -- (332.42,74.23) -- (332.42,98.48) -- (308.67,98.48) -- cycle ;
\draw  [color={rgb, 255:red, 208; green, 2; blue, 27 }  ,draw opacity=1 ][fill={rgb, 255:red, 208; green, 2; blue, 27 }  ,fill opacity=1 ] (350.59,86.31) .. controls (350.59,80.79) and (355.07,76.31) .. (360.59,76.31) .. controls (366.11,76.31) and (370.59,80.79) .. (370.59,86.31) .. controls (370.59,91.83) and (366.11,96.31) .. (360.59,96.31) .. controls (355.07,96.31) and (350.59,91.83) .. (350.59,86.31) -- cycle ;
\draw  [color={rgb, 255:red, 128; green, 128; blue, 128 }  ,draw opacity=1 ] (348.59,74.38) -- (372.34,74.38) -- (372.34,98.63) -- (348.59,98.63) -- cycle ;
\draw    (320.89,86.68) -- (357.64,86.54) ;
\draw [shift={(360.64,86.53)}, rotate = 179.78] [fill={rgb, 255:red, 0; green, 0; blue, 0 }  ][line width=0.08]  [draw opacity=0] (5.36,-2.57) -- (0,0) -- (5.36,2.57) -- cycle    ;
\draw  [color={rgb, 255:red, 208; green, 2; blue, 27 }  ,draw opacity=1 ][fill={rgb, 255:red, 208; green, 2; blue, 27 }  ,fill opacity=1 ] (390.64,86.76) .. controls (390.64,81.23) and (395.11,76.76) .. (400.64,76.76) .. controls (406.16,76.76) and (410.64,81.23) .. (410.64,86.76) .. controls (410.64,92.28) and (406.16,96.76) .. (400.64,96.76) .. controls (395.11,96.76) and (390.64,92.28) .. (390.64,86.76) -- cycle ;
\draw  [color={rgb, 255:red, 128; green, 128; blue, 128 }  ,draw opacity=1 ] (388.64,74.83) -- (412.39,74.83) -- (412.39,99.08) -- (388.64,99.08) -- cycle ;
\draw    (360.64,86.53) -- (397.64,86.74) ;
\draw [shift={(400.64,86.76)}, rotate = 180.32] [fill={rgb, 255:red, 0; green, 0; blue, 0 }  ][line width=0.08]  [draw opacity=0] (5.36,-2.57) -- (0,0) -- (5.36,2.57) -- cycle    ;
\draw    (400.42,86.23) .. controls (380.29,64.86) and (341.3,63.84) .. (322.36,84.2) ;
\draw [shift={(320.67,86.16)}, rotate = 309.15] [fill={rgb, 255:red, 0; green, 0; blue, 0 }  ][line width=0.08]  [draw opacity=0] (5.36,-2.57) -- (0,0) -- (5.36,2.57) -- cycle    ;
\draw  [color={rgb, 255:red, 208; green, 2; blue, 27 }  ,draw opacity=1 ][fill={rgb, 255:red, 208; green, 2; blue, 27 }  ,fill opacity=1 ] (430.64,86.76) .. controls (430.64,81.23) and (435.11,76.76) .. (440.64,76.76) .. controls (446.16,76.76) and (450.64,81.23) .. (450.64,86.76) .. controls (450.64,92.28) and (446.16,96.76) .. (440.64,96.76) .. controls (435.11,96.76) and (430.64,92.28) .. (430.64,86.76) -- cycle ;
\draw  [color={rgb, 255:red, 128; green, 128; blue, 128 }  ,draw opacity=1 ] (428.64,74.83) -- (452.39,74.83) -- (452.39,99.08) -- (428.64,99.08) -- cycle ;
\draw    (400.51,39.95) .. controls (389.38,29.09) and (373.69,28.15) .. (362.54,37.58) ;
\draw [shift={(360.46,39.51)}, rotate = 314.39] [fill={rgb, 255:red, 0; green, 0; blue, 0 }  ][line width=0.08]  [draw opacity=0] (5.36,-2.57) -- (0,0) -- (5.36,2.57) -- cycle    ;
\draw    (360.46,39.51) .. controls (371.62,50.69) and (386.8,51.39) .. (398.35,41.9) ;
\draw [shift={(400.51,39.95)}, rotate = 135.68] [fill={rgb, 255:red, 0; green, 0; blue, 0 }  ][line width=0.08]  [draw opacity=0] (5.36,-2.57) -- (0,0) -- (5.36,2.57) -- cycle    ;

\draw (243.67,30.07) node [anchor=north west][inner sep=0.75pt]  [color={rgb, 255:red, 128; green, 128; blue, 128 }  ,opacity=1 ]  {$\Gamma _{i} \times \Delta _{i}$};
\draw (243.67,77.07) node [anchor=north west][inner sep=0.75pt]  [color={rgb, 255:red, 128; green, 128; blue, 128 }  ,opacity=1 ]  {$\Gamma _{j} \times \Delta _{j}$};
\draw (370.75,56) node [anchor=north west][inner sep=0.75pt]    {$\tau $};

\end{tikzpicture}

}
\newcommand{\figuremixed}{

\tikzset{every picture/.style={line width=0.75pt}} 

\begin{tikzpicture}[x=0.75pt,y=0.75pt,yscale=-1,xscale=1]
\path (450,120);

\draw  [color={rgb, 255:red, 208; green, 2; blue, 27 }  ,draw opacity=1 ][fill={rgb, 255:red, 208; green, 2; blue, 27 }  ,fill opacity=0.3 ] (310.67,39.16) .. controls (310.67,33.63) and (315.14,29.16) .. (320.67,29.16) .. controls (326.19,29.16) and (330.67,33.63) .. (330.67,39.16) .. controls (330.67,44.68) and (326.19,49.16) .. (320.67,49.16) .. controls (315.14,49.16) and (310.67,44.68) .. (310.67,39.16) -- cycle ;
\draw  [color={rgb, 255:red, 128; green, 128; blue, 128 }  ,draw opacity=1 ] (308.67,27.23) -- (332.42,27.23) -- (332.42,51.48) -- (308.67,51.48) -- cycle ;
\draw  [color={rgb, 255:red, 208; green, 2; blue, 27 }  ,draw opacity=1 ][fill={rgb, 255:red, 208; green, 2; blue, 27 }  ,fill opacity=0.3 ] (350.59,39.31) .. controls (350.59,33.79) and (355.07,29.31) .. (360.59,29.31) .. controls (366.11,29.31) and (370.59,33.79) .. (370.59,39.31) .. controls (370.59,44.83) and (366.11,49.31) .. (360.59,49.31) .. controls (355.07,49.31) and (350.59,44.83) .. (350.59,39.31) -- cycle ;
\draw  [color={rgb, 255:red, 128; green, 128; blue, 128 }  ,draw opacity=1 ] (348.59,27.38) -- (372.34,27.38) -- (372.34,51.63) -- (348.59,51.63) -- cycle ;
\draw  [color={rgb, 255:red, 208; green, 2; blue, 27 }  ,draw opacity=1 ][fill={rgb, 255:red, 208; green, 2; blue, 27 }  ,fill opacity=0.3 ] (390.64,39.76) .. controls (390.64,34.23) and (395.11,29.76) .. (400.64,29.76) .. controls (406.16,29.76) and (410.64,34.23) .. (410.64,39.76) .. controls (410.64,45.28) and (406.16,49.76) .. (400.64,49.76) .. controls (395.11,49.76) and (390.64,45.28) .. (390.64,39.76) -- cycle ;
\draw  [color={rgb, 255:red, 128; green, 128; blue, 128 }  ,draw opacity=1 ] (388.64,27.83) -- (412.39,27.83) -- (412.39,52.08) -- (388.64,52.08) -- cycle ;
\draw  [color={rgb, 255:red, 208; green, 2; blue, 27 }  ,draw opacity=1 ][fill={rgb, 255:red, 208; green, 2; blue, 27 }  ,fill opacity=0.3 ] (430.64,39.76) .. controls (430.64,34.23) and (435.11,29.76) .. (440.64,29.76) .. controls (446.16,29.76) and (450.64,34.23) .. (450.64,39.76) .. controls (450.64,45.28) and (446.16,49.76) .. (440.64,49.76) .. controls (435.11,49.76) and (430.64,45.28) .. (430.64,39.76) -- cycle ;
\draw  [color={rgb, 255:red, 128; green, 128; blue, 128 }  ,draw opacity=1 ] (428.64,27.83) -- (452.39,27.83) -- (452.39,52.08) -- (428.64,52.08) -- cycle ;
\draw  [color={rgb, 255:red, 208; green, 2; blue, 27 }  ,draw opacity=1 ][fill={rgb, 255:red, 208; green, 2; blue, 27 }  ,fill opacity=1 ] (310.67,86.16) .. controls (310.67,80.63) and (315.14,76.16) .. (320.67,76.16) .. controls (326.19,76.16) and (330.67,80.63) .. (330.67,86.16) .. controls (330.67,91.68) and (326.19,96.16) .. (320.67,96.16) .. controls (315.14,96.16) and (310.67,91.68) .. (310.67,86.16) -- cycle ;
\draw  [color={rgb, 255:red, 128; green, 128; blue, 128 }  ,draw opacity=1 ] (308.67,74.23) -- (332.42,74.23) -- (332.42,98.48) -- (308.67,98.48) -- cycle ;
\draw  [color={rgb, 255:red, 208; green, 2; blue, 27 }  ,draw opacity=1 ][fill={rgb, 255:red, 208; green, 2; blue, 27 }  ,fill opacity=1 ] (350.59,86.31) .. controls (350.59,80.79) and (355.07,76.31) .. (360.59,76.31) .. controls (366.11,76.31) and (370.59,80.79) .. (370.59,86.31) .. controls (370.59,91.83) and (366.11,96.31) .. (360.59,96.31) .. controls (355.07,96.31) and (350.59,91.83) .. (350.59,86.31) -- cycle ;
\draw  [color={rgb, 255:red, 128; green, 128; blue, 128 }  ,draw opacity=1 ] (348.59,74.38) -- (372.34,74.38) -- (372.34,98.63) -- (348.59,98.63) -- cycle ;
\draw [color={rgb, 255:red, 74; green, 144; blue, 226 }  ,draw opacity=1 ]   (323.89,86.67) -- (360.64,86.53) ;
\draw [shift={(320.89,86.68)}, rotate = 359.78] [fill={rgb, 255:red, 74; green, 144; blue, 226 }  ,fill opacity=1 ][line width=0.08]  [draw opacity=0] (5.36,-2.57) -- (0,0) -- (5.36,2.57) -- cycle    ;
\draw  [color={rgb, 255:red, 208; green, 2; blue, 27 }  ,draw opacity=1 ][fill={rgb, 255:red, 208; green, 2; blue, 27 }  ,fill opacity=1 ] (390.64,86.76) .. controls (390.64,81.23) and (395.11,76.76) .. (400.64,76.76) .. controls (406.16,76.76) and (410.64,81.23) .. (410.64,86.76) .. controls (410.64,92.28) and (406.16,96.76) .. (400.64,96.76) .. controls (395.11,96.76) and (390.64,92.28) .. (390.64,86.76) -- cycle ;
\draw  [color={rgb, 255:red, 128; green, 128; blue, 128 }  ,draw opacity=1 ] (388.64,74.83) -- (412.39,74.83) -- (412.39,99.08) -- (388.64,99.08) -- cycle ;
\draw [color={rgb, 255:red, 74; green, 144; blue, 226 }  ,draw opacity=1 ]   (363.64,86.55) -- (400.64,86.76) ;
\draw [shift={(360.64,86.53)}, rotate = 0.32] [fill={rgb, 255:red, 74; green, 144; blue, 226 }  ,fill opacity=1 ][line width=0.08]  [draw opacity=0] (5.36,-2.57) -- (0,0) -- (5.36,2.57) -- cycle    ;
\draw [color={rgb, 255:red, 74; green, 144; blue, 226 }  ,draw opacity=1 ]   (398.08,88.62) .. controls (378.47,106.92) and (338.27,105.61) .. (320.67,86.16) ;
\draw [shift={(400.42,86.23)}, rotate = 131.79] [fill={rgb, 255:red, 74; green, 144; blue, 226 }  ,fill opacity=1 ][line width=0.08]  [draw opacity=0] (5.36,-2.57) -- (0,0) -- (5.36,2.57) -- cycle    ;
\draw  [color={rgb, 255:red, 208; green, 2; blue, 27 }  ,draw opacity=1 ][fill={rgb, 255:red, 208; green, 2; blue, 27 }  ,fill opacity=1 ] (430.64,86.76) .. controls (430.64,81.23) and (435.11,76.76) .. (440.64,76.76) .. controls (446.16,76.76) and (450.64,81.23) .. (450.64,86.76) .. controls (450.64,92.28) and (446.16,96.76) .. (440.64,96.76) .. controls (435.11,96.76) and (430.64,92.28) .. (430.64,86.76) -- cycle ;
\draw  [color={rgb, 255:red, 128; green, 128; blue, 128 }  ,draw opacity=1 ] (428.64,74.83) -- (452.39,74.83) -- (452.39,99.08) -- (428.64,99.08) -- cycle ;
\draw [color={rgb, 255:red, 65; green, 117; blue, 5 }  ,draw opacity=1 ]   (400.51,39.95) .. controls (389.38,29.09) and (373.69,28.15) .. (362.54,37.58) ;
\draw [shift={(360.46,39.51)}, rotate = 314.39] [fill={rgb, 255:red, 65; green, 117; blue, 5 }  ,fill opacity=1 ][line width=0.08]  [draw opacity=0] (5.36,-2.57) -- (0,0) -- (5.36,2.57) -- cycle    ;
\draw [color={rgb, 255:red, 65; green, 117; blue, 5 }  ,draw opacity=1 ]   (360.46,39.51) .. controls (371.62,50.69) and (386.8,51.39) .. (398.35,41.9) ;
\draw [shift={(400.51,39.95)}, rotate = 135.68] [fill={rgb, 255:red, 65; green, 117; blue, 5 }  ,fill opacity=1 ][line width=0.08]  [draw opacity=0] (5.36,-2.57) -- (0,0) -- (5.36,2.57) -- cycle    ;

\draw (243.67,30.07) node [anchor=north west][inner sep=0.75pt]  [color={rgb, 255:red, 128; green, 128; blue, 128 }  ,opacity=1 ]  {$\Gamma _{i} \times \Delta _{i}$};
\draw (243.67,77.07) node [anchor=north west][inner sep=0.75pt]  [color={rgb, 255:red, 128; green, 128; blue, 128 }  ,opacity=1 ]  {$\Gamma _{j} \times \Delta _{j}$};
\draw (371.25,10) node [anchor=north west][inner sep=0.75pt]  [color={rgb, 255:red, 65; green, 117; blue, 5 }  ,opacity=1 ]  {$\tau ^{3}$};
\draw (350.59,102.03) node [anchor=north west][inner sep=0.75pt]  [color={rgb, 255:red, 74; green, 144; blue, 226 }  ,opacity=1 ]  {$\tau ^{2}$};

\end{tikzpicture}

}
\newcommand{\figurerhoone}{

\tikzset{every picture/.style={line width=0.75pt}} 

\begin{tikzpicture}[x=0.6pt,y=0.6pt,yscale=-1,xscale=1]

\draw  [color={rgb, 255:red, 208; green, 2; blue, 27 }  ,draw opacity=1 ][fill={rgb, 255:red, 208; green, 2; blue, 27 }  ,fill opacity=1 ] (200,50) .. controls (200,44.48) and (204.48,40) .. (210,40) .. controls (215.52,40) and (220,44.48) .. (220,50) .. controls (220,55.52) and (215.52,60) .. (210,60) .. controls (204.48,60) and (200,55.52) .. (200,50) -- cycle ;
\draw  [color={rgb, 255:red, 208; green, 2; blue, 27 }  ,draw opacity=1 ][fill={rgb, 255:red, 208; green, 2; blue, 27 }  ,fill opacity=1 ] (230,50) .. controls (230,44.48) and (234.48,40) .. (240,40) .. controls (245.52,40) and (250,44.48) .. (250,50) .. controls (250,55.52) and (245.52,60) .. (240,60) .. controls (234.48,60) and (230,55.52) .. (230,50) -- cycle ;
\draw  [color={rgb, 255:red, 208; green, 2; blue, 27 }  ,draw opacity=1 ][fill={rgb, 255:red, 208; green, 2; blue, 27 }  ,fill opacity=1 ] (260,50) .. controls (260,44.48) and (264.48,40) .. (270,40) .. controls (275.52,40) and (280,44.48) .. (280,50) .. controls (280,55.52) and (275.52,60) .. (270,60) .. controls (264.48,60) and (260,55.52) .. (260,50) -- cycle ;
\draw  [color={rgb, 255:red, 208; green, 2; blue, 27 }  ,draw opacity=1 ][fill={rgb, 255:red, 208; green, 2; blue, 27 }  ,fill opacity=1 ] (290,50) .. controls (290,44.48) and (294.48,40) .. (300,40) .. controls (305.52,40) and (310,44.48) .. (310,50) .. controls (310,55.52) and (305.52,60) .. (300,60) .. controls (294.48,60) and (290,55.52) .. (290,50) -- cycle ;
\draw  [color={rgb, 255:red, 248; green, 231; blue, 28 }  ,draw opacity=1 ][fill={rgb, 255:red, 248; green, 231; blue, 28 }  ,fill opacity=1 ] (320,50) .. controls (320,44.48) and (324.48,40) .. (330,40) .. controls (335.52,40) and (340,44.48) .. (340,50) .. controls (340,55.52) and (335.52,60) .. (330,60) .. controls (324.48,60) and (320,55.52) .. (320,50) -- cycle ;
\draw  [color={rgb, 255:red, 208; green, 2; blue, 27 }  ,draw opacity=1 ][fill={rgb, 255:red, 208; green, 2; blue, 27 }  ,fill opacity=1 ] (380,50) .. controls (380,44.48) and (384.48,40) .. (390,40) .. controls (395.52,40) and (400,44.48) .. (400,50) .. controls (400,55.52) and (395.52,60) .. (390,60) .. controls (384.48,60) and (380,55.52) .. (380,50) -- cycle ;
\draw  [color={rgb, 255:red, 208; green, 2; blue, 27 }  ,draw opacity=1 ][fill={rgb, 255:red, 208; green, 2; blue, 27 }  ,fill opacity=1 ] (410,50) .. controls (410,44.48) and (414.48,40) .. (420,40) .. controls (425.52,40) and (430,44.48) .. (430,50) .. controls (430,55.52) and (425.52,60) .. (420,60) .. controls (414.48,60) and (410,55.52) .. (410,50) -- cycle ;
\draw  [color={rgb, 255:red, 208; green, 2; blue, 27 }  ,draw opacity=1 ][fill={rgb, 255:red, 208; green, 2; blue, 27 }  ,fill opacity=1 ] (440,50) .. controls (440,44.48) and (444.48,40) .. (450,40) .. controls (455.52,40) and (460,44.48) .. (460,50) .. controls (460,55.52) and (455.52,60) .. (450,60) .. controls (444.48,60) and (440,55.52) .. (440,50) -- cycle ;
\draw  [color={rgb, 255:red, 208; green, 2; blue, 27 }  ,draw opacity=1 ][fill={rgb, 255:red, 208; green, 2; blue, 27 }  ,fill opacity=1 ] (470,50) .. controls (470,44.48) and (474.48,40) .. (480,40) .. controls (485.52,40) and (490,44.48) .. (490,50) .. controls (490,55.52) and (485.52,60) .. (480,60) .. controls (474.48,60) and (470,55.52) .. (470,50) -- cycle ;
\draw    (210,50) .. controls (218.84,59.75) and (228.53,60.79) .. (237.89,52.12) ;
\draw [shift={(240,50)}, rotate = 132.53] [fill={rgb, 255:red, 0; green, 0; blue, 0 }  ][line width=0.08]  [draw opacity=0] (5.36,-2.57) -- (0,0) -- (5.36,2.57) -- cycle    ;
\draw    (240,50) .. controls (248.84,59.75) and (258.53,60.79) .. (267.89,52.12) ;
\draw [shift={(270,50)}, rotate = 132.53] [fill={rgb, 255:red, 0; green, 0; blue, 0 }  ][line width=0.08]  [draw opacity=0] (5.36,-2.57) -- (0,0) -- (5.36,2.57) -- cycle    ;
\draw    (270,50) .. controls (278.83,59.75) and (288.53,60.79) .. (297.89,52.12) ;
\draw [shift={(300,50)}, rotate = 132.53] [fill={rgb, 255:red, 0; green, 0; blue, 0 }  ][line width=0.08]  [draw opacity=0] (5.36,-2.57) -- (0,0) -- (5.36,2.57) -- cycle    ;
\draw    (420,50) .. controls (428.84,59.75) and (438.53,60.79) .. (447.89,52.12) ;
\draw [shift={(450,50)}, rotate = 132.53] [fill={rgb, 255:red, 0; green, 0; blue, 0 }  ][line width=0.08]  [draw opacity=0] (5.36,-2.57) -- (0,0) -- (5.36,2.57) -- cycle    ;
\draw    (450,50) .. controls (458.84,59.75) and (468.53,60.79) .. (477.89,52.12) ;
\draw [shift={(480,50)}, rotate = 132.53] [fill={rgb, 255:red, 0; green, 0; blue, 0 }  ][line width=0.08]  [draw opacity=0] (5.36,-2.57) -- (0,0) -- (5.36,2.57) -- cycle    ;
\draw    (480,50) .. controls (488.84,59.75) and (498.53,60.79) .. (507.89,52.12) ;
\draw [shift={(510,50)}, rotate = 132.53] [fill={rgb, 255:red, 0; green, 0; blue, 0 }  ][line width=0.08]  [draw opacity=0] (5.36,-2.57) -- (0,0) -- (5.36,2.57) -- cycle    ;
\draw    (180,50) .. controls (188.84,59.75) and (198.53,60.79) .. (207.89,52.12) ;
\draw [shift={(210,50)}, rotate = 132.53] [fill={rgb, 255:red, 0; green, 0; blue, 0 }  ][line width=0.08]  [draw opacity=0] (5.36,-2.57) -- (0,0) -- (5.36,2.57) -- cycle    ;
\draw    (390,50) .. controls (398.84,59.75) and (408.53,60.79) .. (417.89,52.12) ;
\draw [shift={(420,50)}, rotate = 132.53] [fill={rgb, 255:red, 0; green, 0; blue, 0 }  ][line width=0.08]  [draw opacity=0] (5.36,-2.57) -- (0,0) -- (5.36,2.57) -- cycle    ;
\draw    (300,50) .. controls (319.3,68.88) and (366.85,70.16) .. (387.82,52.05) ;
\draw [shift={(390,50)}, rotate = 134.19] [fill={rgb, 255:red, 0; green, 0; blue, 0 }  ][line width=0.08]  [draw opacity=0] (5.36,-2.57) -- (0,0) -- (5.36,2.57) -- cycle    ;
\draw [color={rgb, 255:red, 144; green, 19; blue, 254 }  ,draw opacity=1 ] [dash pattern={on 3pt off 1.5pt}]  (270,50) .. controls (279.3,41.23) and (288.31,40.04) .. (297.84,48.04) ;
\draw [shift={(300,50)}, rotate = 224.35] [fill={rgb, 255:red, 144; green, 19; blue, 254 }  ,fill opacity=1 ][line width=0.08]  [draw opacity=0] (5.36,-2.57) -- (0,0) -- (5.36,2.57) -- cycle    ;
\draw [color={rgb, 255:red, 144; green, 19; blue, 254 }  ,draw opacity=1 ] [dash pattern={on 3pt off 1.5pt}]  (300,50) .. controls (309.3,41.23) and (318.31,40.04) .. (327.84,48.04) ;
\draw [shift={(330,50)}, rotate = 224.35] [fill={rgb, 255:red, 144; green, 19; blue, 254 }  ,fill opacity=1 ][line width=0.08]  [draw opacity=0] (5.36,-2.57) -- (0,0) -- (5.36,2.57) -- cycle    ;
\draw [color={rgb, 255:red, 144; green, 19; blue, 254 }  ,draw opacity=1 ] [dash pattern={on 3pt off 1.5pt}]  (330,50) .. controls (311.02,69.2) and (291.73,69.57) .. (272.13,51.98) ;
\draw [shift={(270,50)}, rotate = 43.9] [fill={rgb, 255:red, 144; green, 19; blue, 254 }  ,fill opacity=1 ][line width=0.08]  [draw opacity=0] (5.36,-2.57) -- (0,0) -- (5.36,2.57) -- cycle    ;
\draw  [color={rgb, 255:red, 0; green, 0; blue, 0 }  ,draw opacity=1 ][fill={rgb, 255:red, 0; green, 0; blue, 0 }  ,fill opacity=1 ] (350,50) .. controls (350,44.48) and (354.48,40) .. (360,40) .. controls (365.52,40) and (370,44.48) .. (370,50) .. controls (370,55.52) and (365.52,60) .. (360,60) .. controls (354.48,60) and (350,55.52) .. (350,50) -- cycle ;

\draw (188,58.4) node [anchor=north west][inner sep=0.75pt]    {$g$};
\draw (277,22.4) node [anchor=north west][inner sep=0.75pt]  [color={rgb, 255:red, 144; green, 19; blue, 254 }  ,opacity=1 ]  {$k$};
\draw (207,72.4) node [anchor=north west][inner sep=0.75pt]  [color={rgb, 255:red, 128; green, 128; blue, 128 }  ,opacity=1 ]  {$0$};
\draw (237,72.4) node [anchor=north west][inner sep=0.75pt]  [color={rgb, 255:red, 128; green, 128; blue, 128 }  ,opacity=1 ]  {$1$};
\draw (267,72.4) node [anchor=north west][inner sep=0.75pt]  [color={rgb, 255:red, 128; green, 128; blue, 128 }  ,opacity=1 ]  {$2$};
\draw (297,72.4) node [anchor=north west][inner sep=0.75pt]  [color={rgb, 255:red, 128; green, 128; blue, 128 }  ,opacity=1 ]  {$3$};
\draw (327,72.4) node [anchor=north west][inner sep=0.75pt]  [color={rgb, 255:red, 128; green, 128; blue, 128 }  ,opacity=1 ]  {$4$};
\draw (357,72.4) node [anchor=north west][inner sep=0.75pt]  [color={rgb, 255:red, 128; green, 128; blue, 128 }  ,opacity=1 ]  {$5$};
\draw (387,72.4) node [anchor=north west][inner sep=0.75pt]  [color={rgb, 255:red, 128; green, 128; blue, 128 }  ,opacity=1 ]  {$6$};
\draw (417,72.4) node [anchor=north west][inner sep=0.75pt]  [color={rgb, 255:red, 128; green, 128; blue, 128 }  ,opacity=1 ]  {$7$};
\draw (447,72.4) node [anchor=north west][inner sep=0.75pt]  [color={rgb, 255:red, 128; green, 128; blue, 128 }  ,opacity=1 ]  {$8$};
\draw (477,72.4) node [anchor=north west][inner sep=0.75pt]  [color={rgb, 255:red, 128; green, 128; blue, 128 }  ,opacity=1 ]  {$9$};
\draw (351,22.4) node [anchor=north west][inner sep=0.75pt]  [color={rgb, 255:red, 0; green, 0; blue, 0 }  ,opacity=1 ]  {$\mathcal{F}$};

\end{tikzpicture}

}
\newcommand{\figurerhotwo}{

\tikzset{every picture/.style={line width=0.75pt}} 

\begin{tikzpicture}[x=0.6pt,y=0.6pt,yscale=-1,xscale=1]

\draw  [color={rgb, 255:red, 208; green, 2; blue, 27 }  ,draw opacity=1 ][fill={rgb, 255:red, 208; green, 2; blue, 27 }  ,fill opacity=1 ] (200,50) .. controls (200,44.48) and (204.48,40) .. (210,40) .. controls (215.52,40) and (220,44.48) .. (220,50) .. controls (220,55.52) and (215.52,60) .. (210,60) .. controls (204.48,60) and (200,55.52) .. (200,50) -- cycle ;
\draw  [color={rgb, 255:red, 65; green, 117; blue, 5 }  ,draw opacity=1 ][fill={rgb, 255:red, 65; green, 117; blue, 5 }  ,fill opacity=1 ] (230,50) .. controls (230,44.48) and (234.48,40) .. (240,40) .. controls (245.52,40) and (250,44.48) .. (250,50) .. controls (250,55.52) and (245.52,60) .. (240,60) .. controls (234.48,60) and (230,55.52) .. (230,50) -- cycle ;
\draw  [color={rgb, 255:red, 65; green, 117; blue, 5 }  ,draw opacity=1 ][fill={rgb, 255:red, 65; green, 117; blue, 5 }  ,fill opacity=1 ] (260,50) .. controls (260,44.48) and (264.48,40) .. (270,40) .. controls (275.52,40) and (280,44.48) .. (280,50) .. controls (280,55.52) and (275.52,60) .. (270,60) .. controls (264.48,60) and (260,55.52) .. (260,50) -- cycle ;
\draw  [color={rgb, 255:red, 208; green, 2; blue, 27 }  ,draw opacity=1 ][fill={rgb, 255:red, 208; green, 2; blue, 27 }  ,fill opacity=1 ] (290,50) .. controls (290,44.48) and (294.48,40) .. (300,40) .. controls (305.52,40) and (310,44.48) .. (310,50) .. controls (310,55.52) and (305.52,60) .. (300,60) .. controls (294.48,60) and (290,55.52) .. (290,50) -- cycle ;
\draw  [color={rgb, 255:red, 65; green, 117; blue, 5 }  ,draw opacity=1 ][fill={rgb, 255:red, 65; green, 117; blue, 5 }  ,fill opacity=1 ] (320,50) .. controls (320,44.48) and (324.48,40) .. (330,40) .. controls (335.52,40) and (340,44.48) .. (340,50) .. controls (340,55.52) and (335.52,60) .. (330,60) .. controls (324.48,60) and (320,55.52) .. (320,50) -- cycle ;
\draw  [color={rgb, 255:red, 65; green, 117; blue, 5 }  ,draw opacity=1 ][fill={rgb, 255:red, 65; green, 117; blue, 5 }  ,fill opacity=1 ] (380,50) .. controls (380,44.48) and (384.48,40) .. (390,40) .. controls (395.52,40) and (400,44.48) .. (400,50) .. controls (400,55.52) and (395.52,60) .. (390,60) .. controls (384.48,60) and (380,55.52) .. (380,50) -- cycle ;
\draw  [color={rgb, 255:red, 208; green, 2; blue, 27 }  ,draw opacity=1 ][fill={rgb, 255:red, 208; green, 2; blue, 27 }  ,fill opacity=1 ] (410,50) .. controls (410,44.48) and (414.48,40) .. (420,40) .. controls (425.52,40) and (430,44.48) .. (430,50) .. controls (430,55.52) and (425.52,60) .. (420,60) .. controls (414.48,60) and (410,55.52) .. (410,50) -- cycle ;
\draw  [color={rgb, 255:red, 65; green, 117; blue, 5 }  ,draw opacity=1 ][fill={rgb, 255:red, 65; green, 117; blue, 5 }  ,fill opacity=1 ] (440,50) .. controls (440,44.48) and (444.48,40) .. (450,40) .. controls (455.52,40) and (460,44.48) .. (460,50) .. controls (460,55.52) and (455.52,60) .. (450,60) .. controls (444.48,60) and (440,55.52) .. (440,50) -- cycle ;
\draw  [color={rgb, 255:red, 208; green, 2; blue, 27 }  ,draw opacity=1 ][fill={rgb, 255:red, 208; green, 2; blue, 27 }  ,fill opacity=1 ] (470,50) .. controls (470,44.48) and (474.48,40) .. (480,40) .. controls (485.52,40) and (490,44.48) .. (490,50) .. controls (490,55.52) and (485.52,60) .. (480,60) .. controls (474.48,60) and (470,55.52) .. (470,50) -- cycle ;
\draw [color={rgb, 255:red, 0; green, 0; blue, 0 }  ,draw opacity=1 ]   (180,50) .. controls (199.3,68.88) and (218.91,70.16) .. (237.93,52.05) ;
\draw [shift={(240,50)}, rotate = 134.19] [fill={rgb, 255:red, 0; green, 0; blue, 0 }  ,fill opacity=1 ][line width=0.08]  [draw opacity=0] (5.36,-2.57) -- (0,0) -- (5.36,2.57) -- cycle    ;
\draw [color={rgb, 255:red, 0; green, 0; blue, 0 }  ,draw opacity=1 ]   (210,50) .. controls (239.4,20.5) and (268.8,19.91) .. (298.2,48.23) ;
\draw [shift={(300,50)}, rotate = 225.1] [fill={rgb, 255:red, 0; green, 0; blue, 0 }  ,fill opacity=1 ][line width=0.08]  [draw opacity=0] (5.36,-2.57) -- (0,0) -- (5.36,2.57) -- cycle    ;
\draw [color={rgb, 255:red, 0; green, 0; blue, 0 }  ,draw opacity=1 ]   (240,50) .. controls (269.4,79.63) and (299.12,80.54) .. (328.22,51.8) ;
\draw [shift={(330,50)}, rotate = 134.14] [fill={rgb, 255:red, 0; green, 0; blue, 0 }  ,fill opacity=1 ][line width=0.08]  [draw opacity=0] (5.36,-2.57) -- (0,0) -- (5.36,2.57) -- cycle    ;
\draw [color={rgb, 255:red, 0; green, 0; blue, 0 }  ,draw opacity=1 ]   (270,50) .. controls (299.4,79.63) and (357.93,80.54) .. (388.18,51.8) ;
\draw [shift={(390,50)}, rotate = 134.14] [fill={rgb, 255:red, 0; green, 0; blue, 0 }  ,fill opacity=1 ][line width=0.08]  [draw opacity=0] (5.36,-2.57) -- (0,0) -- (5.36,2.57) -- cycle    ;
\draw [color={rgb, 255:red, 0; green, 0; blue, 0 }  ,draw opacity=1 ]   (300,50) .. controls (329.73,20.18) and (387.95,20.54) .. (418.18,48.27) ;
\draw [shift={(420,50)}, rotate = 224.77] [fill={rgb, 255:red, 0; green, 0; blue, 0 }  ,fill opacity=1 ][line width=0.08]  [draw opacity=0] (5.36,-2.57) -- (0,0) -- (5.36,2.57) -- cycle    ;
\draw [color={rgb, 255:red, 0; green, 0; blue, 0 }  ,draw opacity=1 ]   (270,50) .. controls (290,60.57) and (310,60.23) .. (330,50) ;
\draw [shift={(296.1,57.68)}, rotate = 0.66] [fill={rgb, 255:red, 0; green, 0; blue, 0 }  ,fill opacity=1 ][line width=0.08]  [draw opacity=0] (5.36,-2.57) -- (0,0) -- (5.36,2.57) -- cycle    ;
\draw [color={rgb, 255:red, 0; green, 0; blue, 0 }  ,draw opacity=1 ]   (390,50) .. controls (409.3,68.88) and (428.91,70.16) .. (447.93,52.05) ;
\draw [shift={(450,50)}, rotate = 134.19] [fill={rgb, 255:red, 0; green, 0; blue, 0 }  ,fill opacity=1 ][line width=0.08]  [draw opacity=0] (5.36,-2.57) -- (0,0) -- (5.36,2.57) -- cycle    ;
\draw [color={rgb, 255:red, 0; green, 0; blue, 0 }  ,draw opacity=1 ]   (450,50) .. controls (469.3,68.88) and (488.91,70.16) .. (507.93,52.05) ;
\draw [shift={(510,50)}, rotate = 134.19] [fill={rgb, 255:red, 0; green, 0; blue, 0 }  ,fill opacity=1 ][line width=0.08]  [draw opacity=0] (5.36,-2.57) -- (0,0) -- (5.36,2.57) -- cycle    ;
\draw [color={rgb, 255:red, 0; green, 0; blue, 0 }  ,draw opacity=1 ]   (420,50) .. controls (439.62,30.6) and (458.93,30.24) .. (477.93,48) ;
\draw [shift={(480,50)}, rotate = 225.15] [fill={rgb, 255:red, 0; green, 0; blue, 0 }  ,fill opacity=1 ][line width=0.08]  [draw opacity=0] (5.36,-2.57) -- (0,0) -- (5.36,2.57) -- cycle    ;
\draw  [color={rgb, 255:red, 0; green, 0; blue, 0 }  ,draw opacity=1 ][fill={rgb, 255:red, 0; green, 0; blue, 0 }  ,fill opacity=1 ] (350,50) .. controls (350,44.48) and (354.48,40) .. (360,40) .. controls (365.52,40) and (370,44.48) .. (370,50) .. controls (370,55.52) and (365.52,60) .. (360,60) .. controls (354.48,60) and (350,55.52) .. (350,50) -- cycle ;
\draw [color={rgb, 255:red, 0; green, 0; blue, 0 }  ,draw opacity=1 ]   (180,30) .. controls (193.16,35.78) and (197.48,38.34) .. (207.92,48.05) ;
\draw [shift={(210,50)}, rotate = 223.41] [fill={rgb, 255:red, 0; green, 0; blue, 0 }  ,fill opacity=1 ][line width=0.08]  [draw opacity=0] (5.36,-2.57) -- (0,0) -- (5.36,2.57) -- cycle    ;
\draw [color={rgb, 255:red, 0; green, 0; blue, 0 }  ,draw opacity=1 ]   (480,50) .. controls (489.87,40.62) and (493.78,37.41) .. (507.29,31.22) ;
\draw [shift={(510,30)}, rotate = 155.96] [fill={rgb, 255:red, 0; green, 0; blue, 0 }  ,fill opacity=1 ][line width=0.08]  [draw opacity=0] (5.36,-2.57) -- (0,0) -- (5.36,2.57) -- cycle    ;

\draw (179,63.4) node [anchor=north west][inner sep=0.75pt]    {$kg^{2}$};
\draw (207,72.4) node [anchor=north west][inner sep=0.75pt]  [color={rgb, 255:red, 128; green, 128; blue, 128 }  ,opacity=1 ]  {$0$};
\draw (237,72.4) node [anchor=north west][inner sep=0.75pt]  [color={rgb, 255:red, 128; green, 128; blue, 128 }  ,opacity=1 ]  {$1$};
\draw (267,72.4) node [anchor=north west][inner sep=0.75pt]  [color={rgb, 255:red, 128; green, 128; blue, 128 }  ,opacity=1 ]  {$2$};
\draw (297,72.4) node [anchor=north west][inner sep=0.75pt]  [color={rgb, 255:red, 128; green, 128; blue, 128 }  ,opacity=1 ]  {$3$};
\draw (327,72.4) node [anchor=north west][inner sep=0.75pt]  [color={rgb, 255:red, 128; green, 128; blue, 128 }  ,opacity=1 ]  {$4$};
\draw (357,72.4) node [anchor=north west][inner sep=0.75pt]  [color={rgb, 255:red, 128; green, 128; blue, 128 }  ,opacity=1 ]  {$5$};
\draw (387,72.4) node [anchor=north west][inner sep=0.75pt]  [color={rgb, 255:red, 128; green, 128; blue, 128 }  ,opacity=1 ]  {$6$};
\draw (417,72.4) node [anchor=north west][inner sep=0.75pt]  [color={rgb, 255:red, 128; green, 128; blue, 128 }  ,opacity=1 ]  {$7$};
\draw (447,72.4) node [anchor=north west][inner sep=0.75pt]  [color={rgb, 255:red, 128; green, 128; blue, 128 }  ,opacity=1 ]  {$8$};
\draw (351,22.4) node [anchor=north west][inner sep=0.75pt]  [color={rgb, 255:red, 0; green, 0; blue, 0 }  ,opacity=1 ]  {$\mathcal{F}$};
\draw (477,72.4) node [anchor=north west][inner sep=0.75pt]  [color={rgb, 255:red, 128; green, 128; blue, 128 }  ,opacity=1 ]  {$9$};

\end{tikzpicture}

}

\title{Subgroup and Submonoid Membership in the lampshuffler of $\mathbb{Z}$}
\author{Corentin Bodart\footnote{Département de Mathématiques, Universit\'{e} du Luxembourg, Luxembourg. Email: \url{cobodart123@gmail.com}. C.B. was funded by the Swiss SNF grant P500PT-225420.} \and Ruiwen Dong\footnote{Magdalen College, University of Oxford, United Kingdom. Email: \url{ruiwen.dong@magd.ox.ac.uk}. R.D.\ is supported by a Fellowship by Examination at Magdalen College.}}
\date{}

\begin{document}

\maketitle

\begin{abstract}    
    The \textit{lampshuffler group} of $\mathbb{Z}$ is the semidirect product $\operatorname{FSym}(\mathbb{Z}) \rtimes \mathbb{Z}$, which consists of all permutations of $\mathbb{Z}$ that act as a translation outside a finite set. This infinite permutation group naturally contains as subgroups the wreath products $H \wr \mathbb{Z}$ for every finite group $H$. We prove that the Subgroup Membership Problem, and more generally, the Submonoid Membership Problem, are decidable in $\operatorname{FSym}(\mathbb{Z}) \rtimes \mathbb{Z}$.
    Our proof reduces Subgroup and Submonoid Membership in $\operatorname{FSym}(\mathbb{Z}) \rtimes \mathbb{Z}$ to Subgroup Membership in the wreath products $H \wr \mathbb{Z}$, which was shown to be decidable by Lohrey, Steinberg and Zetzsche (2015). 
\end{abstract}

\section{Introduction and main results}
\paragraph*{Membership problems in infinite groups.}
The study of algorithmic problems in groups and semigroups has a long history dating back to the beginning of the 20th century.
Two of the central decision problems in this area are the \emph{Subgroup Membership} problem and the \emph{Submonoid Membership} problem.
For both problems, the input is a finite number of elements $g_1, g_2, \ldots, g_n$ in a (possibly infinite) group $G$ and an element $g \in G$.
The Subgroup Membership problem asks whether $g$ belong to the sub\emph{group} generated by $g_1, g_2, \ldots, g_n$, and the more general Submonoid Membership problem asks whether $g$ belong to the sub\emph{monoid} generated by $g_1, g_2, \ldots, g_n$.

In the seminal work of Markov from the 1950s~\cite{markov1951certain}, it was shown that Submonoid Membership is undecidable for a matrix group of dimension six.
Historically, this provided one of the earliest examples of an undecidable problem in the theory of computing.
Markov's result was subsequently strengthened by Mikhailova~\cite{mikhailova1958occurrence}, who showed undecidability of Subgroup Membership in the group $\SL(4, \Z)$ of $4 \times 4$ integer matrices of determinant one.
Since then, significant advances in this area gave a large number of both positive and negative decidability results.
Examples include matrix groups~\cite{kopytov1968solvability, babai1996multiplicative, choffrut2005some, potapov2017decidability, colcombet2019reachability, bodart2025membership}, groups arising from the study of automata and geometric group theory~\cite{GrigorchukWilson2003, lohrey2011tilings, cadilhac2020rational, Gray2020OneRelatorInverseMonoids, foniqi2026membership}, as well as groups constructed from finite extensions, direct products, and wreath products~\cite{romanovskii1974some, lohrey2015rational, roman2022undecidability, shafrir2024decidability, dong2025submonoid, dong2026standard}.
These results have had far-reaching consequences in areas such as weighted automata, program verification, and control theory~\cite{blondel2000survey, derksen2005quantum, hrushovski2018polynomial, haase2019presburger}.
See~\cite{lohrey2024membership} for a comprehensive survey.

\paragraph*{Infinite permutation groups.}
Membership problems in \emph{finite} permutation groups are a staple of algorithmic and complexity theory~\cite{sims1971computation, knuth1991efficient, lohrey2023complexity, dahan2025group}.
In a finite permutation group, Submonoid Membership is equivalent to Subgroup Membership, and they can be efficiently decided using the celebrated \emph{Schreier-Sims algorithm}~\cite{sims1970computational}.
In contrast, \emph{infinite} permutation groups are deeply related to logic and computability theory, with applications in orbit-finite programming~\cite{ghosh2023orbit, bojanczyk2024orbit}, model theory~\cite{cameron1990oligomorphic, ackerman2016invariant}, as well as automata and formal language theory~\cite{bojanczyk2011automata, lohrey2015rational}.
Consider the set of integers $\Z$.
The simplest permutation group over $\Z$ is the finitary symmetric group $\FSym(\Z)$, consisting of all \emph{finitely supported} permutations of $\Z$.
It is easy to see that the Subgroup and Submonoid Membership problems are decidable in $\FSym(\Z)$ since they are equivalent to the case of finite groups.
Thus, a natural next step is to study extensions of $\FSym(\Z)$ with simple, infinitely supported elements.

In this paper we consider Subgroup and Submonoid Membership in the \emph{lampshuffler group}\footnote{This terminology appeared in~\cite{gheysens2022between, bonnet2022twin, genevois2024lamplighter, silva2024poisson}, where the semidirect product $\FSym(G) \rtimes G$ is referred to as the \textit{lampshuffler} of the group $G$. The name originated from their resemblance to lamplighter groups.} $\FSym(\Z) \rtimes \Z$, which is the permutation group over $\Z$ generated by the elements of $\FSym(\Z)$ and the \emph{shift} $\sigma \colon \Z \rightarrow \Z\colon x \mapsto x+1$.
The group $\FSym(\Z) \rtimes \Z$ consists of all permutations of $\Z$ that act \emph{ultimately} as a translation.
In other words, elements of $\FSym(\Z) \rtimes \Z$ are permutations $f \colon \Z \rightarrow \Z$, for which $f(z)  - z$ is constant for all but finitely many $z$.
Intuitively, one may regard computation in $\FSym(\Z) \rtimes \Z$ as operations on a bi-infinite tape by finite permutation of tape cells and shifting of the entire tape.

The groups $H_1 = \FSym(\N)$ and $H_2 = \FSym(\Z) \rtimes \Z$ are the first two entries in a series of infinite groups $H_n, n \geq 1$, called the \emph{Houghton groups}~\cite{houghton1978first}.
These groups have been extensively studied for their finiteness properties~\cite{brown1987finiteness, cox2025finiteness}, growth \cite{metric_in_Houghton} and combinatorial features~\cite{wiegold1977transitive}.
In particular, the \emph{Conjugacy Problem} was shown to be decidable in all Houghton groups~\cite{conjugacy_in_Houghton}, while decidability of Subgroup and Submonoid Membership remains open.

For every finite group $H$, the lampshuffler $\FSym(\Z) \rtimes \Z$ contains the wreath product $H \wr \Z$ as a subgroup (see Section~\ref{sec:prelim}).
The groups $H \wr \Z$ are often referred to as \emph{lamplighter} groups.
Lohrey, Steinberg and Zetzsche~\cite{lohrey2015rational} showed that \emph{Rational Subset Membership} (which subsumes Subgroup and Submonoid Membership) is decidable in $H \wr \Z$ for every finite group $H$.
It is unclear whether this decidability result generalizes to the lampshuffler $\FSym(\Z) \rtimes \Z$, which can be considered as a ``limit'' of the lamplighters $H \wr \Z$.

Another motivation to study membership problems in $\FSym(\Z) \rtimes \Z$ is to understand how decidability behaves with respect to taking various extensions of groups.
It is a classic result that, for two groups $H \leq G$ where the quotient $G/H$ is finite, \emph{Subgroup Membership} is decidable in $G$ if and only if it is decidable in $H$~\cite{grunschlag1999algorithms}.
The same statement is however false for \emph{Submonoid Membership}, as shown by the recent work of Shafrir~\cite{shafrir2024decidability} and Dong~\cite{dong2025submonoid}.
The natural next step is thus to consider the case with $H \unlhd G$ such that $G/H \cong \Z$, and find sufficient conditions on $H$ that render Subgroup and Submonoid Membership in $G$ decidable.

\paragraph{Main result.}

The main result of this paper is the decidability of Submonoid (and Subgroup) Membership in $\FSym(\Z) \rtimes \Z$:

\begin{restatable}{theorem}{thmmain}\label{thm:submonoid_membership}
    Submonoid Membership is decidable in $\FSym(\Z) \rtimes \Z$.
\end{restatable}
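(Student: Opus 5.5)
We reduce to Subgroup Membership in lamplighter groups $H \wr \Z$ with $H$ finite, which is decidable by Lohrey, Steinberg and Zetzsche~\cite{lohrey2015rational}. Let $\pi \colon \FSym(\Z) \rtimes \Z \to \Z$ be the translation-amount homomorphism. Given generators $g_1, \dots, g_n$ and a target $g$, we split into cases according to the signs of $\pi(g_i)$.

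\textbf{Case 1: all $\pi(g_i)$ have the same sign.} Suppose all $\pi(g_i) \ge 0$. Generators with $\pi(g_i) > 0$ can appear only a bounded number of times, at most $\pi(g)/\min_{\pi(g_i)>0}\pi(g_i)$. The remaining generators lie in $\FSym(\Z)$ and generate a finite group $F$. Since $F$ is a finite group, the submonoid it generates equals $F$, and $F$ is computable. Membership then reduces to a finite search over words of the form $f_0 h_1 f_1 \cdots h_k f_k$ with $f_j \in F$. If some $\pi(g_i)$ are zero and the rest have one strict sign, this is the whole argument.

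\textbf{Case 2: both signs occur.} Choose $g_i, g_j$ with $\pi(g_i) > 0 > \pi(g_j)$. The key step is to show that the submonoid $M$ then contains a ``large'' subgroup: the kernel part $M \cap \FSym(\Z)$ is controlled by a group. Concretely, we aim to prove that there is a computable $d > 0$ and a computable finite-index-like structure such that $M$ coincides with a subgroup, or with a finite union of cosets of a subgroup intersected with a semilinear condition on $\pi$.

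The intuition is as follows. The elements $g_i^{a} g_j^{b}$ with $a\pi(g_i) + b\pi(g_j) = 0$ lie in $\FSym(\Z)$. Conjugating by large powers of $g_i$ spreads supports far apart. Since each element of $\FSym(\Z)$ has finite order, inverses of kernel elements are positive powers. So $M \cap \FSym(\Z)$ is closed under inverses and is a group $K$, normalised by $M$.

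Next we want $M$ itself to be a group, or at least $M = \langle M \rangle \cap \pi^{-1}(\pi(M))$, where $\pi(M) = d\Z$ in this case. To show $g_k^{-1} \in M$, take a word $w \in M$ with $\pi(w) = -\pi(g_k)$. Then $g_k w \in K$, so $(g_k w)^{-1} \in M$, and hence $g_k^{-1} = w (g_k w)^{-1} \in M$. Thus in Case 2, $M = \langle g_1, \dots, g_n \rangle$, and Submonoid Membership reduces to Subgroup Membership.

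\textbf{Main obstacle: Subgroup Membership in $\FSym(\Z) \rtimes \Z$.} Let $G = \langle g_1, \dots, g_n \rangle$ with $\pi(G) = d\Z$, and pick $t \in G$ with $\pi(t) = d$. Replacing the generators by $t$ together with elements $t^{-m} g_i$ in the kernel, $G$ is generated by $t$ and by its finitely supported part. The key structural step is to find a bounded window of $\Z$ on which the action is visible. Outside a finite set, $t$ is the translation by $d$. After conjugating within $\FSym(\Z) \rtimes \Z$, we can aim to make $t$ act as $\sigma^d$ on orbits modulo $d$, except for finitely many finite cycles. We then group $\Z$ into blocks $I_m = \{md, \dots, md+d-1\}$ after merging finitely many exceptional points.

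The generators of $G$ have supports in boundedly many blocks. The normal closure of the finitary part under $t$ then lives inside $\FSym$ of a union of finite blocks, with local interactions of bounded range $r$. Regrouping $r$ consecutive blocks into a single ``lamp'' of size $rd$, the group $G$ embeds into $\Sym(rd) \wr \Z$. This uses two finitary pieces glued across block boundaries, via the alternating-group trick shown in the figures: elements supported on two adjacent blocks are rewritten using conjugates by $t$. The target $g$ embeds as well, provided its support fits; otherwise we enlarge $r$, or note that $g \notin G$ by comparing with the finite exceptional set.

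The delicate points are correctly treating the finitely many exceptional orbits, namely finite cycles of $t$ and the boundary near the support. We would handle these by an explicit normal form for $t$, which is essentially the conjugacy classification from~\cite{conjugacy_in_Houghton}, together with a direct product with a finite group. After that, membership is decided in $H \wr \Z \times F$ by~\cite{lohrey2015rational}.
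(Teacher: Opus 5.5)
Your reduction of Submonoid to Subgroup Membership (both cases) is correct and matches Lemma~\ref{lem:submonoid_to_subgroup}. The gap is in the Subgroup Membership part, at the claim that after regrouping $r$ consecutive blocks the group $G$ embeds into $\Sym(rd)\wr\Z$. Bounded range of the generators $\tau_i$ does not give bounded range for the group. Products of $t$-conjugates of the $\tau_i$ move points arbitrarily far, and the $t$-invariant closure $\gen{\tau_1,\ldots,\tau_s}_t$ can have infinite orbits.

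Already for $G=\gen{(0\,1),\sigma}$ you get $G\cap\FSym(\Z)=\FSym(\Z)$ (Example~\ref{example:invariant}(1)), which contains elements of every prime order. But every torsion element of $H\wr\Z$ lies in $\bigoplus_\Z H$, whose exponent divides $\card{H}$. So $\FSym(\Z)$ embeds into no $H\wr\Z$ with $H$ finite, whatever $r$ you choose, and no local rewriting trick can repair this. Your regrouping idea is valid only on the union of \emph{finite} orbits of $\gen{\tau_1,\ldots,\tau_s}_t$. There the orbits are translates $\mA_x$ of a single finite set, and you really do get $G_{\fin}\le\bigoplus_\Z\Sym(J_{\fin})$, with the orbits themselves, not windows of consecutive positions, as lamps.

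The missing idea is how to handle the infinite orbits, where the group is genuinely not a lamplighter. The paper first shows $W'\le G_\infty\le W=\prod_i H_i\wr_{\Delta_i}\FSym(\Delta_i)$. This uses a coarsest block system, Neumann's theorem (Lemma~\ref{lem:transitive_finite}), and an iterated Goursat argument that merges pairs of orbits on which the action is diagonal. It then shows that $G$ itself contains $\{\id\}\times W'$: because $G_{\fin}/N_{\fin}\cong G_\infty/N_\infty$ has bounded exponent, this forces $N_\infty\ge W'$. Since $W/W'$ is finite, membership in $G$ reduces to membership in a $\rho$-invariant subgroup of $(\bigoplus_\Z H)\times W/W'$. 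That group embeds in $(H^\ell\wr\Z)\times W/W'$, where Lemma~\ref{lem:decidable_H_wreath_Z} applies.

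There is also a smaller but real issue with your treatment of exceptional points. Finite cycles of your chosen $t$ may lie in infinite $G$-orbits, so they cannot simply be split off as a direct factor with a finite group. The paper first constructs $\rho\in G$ whose finite orbits are exactly the finite $G$-orbits (Lemma~\ref{lem:rho_inf_orbit}), and then passes to a finite-index subgroup that fixes them.
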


In fact, it is not difficult to reduce Submonoid Membership in $\FSym(\Z) \rtimes \Z$ to Subgroup Membership (see Lemma~\ref{lem:submonoid_to_subgroup}).
Then, to show decidability of Subgroup Membership, our proof of Theorem~\ref{thm:submonoid_membership} implicitly provides a classification and description of all finitely generated subgroups of $\FSym(\Z) \rtimes \Z$ (see Subsection~\ref{subsec:invar} and Proposition~\ref{prop:structure_invariant_subgroup}).
However, we do not know the complexity of our algorithm for Theorem~\ref{thm:submonoid_membership}, as it relies on a reduction to the Subgroup Membership problem in $H \wr \Z$~\cite{lohrey2015rational}, whose exact complexity remains unknown.

\section{Preliminaries}\label{sec:prelim}

\paragraph{Permutation groups.}

Let $\Omega$ be a (finite or infinite) set.
The \emph{symmetric group} $\Sym(\Omega)$ on the set $\Omega$ is the group whose elements are bijections from $\Omega$ to $\Omega$, and multiplication is defined by composition of maps, i.e., for $f, g \in \Sym(\Omega)$, the element $fg$ is the bijection that sends $x \in \Omega$ to $f(g(x))$.
The neutral element of $\Sym(\Omega)$ is the identity map, denoted by $\id$.
The cardinality of a set or a group $S$ is denoted by $\card{S}$.

For $f \in \Sym(\Omega)$, we denote by $\supp(f)$ the \emph{support} of $f$, defined as 
\[
\supp(f) \coloneqq \{x \in \Omega \mid f(x) \neq x\}.
\]
This is the set of elements moved by $f$.
The group elements of $\Sym(\Omega)$ with finite support form a subgroup $\FSym(\Omega) \coloneqq \{f \in \Sym(\Omega), \card{\supp(f)} < \infty\}$.
If $\Omega$ is finite then $\FSym(\Omega) = \Sym(\Omega)$.

For distinct elements $x_1, x_2, \ldots, x_k \in \Omega$, let $(x_1 x_2 \cdots x_k)$ denote the cyclic permutation $x_1 \mapsto x_2, x_2 \mapsto x_3, \ldots, x_k \mapsto x_1$.
Such a permutation is called a \emph{$k$-cycle}.
Every $f \in \FSym(\Omega)$ can be decomposed as a product of disjoint cycles $$(x_{11} \cdots x_{1k_1})(x_{21} \cdots x_{2k_2})\cdots(x_{n1} \cdots x_{nk_n}).$$
The \emph{sign} of a permutation $f$, denoted by $\sgn(f)$, is defined as the value $(-1)^m$ where $m$ is the number of cycles of even length in the decomposition of $f$.
The set of elements $f \in \FSym(\Omega)$ with $\sgn(f) = 1$ forms the \emph{alternating subgroup} $\FAlt(\Omega) \unlhd \FSym(\Omega)$.
For an infinite set $\Omega$, the only proper normal subgroups of $\FSym(\Omega)$ are $\FAlt(\Omega)$ and the trivial subgroup $\{\id\}$~\cite[Section~11.3.3]{scott1987group}.

For $f, \rho \in \Sym(\Omega)$, the element $\rho f \rho^{-1}$ is called the \emph{conjugation} of $f$ by $\rho$.
The element $\rho f \rho^{-1}$ maps $\rho(x)$ to $\rho(f(x))$ for every $x \in \Omega$, so $\supp(\rho f \rho^{-1}) = \rho(\supp(f))$.
For two elements $g, h$ of a group, let $[g, h]$ denote the \emph{commutator} $ghg^{-1}h^{-1}$.
For a group $G$, we denote by $G'$ its \emph{commutator subgroup}: this is the group generated by all elements $[g, h]$ with $g, h \in G$.

For $\Omega = \Z$, let $\sigma \in \Sym(\Z)$ denote the shift map $x \mapsto x+1$.
The group $\FSym(\Z) \rtimes \Z$ is the subgroup of $\Sym(\Z)$ generated by $\sigma$ and all elements of $\FSym(\Z)$.
For any element $g \in \FSym(\Z) \rtimes \Z$, since $g$ ultimately acts as a translation $\sigma^z$ for some $z \in \Z$, we have $g = f\sigma^z$ for some $f \in \FSym(\Z)$.
Let
\[
\pi \colon \FSym(\Z) \rtimes \Z \rightarrow \Z, \;  f\sigma^z \mapsto z,
\]
denote the projection, then $\pi$ is a group homomorphism. 

\paragraph{Invariance, orbits, transitivity, and primitivity.}
Given a subset $S$ of a group, denote by $\gen{S}$ the subgroup generated by $S$.

For an element $\rho \in \Sym(\Omega)$ and a subgroup $G \leq \Sym(\Omega)$, we say that $G$ is \emph{$\rho$-invariant} if the group $\rho G \rho^{-1} \coloneqq \{\rho g \rho^{-1} \mid g \in G\}$ is equal to $G$, i.e., $G$ is invariant under the conjugation by $\rho$.
Given a set of elements $S \subset \Sym(\Omega)$, there is a smallest $\rho$-invariant group containing $S$, which we call the $\rho$-invariant group \emph{generated by $S$}, and is denoted by $\gen{S}_{\rho}$.
In particular, $\gen{S}_{\rho}$ is the group generated by the elements $\rho^n s \rho^{-n}$ (for $n \in \Z, s \in S$).

For $G \leq \Sym(\Omega)$, the \emph{$G$-orbit} of an element $x \in \Omega$ is the set $\{g(x) \mid g \in G\} \subseteq \Omega$.
The $G$-orbits of two different elements are either equal or disjoint.
Thus, the set $\Omega$ can be uniquely decomposed as a disjoint union of $G$-orbits.
We say that $G$ is \emph{transitive} (on $\Omega$) if there is only one $G$-orbit.
Equivalently, $G \leq \Sym(\Omega)$ is transitive if 
for all $x, y \in \Omega$, there exists $g \in G$ such that $g(x) = y$.

We say a subset $S \subseteq \Omega$ is \emph{stabilized} by $G$ if, for every $g \in G$, the set $gS \coloneqq \{gs \mid s \in S\}$ is equal to $S$.
In this case, each element of $G$ permutes the set $S$, so $G$ induces a subgroup of $\Sym(S)$.
In particular, $S$ is a union of $G$-orbits.

A \emph{partition} of $\Omega$ is a family of disjoint sets $\Gamma_{\delta} \subseteq \Omega$ (for $\delta \in \Delta$), such that $\bigsqcup_{\delta \in \Delta} \Gamma_{\delta} = \Omega$.
The subsets $\Gamma_{\delta}$ are called \emph{blocks} of the partition.
We say that $g \in \Sym(\Omega)$ \emph{preserves} the partition $\bigsqcup_{\delta \in \Delta} \Gamma_{\delta}$ if for every block $\Gamma_{\delta}$, the set $g \Gamma_{\delta}$ is also a block in this partition.
We say that a group $G \leq \Sym(\Omega)$ preserves a partition $\bigsqcup_{\delta \in \Delta} \Gamma_{\delta}$ if every element of $G$ preserves the partition.
In this case, the group $G$ induces a permutation group on the index set $\Delta$ of blocks.
The trivial partitions of $\Omega$ are the partition with a single block $\Omega$ and the partition into singletons.
A group $G \leq \Sym(\Omega)$ is called \emph{primitive} if it is transitive and does not preserve any non-trivial partition.
For example, the group $\gen{f} \leq \Sym(\{1,2,3,4,5,6\})$ generated by the cycle $f = (123456)$ is transitive but not primitive, since it preserves the partition $\{1, 3, 5\} \sqcup \{2, 4, 6\}$.
However, the group $\FAlt(\{1,2,3,4,5,6\}) \leq \Sym(\{1,2,3,4,5,6\})$ is primitive.

\paragraph{Wreath products and subdirect products.}

An important way to construct non-primitive groups is the \emph{wreath product}.
Let $H$ be an arbitrary group and let $\Delta$ be a set.
The \emph{direct sum} $\bigoplus_{\Delta}H$ is the group whose elements are tuples $(h_{\delta})_{\delta \in \Delta}$ with $h_{\delta} \in H$ for all $\delta \in \Delta$, such that all but finitely many $h_{\delta}$'s are the neutral element.
The set $\bigoplus_{\Delta}H$ forms a group under the pointwise multiplication $(h_{\delta})_{\delta \in \Delta} \cdot (h'_{\delta})_{\delta \in \Delta} = (h_{\delta}h'_{\delta})_{\delta \in \Delta}$.

Let $G$ be a group acting on $\Delta$, i.e., $G$ maps to a subgroup of $\Sym(\Delta)$.
The (restricted) \emph{wreath product} $H \wr_{\Delta} G$ is the group constructed as follows.
Its elements are pairs $\left((h_{\delta})_{\delta \in \Delta}, g\right)$ where $(h_{\delta})_{\delta \in \Delta} \in \bigoplus_{\Delta}H$ and $g \in G$; multiplication is defined by
\begin{equation}\label{eq:wreath_multiplication}
\bigl((h_{\delta})_{\delta \in \Delta},\, g\bigr) \cdot \bigl((h'_{\delta})_{\delta \in \Delta},\, g'\bigr) \coloneqq \left((h_{\delta} h'_{g^{-1}(\delta)})_{\delta \in \Delta},\, gg'\right);
\end{equation}
and the neutral element is $((\id)_{\delta \in \Delta}, \id)$.

For example, let $\Gamma$ be a finite set and $\Delta$ be any set, and take $\Omega$ to be the direct product $\Gamma \times \Delta$.
Then we can consider both $\bigoplus_{\Delta}\Sym(\Gamma)$ and $\Sym(\Gamma) \wr_{\Delta} \FSym(\Delta)$ as subgroups of $\FSym(\Omega)$: the direct sum $\bigoplus_{\Delta}\Sym(\Gamma)$ contains permutations that stabilize every set $\Gamma \times \{\delta\}$ (for $\delta \in \Delta$); while the wreath product $\Sym(\Gamma) \wr_{\Delta} \FSym(\Delta)$ contains permutations that preserve the partition $\Omega = \bigsqcup_{\delta \in \Delta} (\Gamma \times \{\delta\})$.
Intuitively, in Equation~\eqref{eq:wreath_multiplication}, the element $(h_{\delta})_{\delta \in \Delta}$ permutes the elements within each block, whereas $g$ permutes the blocks as a whole.
More examples will be given in the next section (Examples~\ref{example:invariant},~\ref{example:product}).

Note that the direct sum $\bigoplus_{\Delta}H$ can be identified as the subgroup $\left\{\left(h, \id\right) \;\middle|\; h \in \bigoplus_{\Delta}H \right\}$ of $H \wr_{\Delta} G$.
Similarly, the group $G$ can be identified as the subgroup $\left\{\left(\id, g\right) \;\middle|\; g \in G \right\}$ of $H \wr_{\Delta} G$.
Under these identifications, the element $\left((h_{\delta})_{\delta \in \Delta}, g\right) \in H \wr_{\Delta} G$ is simply a product $(h_{\delta})_{\delta \in \Delta} \cdot g$, where $(h_{\delta})_{\delta \in \Delta} \in \bigoplus_{\Delta}H$ and $g \in G$.

Consider $\Z$ as a subgroup of $\Sym(\Z)$ by identifying $n \in \Z$ with the shift $\sigma^n \colon z \mapsto z+n$.
Then for any group $H$ we can define the wreath product $H \wr_{\Z} \Z$, which we simply denote by $H \wr \Z$.
Is it easy to see that when $H$ is finite, the wreath product $H \wr \Z$ can be embedded as a subgroup of $\FSym(\Z) \rtimes \Z$.
Indeed, take any $n \geq 1$ such that $H$ can be embedded as a subgroup of $\Sym(\{0, 1, \ldots, n-1\})$, then $H \wr \Z$ can be embedded in $\Sym(\{0, 1, \ldots, n-1\}) \wr \Z$, which is the subgroup of $\FSym(\Z) \rtimes \Z$ generated by the element $\sigma^n$ and $\Sym(\{0, 1, \ldots, n-1\})$.

The wreath product $H \wr \Z$ will play an important role in our paper for the classification of finitely generated subgroups of $\FSym(\Z) \rtimes \Z$.
A special case of a result of Lohrey, Steinberg, and Zetzsche~\cite{lohrey2015rational} showed that Subgroup Membership is decidable in $H \wr \Z$:

\begin{lemma}[{Special case of~\cite[Theorem~7]{lohrey2015rational}}]\label{lem:decidable_H_wreath_Z}
    Subgroup Membership is decidable in the group $H \wr \Z$ for finite $H$.
\end{lemma}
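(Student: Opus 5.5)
The plan is to derive the lemma directly from the general result of Lohrey, Steinberg and Zetzsche, rather than reprove anything about lamplighter groups. Their Theorem~7 in~\cite{lohrey2015rational} states that \emph{Rational Subset Membership} is decidable in $H \wr \Z$ whenever $H$ is finite. So it suffices to check that every instance of Subgroup Membership in $H \wr \Z$ can be effectively turned into an instance of Rational Subset Membership.

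Fix a finite generating set of $H \wr \Z$, for instance $H$ itself (viewed as the lamp group at position $0$) together with the generator $t = \sigma$ of $\Z$. Every element is given as a word over these generators; equivalently, it is given by its finitely supported lamp configuration and its shift. Given the input $g_1, \ldots, g_n, g$, I will construct the finite automaton over the alphabet $\{g_1^{\pm 1}, \ldots, g_n^{\pm 1}\}$, where each letter is labelled by a word representing it. This automaton has a single state, which is both initial and final, and a loop for each letter. Its accepted language evaluates in $H \wr \Z$ to the rational subset $\{g_1^{\pm1}, \ldots, g_n^{\pm1}\}^{*}$. Since this set is closed under products and inverses and contains $\id$, it equals the subgroup $\gen{g_1, \ldots, g_n}$.

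Running the decision procedure of~\cite[Theorem~7]{lohrey2015rational} on this rational subset and on $g$ then decides whether $g \in \gen{g_1, \ldots, g_n}$. The only point to verify is that the input encodings match. Both formulations take group elements as words over a fixed finite generating set, and the inverse words $g_i^{-1}$ are computable by reversing and inverting letters, so the reduction is effective.

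I do not expect a genuine obstacle, since the lemma is a special case of a cited theorem. The only thing to be careful about is that the cited result is stated for rational subsets of $H \wr \Z$ with $H$ finite and the standard (restricted, regular-action) wreath product. This is exactly the group $H \wr_{\Z} \Z$ defined above, so no further translation is needed.
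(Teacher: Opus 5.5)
Your proposal is correct and matches the paper, which gives no separate proof. It simply invokes~\cite{lohrey2015rational} for the decidability of Rational Subset Membership in $H \wr \Z$ and notes that this subsumes Subgroup Membership, which is exactly your observation that $\gen{g_1,\ldots,g_n}$ is the rational subset $\{g_1^{\pm1},\ldots,g_n^{\pm1}\}^{*}$. One minor remark: the result of~\cite{lohrey2015rational} is stated more generally for $H \wr V$ with $H$ finite and $V$ virtually free, so $H \wr \Z$ is a special case rather than the literal statement, which changes nothing in your argument.
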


In this paper we will also frequently deal with subgroups of direct products $G_1 \times G_2$.
This can be characterized by the following classic lemma of Goursat:

\begin{lemma}[{Goursat's lemma, c.f.~\cite[Section~4.3.1]{scott1987group}}]\label{lem:Goursat}
    Let $G$ be a subgroup of $G_1 \times G_2$, such that the projections $p_1 \colon G \rightarrow G_1, p_2 \colon G \rightarrow G_2$ are surjective.
    Denote
    \[
    N_1 \coloneqq p_1(\ker p_2) = \{g_1 \mid (g_1, \id) \in G \} \unlhd G_1, \;\; \text{ and } \;\; N_2 \coloneqq p_2(\ker p_1) = \{g_2 \mid (\id, g_2) \in G \} \unlhd G_2.
    \]
    Then there is an isomorphism $\varphi \colon G_1/N_1 \xrightarrow{\sim} G_2/N_2$, and $G = \{(g_1, g_2) \mid \varphi(g_1 N_1) = g_2 N_2\}$.
\end{lemma}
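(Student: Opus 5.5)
The statement to prove is Goursat's lemma, Lemma~\ref{lem:Goursat}. The plan is to verify normality of $N_1, N_2$ directly, define $\varphi$ via "partners" in $G$, and then check the description of $G$.

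\textbf{Normality.} Note that $N_1 = \{g_1 \mid (g_1,\id)\in G\}$ is a subgroup, being the image under $p_1$ of the subgroup $\ker p_2 \le G$. For normality in $G_1$, take $h\in G_1$. By surjectivity of $p_1$ pick $(h,k)\in G$. Then for $(n,\id)\in G$ the conjugate $(h,k)(n,\id)(h,k)^{-1}=(hnh^{-1},\id)$ lies in $G$. So $hN_1h^{-1}\subseteq N_1$, and symmetrically $N_2\unlhd G_2$.

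\textbf{Defining $\varphi$.} Set $\varphi(g_1N_1) \coloneqq g_2N_2$ for any $g_2$ with $(g_1,g_2)\in G$; such $g_2$ exists by surjectivity of $p_1$. This is the step needing the most care: well-definedness. Suppose $(g_1,g_2),(g_1',g_2')\in G$ with $g_1N_1=g_1'N_1$, so $g_1^{-1}g_1' = n\in N_1$. Then
\[
(g_1,g_2)^{-1}(g_1',g_2')(n,\id)^{-1} = (\id,\, g_2^{-1}g_2')\in G .
\]
Hence $g_2^{-1}g_2'\in N_2$, i.e.\ $g_2N_2=g_2'N_2$.

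\textbf{Properties of $\varphi$.} Multiplicativity follows because $G$ is closed under products. Surjectivity follows from surjectivity of $p_2$. For injectivity, suppose $(g_1,g_2)\in G$ with $g_2\in N_2$. Then $(\id,g_2)\in G$, so $(g_1,\id)=(g_1,g_2)(\id,g_2)^{-1}\in G$, giving $g_1\in N_1$.

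\textbf{Description of $G$.} By construction every $(g_1,g_2)\in G$ satisfies $\varphi(g_1N_1)=g_2N_2$. Conversely, suppose $\varphi(g_1N_1)=g_2N_2$. Pick $(g_1,g_2')\in G$; then $g_2'^{-1}g_2=m\in N_2$. Therefore $(g_1,g_2)=(g_1,g_2')(\id,m)\in G$, which proves the equality of sets.
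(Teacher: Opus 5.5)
Your proof is correct and complete. It is the standard argument: normality via conjugation by a partner $(h,k)\in G$, well-definedness of $\varphi$ via the element $(\id, g_2^{-1}g_2')\in G$, then bijectivity and the reverse inclusion. The paper gives no proof of its own and simply cites Scott's \emph{Group Theory}, where this standard argument appears.
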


The following example illustrates Goursat's lemma.
\begin{example}
    Let $G$ be a subgroup of $\Z/4\Z \times \Z/6\Z$, whose projections on $\Z/4\Z$ and $\Z/6\Z$ are surjective.
    We use Goursat's lemma to find all such $G$.
    Since $N_1 \unlhd \Z/4\Z$, we have $(\Z/4\Z)/N_1 = \Z/4\Z, \Z/2\Z$, or $\{\id\}$.
    Similarly for $N_2 \unlhd \Z/6\Z$, we have $(\Z/6\Z)/N_2 = \Z/6\Z, \Z/3\Z, \Z/2\Z$, or $\{\id\}$.
    By Goursat's Lemma. we have $(\Z/4\Z)/N_1 \cong (\Z/6\Z)/N_2$.
    Therefore $(\Z/4\Z)/N_1$ and $(\Z/6\Z)/N_2$ can only be $\Z/2\Z$ or $\{\id\}$.
    \begin{enumerate}[nosep, label = (\roman*)]
        \item If $(\Z/4\Z)/N_1 \cong (\Z/6\Z)/N_2 \cong \Z/2\Z$. Then $N_1 = 2(\Z/4\Z)$ and $N_2 = 2(\Z/6\Z)$.
        In this case, $G = \{(g_1, g_2) \mid (g_1 \bmod 2) = (g_2 \bmod 2)\}$. That is, elements of $G$ are pairs $(a,b)$ of the same parity.
        \item If $(\Z/4\Z)/N_1 \cong (\Z/6\Z)/N_2 \cong \{\id\}$. Then $N_1 = \Z/4\Z$ and $N_2 = \Z/6\Z$. In this case, $G$ is the full group $\Z/4\Z \times \Z/6\Z$.
    \end{enumerate}
\end{example}

\paragraph{From Submonoid Membership to Subgroup Membership.}

In~\cite{bodart2025}, Bodart showed that for $G = N \rtimes \Z$ with $N$ locally finite, Submonoid Membership in $G$ is Turing equivalent to Subgroup Membership.
Specializing to the lampshuffler $\FSym(\Z) \rtimes \Z$, we immediately obtain:

\begin{lemma}[{\cite[Proposition 2.5]{bodart2025}}]\label{lem:submonoid_to_subgroup}
    In $\FSym(\Z) \rtimes \Z$, Submonoid Membership reduces to Subgroup Membership.
\end{lemma}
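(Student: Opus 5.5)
The plan is to derive Lemma~\ref{lem:submonoid_to_subgroup} from the general result of~\cite{bodart2025}. That result says that for $G = N \rtimes \Z$ with $N$ locally finite, Submonoid Membership in $G$ is Turing equivalent to Subgroup Membership. Only two hypotheses need checking. First, $\FSym(\Z) \rtimes \Z$ is a semidirect product $N \rtimes \Z$ with $N = \FSym(\Z) = \ker \pi$, split by $n \mapsto \sigma^n$. Second, $\FSym(\Z)$ is locally finite: finitely many finitely supported permutations all fix the complement of the finite union $F$ of their supports, so they generate a subgroup of $\Sym(F)$, which is finite.

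For intuition, and in case a self-contained argument is preferred, the reduction itself runs as follows. Given $g_1, \dots, g_n, g$, split the generators by the sign of $\pi(g_i)$.
\begin{itemize}
\item If all $\pi(g_i) \geq 0$ (the case $\leq 0$ is symmetric), then any word representing $g$ uses the generators with $\pi(g_i) > 0$ at most $\pi(g)$ times in total. The positive part is therefore bounded, and the zero-shift generators generate a finite group by local finiteness.
\item Enumerating the boundedly many patterns then reduces membership to finitely many membership questions in finite groups. These are decidable because supports are explicit.
\end{itemize}

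The interesting case is when generators of both positive and negative shift exist. Here the key claim I would aim for is that the submonoid $M$ contains the finite subgroups it meets, and that $M \cap \ker\pi$ coincides with the subgroup it generates, up to effectively computable corrections. Concretely, take $a, b \in M$ with $\pi(a) = p > 0$ and $\pi(b) = -q < 0$. Then $a^q b^p \in M \cap \ker\pi$, so it has finite order and its inverse lies in $M$. Conjugation tricks of this kind show that $M$ contains the inverses of many of its elements. The goal is to show that $M$ equals the group $\gen{g_1,\dots,g_n}$, intersected with a computable set that reflects which shifts are reachable. In the mixed-sign case that set should be all of $\Z$ after suitable modifications, so membership in $M$ reduces to membership in a subgroup. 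This is the argument of~\cite[Proposition~2.5]{bodart2025}.

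The main obstacle is this mixed-sign case. Its difficulty is that inverting $g_i$ requires locating an element of $M$ in the coset $g_i^{-1}\ker\pi$ that differs from $g_i^{-1}$ only by an element of a finite subgroup already contained in $M$, and controlling this uniformly. Since the statement is precisely a specialization of~\cite[Proposition~2.5]{bodart2025}, I would simply invoke that result after verifying the two hypotheses above.
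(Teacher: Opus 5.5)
Your check of the hypotheses of Bodart's general theorem is correct: $\FSym(\Z)=\ker\pi$ is locally finite, and $n\mapsto\sigma^n$ splits $\pi$. So citing that theorem is legitimate, and your one-sign case coincides with the paper's second case. If the lemma is to be proved rather than cited, however, your self-contained argument has a genuine gap in the mixed-sign case. You never show that $M$ is closed under inverses. Instead you defer to unspecified ``computable corrections'' and to intersecting $\gen{g_1,\dots,g_n}$ with a set of reachable shifts, and you misjudge this as the hard case. Also, ``$M$ contains the finite subgroups it meets'' is false as stated. For instance, the submonoid generated by $(01)$ and $\sigma$ meets $\Sym(\{0,1,2\})$ but does not contain $(12)$. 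What does hold is that any $x\in M\cap\FSym(\Z)$ has some finite order $t$, so $x^{-1}=x^{t-1}\in M$.

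The missing step is to apply that finite-order observation to an arbitrary $h\in M$, not just to $a^qb^p$. Take $\pi(a)=p>0>-q=\pi(b)$ and choose $k\geq 1$ and $m,n\geq 0$ with $k\pi(h)+mp-nq=0$. For example, take $(k,m,n)=(q,0,\pi(h))$ if $\pi(h)>0$, $(p,-\pi(h),0)$ if $\pi(h)<0$, and $(1,0,0)$ if $\pi(h)=0$. Then $x\coloneqq h^k a^m b^n$ lies in $M\cap\FSym(\Z)$, so $x^t=\id$ for some $t\geq 1$, and
\[
h^{-1}=h^{k-1}a^m b^n\, x^{t-1}\in M .
\]
Hence in the mixed-sign case $M=\gen{g_1,\dots,g_n}$ exactly, with no corrections and no uniformity issue, and membership in $M$ is literally Subgroup Membership. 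This short argument is precisely the paper's first case.
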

\begin{proof}
    Let $\sigma$ be the shift map $x \mapsto x+1$ and $\pi \colon \FSym(\Z) \rtimes \Z \rightarrow \Z$ be the projection.
    Let $g_1, \ldots, g_n \in \FSym(\Z) \rtimes \Z$ and let $M$ be the submonoid they generate.
    \begin{itemize}[leftmargin=5mm]
        \item If there exist $i,j$, such that $\pi(g_i) < 0 < \pi(g_j)$, then $M$ is a subgroup of $\FSym(\Z) \rtimes \Z$.
        Indeed, for any $h \in M$, there exists $k \geq 1, m,n \geq 0$ such that $k \pi(h) + m \pi(g_i) + n\pi(g_j)= 0$, so $h^k g_i^m g_j^n \in \FSym(\Z)$ and $(h^k g_i^mg_j^n)^t = \id$ for some $t \geq 1$, which yields
        $$h^{-1} = h^{k-1} g_i^mg_j^n (h^k g_i^mg_j^n)^{t-1} \in M.$$
        Therefore $M$ is a group and membership in $M$ reduces to Subgroup Membership.
        \item Otherwise $\pi(g_i)$ are all non-negative or all non-positive.
        Given $g \in \FSym(\Z) \rtimes \Z$, any product of elements $g_i$ evaluating to $g$ may only contain at most $|\pi(g)|$ occurrence of $g_i$ with $\pi(g_i) \neq 0$.
        Therefore it suffices to verify whether $g$ can be written as a product $h_0 g_{i_1} h_1 g_{i_2} h_2 \cdots g_{i_m} h_m$ with $m \leq |\pi(g)|$ and $h_0, \ldots, h_m$ in the finite group generated by $\{g_j \mid \pi(g_j) = 0\}$. \qedhere
    \end{itemize}
\end{proof}

\section{Submonoid Membership and subgroups of \texorpdfstring{$\FSym(\Z) \rtimes \Z$}{}}

\paragraph{Overview.}
In this section we prove our main result (Theorem~\ref{thm:submonoid_membership}) that Submonoid Membership is decidable in $\FSym(\Z) \rtimes \Z$.
By Lemma~\ref{lem:submonoid_to_subgroup}, it suffices to show that Subgroup Membership is decidable.
We start with an overview, together with a few examples highlighting the different types of subgroups that can appear.

Using an Euclidean algorithm, it is easy to see that a finitely generated subgroup of $\FSym(\Z) \rtimes \Z$ is either contained in $\FSym(\Z)$, or it can be written as $\gen{\tau_1, \ldots, \tau_s, \rho}$ with $\tau_1, \ldots, \tau_s \in \FSym(\Z)$, and $\rho \notin \FSym(\Z)$.
In Subsection~\ref{subsec:invar} we will reduce to the case where all $\gen{\rho}$-orbits are infinite: for example, we reduce to the case $\rho = (\cdots -2 \; 0 \; 2 \cdots)(\cdots -1 \; 1 \; 3 \cdots)$, instead of the case $\rho = (0 \; 1)(\cdots -2 \; -1 \; 2 \cdots)$.
This will require ``eliminating'' the finite orbits.

Next, it is not difficult to see that the membership problem in $\gen{\tau_1, \ldots, \tau_s, \rho}$ reduces to the membership problem in the $\rho$-invariant group $\gen{\tau_1, \ldots, \tau_s}_{\rho} \leq \FSym(\Z)$.
The following example gives some intuition on the structure of $\gen{\tau_1, \ldots, \tau_s, \rho}$ and $\gen{\tau_1, \ldots, \tau_s}_{\rho}$.

\begin{example}\label{example:invariant}
    We illustrate the groups $\gen{\tau_1, \ldots, \tau_s, \rho}$ and $\gen{\tau_1, \ldots, \tau_s}_{\rho}$ with three examples.
    \begin{enumerate}[nosep, leftmargin=8mm, label = (\arabic*)]
        \item Consider the group $\gen{\tau, \rho} \leq \FSym(\Z) \rtimes \Z$, generated by the 2-cycle $\tau \coloneq (01) \in \FSym(\Z)$ and the one-step shift $\rho \coloneqq \sigma$.
        See Figure~\ref{fig:exampleone} for an illustration.
        The intersection $\gen{\tau, \rho} \cap \FSym(\Z)$ is equal to $\gen{\tau}_{\rho}$, the group generated by the (infinitely many) elements $\rho^n\tau\rho^{-n}, n \in \Z$.
        For example, the element $\rho^2 \tau \rho^3 \tau \rho^{-5}$ is in the intersection $\gen{\tau, \rho} \cap \FSym(\Z)$ because $2 + 3 + (-5) = 0$; and we can indeed write $\rho^2 \tau \rho^3 \tau \rho^{-5}$ as $(\rho^2\tau\rho^{-2}) (\rho^5\tau\rho^{-5}) \in \gen{\tau}_{\rho}$.
        Since $\gen{\tau}_{\rho}$ is generated by the 2-cycles $\tau = (01), \rho\tau\rho^{-1} = (12), \rho^2\tau\rho^{-2} = (23), \ldots$, we have $\gen{\tau}_{\rho} = \FSym(\Z)$.
        \item Consider the group $\gen{\tau, \rho} \leq \FSym(\Z) \rtimes \Z$, generated by the 2-cycle $\tau \coloneq (01) \in \FSym(\Z)$ and the two-step shift $\rho \coloneqq \sigma^2$.
        See Figure~\ref{fig:exampletwo} for an illustration.
        In this case, the intersection $\gen{\tau, \rho} \cap \FSym(\Z) = \gen{\tau}_{\rho}$ is generated by the (infinitely many) 2-cycles $\tau = (01), \rho\tau\rho^{-1} = (23), \rho^2\tau\rho^{-2} = (45), \ldots$.
        Therefore $\gen{\tau}_{\rho}$ is a direct sum $\bigoplus_{\Z} C_2$, where $C_2$ is the cyclic group of two elements.
        \item Consider the group $\gen{\tau, \tau', \rho} \leq \FSym(\Z) \rtimes \Z$, generated by $\tau \coloneq (01)$, $\tau' \coloneqq (02)(13)$, and the two-step shift $\rho \coloneqq \sigma^2$.
        See Figure~\ref{fig:examplethree} for an illustration.
        The intersection $\gen{\tau, \tau', \rho} \cap \FSym(\Z) = \gen{\tau, \tau'}_{\rho}$ is generated by the 2-cycles $\rho^n\tau\rho^{-n} = (2n \; 2n+1), n \in \Z$, and the double cycles $\rho^n\tau'\rho^{-n} = (2n \; 2n+2)(2n+1 \; 2n+3), n \in \Z$.
        In this case, $\gen{\tau, \tau'}_{\rho}$ preserves the partition $\bigsqcup_{n \in \Z}\{2n, 2n+1\}$.
        Here, the generator $\rho^n\tau\rho^{-n}$ swaps the two elements within the individual block $\{2n, 2n+1\}$, whereas the generator $\rho^n\tau'\rho^{-n}$ swaps the block $\{2n, 2n+1\}$ with its neighbouring block $\{2n+2, 2n+3\}$.
        Therefore, $\gen{\tau, \tau'}_{\rho}$ is a wreath product $C_2 \wr_{\Z} \FSym(\Z)$.
    \end{enumerate}
\end{example}

\begin{figure}[h!]
    \centering
    \begin{minipage}[t]{.28\textwidth}
        \centering
        \figureexampleone
        \caption{Example~\ref{example:invariant}(1)}
        \label{fig:exampleone}
    \end{minipage}
    \hfill
    \begin{minipage}[t]{.32\textwidth}
        \centering
        \figureexampletwo
        \caption{Example~\ref{example:invariant}(2)}
        \label{fig:exampletwo}
    \end{minipage}
    \hfill
    \begin{minipage}[t]{.32\textwidth}
        \centering
        \figureexamplethree
        \caption{Example~\ref{example:invariant}(3)}
        \label{fig:examplethree}
    \end{minipage}
\end{figure}

We then focus on deciding membership in $\gen{\tau_1, \ldots, \tau_s}_{\rho}$.
Note that in Example~\ref{example:invariant}(2), the orbits of $\gen{\tau}_{\rho}$ are all finite;
whereas in Example~\ref{example:invariant}(3), the orbit of $\gen{\tau, \tau'}_{\rho}$ is infinite.
In both examples, one can also replace $\sigma^2$ by $\sigma^d$ for some $d \geq 1$, and replace $C_2$ by any finite group $H \leq \Sym(\{1, \ldots, d\})$.
This way, we obtain the more general examples of $\rho$-invariant groups $\bigoplus_{\Z}H$ and $H \wr_{\Z} \FSym(\Z)$.
These two examples will be the essential building blocks for constructing all $\rho$-invariant groups $\gen{\tau_1, \ldots, \tau_s}_{\rho}$.

We continue the overview of the section.
In Subsection~\ref{subsec:orbits}, we will classify the finite and infinite orbits of $\gen{\tau_1, \ldots, \tau_s}_{\rho}$.
In this way, $\gen{\tau_1, \ldots, \tau_s}_{\rho}$ can be written as a subgroup of $G_{\fin} \times G_{\infty}$, where $G_{\fin}$ is the permutation group induced on the union of finite orbits, and $G_{\infty}$ is the permutation group induced on the union of infinite orbits.
In Subsection~\ref{subsec:finite_orbits} we compute $G_{\fin}$: we will show that $G_{\fin}$ is a $\rho$-invariant subgroup of $\bigoplus_{\Z}H$ for some finite $H$.
In Subsection~\ref{subsec:infinite_orbits} we compute $G_{\infty}$: we will show that $G_{\infty}$ can be obtained by taking a finite index subgroup of some direct product $\prod_{i = 1}^k \left(H_i \wr_{\Z} \FSym(\Z)\right)$.
In Subsection~\ref{subsec:membership}, we obtain a characterization of the group $\gen{\tau_1, \ldots, \tau_s}_{\rho} \leq G_{\fin} \times G_{\infty}$ by applying Goursat's Lemma (Lemma~\ref{lem:Goursat}) on the direct product $G_{\fin} \times G_{\infty}$.
The following example illustrates the relation between $\gen{\tau_1, \ldots, \tau_s}_{\rho}, G_{\fin}$, and $G_{\infty}$.

\begin{example}\label{example:product}
    Consider the group $\gen{\tau, \tau', \rho} \leq \FSym(\Z) \rtimes \Z$, generated by $\tau \coloneq (01)(23)$, $\tau' \coloneqq (26)(37)$, and the four-step shift $\rho \coloneqq \sigma^4$.
    See Figure~\ref{fig:examplefour} for an illustration.
    The intersection $\gen{\tau, \tau', \rho} \cap \FSym(\Z) = \gen{\tau, \tau'}_{\rho}$ is generated by the double cycles $\rho^n\tau\rho^{-n} = (4n \; 4n+1)(4n+2 \; 4n+3), n \in \Z$, and $\rho^n\tau'\rho^{-n} = (4n+2 \; 4n+6)(4n+3 \; 4n+7), n \in \Z$.
    The finite orbits of $\gen{\tau, \tau'}_{\rho}$ are $\{0, 1\}, \{4, 5\}, \{8, 9\}, \ldots$;
    there is an infinite orbit $\{2, 3, 6, 7, 10, 11, \ldots\}$.
    
    In this case, the group $G = \gen{\tau, \tau'}_{\rho}$ acts on the finite orbits as $G_{\fin} = \bigoplus_{\Z} C_2$, similar to Example~\ref{example:invariant}(2).
    Meanwhile, $G$ acts on the infinite orbit as $G_{\infty} = C_2 \wr_{\Z} \FSym(\Z)$, similar to Example~\ref{example:invariant}(3).
    Thus, $G$ is a subgroup of $G_{\fin} \times G_{\infty}$.
    
    However, $G$ is not equal to $G_{\fin} \times G_{\infty}$.
    To see this, note that each $\rho^n\tau\rho^{-n}$ simultaneously inverts the block $\{4n, 4n+1\}$ in the finite orbit and the block $\{4n+2, 4n+3\}$ in the infinite orbit.
    Therefore, for every element in $G$, the total number of inverted blocks must be even.
    Due to this parity constraint, the element $(0 1) \in G_{\fin} \times G_{\infty}$ cannot appear in $G$.
    Nevertheless, we can show (c.f.\ Proposition~\ref{prop:structure_invariant_subgroup}) that $G$ contains a ``big'' subgroup of $G_{\infty}$.
\end{example}

\begin{figure}[h!]
    \centering
    \figureexamplefour
    \caption{Illustration of Example~\ref{example:product}}
    \label{fig:examplefour}
\end{figure}

More precisely, in Subsection~\ref{subsec:membership} we will prove:
\begin{restatable}{proposition}{propstructure}\label{prop:structure_invariant_subgroup}
    Let $G \leq \FSym(\{1, \ldots, d\} \times \Z)$ be finitely generated as a $\rho$-invariant subgroup.
    Then $G$ is a subgroup of $G_{\fin} \times G_{\infty}$ whose projections on $G_{\fin}$ and $G_{\infty}$ are surjective, and
    \begin{enumerate}[itemsep=1pt, topsep=3pt, label = {\normalfont(\roman*)}]
        \item $G_{\fin}$ is a $\rho$-invariant subgroup of $\bigoplus_{\Z} H$ for some finite $H$, where $\rho$ acts on $\bigoplus_{\Z} H$ as the shift $(h_k)_{k \in \Z} \mapsto (h_{k-1})_{k \in \Z}$.
        \item $G_{\infty}$ is either trivial, or it can be effectively written as a subgroup of $W = W_1 \times \cdots \times W_k$, where $W_i = H_i \wr_{\Delta_i} \FSym(\Delta_i)$ for some finite $H_i$ and countably infinite sets $\Delta_1,\ldots,\Delta_k$.
    \end{enumerate}
    Furthermore, $G \geq \{\id\} \times W'$, where $W' = W_1' \times \cdots \times W_k'$ is the commutator subgroup of $W$.
\end{restatable}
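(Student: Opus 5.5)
The plan is to analyse the orbit structure of $G$ on $\Omega = \{1,\ldots,d\} \times \Z$ and then assemble $G$ from the pieces using Goursat-type arguments. Here $\rho$ acts as $(i,n) \mapsto (i,n+1)$, and $G = \gen{\tau_1, \ldots, \tau_s}_{\rho}$ with all $\tau_j$ supported in a window $\{1,\ldots,d\} \times [-N,N]$.

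\textbf{Step 1: orbits.} Since $\rho G \rho^{-1} = G$, the map $\rho$ permutes the $G$-orbits. Every point of $\Omega$ is a $\rho$-translate of a point of $\{1,\ldots,d\}\times\{0\}$, so there are only finitely many $\gen{\rho}$-classes of $G$-orbits. I will also check that only finitely many $G$-orbits are infinite. An infinite orbit must meet every window of width $2N+1$, because consecutive generators $\rho^n\tau_j\rho^{-n}$ only move points by at most $2N$. I expect these finitely many infinite orbits to be permuted by $\rho$.

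Let $F$ be the union of the finite orbits and $I$ the union of the infinite ones. Both are $G$- and $\rho$-stable, and $G$ embeds in $G_{\fin} \times G_\infty$ with surjective projections, by definition of the induced groups.

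\textbf{Step 2: item (i).} Choose representatives $O_1, \ldots, O_r$ of the $\gen{\rho}$-classes of finite orbits. A finite set has distinct $\rho$-translates, so the finite orbits are exactly $\rho^n O_c$, each listed once. Set $H = \prod_c \Sym(O_c)$. Identifying $\Sym(\rho^n O_c)$ with $\Sym(O_c)$ via $\rho^n$ gives $G_{\fin} \leq \bigoplus_{\Z} H$, where the sum is direct because elements are finitary. Under this identification, conjugation by $\rho$ becomes the shift.

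\textbf{Step 3: item (ii).} On each infinite orbit $\Omega_i$, the group $G_i$ (the restriction of $G$) is transitive, finitary, and $\rho$-invariant. I intend to use the classical structure theorem for transitive finitary groups (Neumann): such a group either contains $\FAlt(\Omega_i)$, or preserves a nontrivial partition into finite blocks. I will take a block system $\Omega_i = \bigsqcup_{\delta\in\Delta_i}\Gamma_\delta$ with finite blocks of bounded size. It should be canonical and hence $\rho$-invariant, for instance the coarsest system with finite blocks, or one built from the equivalence ``$x,y$ are separated by no element of $G$ within a window''. I then want to show that the induced action on $\Delta_i$ is primitive and therefore contains $\FAlt(\Delta_i)$.

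Because the block system is $\rho$-invariant and $\rho$ has finitely many orbits on it, all blocks have the same size, and the block stabilisers are conjugate to a fixed finite group $H_i$. This gives $G_i \leq H_i \wr_{\Delta_i} \FSym(\Delta_i) = W_i$. Effectiveness comes from computing these objects inside a large enough window, using $\rho$-periodicity.

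\textbf{Step 4: $\{\id\}\times W' \leq G$.} This is the main obstacle. First I produce elements of $G$ that act trivially on $F$ and on all $\Omega_j$ with $j\neq i$. If $a, b\in G$ have supports whose intersection lies in $\Omega_i$, then $[a,b]$ is supported in $\Omega_i$. Far-apart $\rho$-translates meet only inside the infinite orbits, where we can move points arbitrarily far, so conjugating by suitable elements of $G$ lets us arrange such overlaps. This should yield a subgroup $K_i \leq G \cap W_i$ that is normalised by $G_i$ and nontrivial.

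Next I show that $K_i \supseteq W_i'$. Since $G_i$ contains $\FAlt(\Delta_i)$ acting on blocks and projects onto $H_i$ on each block, normal closure arguments should give the result. Commutators of a block-preserving element with a block transposition produce elements of the form $(h, h^{-1})$ on two blocks. Together with $\FAlt(\Delta_i)$, these generate $W_i' = \{(h_\delta)\cdot g : \prod h_\delta \in H_i',\ g\in\FAlt(\Delta_i)\}$.

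The delicate points are twofold: making sure the block system is chosen so that the induced group is genuinely (almost) all of $\FSym(\Delta_i)$, and making sure the support-separation trick works uniformly. If the separation trick fails, I would first try iterated commutators $[[a,b],c]$ with translates spreading across $\Omega_i$.
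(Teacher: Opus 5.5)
Steps~1--2 are fine. The genuine gap is in Steps~3--4: you build one wreath factor $W_i$ per infinite $G$-orbit $\Omega_i$, and with that choice the final assertion is false.

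\textbf{Counterexample.} Take $d=2$ and $G=\gen{\tau}_\rho$ with $\tau=((1,0)\,(1,1))((2,0)\,(2,1))$.
\begin{itemize}
\item $G$ is the diagonal copy of $\FSym(\Z)$, acting identically on both rows.
\item The infinite orbits are $\Omega_1=\{1\}\times\Z$ and $\Omega_2=\{2\}\times\Z$, and each $G_i=\FSym(\Omega_i)$ is primitive.
\item Your recipe therefore gives $W_i=\FSym(\Omega_i)$ and $W'=\FAlt(\Omega_1)\times\FAlt(\Omega_2)$.
\item But $G$ contains no nontrivial element supported in $\Omega_1$.
\end{itemize}
Your separation trick cannot work here. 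Every element of $G$ acts on the two orbits in lockstep, so two supports meeting in $\Omega_1$ also meet in $\Omega_2$, and no iterated commutator can help. The trick is fine for separating from the finite orbits, since far $\rho$-translates have disjoint supports there; the obstruction is coupling between infinite orbits.

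The same phenomenon breaks your inference ``$K_i\ne\{\id\}$ normalised by $G_i$ $\Rightarrow$ $K_i\ge W_i'$''. Take $d=3$, $\tau=((1,0)\,(1,1))((2,0)\,(2,1))((3,0)\,(3,1))$ and $\tau'=((1,0)\,(2,0))$. The orbit $\{1,2\}\times\Z$ has blocks $\{(1,n),(2,n)\}$, and $G$ permutes these blocks exactly as it permutes row~$3$. Hence $G\cap W_1$ is a nontrivial subgroup of $\bigoplus_{\Delta_1}C_2$ containing no block $3$-cycle. The double commutator argument only yields $K_i\ge W_i'$ when $\pi_{\Delta_i}(K_i)\ne\{\id\}$.

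\textbf{Missing idea.} The factors of $W$ are not single orbits but classes of orbits glued along \emph{diagonal} actions, so in general $k$ is smaller than the number of infinite orbits. The paper handles this as follows.
\begin{itemize}
\item A normal subgroup of $G_i$ (with $W_i'\le G_i\le W_i$) either lies in $\bigoplus_{\Delta_i}H_i$ or contains $W_i'$.
\item Applying Goursat's lemma to a pair of factors, either $G\ge W_i'\times W_j'$, or the Goursat isomorphism descends to an isomorphism between the induced block actions (both $\FAlt$ or both $\FSym$).
\item In the second case, H\"older's theorem makes that isomorphism conjugation by a bijection $\xi\colon\Delta_i\to\Delta_j$. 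The two orbits are then merged into one factor inside $\Sym(\Gamma_i\sqcup\Gamma_j)\wr_{\Delta_i}\FSym(\Delta_i)$. Since $\xi$ commutes with $\rho^\ell$ and has bounded displacement, it is found by a finite search.
\item Once no diagonal pair remains, induction on the number of factors gives $G_\infty\ge W'$.
\item The finite orbits are handled by Goursat on $G\le G_{\fin}\times G_\infty$. Because $G_{\fin}$ has exponent dividing $\card{H}$, the kernel $N_\infty$ must project onto a group containing each $W_i'$. Normality of $N_\infty$ in $G_\infty$ then excludes diagonality, giving $N_\infty\ge W'$.
\end{itemize}

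\textbf{Smaller gap in Step~3.} You assert that a canonical coarsest block system exists, but this needs proof, since transitive finitary groups can be totally imprimitive. The paper uses transitivity and $\rho^\ell$-invariance to show that blocks of nontrivial invariant partitions have diameter bounded by a computable $D$, so a coarsest one exists. Its uniqueness is what gives $\rho^\ell$-invariance. Note also that $G_i$ is only $\rho^{\ell_i}$-invariant, not $\rho$-invariant, when $\rho$ permutes the infinite orbits.
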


The most non-trivial part of Proposition~\ref{prop:structure_invariant_subgroup} is to prove that $G$ contains the ``big'' subgroup $\{\id\} \times W'$.
This characterization allows us to reduce the membership problem in $G = \gen{\tau_1, \ldots, \tau_s}_{\rho}$, to the membership problem in $G/(\{\id\} \times W') \leq (\bigoplus_{\Z} H) \times (W/W')$.
Another key fact is that the quotient $W/W'$ is always finite (c.f.~Lemma~\ref{lem:transitive_finite}, due to Neumann~\cite{neumann1976structure}).
Finally in Subsection~\ref{subsec:membership}, we use the embedding $\bigoplus_{\Z} H \hookrightarrow H \wr \Z$ and the finiteness of $W/W'$ to reduce the above problem to the Subgroup Membership in the wreath product $H \wr \Z$.
This is decidable by the result of Lohrey, Steinberg,
and Zetzsche (Lemma~\ref{lem:decidable_H_wreath_Z}).

This concludes the overview.
We now begin the detailed proof of our main result (Theorem~\ref{thm:submonoid_membership}).

\subsection{Reduction to $\rho$-invariant subgroups}\label{subsec:invar}

In this subsection we reduce Subgroup Membership in $\FSym(\Z) \rtimes \Z$, to deciding membership in $\rho$-invariant subgroups of $\FSym(\{1, \ldots, d\} \times \Z)$, where $\rho \colon (i, z) \mapsto (i, z+1)$ denotes the ``shift on the second coordinate''.
To do this, we need to eliminate the finite orbits.

Let 
\[
\pi \colon \FSym(\Z) \rtimes \Z \rightarrow \Z
\]
denote the projection.
Let $G$ be a finitely generated subgroup of $\FSym(\Z) \rtimes \Z$. 
If $\pi(G) = \{0\}$, then $G$ is finite and membership in $G$ can be decided by enumerating its elements. Otherwise, $\pi(G)$ is a non-trivial subgroup of $\Z$ and $G$ is infinite. We can say a bit more:
\begin{lemma}\label{lem:compute_orbits}
    If $G\le \FSym(\Z)\rtimes\Z$ is finitely generated and $\pi(G)\ne\{0\}$, then the action of $G$ on $\Z$ has finitely many orbits. Moreover one can effectively compute a decomposition
    \[ \Z = (\mF_1\sqcup\ldots\sqcup \mF_c) \sqcup (\mI_1\sqcup \ldots \sqcup \mI_d) \]
    from the generators of $G$, where $\mF_i$ are the finite orbits and $\mI_i$ are the infinite orbits.
\end{lemma}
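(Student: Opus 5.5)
We first reduce to a single element with nonzero translation. Since $\pi(G) = n\Z$ with $n \neq 0$, a Euclidean algorithm on the generators (replacing $g_i$ by $g_i g_j^{\pm 1}$) gives effectively $\rho \in G$ with $\pi(\rho) = n > 0$. Write $\rho = f\sigma^n$ with $f \in \FSym(\Z)$, and let $R \subset \Z$ be a finite interval containing $\supp(f)$ and all supports of the $\FSym$-parts of the generators. Outside a bounded region, $\rho$ acts as $x \mapsto x+n$. Hence every $\gen{\rho}$-orbit meeting $\Z \setminus [-M, M]$ (for $M$ large, computable from $R$ and $n$) is infinite. The infinite $\gen{\rho}$-orbits are therefore finitely many: each contains a point of the window $[M+1, M+n]$, so there are at most $n$ of them. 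The finite $\gen{\rho}$-orbits lie in $[-M, M]$. In particular $\gen{\rho}$ already has finitely many orbits on $\Z$, and so does $G \geq \gen{\rho}$, since each $G$-orbit is a union of $\gen{\rho}$-orbits. This proves the first claim.

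For effectiveness, we compute the $\gen{\rho}$-orbit decomposition explicitly. The finite $\gen{\rho}$-orbits are found by iterating $\rho$ on each point of $[-M, M]$ and detecting return; points whose iterates leave $[-M, M]$ lie in infinite orbits. Each infinite $\gen{\rho}$-orbit is represented by a residue-class-like tail $\{x, x+n, x+2n, \ldots\}$ together with a finite piece. Then we form the finite graph whose vertices are the $\gen{\rho}$-orbits, with an edge between two of them when some generator $g_i$ maps a point of one to the other.

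To test edges we need only finitely many points. For $g_i = f_i\sigma^{z_i}$, the map $g_i\rho^{k} = \rho^{k'}g_i$-type relations are unnecessary. It suffices to check, for each vertex, the images under each $g_i$ of a finite set of representatives. A point $x$ far from $R$ satisfies $g_i(x) = x + z_i$, and $x$, $x+z_i$ both lie in long tails. So the $\gen{\rho}$-orbit of $g_i(x)$ is determined by the residue-type tail class, which is periodic in $x$ with period $n$ once outside $[-M', M']$. Checking $g_i$ on $[-M'-n, M'+n]$ therefore captures all edges.

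The connected components of this graph are exactly the $G$-orbits. A component containing an infinite $\gen{\rho}$-orbit is an infinite $G$-orbit $\mI_j$. A component consisting only of finite $\gen{\rho}$-orbits is a finite $G$-orbit $\mF_i$, and it is listed explicitly as a finite set. The main obstacle is making the "far from $R$" periodicity argument rigorous. That argument justifies that adjacency between infinite $\gen{\rho}$-orbits is witnessed inside a computable bounded window, and it uses that both $\rho$ and every $g_i$ act as translations commuting with $\sigma^n$ outside $R$.
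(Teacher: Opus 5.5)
Your proposal is correct and is essentially the paper's argument: both build a finite graph whose vertices are pieces of $\Z$ already known to lie in a single $G$-orbit (with the far-out tails collapsed using the periodicity of a translation-like element of $G$), with edges given by the generators. In both, the connected components are the $G$-orbits, and a component is infinite iff it contains an infinite piece. The only difference is the choice of vertices: the paper uses residue classes mod $m=\max\abs{\pi(g_i)}$ in each tail plus singletons in the middle, whereas you use full $\gen{\rho}$-orbits. For your version, note that each infinite $\gen{\rho}$-orbit has a tail at $-\infty$ as well as at $+\infty$, and you match the two by iterating $\rho$ through the bounded window.
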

\begin{proof}
Suppose that $G=\la g_1,g_2,\ldots,g_r\ra$ where $g_i = \tau_i\sigma^{\pi(g_i)}$ with $\tau_i\in\FSym(\Z)$ and $\pi(g_i)\in \Z$ (not all zero). Let $m=\max\abs{\pi(g_i)} > 0$. We can compute an interval $[a,b]$ such that $\supp(\tau_i)\subset [a,b]$ for $i=1,\ldots,r$. 
We claim that for all $x>b$ and $y \in \Z$, $x$ and $y$ are in the same orbit if and only if and $x + m$ and $y$ are in the same orbit.
Indeed, without loss of generality supposing that $\pi(g_1) = m$, we have $g(x+m) = y \iff gg_1(x) = y$.
Similarly, for all $x < a$ and $y \in \Z$, $x$ and $y$ are in the same orbit if and only if and $x - m$ and $y$ are in the same orbit.
Therefore, to compute all the orbits, we can draw the finite graph with
\begin{itemize}[leftmargin=8mm]
    \item vertex set $\Z/\!\!\sim$, where the congruence relation $\sim$ is defined as $x\sim y$ if either ($x,y<a-m$ and $x\equiv y\pmod m$), or ($x,y>b+m$ and $x\equiv y\pmod m$), or $x=y$;
    \item edges given by $([x],[g_i(x)])$ for $x\in\Z$ and $i=1,\ldots,r$, where $[x]$ denotes the equivalence class of $x$ under $\sim$.
\end{itemize}
The congruence classes of each connected graph component corresponds to a $G$-orbit.
This orbit is infinite if it contains a vertex whose congruence class is infinite.
\end{proof}

Our goal is to reduce to the case where there are only infinite orbits.
To achieve this, the next idea is to ``cover'' the finite orbits using a single element:

\begin{lemma}\label{lem:rho_inf_orbit}
    Let $G$ be a finitely generated subgroup of $\FSym(\Z) \rtimes \Z$.
    Suppose $G$ is infinite and let $\mF = \mF_1 \sqcup \cdots \sqcup \mF_c$ be the union of finite orbits of $G$.
    Then, there exists an effectively computable element $\rho \in G$, such that 
    \begin{enumerate}[nosep, label={\normalfont(\roman*)}]
        \item $\pi(\rho) > 0$,
        \item the union of finite orbits of $\gen{\rho}$ is equal to $\mF$,
        \item $\rho(z) = z$ for all $z \in \mF$.
    \end{enumerate}
\end{lemma}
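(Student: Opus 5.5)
We start from any $g \in G$ with $\pi(g) > 0$; one exists because $G$ is infinite, hence $\pi(G) \neq \{0\}$, and if needed we replace $g$ by $g^{-1}$. Such a $g$ can be found effectively by taking a generator with nonzero projection. Write $g = \tau\sigma^{z}$ with $z = \pi(g) > 0$. Outside a finite interval $g$ acts as translation by $z$. Hence every $\gen{g}$-orbit meeting a far-left or far-right point is infinite, and only finitely many points lie in finite $\gen{g}$-orbits. Since $G$ stabilizes each of its orbits, $g$ stabilizes $\mF$. As $\mF$ is finite, $g$ induces a permutation of $\mF$ of some finite order $N$, namely the lcm of the cycle lengths of $g|_{\mF}$. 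Let $\mF' \supseteq \mF$ be the union of the finite $\gen{g}$-orbits. It is also finite and stabilized by $g$, so taking $N$ to be the order of $g|_{\mF'}$, we get that $g^N$ fixes $\mF'$ pointwise.

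Set $\rho = g^N$. Property (i) holds since $\pi(\rho) = Nz > 0$. Property (iii) holds since $\rho$ fixes $\mF' \supseteq \mF$ pointwise. It remains to check (ii). Points of $\mF'$ are fixed by $\rho$, so they give singleton, hence finite, $\gen{\rho}$-orbits. A point $x \notin \mF'$ has an infinite $\gen{g}$-orbit, so $g^k(x)$ are pairwise distinct, and then $g^{Nk}(x)$ are also pairwise distinct, making its $\gen{\rho}$-orbit infinite. Thus the union of finite $\gen{\rho}$-orbits is exactly $\mF'$.

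The main obstacle is that $\mF'$ may be strictly larger than $\mF$. A point lying in an infinite $G$-orbit can still lie in a finite $\gen{g}$-orbit, and then (ii) fails. So the plan must modify $g$ to destroy these extra finite cycles without creating new ones inside $\mF$. I would handle this as follows. Take $x \in \mF' \setminus \mF$. Its $G$-orbit is infinite, so by Lemma~\ref{lem:compute_orbits} it contains some point $y$ far out, lying in an infinite $\gen{g}$-orbit, and there is a computable $h \in G$ with $h(x) = y$.

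Replace $g$ by $g' = g^{M} h^{-1} g^{K} h$, or a similar product with large exponents $M, K$, chosen so that $\pi(g') > 0$. The intent is to send the finite cycle through $x$ into the translation region, so that the orbit of $x$ escapes to infinity. Such a product still stabilizes $\mF$, since $G$ stabilizes it, and its finite orbits outside $\mF$ should strictly decrease. Iterating over the finitely many points of $\mF' \setminus \mF$ then gives $\mF' = \mF$, after which the argument above with $N$ applies.

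The delicate points are twofold. First, I must verify that the modification does not create new finite cycles elsewhere; choosing $K, M$ larger than the diameter of all relevant supports should ensure every point outside a bounded window is translated far away. Second, everything must be computable, which holds because all the objects involved are explicit finite data: finite supports, the orbit decomposition, and bounded searches for $h$.
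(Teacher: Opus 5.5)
Your preliminary steps are correct. Passing to $\rho = g^N$ gives (i) and (iii) and reduces (ii) to achieving $\mF' = \mF$, and you correctly locate the obstacle. But the step that carries the whole content of the lemma is only asserted (``should strictly decrease''): namely, that $g' = g^M h^{-1} g^K h$ has strictly fewer points in finite $\gen{g'}$-orbits than $g$ has in finite $\gen{g}$-orbits. The heuristic you offer does not prove it, and two ingredients are missing.

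First, large exponents alone do not make $g^K$ a large translation on $\mI_g$, since a point of an infinite $\gen g$-orbit can pass through the region where $g$ is not a translation and move left there. One needs the observation that such an orbit meets the finite exceptional set of $g$ at most as many times as that set has elements. This gives a uniform bound $g^n(z) \geq z + C$ for all $z \in \mI_g$ once $n$ is large.

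Second, and more seriously, pushing every point outside a bounded window far to the right does not rule out finite cycles of $g'$ that alternate between the window and its exterior. Nor does the escape of $x$ give a decrease, since $g'$ may trap other points, possibly ones previously in $\mI_g$, in new finite cycles. The paper closes this with a displacement-counting criterion. Suppose $h(z) \geq z - m$ for $z$ in a finite set $F$ and $h(z) > z + m\card{F}$ for $z \notin F$. Then every finite $\gen h$-orbit lies in $F$, because displacements along a cycle sum to zero while the cycle visits $F$ at most $\card F$ times.

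With these tools your product can be made to work, but the bookkeeping differs from what you describe. The right exceptional set is $F = h^{-1}(\mF')$, not $\mF'$.
\begin{itemize}
\item The element $h^{-1}g^Kh$ permutes $F$, so $g'$ moves points of $F$ left by an amount bounded independently of $K$. Off $F$, it moves points right by an amount growing with $K$.
\item Strict decrease needs its own argument. Since $x \in \mF' \setminus F$ and $\card F = \card{\mF'}$, the set $F$ contains a point of $\mI_g$. Hence $g'(F) = g^M(F) \neq F$ for $M \geq 1$, so the $g'$-stable set of points with finite $\gen{g'}$-orbit is a proper subset of $F$.
\item This new set need not be contained in $\mF'$. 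So the induction must be on $\card{\mF'}$, not ``over the finitely many points of $\mF' \setminus \mF$''.
\end{itemize}
The paper's choice is simpler: $kg^n$ with $k(z_0) \in \mI_g$. Its exceptional set stays $\mF_g$, and the point $g^{-n}(z_0) \in \mF_g$ visibly escapes, because $kg^n$ sends it to $k(z_0) \in \mI_g$.
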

\begin{proof}
    For an illustration of the lemma and its proof, see Figures~\ref{fig:rho1} and \ref{fig:rho2}. We will first find an element $\rho$ satisfying (i) and (ii), it suffices to take a suitable power of $\rho$ so that (iii) is satisfied. \medskip

    \begin{figure}[h]
    \centering
    \begin{minipage}[t]{.48\textwidth}
        \centering
        \figurerhoone
        \caption{The arrows denote two permutations $g,k$. For $G = \gen{g, k}$, there is only one finite $G$-orbit $\mF = \{5\}$. The colours mark different $\gen{g}$-orbits.
        The finite $\gen{g}$-orbits are $\{4\}$ and $\{5\}$.}
        \label{fig:rho1}
    \end{minipage}
    \hfill
    \begin{minipage}[t]{.48\textwidth}
        \centering
        \figurerhotwo
        \caption{For the element $\rho \coloneqq kg^2$, the different colours mark different $\gen{\rho}$-orbits. The only finite $\gen{\rho}$-orbit is $\mF = \{5\}$.}
        \label{fig:rho2}
    \end{minipage}
    \end{figure}
    
    \noindent Let $\mI = \mI_1 \sqcup \cdots \sqcup \mI_d$ denote the union of infinite orbits of $G$.
    Take any $g \in G$ such that $\pi(g)\ge 1$.
    Let $\mF_g$ denote the union of finite orbits of $\gen{g}$, then $\mF_g \supseteq \mF$.
    Similarly, let $\mI_g \subseteq \mI$ denote the union of infinite orbits of $\gen{g}$, so $\Z = \mI_g \sqcup \mF_g$.
    Since $\pi(g) \neq 0$, $\gen{g}$ has only finitely many finite orbits, so the cardinality $\card{\mF_g}$ is finite. If $\card{\mF_g} = \card{\mF}$, then we have found our $\rho$. If on the contrary $\card{\mF_g} > \card{\mF}$, we will find another element $h \in G$ such that $\card{\mF_h} < \card{\mF_g}$. \medskip
    
    \noindent Since $\card{\mF_g} > \card{\mF}$, there exists $z_0 \in \mF_g \setminus \mF$. The $G$-orbit of $z_0$ is infinite, so we can find $k \in G$ such that $k(z_0) \in \mI_g$. The idea is to take $h=kg^n$ with $n$ large enough so that $g^n$ (and $kg^n$) moves points from $\mI_g$ so far to the right that their $\gen h$-orbits must be infinite, and $k$ inserts $z_0\in \mF_g$ into $\mI_g$. \smallskip
    
    
    \begin{adjustwidth}{2mm}{2mm}
        \textbf{Claim.} Consider $g\in\FSym(\Z)\rtimes\Z$ such that $\pi(g)\ge 1$. For any constant $C$, there exists an exponent $n=n(C)\ge 1$ such that $g^n(x)\ge x+C$ for all $x\in\mI_g$.
        
        \medskip
        \noindent$\triangleright$ Recall that each element $g\in\FSym(\Z)\rtimes\Z$ acts eventually like translation. Equivalently, there exists a finite set $F\subset\Z$ such that $g(x)=x+\pi(g)$ for all $x\in\Z\setminus F$. Let $m_g=\max_{x\in F}\bigl(x-g(x)\bigr)$ be the largest backward increment of $g$. We consider $x\in\mI_g$ and observe that there are at most at most $\abs F$ indices $i\in\Z$ such that $g^i(x)\in F$, since $\gen g\cdot x$ is infinite. Finally we have
        \[g^n(x)-x = \sum_{i=0}^{n-1}\bigl(g^{i+1}(x)-g^i(x)\bigr)\ge (n-\card F)\cdot \pi(g)-\abs F\cdot m_g\ge C.\]
        for all $x\in\mI_g$ and $n\ge C+(m_g+1)\card F$. \hfill $\triangleleft$
    \end{adjustwidth}
\medskip

\begin{adjustwidth}{3mm}{3mm}
    \textbf{Claim.} Consider $h\in\Sym(\Z)$. Suppose that there exists $F\subset \Z$ and $m\ge 1$ such that 
    \[ \forall x\in F,\; h(x)\ge x-m, \quad\text{and}\quad \forall x\in\Z\setminus F,\; h(x)> x+m\card F.\]
    Then all orbits $\gen h\cdot x$ which are not included in $F$ are infinite.

    \medskip
    \noindent $\triangleright$ Consider $x\in\Z$ such that $\gen h\cdot x\not\subset F$, and suppose on the contrary that there exists $t\ge 1$ such that $h^t(x) = x$. We consider the smallest such $t$, so that the elements $x, h(x), h^2(x), \ldots, h^{t-1}(x)$ are distinct. In particular, there are at most $\card F$ indices $i\in\{0,1,\ldots,t-1\}$ such that $h^i(x)\in F$, in which case $h^{i+1}(x)\ge h^i(x)-m$. Otherwise, we have $h^{i+1}(x)\ge h^i(x)+m\card F\ge h^i(x)$. Moreover, the later case happens at least once since $\gen h\cdot x\not\subset F$. We conclude that
    $$h^t(x) > x - \card{F} \cdot m + m\card{F} = x,$$
    a contradiction. \hfill $\triangleleft$
\end{adjustwidth}
\medskip
\noindent We apply this claim to $h=kg^n$, with the finite set $F=\mF_g$ and $m=\mathrm{diam}(\mF_g)$, where $n$ is taken as $n\bigl(m\abs F+\max_{x\in F}|x-k(x)|+1\bigr)$ from the first claim. 
The hypothesis $\gen h\cdot x\not\subseteq F$ applies to all $x\notin \mF_g$, as well as $x=g^{-n}(z_0)\in\mF_g$ (recall that $h(x)=k(z_0)\in\mI_g$), proving that $\mF_h \subsetneq\mF_g$. By induction on $\card{\mF_g}$, we can construct an element $\rho \in G$ such that $\mF_{\rho} = \mF$ and $\pi(\rho) > 0$. We ensure that $\rho(z) = z$ for all $z \in \mF$ by replacing $\rho$ with a suitable power $\rho^{\card{\mF}!}$.
\end{proof}

The following folklore result shows that decidability of Subgroup Membership is preserved by taking finite extension of groups.
\begin{lemma}[Nielsen-Schreier, c.f.~\cite{grunschlag1999algorithms}]\label{lem:finite_index_subgroup}
    Let $A$ be a finitely generated group, $G=\gen{g_1,\ldots,g_r}$ a finitely generated subgroup and $\varphi \colon G \to F$ be a homomorphism in a finite group $F$.
    Then $\ker(\varphi)$ is finitely generated and a set of generators is effectively computable.
    Furthermore, the membership problem in $G$ reduces to the membership problem in $\ker(\varphi)$.
\end{lemma}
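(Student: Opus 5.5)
The plan is the standard Schreier argument applied to the finite coset space $G/\ker(\varphi)$. Let $K \coloneqq \ker(\varphi)$ and $T \coloneqq \varphi(G) \leq F$; the subgroup $T$ is computed by closing $\{\varphi(g_1), \ldots, \varphi(g_r)\}$ under multiplication inside the finite group $F$. For each $t \in T$ we fix, by breadth-first search over words in the generators, a word $w_t$ in $g_1^{\pm1}, \ldots, g_r^{\pm1}$ with $\varphi(w_t) = t$, taking $w_{\id}$ to be the empty word. The elements $w_t$ form a transversal of the cosets of $K$ in $G$, and $[G : K] = \card{T} < \infty$.

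For finite generation we invoke Schreier's lemma: $K$ is generated by the finitely many elements $w_t\, g_i\, w_{t\varphi(g_i)}^{-1}$ for $t \in T$ and $1 \leq i \leq r$. Each of these lies in $K$ because its image under $\varphi$ is $t\varphi(g_i)(t\varphi(g_i))^{-1} = \id$. To see that they generate $K$, take an element of $K$ written as a word $g_{i_1}^{\epsilon_1}\cdots g_{i_\ell}^{\epsilon_\ell}$. Inserting $w_{t_j}^{-1}w_{t_j}$ after each prefix, where $t_j$ is the image of the $j$-th prefix, rewrites the word as a product of Schreier generators and their inverses, followed by the trailing factor $w_{t_\ell}$. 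Since the whole word lies in $K$, we have $t_\ell = \id$, so this trailing factor is empty. All of these generators are explicit words, so they are effectively computable. The only point to check is that $\varphi$ is computable on generators; this is implicit in the statement, since $\varphi$ is given by the images $\varphi(g_i)$.

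For the reduction, take an input $x \in A$ and ask whether $x \in G$. The plan is to show that $x \in G$ if and only if $w_t^{-1}x \in K$ for some $t \in T$. If $x \in G$, then $x$ lies in some coset $w_tK$. Conversely, if $w_t^{-1}x \in K$, then $x \in w_tK \subseteq G$. We therefore make $\card{T}$ calls to the membership oracle for $K$, which is now finitely generated by the computed Schreier generators. Note that we do not need to know $\varphi(x)$, because $x$ is only an element of $A$; this is why every coset representative has to be tried.

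The main subtlety is exactly this last point: $\varphi$ is defined only on $G$ and not on $A$. The reduction must therefore avoid evaluating $\varphi$ on the input, and the fix is to loop over all of $T$. Apart from that, the argument is routine bookkeeping.
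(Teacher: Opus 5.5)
Your proposal is correct. The paper gives no proof of this lemma and only cites it as folklore from Grunschlag. Your argument is exactly the standard proof behind that citation: the Schreier generators $w_t g_i w_{t\varphi(g_i)}^{-1}$ taken relative to the transversal $\{w_t\}_{t \in \varphi(G)}$, followed by the $\card{\varphi(G)}$ oracle queries ``$w_t^{-1}x \in \ker(\varphi)$?'', which correctly avoids evaluating $\varphi$ outside $G$.
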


We are now ready to ``eliminate'' the finite orbits.
Let $G$ be a finitely generated subgroup of $\FSym(\Z) \rtimes \Z$.
Suppose $G$ is infinite, then $\pi(G) = d \Z$ for some $d \geq 1$.
Let $\rho \in G$ be as in Lemma~\ref{lem:rho_inf_orbit}, then $\pi(\rho) = dn$ for some $n \geq 1$.
Let $\mF$ be the union of finite $G$-orbits.
For any element $g \in G$, denote by $\pi_{\mF}(g) \in \Sym(\mF)$ its induced action on $\mF$.
Consider the map 
\begin{align*}
\varphi \colon G &\rightarrow \Sym(\mF) \times \Z/n\Z,  \\
g &\mapsto \left(\pi_{\mF}(g), \; \frac{\pi(g)}{d} \bmod n\right).
\end{align*}
Its kernel $\widetilde{G} \coloneqq \ker(\varphi)$ is the set of elements $g \in G$ that act trivially on $\mF$ and such that $n \mid \frac{\pi(g)}{d}$. 
By Lemma~\ref{lem:finite_index_subgroup}, the membership problem in $G$ reduces to the membership problem in $\widetilde{G} \coloneqq \ker(\varphi)$.

\begin{lemma}\label{lem:standard_generating_set}
    Let $\widetilde{G} \coloneqq \ker(\varphi)$.
    Then we can compute elements $\tau_1, \ldots, \tau_s \in \widetilde{G}$ with $\pi(\tau_i) = 0$ for all $i$, such that the set $\{\tau_1, \ldots, \tau_s, \rho\}$ generates $\widetilde{G}$.
\end{lemma}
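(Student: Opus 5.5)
First I check that $\rho \in \widetilde{G}$. By Lemma~\ref{lem:rho_inf_orbit}(iii), $\rho$ fixes $\mF$ pointwise, so $\pi_{\mF}(\rho) = \id$. Moreover $\pi(\rho) = dn$, so $\pi(\rho)/d = n \equiv 0 \bmod n$. Hence $\varphi(\rho)$ is trivial and $\rho \in \widetilde{G}$.

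Next I apply Lemma~\ref{lem:finite_index_subgroup} to $\varphi$, which has finite target $\Sym(\mF) \times \Z/n\Z$. This effectively computes a finite generating set $h_1, \ldots, h_r$ of $\widetilde{G}$. Every $g \in \widetilde{G}$ satisfies $d \mid \pi(g)$ and $n \mid \pi(g)/d$, so $\pi(\widetilde{G}) \subseteq dn\Z = \pi(\rho)\Z$. Therefore each $h_j$ has $\pi(h_j) = k_j \cdot dn$ for a computable integer $k_j$.

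Set $\tau_j \coloneqq h_j \rho^{-k_j}$. Then $\tau_j \in \widetilde{G}$, and $\pi(\tau_j) = 0$ because $\pi$ is a homomorphism. The set $\{\tau_1, \ldots, \tau_r, \rho\}$ generates $\widetilde{G}$, since each $h_j = \tau_j \rho^{k_j}$ lies in the subgroup it generates. Taking $s = r$ gives the statement.

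There is no real obstacle. The only substantive points are the effective computation of generators of the finite-index kernel, which Lemma~\ref{lem:finite_index_subgroup} supplies, and the observation $\pi(\widetilde{G}) = \pi(\rho)\Z$, which lets the Euclidean-type reduction be done in a single step using powers of $\rho$.
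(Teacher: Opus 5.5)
Your proof is correct and is essentially the paper's argument: compute generators $h_i$ of $\widetilde{G}$ via Lemma~\ref{lem:finite_index_subgroup}, then cancel their $\pi$-values with powers of $\rho$. The paper uses $\tau_i = \rho^{-\pi(h_i)/(dn)} h_i$ rather than your right multiplication, which makes no difference, and you also make explicit the checks that $\rho \in \widetilde{G}$ and $dn \mid \pi(h_j)$, which the paper leaves implicit.
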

\begin{proof}
    By Lemma~\ref{lem:finite_index_subgroup} we can compute a set of generators $\{h_1, \ldots, h_s\}$ of $\widetilde{G}$.
    For $i = 1, \ldots, s$, let $\tau_i \coloneqq \rho^{-\frac{\pi(h_i)}{dn}} \cdot h_i$.
    Then $\{\tau_1, \ldots, \tau_s, \rho\}$ generates $\widetilde{G}$, and $\pi(\tau_i) = 0$ for all $i$.
\end{proof}

By the definition of $\varphi$, the group $\widetilde{G}$ stabilizes every element in $\mF$.
Since $\rho \in \widetilde{G}$ and every $\gen{\rho}$-orbit is either infinite or is in $\mF$, we obtain that every $\widetilde{G}$-orbit is either infinite or is in $\mF$.
We now without loss of generality replace $G$ by $\widetilde{G}$, and replace $\Z$ by $\Z \setminus \mF$.
Thus $G = \gen{\tau_1, \ldots, \tau_s, \rho}$ with $\pi(\tau_i) = 0$ for all $i$, and $\pi(\rho) > 0$, and all $\gen{\rho}$-orbits are now infinite. 
Let $\mI_1, \ldots, \mI_d$ denote the $\gen{\rho}$-orbits, so $\tau_1, \ldots, \tau_s \in \FSym(\mI_1 \sqcup \cdots \sqcup \mI_d)$.

For each orbit $\mI_j$, pick any $i \in \mI_j$, then $\mI_j = \{\rho^z(i) \mid z \in \Z\}$.
This allows us to identify $\mI_j$ with the set $\Z$, where $\rho$ acts as $z \mapsto z+1$.
Therefore, we identify
\begin{equation}\label{eq:identify_Ztimesd}
    \mI_1 \sqcup \cdots \sqcup \mI_d \cong \{1, \ldots, d\} \times \Z,
\end{equation}
where $\rho$ acts as incrementation of the second coordinate $(j, z) \mapsto (j, z+1)$.
The conjugation $g \mapsto \rho g \rho^{-1}$ defines an automorphism of $\FSym(\{1, \ldots, d\} \times \Z)$.
Recall that for a set $S \subseteq \FSym(\{1, \ldots, d\} \times \Z)$, the $\rho$-invariant group $\gen{S}_{\rho}$ is the group generated by $\rho^n s \rho^{-n}, n \in \Z, s \in S$.
This allows us to reduce Subgroup Membership in $\FSym(\Z) \rtimes \Z$ to the membership problem in $\rho$-invariant subgroups of $\FSym(\{1, \ldots, d\} \times \Z)$:

\begin{proposition}\label{prop:subgroup_to_invariant_subgroup}
    Subgroup Membership in $\FSym(\Z) \rtimes \Z$ reduces to the following problem: \smallskip
    \begin{adjustwidth}{3mm}{3mm}
        {\normalfont\textbf{Input:}} elements $\tau_1, \ldots, \tau_s, g \in \FSym(\{1, \ldots, d\} \times \Z)$. \smallskip\\
        {\normalfont\textbf{Question:}} whether $g \in \gen{\tau_1, \ldots, \tau_s}_{\rho}$.
    \end{adjustwidth}
\end{proposition}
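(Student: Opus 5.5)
The reduction assembles the steps already set up in this subsection. Let $G = \gen{g_1, \ldots, g_r}$ and a target $g$ be given. If $\pi(G) = \{0\}$, then $G$ is finite and membership is decided by enumeration. Otherwise, we use Lemma~\ref{lem:compute_orbits} to compute the finite orbits, whose union is $\mF$, and Lemma~\ref{lem:rho_inf_orbit} to compute $\rho \in G$. We then form $\varphi$ and its kernel $\widetilde G$; by Lemma~\ref{lem:finite_index_subgroup}, membership of $g$ in $G$ reduces to finitely many membership queries in $\widetilde G$. Concretely, we enumerate coset representatives $c$ of $\widetilde G$ in $G$ (finitely many, since the image of $\varphi$ is finite and computable) and ask whether $c^{-1}g \in \widetilde G$. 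Lemma~\ref{lem:standard_generating_set} then presents $\widetilde G = \gen{\tau_1, \ldots, \tau_s, \rho}$ with $\pi(\tau_i) = 0$.

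Next we handle a query $h \in \widetilde G$. First, $h$ must fix $\mF$ pointwise and satisfy $\pi(h) \in \pi(\rho)\Z$, since every element of $\widetilde G$ does; if either check fails, we answer no. Otherwise set $h' = \rho^{-\pi(h)/\pi(\rho)} h$, so that $\pi(h') = 0$, and restrict $h'$ to $\Z \setminus \mF$. The key identity is
\[
\widetilde G \cap \FSym(\Z) = \gen{\tau_1, \ldots, \tau_s}_{\rho}.
\]
The inclusion $\supseteq$ is clear. For $\subseteq$, rewrite any word $\rho^{a_0}\tau_{i_1}\rho^{a_1}\cdots\tau_{i_m}\rho^{a_m}$ as a product of conjugates $\rho^{b_j}\tau_{i_j}\rho^{-b_j}$, where $b_j$ are the partial sums $a_0+\cdots+a_{j-1}$, followed by $\rho^{\sum a_j}$. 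Since $\pi(\rho) > 0$ and the word lies in $\FSym(\Z)$, we get $\sum a_j = 0$. So $h \in \widetilde G$ if and only if $h' \in \gen{\tau_1, \ldots, \tau_s}_{\rho}$.

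Finally we transport to $\{1, \ldots, d\} \times \Z$. All $\tau_i$ and $h'$ fix $\mF$ pointwise, and the $\gen{\rho}$-orbits off $\mF$ are the infinite orbits $\mI_1, \ldots, \mI_d$. The bijection \eqref{eq:identify_Ztimesd} is computable: we pick a base point in each $\mI_j$, and for a given point we find its coordinates by iterating $\rho^{\pm1}$, which terminates because the point lies in the orbit. Under this bijection $\rho$ becomes the coordinate shift, so membership is preserved and finitely supported permutations remain finitely supported with computable descriptions. This yields an instance of the stated problem.

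The only step requiring care is the identity above, together with checking effectiveness: computing coset representatives of $\ker\varphi$ and computing the identification. Everything else is given by the earlier lemmas.
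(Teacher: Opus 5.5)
Your proposal is correct and follows the paper's proof: the finite case is handled by enumeration, then you pass to the finite-index subgroup $\widetilde G = \gen{\tau_1,\ldots,\tau_s,\rho}$, perform the divisibility and $\mF$-fixing checks, normalize by a power of $\rho$, use the identity $\widetilde G \cap \ker(\pi) = \gen{\tau_1,\ldots,\tau_s}_\rho$ (which is exactly Lemma~\ref{lem:ker_equals_inv}), and finish with the identification~\eqref{eq:identify_Ztimesd}. Your explicit coset-representative reduction and your computation of the bijection only spell out effectiveness details that the paper leaves to Lemma~\ref{lem:finite_index_subgroup} and the surrounding setup.
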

\begin{proof}
    Let $G$ be a finitely generated subgroup of $\FSym(\Z) \rtimes \Z$ and $g \in \FSym(\Z) \rtimes \Z$: we need to decide whether $g \in G$. We separate two cases: \smallskip
    
    \noindent\textbullet\, If $\pi(G) = \{0\}$, then $G$ is finite and we can enumerate all its elements and compare them with $g$. \smallskip

    \noindent\textbullet\, If $\pi(G) \neq \{0\}$, by passing to a finite index subgroup (Lemma~\ref{lem:finite_index_subgroup} and \ref{lem:standard_generating_set}), we can suppose $G$ to be generated by the elements $\tau_1, \ldots, \tau_s, \rho,$ with $\tau_i \in \FSym(\Z)$ and $\pi(\rho) = d>0$.
    If $d \nmid \pi(g)$ or $g$ does not fix all elements of $\mF$, then obviously $g \notin G$. 
    Otherwise, we have $\pi(g) = ad$ for some $a \in \Z$, so $\rho^{-a}g \in \ker(\pi)$.
    Since $g \in G$ if and only if $\rho^{-a} g \in G$, we can without loss of generality replace $g$ with $\rho^{-a} g$ and suppose $g \in \ker(\pi) = \FSym(\mI_1 \sqcup \cdots \sqcup \mI_d)$.
    By Lemma~\ref{lem:ker_equals_inv} below, we have
    \[
    G \cap \ker(\pi) = \gen{\tau_1, \ldots, \tau_s}_{\rho}.
    \]
    Therefore $g \in G$ if and only if $g \in \gen{\tau_1, \ldots, \tau_s}_{\rho}$.
    We conclude the proof by identifying $\ker(\pi) = \FSym(\mI_1 \sqcup \cdots \sqcup \mI_d)$ with the group $\FSym(\{1, \ldots, d\} \times \Z)$ using the bijection~\eqref{eq:identify_Ztimesd}.
\end{proof}

\begin{lemma}\label{lem:ker_equals_inv}
    Consider a morphism $\pi\colon H\rightarrow \Z$ and a finitely generated subgroup $G=\gen{\tau_1,\ldots,\tau_s,\rho} \leq H$, with $\pi(\tau_1)= \cdots = \pi(\tau_s) = 0$ and $\pi(\rho) \neq 0$. Then $G \cap \ker(\pi) = \gen{\tau_1,\ldots,\tau_s}_\rho$.
\end{lemma}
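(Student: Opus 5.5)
We prove the two inclusions separately; the substance lies entirely in the inclusion $G \cap \ker(\pi) \subseteq \gen{\tau_1,\ldots,\tau_s}_\rho$. Write $N \coloneqq \gen{\tau_1,\ldots,\tau_s}_\rho$, the subgroup generated by all conjugates $\rho^n\tau_i\rho^{-n}$ with $n \in \Z$.

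The inclusion $N \subseteq G \cap \ker(\pi)$ is immediate. Each generator $\rho^n\tau_i\rho^{-n}$ of $N$ is a product of elements of $G$, so it lies in $G$. Since $\pi$ is a homomorphism to the abelian group $\Z$, we have $\pi(\rho^n\tau_i\rho^{-n}) = n\pi(\rho) + 0 - n\pi(\rho) = 0$, so it lies in $\ker(\pi)$.

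For the reverse inclusion, we first observe that $N$ is normalized by $\rho$ and $\rho^{-1}$: by definition $\rho N \rho^{-1} = N$. It is also normalized by each $\tau_i$, since $\tau_i \in N$. Hence $N$ is normal in $G = \gen{N, \rho}$. Consequently every element $g \in G$ can be written as $g = u\rho^k$ with $u \in N$ and $k \in \Z$. To obtain this form, take any word in $\tau_i^{\pm1}$ and $\rho^{\pm1}$ and move each occurrence of $\rho^{\pm 1}$ to the right; each $\tau_i^{\pm1}$ that is passed over becomes a conjugate $\rho^{m}\tau_i^{\pm1}\rho^{-m} \in N$. Concretely, $\rho^{m}\tau\,w = (\rho^{m}\tau\rho^{-m})\,\rho^{m}w$.

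Now suppose $g = u\rho^k \in \ker(\pi)$. Then $0 = \pi(u) + k\pi(\rho) = k\pi(\rho)$, and since $\pi(\rho)\neq 0$ and $\Z$ is torsion-free, $k = 0$. Hence $g = u \in N$. This completes the proof.

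The only step needing any care is the normal form $G = N\langle\rho\rangle$, and it follows directly from the $\rho$-invariance of $N$.
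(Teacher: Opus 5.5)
Your proof is correct and follows essentially the same route as the paper: rewrite a word in $\tau_i^{\pm1}$ and $\rho^{\pm1}$ as a product of $\rho$-conjugates $\rho^{m}\tau_i^{\pm1}\rho^{-m}$ followed by a power of $\rho$, then use $\pi(\rho)\neq 0$ (and torsion-freeness of $\Z$) to force that power to vanish. The only cosmetic difference is that the paper assumes the $\rho$-exponents sum to zero from the outset and telescopes directly, whereas you first obtain the normal form $u\rho^k$ and then deduce $k=0$, which makes the use of $\pi(\rho)\neq 0$ explicit.
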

\begin{proof}
    By restricting the domain of $\pi$ to $G$, it suffices to consider the case $H = G$ and prove $\ker(\pi) = \gen{\tau_1,\ldots,\tau_s}_\rho$.
    
    Recall that the group $\gen{\tau_1, \ldots, \tau_s}_{\rho}$ is generated by elements of the form $\rho^\ell \tau_i \rho^{-\ell}, \ell \in \Z, 1 \leq i \leq n$.
    Each of these generators satisfies $\pi(\rho^n \tau_i \rho^{-n}) = n \pi(\rho) + \pi(\tau_i) - n \pi(\rho) = 0$, and therefore belongs to $\ker(\pi)$.
    We thus have $\gen{\tau_1,\ldots,\tau_s}_\rho \leq \ker(\pi)$.
    
    For the other inclusion, consider any element of $\ker(\pi)$: it is of the form
    \begin{equation}\label{eq:element_ker}
    \rho^{k_1}\tau_{i_1}^{\varepsilon_1} \rho^{k_2}\tau_{i_2}^{\varepsilon_2} \cdots \rho^{k_m}\tau_{i_m}^{\varepsilon_m} \rho^{k_{m+1}},
    \end{equation}
    with $\varepsilon_i \in \{1, -1\}$ and $k_1 + \cdots + k_{m+1} = 0$.
    It can be rewritten as
    \[
    \rho^{\ell_1}\tau_{i_1}^{\varepsilon_1}\rho^{-\ell_1} \cdot \rho^{\ell_2}\tau_{i_2}^{\varepsilon_2} \rho^{-\ell_2} \cdot \ldots \cdot \rho^{\ell_m}\tau_{i_m}^{\varepsilon_m} \rho^{-\ell_m},
    \]
    where $\ell_i \coloneqq k_1 + \cdots + k_i, 1 \leq i \leq m$.
    Therefore the element~\eqref{eq:element_ker} is in $\gen{\tau_1, \ldots, \tau_s}_{\rho}$.
    We conclude that $\ker(\pi) = \gen{\tau_1, \ldots, \tau_s}_{\rho}$.
\end{proof}

The following subsections will focus on characterizing $\rho$-invariant subgroups of $\FSym(\{1, \ldots, d\} \times \Z)$, as well as deciding their membership problem.

\subsection{Orbits of $\rho$-invariant subgroups}\label{subsec:orbits}
We now consider the membership problem in the $\rho$-invariant subgroup $\gen{\tau_1, \ldots, \tau_s}_{\rho} \leq \FSym(\{1, \ldots, d\} \times \Z)$.
Let 
\[
G = \gen{\tau_1, \ldots, \tau_s}_{\rho}.
\]
We now classify the possible partitions of $\{1, \ldots, d\} \times \Z$ into $G$-orbits.
Let 
\[
\pi_{[d]} \colon \{1, \ldots, d\} \times \Z \rightarrow \{1, \ldots, d\}
\]
denote the projection onto the first coordinate.
The following lemma describes the possible forms of a $G$-orbit in $\{1, \ldots, d\} \times \Z$.
\begin{lemma}\label{lem:single_orbit}
    Let $\mO \subseteq \{1, \ldots, d\} \times \Z$ be a $G$-orbit.
    Let $\pi_{[d]}(\mO)=\{j_1,\ldots,j_k\}$.
    Then:
    \begin{enumerate}[nosep, label={\normalfont(\roman*)}]
        \item either $\mO$ is finite, and there exist $z_1, \ldots, z_k \in \Z$, such that 
        \begin{equation}\label{eq:orbit_finite}
            \mO = \{(j_1, z_1), \ldots, (j_k, z_k)\};
        \end{equation}
        \item or $\mO$ is infinite, and there exist $z_1, \ldots, z_k \in \Z$ and $\ell \geq 1$, such that 
        \begin{equation}\label{eq:orbit_infinite}
            \mO = \{(j_1, z_1 + \ell x) \mid x \in \Z\} \cup \cdots \cup \{(j_k, z_k + \ell x) \mid x \in \Z\}.
        \end{equation}
    \end{enumerate}
\end{lemma}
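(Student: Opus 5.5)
The proof rests on the fact that $G=\gen{\tau_1,\ldots,\tau_s}_{\rho}$ commutes with $\rho$ at the level of orbits. Since $\rho G\rho^{-1}=G$, for every $x$ we have $\rho(G\cdot x)=(\rho G\rho^{-1})\cdot\rho(x)=G\cdot\rho(x)$. So $\rho$ permutes the set of $G$-orbits, and each translate $\rho^n(\mO)$ is again a $G$-orbit. Writing $\mO_j\coloneqq\{z\mid (j,z)\in\mO\}$ for the fibre over $j\in\{j_1,\ldots,j_k\}$, the whole statement reduces to understanding these fibres.

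\textbf{Finite case.} I claim each fibre is a single point. Suppose $(j,z),(j,z')\in\mO$ with $z<z'$, and put $n=z'-z\neq 0$. Then $(j,z')=\rho^n(j,z)$ lies in both $\mO$ and $\rho^n(\mO)$. Since these are both $G$-orbits, they coincide, so $\mO=\rho^n(\mO)$. A finite nonempty subset of $\{1,\ldots,d\}\times\Z$ cannot be invariant under a nontrivial translation in the second coordinate: for instance, the maximum second coordinate would strictly increase. This is a contradiction, and it gives \eqref{eq:orbit_finite}.

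\textbf{Infinite case.} Let $L\coloneqq\{n\in\Z\mid \rho^n(\mO)=\mO\}$, the stabilizer of $\mO$ in $\gen{\rho}\cong\Z$. This is a subgroup of $\Z$. The argument of the finite case shows that $z'-z\in L$ whenever $(j,z),(j,z')\in\mO$. Conversely, if $(j,z)\in\mO$ and $n\in L$, then $(j,z+n)\in\mO$. Hence each nonempty fibre $\mO_j$ is a single coset $z_j+L$. Because $\mO$ is infinite and $d$ is finite, some fibre is infinite, so $L\neq\{0\}$ and $L=\ell\Z$ with $\ell\ge1$. This gives \eqref{eq:orbit_infinite}.

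The only step that needs care is that the difference set of each fibre lies in $L$ and that $L$ acts transitively on each fibre, since this is what makes the modulus $\ell$ uniform across all fibres of $\mO$. Both facts follow from the single observation that two $G$-orbits sharing a point are equal, so I expect no real obstacle. No finite generation or earlier lemma is needed beyond the $\rho$-invariance of $G$ and the fact that $G\le\FSym$, which in fact is not even used.
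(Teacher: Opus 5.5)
Your proof is correct and is essentially the paper's argument. The paper shows that $(j,z),(j,z+\ell),(j',y)\in\mO$ forces $(j',y+\ell)\in\mO$ by conjugating an $h\in G$ with $h(j,z)=(j',y)$ by $\rho^{\ell}$; this is the element-level form of your observation that $\rho^n(\mO)$ is a $G$-orbit meeting $\mO$. Your use of the stabilizer subgroup $L\le\Z$ is slightly cleaner: it gives translation in both directions and a common $\ell$ for all fibres at once, replacing the paper's choice of a minimal $\ell$.
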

\begin{proof}
    Consider two cases.
    
    \textbf{Case 1:} for every $i = 1, \ldots, k$, the set $\mO \cap (\{j_i\} \times \Z)$ is a singleton.
    Then we have $\mO = \{(j_1, z_1), \ldots, (j_k, z_k)\}$ for some $z_1, \ldots, z_k \in \Z$ and $\mO$ is finite.

    \textbf{Case 2:} there is some $i$ such that $\mO \cap (\{j_i\} \times \Z)$ contains at least two elements. 
    We show that $(j, z), (j, z + \ell), (j', y) \in \mO \implies (j', y + \ell) \in \mO$. 
    Suppose $(j, z), (j, z + \ell), (j', y) \in \mO$.
    Take an $h \in G$ with $h (j, z) = (j', y)$.
    By the $\rho$-invariance of $G$, we have $\rho^{\ell} h \rho^{-\ell} \in G$, and
    \[
    \rho^{\ell} h \rho^{-\ell}(j, z + \ell) = \rho^{\ell} h (j, z) = \rho^{\ell} (j', y) = (j', y + \ell).
    \]
    Therefore $(j', y + \ell) \in \mO$.

    Take the smallest $\ell \geq 1$ such that there exist $j, z$ with $(j, z), (j, z + \ell) \in \mO$.
    Then we have $\mO = \{(j_1, z_1 + \ell x) \mid x \in \Z\} \cup \cdots \cup \{(j_k, z_k + \ell x) \mid x \in \Z\}$ for some $z_1, \ldots, z_k \in \Z$.
\end{proof}

The following proposition describes the possible partitions of $\{1, \ldots, d\} \times \Z$ into $G$-orbits.
See Figure~\ref{fig:orbits} for an example of the partition.

\begin{proposition}\label{prop:partition_orbits}
    Let $G$ be a $\rho$-invariant subgroup of $\FSym(\{1, \ldots, d\} \times \Z)$, generated by finitely many elements.
    Then one can effectively compute $z_1, \ldots, z_d \in \Z$, as well as a partition of the set $\{1, \ldots, d\}$ into a disjoint union $J_{\fin} \sqcup J_1 \sqcup \cdots \sqcup J_n$, such that:
    \begin{enumerate}[nosep, label={\normalfont(\roman*)}]
        \item For each $x \in \Z$, the set $\mA_x \coloneqq \{(j, z_j + x) \mid j \in J_{\fin}\}$ is a finite union of (finite) $G$-orbits.
        \item For each $J_i, i = 1, \ldots, n$, there is an effectively computable $\ell_i \geq 1$ such that the $\ell_i$ sets
        $$\{(j, z_j + \ell_i x+r) \mid j \in J_i, x \in \Z\}$$
        (for $r=0,1,\ldots,\ell_i-1$) are infinite $G$-orbits.
    \end{enumerate}
\end{proposition}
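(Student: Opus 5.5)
The plan is to first understand the orbit structure combinatorially using Lemma~\ref{lem:single_orbit}, and then make it effective via a finite graph computation analogous to the proof of Lemma~\ref{lem:compute_orbits}.

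\textbf{Step 1: rows are entirely finite or entirely infinite.} Since $G$ is $\rho$-invariant, $\rho$ permutes the $G$-orbits: $\rho(\mO)$ is again a $G$-orbit, because $g\rho(x)=\rho(\rho^{-1}g\rho)(x)$. Hence for each row $j$, either all points $(j,z)$ lie in finite orbits or all lie in infinite ones. Let $J_{\fin}$ be the set of rows of the first kind.

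\textbf{Step 2: the finite part.} Fix a finite orbit $\mO$ meeting row $j_1$. By Lemma~\ref{lem:single_orbit}(i) it has the form $\{(j_i,z_i)\}$ with one point in each of its rows. Its translates $\rho^x\mO$ are disjoint finite orbits. Declare $j\approx j'$ if $(j,0)$ and $(j',y)$ share an orbit for some $y$; by $\rho$-invariance this relation is well defined, and for each class pick offsets $z_j$ normalized so that $(j,z_j)$ for $j$ in a class form an orbit. Then $\mA_x$ is the union of the translates by $x$ of these finitely many orbits, which proves (i).

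\textbf{Step 3: the infinite part.} Define $J_1,\dots,J_n$ as the sets $\pi_{[d]}(\mO)$ for infinite orbits $\mO$. These sets are disjoint: if two infinite orbits meet a common row $j$, then by Lemma~\ref{lem:single_orbit}(ii) their intersections with row $j$ are residue classes modulo the same $\ell$, since $\ell$ is determined as a $\rho$-invariant quantity of the row set. Both orbits are therefore $\rho$-translates of each other with the same row set. The $\rho$-translates of one infinite orbit with row set $J_i$ are exactly $\ell_i$ orbits, indexed by $r \bmod \ell_i$, and choosing $z_j$ within the residue classes gives (ii).

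\textbf{Step 4: effectiveness (main obstacle).} The structural description is clear; the hard part is computing $J_{\fin}$, the $z_j$, and the $\ell_i$ from $\tau_1,\ldots,\tau_s$. The key observation is that $(j,z)\sim(j',z')$ in $G$ if and only if the two points are connected in the graph on $\{1,\ldots,d\}\times\Z$ with edges $\rho^n\tau_i\rho^{-n}(p)\sim p$, where each edge changes the second coordinate by a bounded amount. Quotient by $\rho$ to obtain a finite labelled graph $\Lambda$ on vertex set $\{1,\ldots,d\}$. For every $i$ and every point $(j,z)$ in a window of size $\operatorname{diam}\supp(\tau_i)$, put an edge $j\to j'$ labelled by the integer shift $\Delta$, where $\tau_i(j,z)=(j',z+\Delta)$.

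Orbits of $G$ then correspond to connected components of the covering graph of $\Lambda$ over $\Z$. Compute a spanning tree in each component of $\Lambda$, and take $\ell$ to be the gcd of the net shift labels of the fundamental cycles. If $\ell=0$, the component is finite; the tree potentials give the $z_j$, and all of its rows go into $J_{\fin}$. If $\ell>0$, the component gives a $J_i$ with $\ell_i=\ell$ and offsets given by the tree potentials. This is the standard voltage-graph computation.

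The delicate point is checking that every connection in the orbit graph is realized by edges coming from some translate of some $\tau_i$, so that no generators are missed and $\Lambda$ captures exactly the orbit relation. I expect this to follow because $G$ is generated by the translates $\rho^n\tau_i\rho^{-n}$, and orbits of a group generated by permutations are the components of the graph formed by their cycle edges.
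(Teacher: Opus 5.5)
Your proof is correct. The structural part (Steps 1--3) matches the paper, but you make the result effective by a genuinely different route. The paper also derives the structure directly from Lemma~\ref{lem:single_orbit} and the fact that $\rho$ permutes $G$-orbits. Your justification in Step~3 is loosely phrased; the clean reason is the following. If infinite orbits $\mO,\mO'$ meet row $j$ at $(j,z)$ and $(j,z')$, then $\rho^{z'-z}(\mO)$ is a $G$-orbit containing $(j,z')$, so it equals $\mO'$; hence the two orbits have the same rows and the same period.

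For effectivity, the paper builds no quotient graph. It computes each orbit by saturation:
\begin{itemize}
\item start from $\{(j,y)\}$;
\item test whether the current candidate set, of shape \eqref{eq:orbit_finite} or \eqref{eq:orbit_infinite}, is $G$-invariant, which is a finite check by finiteness or periodicity;
\item if not, replace it by the smallest set of that shape containing a newly found image point;
\item conclude termination by an ascending-chain argument.
\end{itemize}

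Your $\Z$-voltage graph is more explicit and self-contained. Since $\rho^n\tau_i\rho^{-n}(j,z+n)=(j',z+n+\Delta)$ whenever $\tau_i(j,z)=(j',z+\Delta)$, the Schreier graph of the generating set $\{\rho^n\tau_i\rho^{-n}\}$ is exactly the derived graph of $\Lambda$. So the ``delicate point'' you flag is automatic: orbits of a group generated by a set of permutations are the components of that graph. The fibre of a component over its root is the subgroup of $\Z$ generated by the fundamental-cycle voltages. Hence one pass yields the row partition, the offsets $z_j$ as tree potentials, and $\ell_i=\gcd$ of the cycle voltages, with an evident bound on the work.

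Your route even re-derives Lemma~\ref{lem:single_orbit} and your own Steps 1--3, which become redundant. The paper's saturation instead needs Lemma~\ref{lem:single_orbit} to restrict the possible shapes of candidate orbits, and gives no closed formula for $\ell_i$. In exchange it avoids any covering-graph machinery.
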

\begin{proof}
    Since $G$ is $\rho$-invariant, the partition of $\{1, \ldots, d\} \times \Z$ into $G$-orbits is also preserved $\rho$.
    Therefore if $\{(j, z_j + \ell x) \mid j \in J, x \in \Z\}$ is a $G$-orbit then $\{(j, z_j + \ell x + 1) \mid j \in J, x \in \Z\}$ is also a $G$-orbit.
    The statement of the proposition follows thus directly from Lemma~\ref{lem:single_orbit}, subject to effectivity.
    We now show that the partition into $G$-orbits can be effectively computed.

    Given $(j, y) \in \{1, \ldots, d\} \times \Z$, we will compute its $G$-orbit using a saturation process.
    Let $\tau_1, \ldots, \tau_s$ be the generators of $G$ as a a $\rho$-invariant subgroup.
    For a set $\mO$ of the form~\eqref{eq:orbit_finite}, we can check whether it is invariant under the action of $G$: it is invariant if and only if for each $z \in \Z, 1 \leq i \leq n$, such that $\supp(\rho^{z} \tau_i \rho^{-z}) \cap \mO \neq \emptyset$, we have $\rho^{z} \tau_i \rho^{-z}(\mO) \subseteq \mO$.
    This can be checked effectively since there are only finitely many $z \in \Z, 1 \leq i \leq n$ such that $\rho^{z}(\supp(\tau_i)) \cap \mO \neq \emptyset$.
    For a set $\mO$ of the form~\eqref{eq:orbit_infinite}, we can also check whether it is invariant under the action of $G$: it is invariant if and only if for each $0 \leq z \leq \ell-1, 1 \leq i \leq n$, we have $\rho^{z} \tau_i \rho^{-z}(\mO) \subseteq \mO$.
    Again this can be checked effectively since $\mO$ is periodic.
    Note that in both cases, if $\mO$ is not invariant under the action of $G$, then we can produce an element $(i, x) \in G \mO \setminus \mO$. 
    
    Therefore, to compute the $G$-orbit of $(j, y) \in \{1, \ldots, d\} \times \Z$, we start with the set $\mO \coloneqq \{(j, y)\}$, and iterate the following: at each step, check whether $\mO$ is invariant under the action of $G$.
    If $\mO$ is not invariant, let $(i, x) \in G \mO \setminus \mO$ and replace $\mO$ with the smallest set of the form~\eqref{eq:orbit_finite} or \eqref{eq:orbit_infinite} containing $\mO \cup \{(i, x)\}$.
    We repeat this step until $\mO$ is invariant under the action of $G$.
    It is easy to see that an ascending chain of sets of the forms~\eqref{eq:orbit_finite} and \eqref{eq:orbit_infinite} must stabilize, therefore the above procedure must terminate.
\end{proof}

\begin{figure}[h]
    \centering
    \figureorbits
    \caption{A possible partition of $\{1, \ldots, 8\} \times \Z$ into $G$-orbits. Here, $G = \gen{\tau_1, \tau_2}_{\rho}$ where the permutations $\tau_1, \tau_2$ are drawn in arrows. Different colours denote elements of different $G$-orbits.}
    \label{fig:orbits}
\end{figure}

Denote $J_{\infty} \coloneqq J_1 \sqcup \cdots \sqcup J_n$.
Now $J_{\fin} \times \Z = \bigcup_{x \in \Z} \mA_x$ is the union of finite $G$-orbits, and $J_{\infty} \times \Z$ is the union of infinite $G$-orbits.
Let $G_{\fin}$ denote the permutation group of $J_{\fin} \times \Z$ induced by the action of $G$ on the finite orbits,
and let $G_{\infty}$ denote the permutation group of $J_{\infty} \times \Z$ induced by the action of $G$ on the infinite orbits.
Then 
\[
G \leq G_{\fin} \times G_{\infty},
\]
where the projections $G \rightarrow G_{\fin}$ and $G \rightarrow G_{\infty}$ are surjective.

In the following subsections, we will characterize the groups $G_{\fin}$ and $G_{\infty}$, and then characterize $G \leq G_{\fin} \times G_{\infty}$ using Goursat's lemma (Lemma~\ref{lem:Goursat}).
To this end, we will also need to understand the normal subgroups of $G_{\fin}$ and $G_{\infty}$.
Note that $G_{\fin}$ and $G_{\infty}$ are $\rho$-invariant, and their generators (as $\rho$-invariant groups) are simply the projections of the generators $\tau_1, \ldots, \tau_s$ of $G$.

\subsection{Permutation group on the finite orbits}\label{subsec:finite_orbits}
In this subsection we will characterize the group $G_{\fin}$.
The group $G_{\fin}$ acts faithfully on $J_{\fin} \times \Z = \bigcup_{x \in \Z} \mA_x$.
By Proposition~\ref{prop:partition_orbits}(i), elements of $G_{\fin}$ stabilize the sets $\mA_x$ (for $x \in \Z$). 
Therefore 
\[
G_{\fin} \leq \bigoplus_{x \in \Z} \Sym(\mA_x).
\]
For each $x \in \Z$, under the bijection $\mA_x \rightarrow J_{\fin}\colon  (j, z) \mapsto j$, we have $\Sym(\mA_x) \simeq \Sym(J_{\fin})$.
Hence $G_{\fin}$ can be considered as a subgroup of
$
\bigoplus_{x \in \Z} \Sym(J_{\fin})
$.
By slight abuse of notation we again use $\rho$ to denote the action $(j, z) \mapsto (j, z+1)$ on $J_{\fin} \times \Z$.
Since $G_{\fin}$ is a $\rho$-invariant, we immediately obtain:
\begin{proposition}\label{prop:general_finite_orbit}
    $G_{\fin}$ is a $\rho$-invariant subgroup of $\bigoplus_{x \in \Z} H$ for the finite group $H = \Sym(J_{\fin})$, where $\rho$ acts on $\bigoplus_{x \in \Z} H$ as the shift $(h_k)_{k \in \Z} \mapsto (h_{k-1})_{k \in \Z}$.
\end{proposition}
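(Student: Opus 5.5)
The plan is to make the identification described just before the statement precise and then check that it respects the group structure and the shift. For each $x \in \Z$ define the bijection $\beta_x \colon J_{\fin} \to \mA_x$, $j \mapsto (j, z_j + x)$, using the integers $z_j$ from Proposition~\ref{prop:partition_orbits}. Since the sets $\mA_x$ are pairwise disjoint and cover $J_{\fin} \times \Z$, and every element of $G_{\fin}$ stabilizes each $\mA_x$ (each $\mA_x$ is a union of $G$-orbits by Proposition~\ref{prop:partition_orbits}(i)), the map
\[
\Psi \colon G_{\fin} \to \textstyle\prod_{x \in \Z} \Sym(J_{\fin}), \qquad g \mapsto \bigl(\beta_x^{-1} \circ g|_{\mA_x} \circ \beta_x\bigr)_{x \in \Z}
\]
is a well-defined homomorphism. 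It is injective because $G_{\fin}$ acts faithfully on $J_{\fin}\times\Z$. Its image lies in the direct sum $\bigoplus_{x\in\Z} \Sym(J_{\fin})$, because every element of $G_{\fin}$ is induced by an element of $\FSym(\{1,\ldots,d\}\times\Z)$ and so has finite support; hence $g|_{\mA_x} = \id$ for all but finitely many $x$. Set $H = \Sym(J_{\fin})$, which is finite.

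Next, compatibility with $\rho$. On $J_{\fin}\times\Z$ we have $\rho(\beta_x(j)) = (j, z_j + x + 1) = \beta_{x+1}(j)$, so $\rho$ maps $\mA_x$ onto $\mA_{x+1}$ and $\rho\beta_x = \beta_{x+1}$. For $g \in G_{\fin}$ this gives
\[
(\rho g \rho^{-1})|_{\mA_{x}} = \rho \circ g|_{\mA_{x-1}} \circ \rho^{-1}.
\]
Therefore the $x$-th coordinate of $\Psi(\rho g \rho^{-1})$ equals $\beta_x^{-1}\rho\, g|_{\mA_{x-1}}\, \rho^{-1}\beta_x = \beta_{x-1}^{-1} g|_{\mA_{x-1}} \beta_{x-1}$, which is the $(x-1)$-th coordinate of $\Psi(g)$. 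So under $\Psi$, conjugation by $\rho$ becomes the shift $(h_k)_k \mapsto (h_{k-1})_k$.

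Finally, $G_{\fin}$ is $\rho$-invariant: $G$ is $\rho$-invariant, and $\rho$ stabilizes $J_{\fin}\times\Z$, so restriction to the finite orbits commutes with conjugation by $\rho$. Hence $\Psi(G_{\fin})$ is a shift-invariant subgroup of $\bigoplus_{\Z} H$, which is the statement.

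There is no real obstacle here. The only points that need care are the direction of the index shift (checked by the computation above) and the finiteness of support, which is what puts the image in the direct sum rather than the full product.
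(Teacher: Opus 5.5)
Your proof is correct and follows the same route as the paper: identify each $\mA_x$ with $J_{\fin}$ via $(j,z)\mapsto j$, which is the inverse of your $\beta_x$. Then use that $G_{\fin}$ acts faithfully on $J_{\fin}\times\Z$ and stabilizes every $\mA_x$, and transport $\rho$-invariance. The paper calls the conclusion immediate, whereas you explicitly check the finite-support condition that places the image in the direct sum and the direction of the index shift.
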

This gives us part~(i) of Proposition~\ref{prop:structure_invariant_subgroup}.

\subsection{Permutation group on the infinite orbits}\label{subsec:infinite_orbits}
In this subsection we will characterize the group $G_{\infty}$.

\paragraph{Computing the $G_{\infty}$-orbits.}
Consider the union of $G_{\infty}$-orbits $J_{\infty} \times \Z$.
By slight abuse of notation we again use $\rho$ to denote the action $(j, z) \mapsto (j, z+1)$ on $J_{\infty} \times \Z$.
Let $\mO_1, \ldots, \mO_m$ respectively denote the $G_{\infty}$-orbits: they have been characterized by Proposition~\ref{prop:partition_orbits}.
For each $i = 1, \ldots, m$, let $G_i \leq \FSym(\mO_i)$ denote the group induced by the action of $G_{\infty}$ on $\mO_i$.
Then 
\[
G_{\infty} \leq G_1 \times \cdots \times G_m,
\]
such that each projection $G_{\infty} \rightarrow G_i, i = 1, \ldots, m$, is surjective.
The idea is to first compute each $G_i$, and then apply Goursat's lemma (Lemma~\ref{lem:Goursat}) iteratively to characterize $G_{\infty}$.

\paragraph{The transitive groups $G_i$.}

For $i = 1, \ldots, m$, we now characterize the group $G_i \leq \FSym(\mO_i)$.
Since there is only one orbit, $G_i$ acts transitively on $\mO_i$.
By Proposition~\ref{prop:partition_orbits}, we have $\rho^{\ell_i} \mO = \mO$ for some $\ell_i \geq 1$, and $G_i$ is $\rho^{\ell_i}$-invariant.
Let $\tau_1, \ldots, \tau_s$ be the generators of $G_{\infty}$ as a $\rho$-invariant group.
Then, the generators of $G_i$ as a $\rho^{\ell_i}$-invariant group are the permutations of $\mO_i$ induced by 
\[
\rho^{z}\tau_1\rho^{-z}, \ldots, \rho^{z}\tau_s\rho^{-z}, \quad 0 \leq z \leq \ell_i-1.
\]

We now fix an index $i$ and for brevity use $G, \mO, J, \ell,$ to denote $G_i, \mO_i, J_i, \ell_i$, so $G$ is a $\rho^{\ell}$-invariant subgroup of $\FSym(\mO)$.



\begin{lemma}\label{lem:effective_partition}
    We can effectively write $\mO$ as a partition $\Gamma \times \Delta \coloneqq \bigsqcup_{\delta \in \Delta} \Gamma \times \{\delta\}$, where the block $\Gamma$ is finite and the index set $\Delta$ is infinite, such that $G$ preserves this partition and induces a primitive action on $\Delta$.
    Furthermore, each block $\Gamma \times \{\delta\}$ contains at most one element in any $\gen{\rho^{\ell}}$-orbit of $\mO$.
\end{lemma}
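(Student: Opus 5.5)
The plan is to take a maximal $G$-invariant partition of $\mO$ into finite blocks, and to show it is computable and has the required properties. Throughout, write $\mO = \{(j, z_j + \ell x + r_0) \mid j \in J, x \in \Z\}$, so that $\mO$ is a finite union of $|J|$ orbits of $\gen{\rho^{\ell}}$, each identified with $\Z$.

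\textbf{Step 1: finite-block partitions are periodic.} Call a $G$-invariant partition of $\mO$ into finite blocks \emph{admissible}. Since $G$ is transitive and finitary, all blocks of an admissible partition $\mathcal{P}$ have the same finite size. Since $G$ is $\rho^{\ell}$-invariant, $\rho^{\ell}\mathcal{P}$ is again admissible.

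The key claim is that each block meets each $\gen{\rho^{\ell}}$-orbit in at most one point. This is what gives the last sentence of the statement.

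To prove the claim, suppose a block $B$ contains two points $(j,z)$ and $(j,z+\ell t)$ with $t \neq 0$. Then $\rho^{\ell t}$ carries the point $(j,z)$ to another point of $B$. Transitivity of $G$ combined with $\rho^{\ell}$-invariance, as in the proof of Lemma~\ref{lem:single_orbit}, should then propagate this to arbitrarily large translates. Concretely, I would consider $B \cup \rho^{\ell t}B \cup \rho^{2\ell t}B \cup \cdots$, use finitary elements of $G$ moving $B$ to $\rho^{\ell t}B$, and aim to conclude that the block containing $(j,z)$ is infinite. That contradicts finiteness.

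I expect this step to be the main obstacle. The difficulty is that a $G$-invariant partition need not be $\rho^{\ell}$-invariant. I would try first to replace $\mathcal{P}$ by the coarsest common refinement, or the join, of the translates $\rho^{\ell k}\mathcal{P}$. Alternatively, I would use the fact that a block is determined by the block system together with one point, together with the observation that $g$ and $\rho^{\ell}g\rho^{-\ell}$ act compatibly on $\mathcal{P}$ and on $\rho^{\ell}\mathcal{P}$.

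\textbf{Step 2: maximal admissible partitions exist.} By Step 1, every admissible block has size at most $|J|$. Take an admissible partition whose blocks have maximal size $|\Gamma|$; the singleton partition is admissible, so such a partition exists. Write it as $\Gamma \times \Delta$. The set $\Delta$ is infinite because $\mO$ is infinite and the blocks are finite.

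Primitivity of the induced action on $\Delta$ then follows from three facts. First, a nontrivial $G$-invariant partition of $\Delta$ pulls back to a coarser $G$-invariant partition of $\mO$. Second, if that coarser partition had finite blocks, it would contradict the maximality of $|\Gamma|$. Third, if it had infinite blocks, then since $G \leq \FSym$ acts transitively on $\Delta$, we would need to rule out a nontrivial partition of $\Delta$ into infinite blocks. I would rule this out by showing that a transitive finitary group cannot preserve a nontrivial partition with infinite blocks. Indeed, a finitary element moving a point of one block into a different block must carry the whole infinite block to a different block, yet it moves only finitely many points, which is a contradiction. So the only invariant partitions of $\Delta$ with infinite blocks are trivial, and the action on $\Delta$ is primitive.

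\textbf{Step 3: effectivity.} By Step 1, any admissible partition is determined by finitely many data. Choose the block through a reference point $(j_0, z_{j_0})$; it contains at most one point from each of the $|J|$ orbits of $\gen{\rho^{\ell}}$, so it is a finite set of candidate offsets. I would then show that the whole partition is determined by this block together with the action of the finitely many generators $\rho^{z}\tau_k\rho^{-z}$. This can be done by a saturation or checking procedure analogous to the one in Proposition~\ref{prop:partition_orbits}, using periodicity modulo $\ell$.

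The search over bounded candidate block shapes is then finite, provided we can verify $G$-invariance of a candidate periodic partition. That verification reduces to checking finitely many translates of the generators, again by periodicity. We select a candidate of maximal block size, which yields the required partition.
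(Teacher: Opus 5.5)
Your proposal has a genuine gap, and it is where you suspected. The claim of Step~1 is never proved, and everything downstream depends on it: the size bound $|J|$ that produces a maximal partition in Step~2, and the finite search in Step~3.

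\textbf{Why Step~1 does not go through as sketched.} The propagation argument of Lemma~\ref{lem:single_orbit} works there because the orbit partition is itself $\rho$-invariant. For an arbitrary admissible $\mathcal{P}$, two points $o,\rho^{\ell t}(o)$ in one block only show that the join of the translates $\rho^{\ell t k}\mathcal{P}$ has an infinite block. By your own finitary remark, that join is then the one-block partition, which is no contradiction. The meet of the translates is no help either: it can be the partition into singletons.

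Nor can you prove that admissible partitions are periodic, as your heading suggests. Take $G=\phi^{-1}\bigl(V\wr_{\Z}\FSym(\Z)\bigr)\phi$ on $\{1,2,3,4\}\times\Z$, where:
\begin{itemize}
\item $V$ is the Klein four-group acting regularly on each column;
\item $\phi(i,x)=(\alpha^{-x}(i),x)$ with $\alpha=(1\,2\,3)$.
\end{itemize}
Since $\alpha$ normalizes $V$, this $G$ is transitive and finitely generated as a $\rho$-invariant group. Yet $\rho$ permutes its three invariant partitions into pairs cyclically. The Step~1 claim is in fact true for every admissible partition, but only as a corollary of the structure described next.

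\textbf{What the paper does instead.} It reverses your order and uses two ideas you are missing.
\begin{itemize}
\item An a priori \emph{diameter} bound. Choose $h_j\in G$ with $h_j(j,0)=(j,\ell)$ and set $D=\max_j\diam(\supp h_j)$. Suppose a block contained a pair of points at maximal distance greater than $D$. Then a conjugate $\rho^{z}h_j\rho^{-z}$ moves the right-hand point $\ell$ further right while fixing the other point. It therefore stabilizes the block, which then contains a farther pair, a contradiction. This bound yields a coarsest nontrivial invariant partition $\Gamma\times\Delta$ with $G$ primitive on $\Delta$. Your primitivity argument in Step~2 is fine at this point.
\item \emph{Uniqueness} of the coarsest partition. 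Neumann's theorem (Lemma~\ref{lem:transitive_finite}) supplies elements that $3$-cycle any three blocks and fix all other blocks pointwise. These force every block of any other nontrivial invariant partition into a single $\Gamma\times\{\delta\}$. So the coarsest partition is unique, hence $\rho^{\ell}$-invariant. Only then does $o,\rho^{\ell d}(o)\in\Gamma$ propagate to $\rho^{n\ell d}(o)\in\Gamma$ for all $n$, giving the at-most-one-point property.
\end{itemize}

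\textbf{Step~3 needs the same two facts.} ``At most one point per $\gen{\rho^{\ell}}$-orbit'' bounds $|\Gamma|$ but not the positions of its points. So it does not give ``a finite set of candidate offsets.'' The search is finite only among $\rho^{\ell}$-invariant partitions whose blocks have diameter at most $D$. Restricting to periodic candidates is legitimate only once you know that the partition you want is $\rho^{\ell}$-invariant. You have not shown this for a merely maximal-size partition; it follows only from uniqueness.
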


\begin{proof}
    We first prove that blocks in non-trivial $G$-invariant partitions cannot be too large. 
    By re-indexing we can without loss of generality suppose
    \[
    \mO = \{(j, \ell x) \mid j \in J, x \in \Z\}
    \]
    for some finite index set $J$.
    Take any $\la\rho^\ell\ra$-invariant metric on $\mO$.
    For example one can take the metric defined by the distance 
    \begin{equation}\label{eq:distance}
        \dist((i, z_i), (j, z_j)) \coloneqq |z_i - z_j| + \frac{|i - j|}{L},
    \end{equation}
    where $L > \max_{i,j \in J}\{|i-j|\}$.
    For a set $S \subset \mO$, let $\diam(S)$ denote its diameter under this distance.

    Since $G$ is transitive, for each $j \in J$ we can effectively compute an element $h_j \in G$, such that $h_j(j, 0) = (j, \ell)$, and let 
    \[
    D=\max_{j\in J}\diam(\supp(h_j)).
    \]

\medskip

\begin{adjustwidth}{2mm}{2mm}
    \textbf{Claim.} Let $S$ be a block of a non-trivial $G$-invariant partition. Then $S$ is finite and $\diam S\le D$.

    \medskip

    \noindent$\triangleright$ Elements of $G$ have finite support, hence cannot permute infinite blocks. If $S$ were infinite, it would therefore be fixed by $G$. Using that $G$ is transitive, this would imply $S=\mO$, a contradiction.

    \medskip
    
    \noindent Fix $(j, z), (j', z') \in S$ with $\dist((j, z), (j', z'))$ maximal and $z\ge z'$. If $\dist((j, z), (j', z'))> D$, then the element $\rho^{z} h_j \rho^{-z} \in G$ maps $(j, z)$ to $(j, z+ \ell)$, but $\rho^{z} h_j \rho^{-z}(j', z') = (j', z')$ because the support of $\rho^{z} h_j \rho^{-z}$ has diameter at most $D$.
    Therefore the block $S$ must also contain $(j, z+ \ell)$, contradicting that $\dist((j,z),(j',z'))$ was maximal.
    \hfill$\triangleleft$
\end{adjustwidth}

\medskip

\noindent From the above claim, there is an upper bound $D$ on the size of blocks in non-trivial $G$-invariant partitions of $\mO$. Hence, there exists a coarsest\footnote{Here, $\rho^{\ell}$-invariance is essential for our proof. If the transitive group $G$ is not $\rho^{\ell}$-invariant, it might be \emph{totally imprimitive}~\cite{neumann1975lawlessness}, and the family of non-trivial partitions preserved by $G$ does not admit a coarsest element.} non-trivial partition (partition with largest blocks) of $\mO$ preserved by $G$.

\medskip

\begin{adjustwidth}{2mm}{2mm}
    \textbf{Claim.} The coarsest (proper) $G$-invariant partition is unique, and $\la\rho^\ell\ra$-invariant.
    
    \medskip

    \noindent $\triangleright$ Consider a coarsest partition $\mO=\bigsqcup_{\delta\in \Delta} \Gamma_\delta$ and a block $S$ in another non-trivial $G$-invariant partition.
    There exist three blocks $\Gamma_i,\Gamma_j,\Gamma_k$ with 
    \[ 
    \Gamma_i\cap S\ne\emptyset \quad\text{and}\quad \Gamma_j\cap S=\Gamma_k\cap S=\emptyset. 
    \]
    Note that $G$ acts primitively on $\Delta$, therefore one may apply \cite[Corollary~31.2]{neumann2023infinite}
    (or Lemma \ref{lem:transitive_finite} below) to deduce that there exists $g \in G$ such that $g(\Gamma_i)=\Gamma_j$, $g(\Gamma_j)=\Gamma_k$, $g(\Gamma_k)=\Gamma_i$, and $g$ fixes all other blocks $\Gamma_\delta$ pointwise.
    Since $S$ comes from another $G$-invariant partition, we conclude that $S\subseteq\Gamma_i$, hence the partition $\mO= \bigsqcup_{\delta\in\Delta} \Gamma_\delta$ is the \emph{unique} coarsest $G$-invariant partition. Since $\mO= \bigsqcup_{\delta\in\Delta} \rho^\ell(\Gamma_\delta)$ is another coarsest $G$-invariant partition, we deduce that the coarsest proper partition is $\la\rho^\ell\ra$-invariant. 
    \hfill $\triangleleft$
\end{adjustwidth}

\medskip

\noindent Consider a block $\Gamma$ of the coarsest $G$-invariant partition $\mO$. We observe that each $\la\rho^\ell\ra$-orbit intersects $\Gamma$ in at most one element. Indeed, if $o,\rho^{\ell d}(o)\in \Gamma$, then the pair $\rho^{n\ell d}(o),\rho^{(n+1)\ell d}(o)$ belongs to the same block of the partition for all $n\ge 0$, and this block is always $\Gamma$ by induction.

\medskip
Finally, for the effective search of the coarsest $G$-invariant partition, we just reduced ourself to $\la\rho^\ell\ra$-invariant partitions defined by a block $\Gamma$ of a diameter at most $D$. There are only finitely many such partitions, we can enumerate them, check for each one whether it is preserved by the generators of $G$, and select the coarsest one. This allows us to compute the partition $\mO = \Gamma \times \Delta$.
\end{proof}
\begin{figure}[h]
    \centering
    \figureinfiniteorbits
    \caption{Partition of an infinite orbit into blocks $\Gamma \times \Delta$. This orbit also appeared in Figure~\ref{fig:orbits}.}
    \label{fig:infinite_orbits}
\end{figure}

Since
\[
    \mO = \{(j, z_j + \ell x) \mid j \in J, x \in \Z\},
\]
and each block $\Gamma \times \{\delta\}$ contains at most one element in any $\gen{\rho^{\ell}}$-orbit of $\mO$, the index set $\Delta$ can be decomposed again as a product $S \times \ell\Z$ for some finite set $S = \{s_1, \ldots, s_t\}$.

Fix the element $(s_1, 0) \in \Delta$ and identify $\Gamma$ with its copy $\Gamma \times \{(s_1, 0)\}$.
Consider the set-wise stabilizer 
\[
G_{\{\Gamma\}} \coloneqq \{g \in G \mid g(\Gamma \times \{(s_1, 0)\}) = \Gamma \times \{(s_1, 0)\}\}.
\]
The group $G_{\{\Gamma\}}$ induces a permutation group on the set $\Gamma$, which we denote by $H \leq \Sym(\Gamma)$.
The following lemma shows that $G$ can be effectively embedded in a wreath product $W \coloneqq H \wr_{\Delta} \FSym(\Delta)$.

\begin{lemma}[{Generalization of~\cite[Lemma~2.6]{neumann1976structure}}]\label{lem:transitive_embed}
    Let $\mO = \Gamma \times \Delta$ be a partition of $\mO$ with $\Gamma$ finite and $\Delta$ infinite.
    Let $G$ be a subgroup of $\FSym(\mO)$ which preserves this partition and acts transitively on the set of blocks $\Delta$, and let $H \leq \Sym(\Gamma)$ be the permutation group induced on $\Gamma$ by its set-wise stabiliser.
    Then $G$ can be effectively embedded in the wreath product $W \coloneqq H \wr_{\Delta} \FSym(\Delta)$.
\end{lemma}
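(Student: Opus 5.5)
The plan is to follow the classical Kaloujnine–Krasner embedding: choose block identifications by elements of $G$ itself, so that every element of $G$ decomposes canonically as a tuple of elements of $H$ together with a permutation of blocks. Throughout, write $\Gamma_\delta \coloneqq \Gamma \times \{\delta\}$ and $\delta_0 \coloneqq (s_1,0)$, so that $\Gamma_{\delta_0}$ is identified with $\Gamma$.

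First, fix the transversal. Since $G$ is transitive on $\Delta$, for every $\delta \in \Delta$ we choose $t_\delta \in G$ with $t_\delta(\Gamma_{\delta_0}) = \Gamma_\delta$, taking $t_{\delta_0} = \id$. Each $t_\delta$ has finite support, so it fixes pointwise all but finitely many blocks; consequently, for all but finitely many $\delta$ the map $t_\delta$ cannot be the identity on $\Gamma_\delta$ (it sends $\Gamma_{\delta_0}$ onto it). The choice must be effective. For this I would use the $\rho^{\ell}$-invariance and the product structure $\Delta = S \times \ell\Z$ from Lemma~\ref{lem:effective_partition}. For each $s_i$ one computes, by searching $G$ (transitivity makes the search terminate), an element $t_{(s_i,0)}$; one also computes an element $u \in G$ mapping $\Gamma_{(s_1,0)}$ to $\Gamma_{(s_1,\ell)}$. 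Then one defines $t_{(s_i,\ell x)}$ by a recursive formula, for instance conjugating by powers of $\rho^{\ell}$ and composing with conjugates of $u$. These conjugates lie in $G$ by invariance, so the $t_\delta$ form a computable family.

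Next, define the embedding. For $g \in G$, let $\bar g \in \FSym(\Delta)$ be the permutation induced on blocks; it is finitary because $g$ has finite support. For each $\delta$, the element $h_\delta(g) \coloneqq t_{\delta}^{-1}\, g\, t_{\bar g^{-1}(\delta)}$ maps $\Gamma_{\delta_0}$ to itself, so it lies in $G_{\{\Gamma\}}$. Let $h_\delta \in H$ be its restriction to $\Gamma$. Set $\Phi(g) \coloneqq ((h_\delta)_\delta, \bar g)$. Two properties then need checking:
\begin{itemize}[nosep]
\item $\Phi$ is a homomorphism. This is a direct cocycle computation against \eqref{eq:wreath_multiplication}: the terms $t_{\bar g^{-1}(\delta)}$ cancel telescopically.
\item $\Phi$ is injective. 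If $\Phi(g)$ is trivial, then $\bar g = \id$ and $t_\delta^{-1} g t_\delta$ is trivial on $\Gamma_{\delta_0}$ for every $\delta$, so $g$ is trivial on every block $\Gamma_\delta$.
\end{itemize}
Equivalently, $\Phi$ is conjugation by the bijection $\Gamma \times \Delta \to \mO$ given by $(\gamma,\delta) \mapsto t_\delta(\gamma,\delta_0)$, which makes both properties transparent.

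The main obstacle is showing that the image lies in the \emph{restricted} wreath product, that is, that $h_\delta = \id$ for all but finitely many $\delta$. Outside the finitely many blocks moved by $g$ or meeting $\supp(g)$, $g$ acts as the identity on $\Gamma_\delta$, so $h_\delta$ equals $t_\delta^{-1} t_\delta$ restricted to $\Gamma$, which is trivial. The difficulty is that the cocycle $t_{\bar g^{-1}(\delta)}$ differs from $t_\delta$ only on finitely many $\delta$, and this requires care: I would verify it through the explicit description of $\mathrm{supp}(g)$.

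A further subtlety is that $H$ is defined via the stabiliser of a single block. One must check that $H$ is effectively computable. I would get this from finiteness of $\Sym(\Gamma)$: enumerate elements of $G_{\{\Gamma\}}$ obtained as products $t_\delta^{-1} g\, t_{\delta'}$ of the generators until the generated subgroup stabilises. To certify that it has stabilised, I would argue via Schreier generators: with respect to the transversal $\{t_\delta\}$ these generate $G_{\{\Gamma\}}$, and only finitely many of them induce distinct permutations on $\Gamma$ up to $\rho^{\ell}$-periodicity.
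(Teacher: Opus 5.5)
Your proposal is correct and is essentially the paper's argument. The paper builds the same transversal $g_{i,x}=\rho^{x\ell}g_i(\rho^{-\ell}h)^x$ from $\rho^{\ell}$-conjugates of the $g_i$ and of your $u$, and embeds $G$ by conjugating with the resulting identification $\phi$ (your $\psi^{-1}$). It then reads off $H$ from the coordinates of finitely many generators of $c_\phi(G)$, viewed as a $\rho^{\ell\card{\Gamma}!}$-invariant group.

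Two small corrections. First, the restricted-wreath-product property you call the main obstacle is immediate: off the finitely many blocks meeting $\supp(g)$ one has $\bar g^{-1}(\delta)=\delta$, and $g$ is trivial on $\Gamma_\delta$, so $h_\delta(g)=\id$. Second, the periodicity you only assert for effectiveness is the step that needs a reason. It holds because $\rho^{-\ell}u$ stabilises $\Gamma_{\delta_0}$, which gives $t_{(s_i,(x+r)\ell)}|_{\Gamma_{\delta_0}}=\rho^{r\ell}\,t_{(s_i,x\ell)}|_{\Gamma_{\delta_0}}$ for $r=\card{\Gamma}!$.
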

\begin{proof}
    We follow the proof of~\cite[Lemma~2.6]{neumann1976structure}, see Figure~\ref{fig:twist} for an illustration.
    Recall that the index set $\Delta$ can be decomposed as a product $S \times \ell\Z$ for some finite set $S = \{s_1, \ldots, s_t\}$.
    We identify $\Gamma$ with its copy $\Gamma \times \{(s_1, 0)\}$ in $\Gamma \times \Delta$.
    Since $G$ acts transitively on the set of blocks $\Delta$, we can compute elements $h, g_1, \ldots, g_t \in G$ such that $h$ sends the block $(s_1, 0)$ to $(s_1, \ell)$; and $g_i, i = 1, \ldots, t$ sends the block $(s_1, 0)$ to $(s_i, 0)$, with $g_1 = \id$.
    Then for $i = 1, \ldots, t$ and $x \geq 0$, the element
    \begin{equation}\label{eq:gix}
    g_{i, x} \coloneqq \rho^{x\ell}g_i\rho^{-x\ell} \cdot \rho^{(x-1)\ell}h\rho^{-(x-1)\ell} \cdot \cdots \cdot \rho^{\ell}h\rho^{-\ell} \cdot h = \rho^{x\ell}g_i \cdot (\rho^{-\ell} h)^x
    \end{equation}
    sends the block $(s_1, 0)$ to $(s_i, x\ell)$.
    A similar element can be defined for $x < 0$.
    Then, the set-wise stabilizer $G_{\{\Gamma \times \{s_i, x \ell\}\}}$ is conjugate to $G_{\{\Gamma \times \{s_1, 0\}\}}$ via $g_{i, x}$.

    We then identify the copy $\Gamma \times \{(s_i, x\ell)\}$ with $\Gamma$ through the map 
    \[
    g_{i, x}^{-1} \colon \Gamma \times \{(s_i, x\ell)\} \rightarrow \Gamma \times \{(s_1, 0)\} = \Gamma, \quad \gamma \mapsto g_{i, x}^{-1}(\gamma).
    \]
    This yields a map $\phi \colon \mO = \bigsqcup_{\delta \in \Delta} \Gamma \times \{\delta\} \rightarrow \Gamma \times \Delta,\; (\gamma, (s_i, x\ell)) \mapsto (g_{i, x}^{-1}(\gamma), (s_i, x \ell))$.
    Let $\Omega = \Gamma \times \Delta$ denote the image of $\phi$.
    Note that the identification of $\Omega$ with $\Gamma \times \Delta$ is different from the original product $\mO = \Gamma \times \Delta$.
    Under conjugation by $\phi$, the group $G$ becomes a partition-preserving permutation group of $\Omega = \Gamma \times \Delta$, which yields an embedding
    \[
    c_{\phi} \colon G \hookrightarrow \Sym(\Gamma) \wr_{\Delta} \FSym(\Delta).
    \]
    Under this embedding, the set-wise stabilizers $c_{\phi}(G)_{\{\Gamma \times \{s_i, x \ell\}\}}$ all induce the same permutation group $H \leq \Sym(\Gamma)$.
    Furthermore, for each $i, x$, the element $c_{\phi}(g_{i,x})$ sends $\Gamma \times \{(s_i, x\ell)\}$ to $\Gamma \times \{(s_1, 0)\}$ while preserving the order of the elements, i.e., $c_{\phi}(g_{i,x}) = ((h_{\delta})_{\delta \in \Delta}, g)$ with $h_{g(s_i, x\ell)} = \id$.
    
    We now show that
    \[
    c_{\phi}(G) \leq H \wr_{\Delta} \FSym(\Delta).
    \]
    Indeed, let $((h_{\delta})_{\delta \in \Delta}, g) \in c_{\phi}(G)$ and take any $(s_i, x \ell) \in \Delta$.
    Write $g(s_i, x \ell) = (s_{i'}, x' \ell)$, then the product $c_{\phi}(g_{i, x}) \cdot c_{\phi}(g_{i', x'})^{-1} \cdot ((h_{\delta})_{\delta \in \Delta}, g) \in c_{\phi}(G)$ stabilizes $\Gamma \times \{(s_i, x \ell)\}$ as a set, on which it induces a permutation $h_{g(s_i, x \ell)}$.
    Therefore $h_{g(s_i, x \ell)} \in H$.
    Applying this to all $(s_i, x \ell) \in \Delta$, we obtain that $((h_{\delta})_{\delta \in \Delta}, g) \leq H \wr_{\Delta} \FSym(\Delta)$.
    
    One may note that, since $G$ is $\rho^{\ell}$-invariant, its image $c_{\phi}(G)$ is invariant under the conjugated shift map $\phi \circ \rho^{\ell} \circ \phi^{-1}$.
    Furthermore, in Definition~\eqref{eq:gix}, the term $\rho^{-\ell} h$ permutes the set $\Gamma \times \{(s_1, 0)\}$.
    Therefore for $r = \card{\Gamma}!$, we have $g_{i,x+r}\vert_{\Gamma \times \{(s_1, 0)\}} = \rho^{r \ell} g_{i,x}\vert_{\Gamma \times \{(s_1, 0)\}}$ for all $1 \leq i \leq t, x \in \Z$.
    This $(\card{\Gamma}!)$-periodicity gives an effective representation of $\phi$ and $\phi \circ \rho^{\ell} \circ \phi^{-1}$, and shows that $(\phi \circ \rho^{\ell} \circ \phi^{-1})^{\card{\Gamma}!}$ is the same as $\rho^{\ell \card{\Gamma}!}$.

    Note that $H$ can be effectively computed. Let $f_1, \ldots, f_n \in \Sym(\Gamma) \wr_{\Delta} \FSym(\Delta)$ be the generators of $G$ as a $\rho^{\ell \card{\Gamma}!}$-invariant group, where $f_i = ((h_{i\delta})_{\delta \in \Delta}, g_i)$ with $h_{i\delta} \in \Sym(\Gamma)$.
    Since $f_i \in H \wr_{\Delta} \FSym(\Delta)$, the group $H$ is generated by the (finitely many non-zero) elements $h_{i\delta} \in \Sym(\Gamma)$.
    Altogether, we obtain an effective embedding of $G$ into $W \coloneqq H \wr_{\Delta} \FSym(\Delta)$, as well as an effective interpretation of the action of $\rho^{\ell}$ on $W$.
\end{proof}

    \begin{figure}[h!]
        \centering
        \figuretwist
        \caption{Re-identifying $\mO = \Gamma \times \Delta$ with $\Omega = \Gamma \times \Delta$ under the map $\phi$. Here, $H = \{\id\}$.}
        \label{fig:twist}
    \end{figure}

From now on we consider $G$ as a permutation group of $\Omega = \Gamma \times \Delta$ under the embedding $G \hookrightarrow H \wr_{\Delta} \FSym(\Delta)$ from Lemma~\ref{lem:transitive_embed}.
The group $G$ can now be represented as a $\rho^{\ell'}$-invariant group for some $\ell' > 0$ (for example, one can take $\ell' = \ell \card{\Gamma}!$ in the proof of Lemma~\ref{lem:transitive_embed}), given by a finite set of generators.

Recall that, for a group $K$, we denote by $K'$ its commutator subgroup, generated by the elements $[g, h]$ (for $g, h \in K$).
In particular $K' \unlhd K$ and the quotient $K/K'$ is abelian.

\begin{lemma}[{\cite[Theorem~3]{neumann1976structure}, see note below}]\label{lem:transitive_finite}
    Let $\Omega = \Gamma \times \Delta$ with $\Gamma$ finite and $\Delta$ infinite.
    Let $H$ be a group acting on $\Gamma$, and $W = H \wr_{\Delta} \FSym(\Delta)$.
    Then $W/W' \cong H/H' \times C_2$ where $C_2$ is the cyclic group of order two, and $((f_{\delta})_{\delta \in \Delta}, b) \in W'$ if and only if $b \in \FAlt(\Delta)$ and $\prod_{\delta \in \Delta} f_{\delta} \in H'$.
    Let $G$ be a subgroup of $W$ such that 
    \begin{enumerate}[nosep, label = {\normalfont(\roman*)}]
        \item $G$ acts primitively on the blocks $\Delta$,
        \item the permutation group on $\Gamma$ induced by the stabilizer $G_{\{\Gamma\}}$ is equal to $H$.
    \end{enumerate}
    Then we have $W' \leq G \leq W$.
\end{lemma}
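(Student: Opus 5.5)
The statement has two parts: the description of $W' $ (and of $W/W'$) for the full wreath product, and the inclusion $W' \leq G$. I will prove them in that order, using the first to organise the second.

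\textbf{Step 1: computing $W/W'$.} Define
\[
\psi \colon W \to H/H' \times C_2, \qquad ((f_\delta)_{\delta\in\Delta}, b) \mapsto \Bigl(\textstyle\prod_\delta f_\delta \bmod H',\ \sgn(b)\Bigr).
\]
The product $\prod_\delta f_\delta$ is well defined modulo $H'$ in any order. By \eqref{eq:wreath_multiplication}, the base entries of a product of two elements are $f_\delta f'_{b^{-1}(\delta)}$. These are the same factors up to reordering, so $\psi$ is a homomorphism. It is surjective and its target is abelian, hence $W' \leq \ker\psi$.

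For the converse, I show that $\ker\psi$ is generated by three kinds of elements of $W'$:
\begin{enumerate}[nosep, label=(\alph*)]
\item $\FAlt(\Delta)$ itself, which equals its own commutator subgroup because it is generated by $3$-cycles, each a commutator of two transpositions;
\item single-coordinate elements with entry in $H'$ at one $\delta$;
\item ``dipoles'' with entry $h$ at $\delta$ and $h^{-1}$ at $\delta'$, which equal $[\,h \text{ at } \delta,\ (\delta\,\delta')\,]$.
\end{enumerate}
Given $((f_\delta), b) \in \ker\psi$, I first multiply by an element of $\FAlt(\Delta)$ to make $b=\id$. I then use dipoles to collect all entries at a single coordinate. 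The remaining entry lies in $H'$, so it is of type (b). This gives $W/W' \cong H/H' \times C_2$ and the stated criterion for membership in $W'$.

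\textbf{Step 2: the top group.} Let $\bar G \leq \FSym(\Delta)$ be the image of $G$. By hypothesis (i) it is a primitive group of finitary permutations on an infinite set. The Jordan--Wielandt theorem (\cite[Corollary~31.2]{neumann2023infinite}) then gives $\bar G \geq \FAlt(\Delta)$.

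\textbf{Step 3: the local part $H'$, which I expect to be the main obstacle.} Fix a block $\delta_0$. By (ii), for $h_1,h_2 \in H$ there are $g_1,g_2 \in G_{\{\Gamma\}}$ inducing $h_1,h_2$ on the block $\delta_0$. Both have finite supports in $\Delta$. By Step 2, I can pick $k \in G$ whose top part fixes $\delta_0$ and moves $\supp(g_2)\setminus\{\delta_0\}$ off $\supp(g_1)$; this uses $3$-cycles fixing $\delta_0$. Then $k g_2 k^{-1}$ overlaps $g_1$ only in the block $\delta_0$, where it induces some conjugate $c h_2 c^{-1}$ with $c \in H$. Hence $[g_1, k g_2 k^{-1}]$ is supported in that single block and equals $[h_1, c h_2 c^{-1}]$ there.

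The delicate point is controlling the conjugator $c$. I would handle it either by noting that such commutators, over all $h_1$ and all conjugates of $h_2$, still generate $H'$ (since $H'$ is normal in $H$), or by choosing $k$ to also stabilise $\delta_0$ with trivial action via (ii). Either way, $G$ contains $H'$ at $\delta_0$. Transitivity of $G$ on blocks then gives $\bigoplus_\Delta H' \leq G$.

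Next I obtain dipoles modulo $\bigoplus_\Delta H'$. Take the commutator of $g_1$ with an element of $G$ whose top part is a $3$-cycle sending $\delta_0$ to a block $\delta_1$ outside $\supp(g_1)$. Its base part is $h_1$ at $\delta_0$ and $h_1^{-1}$ at $\delta_1$, up to factors in $H'$. Transitivity again moves these to arbitrary pairs of blocks. Since dipoles and $\bigoplus_\Delta H'$ generate $W' \cap \bigoplus_\Delta H$ by Step 1, this yields $W' \cap \bigoplus_\Delta H \leq G$.

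\textbf{Step 4: conclusion.} Since $\FAlt(\Delta)$ is perfect and $\bar G \geq \FAlt(\Delta)$, every $b \in \FAlt(\Delta)$ has a lift in $G'$. Any such lift lies in $\ker\psi = W'$ because $\psi$ kills commutators. Now let $w \in W'$ have top part $b$, and let $g \in G'$ be a lift of $b$. Then $w g^{-1}$ lies in $W' \cap \bigoplus_\Delta H$, which is contained in $G$ by Step 3. Therefore $w \in G$, and $W' \leq G \leq W$.
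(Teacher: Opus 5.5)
Your architecture is sound, and Steps 1, 2 and 4 are correct. Step 1 uses the same three generator types for $W'$ ($\FAlt(\Delta)$, $H'$ at one coordinate, dipoles) that the paper lists when proving $W''=W'$; the paper does not prove this lemma itself but defers to Neumann. The problems are in Step 3, which carries the real content.

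\textbf{The conjugator $c$.} Your first remedy is not an argument. For each pair $(h_1,h_2)$ the construction yields one uncontrolled conjugate $ch_2c^{-1}$, with $c$ depending on $g_1,g_2,k$, not all conjugates. Normality of $H'$ does not address this. Your second remedy is the right idea, but correcting $k$ by an element of $G_{\{\Gamma\}}$ inducing $c^{-1}$ changes the top part of $k$ and can destroy the disjointness you just arranged.

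A clean fix is as follows. Let $M$ be the kernel of the restriction map $G_{\{\Gamma\}}\to H$ on the block $\delta_0$. It has index $\card{H}<\infty$, so its image in $\FSym(\Delta)$ has finite index in the stabiliser $\bar G_{\delta_0}$. That stabiliser contains $\FAlt(\Delta\setminus\{\delta_0\})$. An infinite simple group has no proper finite-index subgroup, so the image of $M$ contains $\FAlt(\Delta\setminus\{\delta_0\})$. You may therefore take $k\in M$. Then $c=\id$ and $[g_1,kg_2k^{-1}]=[h_1,h_2]@\delta_0$.

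\textbf{The dipoles.} A lift $t=(e,s)\in G$ of the $3$-cycle $s=(\delta_0\,\delta_1\,\delta_2)$ has an uncontrolled base part $e$. For $g_1=(f,b)$ with $bs=sb$ one computes $[g_1,t]=f\cdot(beb^{-1})\cdot(sf^{-1}s^{-1})\cdot e^{-1}$. On each block $\eta\notin\{\delta_0,\delta_1,\delta_2\}$ moved by $b$, the entry is congruent modulo $H'$ to $e_{b^{-1}(\eta)}e_\eta^{-1}$, which need not lie in $H'$. So ``up to factors in $H'$'' fails in general; the junk is another element of $W'\cap\bigoplus_\Delta H$, whose membership in $G$ is what you are trying to prove. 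You also need $\delta_2\notin\supp(g_1)$ just for the top part to vanish.

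To fix this, reverse the order of choices. Fix $t$ first, with block support $T$. Then replace $g_1$ by $kg_1k^{-1}$, where $k\in M$ moves the blocks of $\supp(g_1)$ other than $\delta_0$ off $T$. Now $[kg_1k^{-1},t]$ has trivial top part. Its entries are $h_1$ at $\delta_0$, $e_{\delta_1}h_1^{-1}e_{\delta_1}^{-1}$ at $\delta_1$, and $\id$ elsewhere. This is the dipole modulo $\bigoplus_\Delta H'\le G$. With these two repairs your proof goes through.

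\textbf{A minor point.} Perfectness of $\FAlt(\Delta)$, used in Step 4, does not follow from $3$-cycles being commutators of transpositions; that only gives $\FAlt(\Delta)\le\FSym(\Delta)'$. Use $(a\,b\,c)=[(a\,b)(d\,e),(a\,c)(d\,e)]$ instead.
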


Note that the original theorem of Neumann~\cite[Theorem~3]{neumann1976structure} is stated in the case where $G$ is transitive on $\Omega$ (i.e.\ $H$ transitive on $\Gamma$).
However, the proof in~\cite[Section~5]{neumann1976structure} does not use the transitivity of $G$ or $H$.
The statement $W/W' \cong H/H' \times C_2$ comes from~\cite[Lemma~2.7]{neumann1976structure}.
In particular, $GW'/W'$ is a subgroup of the abelian group $W/W' = H/H' \times C_2$.
Below is a simple example.

\begin{example}\label{example:derived_wreath}
    Let $\card{\Gamma} = 2$, $\Delta = \Z$, and $H$ be the cyclic group $C_2 = \{1, -1\}$ of two elements acting on $\Gamma$.
    In particular $H' = \{\id\}$.
    Then the group $W$ is the wreath product $C_2 \wr_{\Z} \FSym(\Z)$.
    An element $(f, b) \in W$ is in $W'$ if and only if $b \in \FAlt(\Z)$ and $f = (f_{\delta})_{\delta \in \Z}$ where $\prod_{\delta \in \Delta} f_{\delta} = 1$, that is, the number of $\delta \in \Z$ with $f_{\delta} = -1$ is even.
    The quotient $W/W'$ is isomorphic to $C_2 \times C_2$, where the projection map $W \rightarrow W/W' = C_2 \times C_2$ is $(f, b) \mapsto (\prod_{\delta \in \Delta} f_{\delta},\; \sgn(b))$.
    If $G \leq W$ satisfies the two conditions in Lemma~\ref{lem:transitive_finite}, then $G = AW'$ where $A$ is a subgroup of $C_2 \times C_2$.
\end{example}

\paragraph{Normal subgroups of $G_i$.}

We now characterize normal subgroups of $G$: this will be important for subsequent application of Goursat's lemma.
Let $H$ be a finite group and $\Delta$ be an infinite set.
Denote the projection 
\[
\pi_{\Delta} \colon H \wr_{\Delta} \FSym(\Delta) \rightarrow \FSym(\Delta)
\]
and let $W = H \wr_{\Delta} \FSym(\Delta)$.

\begin{proposition}\label{cor:N_contains_Wprime}
    Let $W' \leq G \leq W$ and $N$ be a normal subgroup of $G$. Then either $\pi_{\Delta}(N) = \{\id\}$ or $N\ge W'$.
    In the first case $\card{G/N}$ is infinite, and in the second case $\card{G/N} < \infty$.
\end{proposition}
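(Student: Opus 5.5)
The plan is to prove the dichotomy first and then read off the cardinality statements. Suppose $\pi_{\Delta}(N) \neq \{\id\}$, and fix $n = ((f_\delta)_{\delta}, b) \in N$ with $b \neq \id$. We use two facts throughout. First, by Lemma~\ref{lem:transitive_finite}, $W'$ consists exactly of the elements $((h_\delta)_\delta, a)$ with $a \in \FAlt(\Delta)$ and $\prod_\delta h_\delta \in H'$. In particular $W'$ contains every lift $(\id, c)$ with $c \in \FAlt(\Delta)$, and every base element whose coordinate product lies in $H'$. Second, since $N \unlhd G$ and $W' \leq G$, every commutator $[x, m]$ with $x \in W'$ and $m \in N$ lies in $N \cap W'$.

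\textbf{Step 1: the base part.} The goal is to show that $N$ contains the base subgroup $B' \coloneqq \{(h_\delta)_\delta \in \bigoplus_\Delta H \mid \prod_\delta h_\delta \in H'\}$. Pick $\delta_0$ with $b(\delta_0) = \delta_1 \neq \delta_0$. For $k \in H$, let $x_k \in W'$ be the base element with $k$ at $\delta_0$, $k^{-1}$ at a far-away block $\delta_2$ fixed by $b$ and outside the support of $(f_\delta)$, and $\id$ elsewhere. The commutator $[n, x_k] = n x_k n^{-1} x_k^{-1}$ is then a base element supported on $\{\delta_0, \delta_1\}$. It equals a conjugate of $k$ at $\delta_1$ (the conjugation by $f_{\delta_1}$ is harmless after renaming $k$) together with $k^{-1}$ at $\delta_0$. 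Conjugating further by lifts of $\FAlt(\Delta)$ lying in $W' \leq G$, we get in $N$ all elements ``$k$ at $\alpha$, $k^{-1}$ at $\beta$'' for all distinct $\alpha, \beta \in \Delta$ and all $k \in H$. A short computation shows these generate $B'$. Products of such elements realise every tuple with trivial product. Combining two of them sharing a coordinate, for instance $(k, k^{-1})\cdot(k', k'^{-1})\cdot((kk')^{-1}, kk')$ placed suitably on three blocks, produces commutators $[k, k']$ in a single coordinate. This step, and in particular keeping track of the conjugation twists coming from $f_\delta$, is the fiddliest part of the proof.

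\textbf{Step 2: the top part.} For any $c \in \FAlt(\Delta)$, consider $[n, (\id, c)] \in N$. Its $\FSym(\Delta)$-component is $[b, c]$. Its base component is a product of $(f_\delta)$ and a shifted copy of $(f_\delta)^{-1}$, whose coordinate product is trivial modulo $H'$, so this base component lies in $B' \subseteq N$. Hence $(\id, [b, c]) \in N$. Choosing $c$ to be a $3$-cycle moving $\delta_0$ makes $[b, c]$ a nontrivial element of $\FAlt(\Delta)$. The set $\{a \in \FAlt(\Delta) \mid (\id, a) \in N\}$ is a subgroup, normalised by $\FAlt(\Delta)$ because $(\id, \FAlt(\Delta)) \leq W' \leq G$. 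Since $\FAlt(\Delta)$ is simple for infinite $\Delta$, this subgroup is all of $\FAlt(\Delta)$. Every element of $W'$ is (base element of $B'$)$\cdot(\id, a)$ with $a \in \FAlt(\Delta)$, so $N \geq W'$.

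\textbf{Cardinalities.} If $\pi_\Delta(N) = \{\id\}$, then $\pi_\Delta$ induces a surjection from $G/N$ onto $\pi_\Delta(G)$. This image contains $\pi_\Delta(W') = \FAlt(\Delta)$, which is infinite, so $G/N$ is infinite. If $N \geq W'$, then $G/N$ is a quotient of $G/W' \leq W/W' \cong H/H' \times C_2$ (Lemma~\ref{lem:transitive_finite}), which is finite because $H$ is finite.
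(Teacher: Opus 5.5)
Your argument is correct in outline, but it takes a different route from the paper, and Step~1 contains one slip that needs repairing.

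\textbf{Comparison with the paper.} The paper never fixes a particular element of $N$. It notes that $\pi_{\Delta}(N)$ is normal in $\pi_{\Delta}(G)\ge\FAlt(\Delta)$, so it is either trivial or contains $\FAlt(\Delta)$. In the latter case, for any $g_1,g_2\in G$ it picks $n\in N$ whose block permutation moves the blocks meeting $\supp(g_1)$ off those meeting $\supp(g_2)$. Then $[ng_1^{-1}n^{-1},g_2]=\id$, and Higman's double commutator trick gives $[g_1,g_2]=[[g_1,n],g_2]\in N$, so $N\ge G'$. Separately, it proves that $W'$ is perfect, so $N\ge G'\ge W''=W'$.

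You instead fix one $n=((f_\delta)_\delta,b)\in N$ with $b\neq\id$ and build generators of $W'$ inside $N$ from commutators $[n,x]$ with $x\in W'$. You get the base part $B'$ first, then the top part via $[n,(\id,c)]$ and simplicity of $\FAlt(\Delta)$.

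The paper's route is coordinate-free, yields the stronger $N\ge G'$, and never tracks twisting by $f$. It pays with the perfectness computation $W''=W'$ and a tacit centraliser fact: a normal subgroup of $\pi_\Delta(G)$ meeting $\FAlt(\Delta)$ trivially commutes with $\FAlt(\Delta)$, hence is trivial. Your route avoids both, since Step~2 produces a nontrivial element of $\FAlt(\Delta)$ in the top part of $N$ directly. The cost is bookkeeping in the base group.

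\textbf{The slip in Step~1.} You correctly compute $[n,x_k]$ as $f_{\delta_1}kf_{\delta_1}^{-1}$ at $\delta_1$ and $k^{-1}$ at $\delta_0$. Renaming $k'=f_{\delta_1}kf_{\delta_1}^{-1}$ gives $k'$ at $\delta_1$ and $(f_{\delta_1}^{-1}k'f_{\delta_1})^{-1}$ at $\delta_0$. This is not of the form ``$k'$ at $\alpha$, $k'^{-1}$ at $\beta$'' unless $f_{\delta_1}$ centralises $k'$, so renaming alone does not remove the twist.

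The fix is to reorder your own steps. Write $c=f_{\delta_1}$ and transport the twisted elements with lifts of $\FAlt(\Delta)$. Take the one supported on $\{\alpha,\beta\}$ with parameter $y$ and the one supported on $\{\alpha,\gamma\}$ with parameter $z$. Their commutator is $[cyc^{-1},czc^{-1}]=c[y,z]c^{-1}$ at $\alpha$ and trivial elsewhere, so $\bigoplus_\Delta H'\le N$. Modulo $\bigoplus_\Delta H'$ we have $cyc^{-1}\equiv y$, so the untwisted pairs lie in $N$, and the rest of Step~1 goes through.

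Alternatively, first replace $n$ by $xnx^{-1}$, where $x\in W'$ is the base element with $f_{\delta_1}^{-1}$ at $\delta_1$ and $f_{\delta_1}$ at a far-away block fixed by $b$. This kills the $\delta_1$-coordinate without changing $b$.

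\textbf{A small point in Step~2.} Not every $3$-cycle moving $\delta_0$ works; for instance $c$ could commute with $b$. Take $c=(\delta_0\,\epsilon_1\,\epsilon_2)$ with $\epsilon_1,\epsilon_2$ fixed by $b$. Then $bcb^{-1}=(\delta_1\,\epsilon_1\,\epsilon_2)\neq c$, so $[b,c]\ne\id$. With these two adjustments the proof is complete.
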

\begin{proof}
The first statement of the proposition is a combination of the following two claims. The first claim is a variation on the \say{double commutator lemma}, which goes back to Higman \cite{Higman}.
\medskip

\begin{adjustwidth}{2mm}{2mm}
    \textbf{Claim.} Let $W' \le G \le W$. Suppose that $N \unlhd G$, then either $\pi_{\Delta}(N) = \{\id\}$ or $N \ge G'$.
    \medskip
    
    \noindent$\triangleright$ We first observe that $\FAlt(\Delta)\le\pi_{\Delta}(G)\le\FSym(\Delta)$ since $W'\le G \le W$. Since $\pi_{\Delta}(N)$ is a normal subgroup of $\pi_{\Delta}(G)$, the intersection $\pi_\Delta(N)\cap\FAlt(\Delta)$ is either $\{\id\}$ or $\FAlt(\Delta)$. In the former case, we have $\pi_{\Delta}(N) = \{\id\}$. Suppose that we are in the later, i.e., $\pi_{\Delta}(N) \ge \FAlt(\Delta)$.
    \medskip
    
    \noindent Given $g_1,g_2 \in G$, we prove that $[g_1,g_2] \in N$. Since $\pi_{\Delta}(N) \ge \FAlt(\Delta)$, there is an element $n \in N$, such that the support $\supp(ng_1^{-1}n^{-1})=n(\supp(g_1^{-1}))$ is disjoint from $\supp(g_2)$ and hence $[ng_1^{-1}n^{-1},g_2]= \id$. Since $[g,n]=gng^{-1}\cdot n^{-1}\in N$ for all $g\in G$ and $n\in N$, we have
    \[ [g_1,g_2] = g_1[ng_1^{-1}n^{-1},g_2]g_2g_1^{-1}g_2^{-1}=\bigl[[g_1,n],g_2\bigr]\in N.\]
    Alternatively $[g_1, g_2]\equiv [ng_1n^{-1}, g_2]=\id \pmod N$.
    We conclude that $N \geq G'$. \hfill$\triangleleft$
\end{adjustwidth}
\medskip
\noindent Since $W' \le G$, the case $N \ge G'$ already yields $N \ge G' \ge W''$. It remains to prove $W''=W'$.
\medskip
\begin{adjustwidth}{2mm}{2mm}
    \textbf{Claim.} We have $W'' = W'$.
    \medskip
    
    \noindent$\triangleright$ We exhibit different elements that belong to $W''$. 
    For $h \in H$ and $\delta \in \Delta$, we let $h@\delta$ denote the element in $\bigoplus_{\Delta} H$ whose $\delta$-coordinate is $h$ and whose all other coordinates are $\id$.
    \begin{itemize}[leftmargin=6mm]
        \item First $W''\ge \FAlt(\Delta)''=\FAlt(\Delta)$ since $W\ge\FAlt(\Delta)$ and $\FAlt(\Delta)' = \FAlt(\Delta)$ (Lemma~\ref{lem:transitive_finite}).
        \item For all $h_1,h_2\in H$ and $\delta\in \Delta$, we have $[h_1,h_2]@\delta\in W''$. Indeed, by Lemma~\ref{lem:transitive_finite}, we have
        \[ [h_1,h_2]@\delta = \bigl[ (h_1@\delta)\cdot (h_1^{-1}@\delta_1),\; (h_2@\delta)\cdot (h_2^{-1}@\delta_2)\bigr] \in [W',W']=W'' \]
        for any $\delta_1,\delta_2\in\Delta$ distinct from each other and distinct from $\delta$.
        \item For all $h\in H$ and distinct $\delta_1,\delta_2\in \Delta$, we have $(h@\delta_1)\cdot (h^{-1}@\delta_2) \in W''$. Indeed, we have
        \[ (h@\delta_1)\cdot (h^{-1}@\delta_2) = \bigl[ (h@\delta_1)\cdot (h^{-1}@\delta_3),\, (\delta_1,\delta_2,\delta_4)\bigr]\in [W',\FAlt(\Delta)]\le W''\]
        for any $\delta_3,\delta_4\in\Delta$ distinct from each other and distinct from $\delta_1,\delta_2$.
    \end{itemize}
    Finally, it is easy to see that $W'$ is generated by these three types of elements. (The first two types of elements generate the subgroup $H'\wr_\Delta\FAlt(\Delta)\ge \bigoplus_\Delta H'$, then we can argue in the quotient $W/\bigoplus_\Delta H'\simeq (H/H')\wr_\Delta\FAlt(\Delta)$.) We conclude that $W''\ge W'$ hence $W''=W'$. \hfill$\triangleleft$
\end{adjustwidth}
\medskip
    
    \noindent 
    In case $\pi_{\Delta}(N) = \{\id\}$, the quotient $G/N$ induces a permutation group of $\Delta$ that contains $\FAlt(\Delta)$, therefore $\card{G/N}$ is infinite.
    In case $N \geq W'$, we have $\card{G/N} \leq \card{W/W'} < \infty$ by Lemma~\ref{lem:transitive_finite}.
\end{proof}

\paragraph{The subdirect product $G_{\infty} \leq G_1 \times \cdots \times G_m$.}

We now compute the group 
\[
G_{\infty} \leq G_1 \times \cdots \times G_m
\]
Recall that the projections $G_{\infty} \rightarrow G_i, i = 1, \ldots, m$, are surjective, and we have already characterized the groups $G_i$ as well as their normal subgroups.
Here, each $G_i$ is a $\rho^{\ell_i}$-invariant group for some $\ell_i > 0$.
To compute $G_{\infty}$, we will iteratively apply Goursat's lemma.
First, we need a generalization of a classic result by H\"{o}lder~\cite{holder1895bildung}:

\begin{lemma}[{\cite[Section~11.4.6]{scott1987group}}]\label{lem:no_outer}
    Let $\Delta, \Delta'$ be two countably infinite sets, and $\varphi \colon \FAlt(\Delta) \rightarrow \FAlt(\Delta')$ or $\varphi \colon \FSym(\Delta) \rightarrow \FSym(\Delta')$ be an isomorphism.
    Then there exists an isomorphism $\xi \colon \Delta \rightarrow \Delta'$ such that $\varphi(g) = \xi g \xi^{-1}$ for all $g$.\footnote{Lemma~\ref{lem:no_outer} also holds for finite sets $\Delta, \Delta'$ with $\card{\Delta} = \card{\Delta'} \neq 6$.
    Remarkably, when $\card{\Delta} = 6$, there are automorphisms of $\Sym(\Delta)$ that are not conjugations~\cite{holder1895bildung}.}
\end{lemma}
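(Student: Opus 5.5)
The statement is the classical Schreier--Ulam/Baer type result that every automorphism (here, isomorphism) of $\FSym$ or $\FAlt$ of a countably infinite set is induced by a bijection. The plan is to characterize the 3-cycles (equivalently, for $\FSym$, the transpositions) group-theoretically, so that $\varphi$ must map them to 3-cycles. We then read off a bijection $\xi \colon \Delta \to \Delta'$ from how $\varphi$ acts on families of 3-cycles sharing points. Since $\FAlt(\Delta)$ is generated by 3-cycles, agreement of $\varphi$ with conjugation by $\xi$ on 3-cycles gives $\varphi = c_\xi$ on all of $\FAlt(\Delta)$. For $\FSym$, we first restrict $\varphi$ to the characteristic subgroup $\FAlt(\Delta) = \FSym(\Delta)'$ and obtain $\xi$ there. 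Then $\varphi$ and $c_\xi$ agree on $\FAlt(\Delta)$. For any $g \in \FSym(\Delta)$, the element $\varphi(g)c_\xi(g)^{-1}$ centralizes $\FAlt(\Delta')$, because for $a \in \FAlt(\Delta)$ we have $\varphi(gag^{-1}) = c_\xi(gag^{-1})$. The centralizer of $\FAlt(\Delta')$ in $\FSym(\Delta')$ is trivial since $\Delta'$ is infinite, so $\varphi = c_\xi$.

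\textbf{Characterizing 3-cycles.} The first step is to find an isomorphism-invariant property singling out 3-cycles among involution-free elements, or more conveniently, to characterize the subgroups $\FAlt(\{a,b,c,d\}) \cong A_4$ and related ones. A standard route is via centralizers. An element $g \in \FAlt(\Delta)$ of order $3$ is a product of $k$ disjoint 3-cycles. Its centralizer contains $\FAlt(\Delta \setminus \supp g)$, and one can distinguish $k=1$ by a property such as: $g$ and all its conjugates that do not commute with it generate a group in which the products $gh$ have order in $\{1,2,3,5\}$. For $k=1$ any two 3-cycles generate a subgroup of $A_5$ or $A_4\times C_3$ or $C_3\times C_3$, all of bounded exponent and small support. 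For $k \ge 2$ we can choose a conjugate $h$ such that $gh$ has order divisible by some prime $\ge 7$, or has large order (e.g.\ chaining the cycles into a long cycle). So ``$g$ has order 3 and $gh$ has order at most 5 for every conjugate $h$'' characterizes 3-cycles. Because $\varphi$ preserves orders and conjugacy, it maps 3-cycles to 3-cycles. For $\FSym$ the analogous characterization of transpositions (order 2, and $gh$ has order $\le 3$ for all conjugates $h$) also works directly. Making this separation rigorous is the step I expect to be the main obstacle: it requires an explicit construction of a bad conjugate for every $k\ge 2$, including $k$ arbitrarily large.

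\textbf{Building $\xi$.} Two 3-cycles have overlapping supports of size exactly $2$ iff they generate $A_4$; of size $1$ iff they generate $A_5$; and of size $0$ iff they commute and are not powers of one another. For a point $x \in \Delta$, consider maximal families of 3-cycles in which every pair shares exactly one point. These are the ``stars'' $\{(x\,a\,b)\}$ with a common point, together with bounded exceptional configurations inside a 5-point set. Infiniteness excludes the exceptions among infinite maximal families, so infinite such families correspond bijectively to points. Since $\varphi$ preserves these relations, it maps stars to stars, and this defines $\xi$. One then checks that $\varphi((a\,b\,c)) = (\xi a\,\xi b\,\xi c)$, possibly up to inversion. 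Inversion is excluded by examining an $A_4$ subgroup, where the orientation of one 3-cycle forces the orientation of the others consistently, so orientation is determined globally. This yields $\varphi = c_\xi$ on generators, and hence everywhere by the reduction in the first paragraph.
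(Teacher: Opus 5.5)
The paper gives no proof of this lemma; it only cites Scott, so your argument has to stand on its own. Most of it does:
\begin{itemize}
\item the recognition of 3-cycles among elements of order $3$;
\item reading $\card{\supp g\cap\supp h}$ off the isomorphism type $C_3$, $A_4$, $A_5$, $C_3\times C_3$ of $\langle g,h\rangle$;
\item the reduction from $\FSym$ to the characteristic subgroup $\FAlt=\FSym'$ via the trivial centralizer;
\item the orientation step, once phrased as follows: $\psi=c_\xi^{-1}\circ\varphi$ restricts to an automorphism of each $\FAlt(Y)\cong A_4$ with $\card{Y}=4$ that fixes the support of every 3-cycle, and such an automorphism is trivial because $\operatorname{Aut}(A_4)\cong S_4$ acts by conjugation.
\end{itemize}
The step you flagged as the main obstacle is routine. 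Let $g=(1\,2\,3)(4\,5\,6)c_3\cdots c_k$ with $7\notin\supp g$. Take the conjugate $h=(3\,4\,7)h_2\cdots h_k$, with $h_2,\ldots,h_k$ disjoint 3-cycles outside $\supp g\cup\{7\}$. Then $gh=(3\,5\,6\,4\,7\,1\,2)\,c_3\cdots c_k\,h_2\cdots h_k$, which has order $21$.

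The genuine gap is the definition of $\xi$. An infinite family of 3-cycles whose supports pairwise meet in exactly one point is a sunflower $\{(x\,a_i\,b_i)^{\pm1}\}$ with pairwise disjoint petals $\{a_i,b_i\}$. It is maximal as soon as the petals cover all but at most one point of $\Delta\setminus\{x\}$. So every point $x$ is the centre of a huge number of infinite maximal families, one for each such matching of $\Delta\setminus\{x\}$ and each choice of orientations. The map ``family $\mapsto$ centre'' is surjective but not injective, so your claim that infinite maximal families correspond bijectively to points is false, and as written $\xi$ is not well defined.

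You need an additional, group-theoretically expressible relation that detects when two families have the same centre. For instance, declare $\mathcal F_1\sim\mathcal F_2$ if no $g_1\in\mathcal F_1$ and $g_2\in\mathcal F_2$ generate $C_3\times C_3$, i.e.\ no member of one has support disjoint from a member of the other.
\begin{itemize}
\item If the centres are equal, every such pair shares the centre, so no such pair exists.
\item If the centres are $x\neq y$, pick $g_2=(y\,c\,d)^{\pm1}\in\mathcal F_2$ with $x\notin\{c,d\}$. Then pick $g_1=(x\,a\,b)^{\pm1}\in\mathcal F_1$ with $\{a,b\}\cap\{y,c,d\}=\emptyset$. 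These have disjoint supports.
\end{itemize}
Since $\varphi$ and $\varphi^{-1}$ preserve $\sim$, they induce mutually inverse bijections of centres, which gives $\xi$. Because $(a\,b\,c)$ belongs to families centred at $a$, $b$ and $c$, you get $\supp\varphi((a\,b\,c))=\{\xi a,\xi b,\xi c\}$, and the orientation step then concludes.

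Finally, the exceptional configurations are not confined to a $5$-point set: the Pasch configuration needs $6$ points and the Fano plane $7$. What you actually need, and what is true by Deza's theorem, is that a family of $3$-sets pairwise meeting in exactly one point with no common point has at most $7$ members.
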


For any index $i = 1, \ldots, m$, let 
\[
W_i = H_i \wr_{\Delta_i} \FSym(\Delta_i)
\]
be a wreath product where $\Delta_i$ is countably infinite and $H_i \leq \Sym(\Gamma_i)$ with finite $\Gamma_i$.
By Lemma~\ref{lem:transitive_finite}, the quotients $W_i/W'_i$ is finite.

\begin{lemma}\label{lem:diagonal_or_fi_two}
    Let $G$ be a subgroup of $W_i \times W_j$, and let $G_i, G_j$ denote respectively the projection of $G$ on the components $W_i$ and $W_j$.
    Suppose $G_i \geq W'_i$ and $G_j \geq W'_j$.
    Then $G$ falls into one of the two following cases:
    \begin{enumerate}[nosep, label={\normalfont(\roman*)}]
        \item either $G \geq W'_i \times W'_j$, 
        \item or there exists a bijection $\xi \colon \Delta_i \rightarrow \Delta_j$, such that $G$ acts on the blocks $\Delta_i \sqcup \Delta_j$ as the group $\{(g, \xi g \xi^{-1}) \mid g \in \FAlt(\Delta_i)\}$ or $\{(g, \xi g \xi^{-1}) \mid g \in \FSym(\Delta_i)\}$.
    \end{enumerate}
    In the second case, under the bijection $\xi^{-1} \colon \Delta_j \rightarrow \Delta_i$, the set $(\Gamma_i \times \Delta_i) \sqcup (\Gamma_j \times \Delta_j)$ can be identified with $(\Gamma_i \sqcup \Gamma_j) \times \Delta_i$, on which $G$ acts faithfully as a subgroup of $\Sym(\Gamma_i \sqcup \Gamma_j) \wr_{\Delta_i} \FSym(\Delta_i)$.
\end{lemma}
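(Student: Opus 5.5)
We apply Goursat's lemma (Lemma~\ref{lem:Goursat}) to $G \leq G_i \times G_j$, where $G_i, G_j$ are the projections. Let $N_i = \{g \mid (g,\id) \in G\} \unlhd G_i$ and $N_j = \{g \mid (\id, g) \in G\} \unlhd G_j$, so that $G_i/N_i \cong G_j/N_j$. Since $W_i' \le G_i \le W_i$ and $W_j' \le G_j \le W_j$, Proposition~\ref{cor:N_contains_Wprime} applies to both. Either $N_i \ge W_i'$ with $G_i/N_i$ finite, or $\pi_{\Delta_i}(N_i) = \{\id\}$ with $G_i/N_i$ infinite, and the same holds for $N_j$. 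Because the two quotients are isomorphic, their finiteness agrees, so either both $N_i \ge W_i'$ and $N_j \ge W_j'$, or both $N_i$ and $N_j$ act trivially on blocks.

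In the first case $G \ge N_i \times N_j \ge W_i' \times W_j'$, which is conclusion (i).

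In the second case, I would pass to the induced actions on blocks. Since $\pi_{\Delta_j}(N_j) = \{\id\}$, every $g \in G$ has its $\Delta_j$-block action determined by its $\Delta_i$-block action. Precisely, if $(g,h),(g,h') \in G$, then $(\id, h^{-1}h') \in G$, so $h^{-1}h' \in N_j$ acts trivially on $\Delta_j$. The symmetric statement also holds. Hence $\theta \colon \pi_{\Delta_i}(G_i) \to \pi_{\Delta_j}(G_j)$, $\pi_{\Delta_i}(g) \mapsto \pi_{\Delta_j}(h)$ for $(g,h)\in G$, is a well-defined isomorphism, provided it is well defined on $\pi_{\Delta_i}$-classes. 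That needs: if $(g,h)\in G$ and $g$ acts trivially on $\Delta_i$, then $h$ acts trivially on $\Delta_j$.

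This is the main obstacle, since the kernel of $\pi_{\Delta_i}$ restricted to $G_i$ is $G_i \cap \bigoplus H_i$, which need not lie in $N_i$. I would handle it as follows. The set $K = \{h \mid (g,h) \in G,\ \pi_{\Delta_i}(g) = \id\}$ is normal in $G_j$, being the image of a normal subgroup of $G$. Its block image $\pi_{\Delta_j}(K)$ is the image of $\bigoplus_{\Delta_i} H_i \cap G_i$ under a homomorphism, hence of the form (locally finite, abelian-by-finite-ish) normal subgroup of $\pi_{\Delta_j}(G_j) \ge \FAlt(\Delta_j)$. Normal subgroups of such a group are trivial or contain $\FAlt(\Delta_j)$. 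If $\pi_{\Delta_j}(K) \ge \FAlt(\Delta_j)$, then $\FAlt(\Delta_j)$ would be a quotient of a subgroup of $\bigoplus H_i$, contradicting that $\bigoplus H_i$ has exponent bounded by $|H_i|$ while $\FAlt(\Delta_j)$ does not. So $\pi_{\Delta_j}(K)$ is trivial and $\theta$ is well defined.

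Each image $\pi_{\Delta}(G)$ is $\FAlt(\Delta)$ or $\FSym(\Delta)$, by $W' \le G$ and Lemma~\ref{lem:transitive_finite}. An isomorphism cannot match $\FAlt$ with $\FSym$, since $\FAlt$ is perfect and $\FSym$ is not. So Lemma~\ref{lem:no_outer} gives a bijection $\xi\colon\Delta_i\to\Delta_j$ with $\theta(g)=\xi g\xi^{-1}$, which is (ii). Finally, identify $\Gamma_j\times\{\xi(\delta)\}$ with $\Gamma_j\times\{\delta\}$. Each element of $G$ then maps the block $(\Gamma_i\sqcup\Gamma_j)\times\{\delta\}$ onto $(\Gamma_i\sqcup\Gamma_j)\times\{g\delta\}$. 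This is because its $\Delta_i$ and $\Delta_j$ actions correspond under $\xi$. So $G$ preserves this partition and embeds in $\Sym(\Gamma_i\sqcup\Gamma_j)\wr_{\Delta_i}\FSym(\Delta_i)$, and the action is faithful since $G$ acts faithfully on the union.
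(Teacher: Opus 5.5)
Your proof is correct and has the same skeleton as the paper's. Both use Goursat's lemma and the dichotomy of Proposition~\ref{cor:N_contains_Wprime} to separate the two cases, then pass to block actions and apply Lemma~\ref{lem:no_outer}.

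The one step you handle differently is the key point of case~(ii): showing that $G_i\cap\bigoplus_{\Delta_i}H_i$ is matched only with elements acting trivially on $\Delta_j$. Your set $K$ is exactly the preimage in $G_j$ of $\varphi\bigl((G_i\cap\bigoplus_{\Delta_i}H_i)/N_i\bigr)$. The paper observes that this is a normal subgroup of infinite index, the quotient being $\pi_{\Delta_i}(G_i)\geq\FAlt(\Delta_i)$, and applies Proposition~\ref{cor:N_contains_Wprime} a second time. You instead use only two facts: a normal subgroup of a group between $\FAlt(\Delta_j)$ and $\FSym(\Delta_j)$ is trivial or contains $\FAlt(\Delta_j)$, and $\bigoplus_{\Delta_i}H_i$ has bounded exponent.

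Your version is more elementary at that point. It does not need the double-commutator half of the proposition, and it is the same torsion trick the paper uses later in the proof of Proposition~\ref{prop:structure_invariant_subgroup}. The paper's version is more uniform, routing everything through a single structural result. You also justify, via perfectness of $\FAlt$, why the two block groups are of the same type, which the paper asserts without comment.

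Two cosmetic remarks. The aside ``abelian-by-finite-ish'' is inaccurate for nonabelian $H_i$, but it is never used. Injectivity of $\theta$ needs the $K$-argument with $i$ and $j$ exchanged, or the observation that a nontrivial kernel would contain $\FAlt(\Delta_i)$ and leave an image of order at most $2$; your appeal to symmetry covers this.
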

\begin{proof}
    See Figures~\ref{fig:mix} and \ref{fig:mixed} for an illustration of case~(i); see Figures~\ref{fig:merge} and \ref{fig:merged} for an illustration of case~(ii).

\begin{figure}[h]
    \centering
    \begin{minipage}[t]{.45\textwidth}
        \centering
        \figuremix
        \caption{Case~(i) of Lemma~\ref{lem:diagonal_or_fi_two} (see Figure~\ref{fig:mixed} for justification): $G$ is the group $\gen{\tau}_{\rho}$, where $\tau$ acts as a 2-cycle on $\Delta_i$ and a 3-cycle on $\Delta_j$.}
        \label{fig:mix}
    \end{minipage}
    \hfill
    \begin{minipage}[t]{.45\textwidth}
        \centering
        \figuremixed
        \caption{$\tau^3$ acts as a 2-cycle on $\Delta_i$ and trivially on $\Delta_j$, so $\gen{\tau^3}_{\rho} = \FSym(\Delta_i) \times \{\id\}$. Also, $\tau^2$ acts trivially on $\Delta_i$ and as a 3-cycle on $\Delta_j$, so $\gen{\tau^2}_{\rho} = \{\id\} \times \FAlt(\Delta_j)$. Therefore $\gen{\tau}_{\rho} \geq W'_i \times W'_j$. \vspace{5mm}}
        \label{fig:mixed}
    \end{minipage}
    \centering
    \centering
    \begin{minipage}[t]{.45\textwidth}
        \centering
        \figuremerge
        \caption{Case~(ii) of Lemma~\ref{lem:diagonal_or_fi_two}: $G$ acts diagonally on the pair $(\Delta_i, \Delta_j)$}
        \label{fig:merge}
    \end{minipage}
    \hfill
    \begin{minipage}[t]{.45\textwidth}
        \centering
        \figuremerged
        \caption{Identification with a subgroup of $\Sym(\Gamma_i \sqcup \Gamma_j) \wr_{\Delta_i} \FSym(\Delta_i)$.}
        \label{fig:merged}
    \end{minipage}
\end{figure}
    
    Consider the surjective projections 
    \[
    p_i \colon G \rightarrow G_i, \;\; p_j \colon G \rightarrow G_j,
    \]and denote 
    \[
    N_i \coloneqq p_i(\ker(p_j)) \unlhd G_i, \;\; N_j \coloneqq p_j(\ker(p_i)) \unlhd G_j.
    \]
    By Goursat's Lemma (Lemma~\ref{lem:Goursat}), there is an isomorphism
    \[
    \varphi \colon G_i/N_i \xrightarrow{\sim} G_j/N_j,
    \]
    such that $G = \{(g_i, g_j) \mid \varphi(g_iN_i) = g_j N_j\}$.
    By Proposition~\ref{cor:N_contains_Wprime}, we have either $N_i \geq W'_i$, or $\pi_{\Delta_i}(N_i) = \{\id\}$.
    
    If $N_i \geq W'_i$, then the quotient $G_i/N_i$ is finite since $W_i/W'_i$ is finite. 
    Therefore $G_j/N_j \cong G_i/N_i$ is also finite. So by Proposition~\ref{cor:N_contains_Wprime} we must have $N_j \geq W'_j$.
    This yields 
    \[
    G \geq \ker(p_i) = \{0\} \times N_j \geq \{0\} \times W'_j, \;\; \text{ and } \;\; G \geq \ker(p_j) = N_i \times \{0\} \geq W'_i \times \{0\}.
    \]
    Therefore $G \geq W'_i \times W'_j$ and we obtain case~(i).

    If $\pi_{\Delta_i}(N_i) = \{\id\}$, then $N_i$ is a subgroup of $\bigoplus_{\Delta_i}H_i \unlhd W_i$.
    Since $G_i/N_i$ and $G_j/N_j$ are infinite, $N_j$ cannot contain $W'_j$.
    Apply Proposition~\ref{cor:N_contains_Wprime} to $N_j \unlhd G_j$, we obtain $\pi_{\Delta_j}(N_j) = \{\id\}$.
    Consider the image of the normal subgroup of infinite index $(G_i \cap \bigoplus_{\Delta_i}H_i)/N_i \unlhd G_i/N_i$ under the map $\varphi$: it is a normal subgroup of $G_j/N_j$ of infinite index.
    So by Proposition~\ref{cor:N_contains_Wprime}, we have $\pi_{\Delta_j}\Big(\varphi\left(\big(G_i \cap \bigoplus_{\Delta_i}H_i\big)/N_i\right) \cdot N_j\Big) = \{\id\}$, that is,
    \[
    \textstyle
    \varphi\left(\big(G_i \cap \bigoplus_{\Delta_i}H_i\big)/N_i\right) \leq \left(G_j \cap \bigoplus_{\Delta_j}H_j\right)/N_j.
    \]
    Similarly, we have 
    $
    \textstyle
    \varphi^{-1}\left(\big(G_j \cap \bigoplus_{\Delta_j}H_j\big)/N_j\right) \leq \big(G_i \cap \bigoplus_{\Delta_i}H_i\big)/N_i.
    $
    Therefore 
    \[
    \textstyle
    \varphi\left(\big(G_i \cap \bigoplus_{\Delta_i}H_i\big)/N_i\right) = \left(G_j \cap \bigoplus_{\Delta_j}H_j\right)/N_j.
    \]
    Hence, $\varphi$ induces an isomorphism
    \[
    \textstyle
    \overline\varphi \colon G_i/\left(G_i \cap \bigoplus_{\Delta_i}H_i\right) \xrightarrow{\sim} G_j/\left(G_j \cap \bigoplus_{\Delta_j}H_j\right).
    \]
    Note that $G_i/(G_i \cap \bigoplus_{\Delta_i}H_i)$ acts faithfully and finitarily on the blocks $\Delta_i$, so it can be considered as a subgroup of $\FSym(\Delta_i)$.
    Furthermore, since $G_i \geq W'_i$ we have $G_i/(G_i \cap \bigoplus_{\Delta_i}H_i) \geq \FAlt(\Delta_i)$.
    Similarly $G_j/(G_j \cap \bigoplus_{\Delta_j}H_j)$ can be considered as a subgroup of $\FSym(\Delta_j)$ containing $\FAlt(\Delta_j)$.
    By the isomorphism $\overline\varphi$, the two groups are either both $\FSym$ or both $\FAlt$.
    By Lemma~\ref{lem:no_outer}, we have $\overline\varphi(g) = \xi g \xi^{-1}$ for some bijection $\xi \colon \Delta_i \xrightarrow{\sim} \Delta_j$.
    Now, the group $G = \{(g_i, g_j) \mid \varphi(g_iN_i) = g_j N_j\}$ acts on the blocks $\Delta_i \sqcup \Delta_j$ as the group 
    \[
    \textstyle
    \left\{(g_i, g_j) \;\middle|\; \overline\varphi(g_i) = g_j, g_i \in G_i/(G_i \cap \bigoplus_{\Delta_i}H_i) \right\} = \left\{(g, \xi g \xi^{-1}) \;\middle|\; g \in G_i/(G_i \cap \bigoplus_{\Delta_i}H_i) \right\},
    \]
    where $G_i/(G_i \cap \bigoplus_{\Delta_i}H_i) = \FAlt(\Delta_i)$ or $\FSym(\Delta_i)$.
    We obtain case~(ii).

    In case~(ii), the group $G$ acts on $(\Gamma_i \times \Delta_i) \sqcup (\Gamma_j \times \Delta_j)$ in such a way that, if an element induces a permutation $\tau \in \FSym(\Delta_i)$ of the blocks $\Delta_i$, then it must induce the permutation $\xi\tau \xi^{-1}$ on the blocks $\Delta_j$.
    Therefore, we can identify $(\Gamma_i \times \Delta_i) \sqcup (\Gamma_j \times \Delta_j)$ with $(\Gamma_i \sqcup \Gamma_j) \times \Delta_i$ under the map
    \begin{align*}
    (\gamma, \delta) \mapsto
        \begin{cases}
            (\gamma, \delta) \quad & (\gamma, \delta) \in \Gamma_i \times \Delta_i \\ 
            (\gamma, \xi^{-1}(\delta)) \quad & (\gamma, \delta) \in \Gamma_j \times \Delta_j.
        \end{cases}
    \end{align*}
    Then elements of $G$ acts on $(\Gamma_i \sqcup \Gamma_j) \times \Delta_i$ while preserving the partition.
    By Lemma~\ref{lem:transitive_embed}, this gives an embedding of $G$ as a subgroup of $\Sym(\Gamma_i \sqcup \Gamma_j) \wr_{\Delta_i} \FSym(\Delta_i)$.
\end{proof}

In the second case of Lemma~\ref{lem:diagonal_or_fi_two}, we say that $G \leq W_i \times W_j$ acts \emph{diagonally} on the pair $(\Delta_i, \Delta_j)$.
In general, let $G$ be a subgroup of $W_1 \times \cdots \times W_k$.
We say that $G$ acts \emph{diagonally} on the pair $(\Delta_i, \Delta_j)$ if the projection of $G$ on the coordinates $i, j$ acts diagonally on $(\Delta_i, \Delta_j)$.
In Proposition~\ref{prop:general_infinite_orbits} we will provide an effective way of determining whether $G$ acts diagonally on $(\Delta_i, \Delta_j)$ and, if so, compute the identification map $\xi \colon \Delta_i \rightarrow \Delta_j$.
Before that, let us first generalize Lemma~\ref{lem:diagonal_or_fi_two} to more than two indices:

\begin{lemma}\label{lem:diagonal_or_fi_k}
    Let $k \geq 1$, $G$ be a subgroup of $W_1 \times \cdots \times W_k$ with $W_i = H_i \wr_{\Delta_i} \FSym(\Delta_i), i = 1, \ldots, k$, and let $G_i$ denote the projection of $G$ on the component $W_i$.
    Suppose $G_i \geq W'_i$ for all $1 \leq i \leq k$.
    Then either there exist two indices $i, j$ such that $G$ acts diagonally on the pair $(\Delta_i,\Delta_j)$, or $G \geq W'_1 \times \cdots \times W'_k$.
\end{lemma}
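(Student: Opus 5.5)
We argue by induction on $k$. The case $k=1$ is the hypothesis $G_1\ge W_1'$, and $k=2$ is exactly Lemma~\ref{lem:diagonal_or_fi_two}. For $k\ge 3$, suppose that $G$ acts diagonally on no pair $(\Delta_i,\Delta_j)$. Write $W=W_1\times\cdots\times W_k$ and split $W = \widehat W \times W_k$ with $\widehat W = W_1\times\cdots\times W_{k-1}$. Let $\widehat G$ be the projection of $G$ to $\widehat W$. Its projections to each $W_i$, $i<k$, are still the $G_i$, and a diagonal pair for $\widehat G$ would be one for $G$. So by induction $\widehat G \ge \widehat W' = W_1'\times\cdots\times W_{k-1}'$.

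Now apply Goursat's lemma (Lemma~\ref{lem:Goursat}) to $G\le \widehat G\times G_k$, giving normal subgroups $\widehat N\unlhd \widehat G$ and $N_k\unlhd G_k$ and an isomorphism $\varphi\colon \widehat G/\widehat N\to G_k/N_k$.

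By Proposition~\ref{cor:N_contains_Wprime}, either $N_k\ge W_k'$ or $\pi_{\Delta_k}(N_k)=\{\id\}$.

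\emph{First case: $N_k\ge W_k'$.} Then $G_k/N_k$ is finite, hence so is $\widehat G/\widehat N$. We want $\widehat N\ge \widehat W'$. For each $i<k$, the image of $\widehat N$ in $G_i$ is normal of finite index, so by Proposition~\ref{cor:N_contains_Wprime} it contains $W_i'$. To pass from the projections to $\widehat N$ itself, use commutators. Since $\widehat N$ is normal in $\widehat G$, we have $[\widehat G,\widehat N]\le \widehat N$. Elements of $\widehat G$ supported only on coordinate $i$ commute with everything in the other coordinates. Concretely, $\widehat W'_i := \{\id\}\times\cdots\times W_i'\times\cdots\times\{\id\}\le \widehat G$. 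So for $a\in W_i'$ and $n\in\widehat N$ with $i$-coordinate $n_i$, we get $[a,n]=[a,n_i]$ placed at coordinate $i$, and this lies in $\widehat N$. Hence $\widehat N\ge [W_i',p_i(\widehat N)]\ge [W_i',W_i']=W_i''=W_i'$, using the claim $W''=W'$ from the proof of Proposition~\ref{cor:N_contains_Wprime}. Thus $\widehat N\ge \widehat W'$, and $G\ge \widehat N\times N_k\ge W'$.

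\emph{Second case: $\pi_{\Delta_k}(N_k)=\{\id\}$.} Then $G_k/N_k$ is infinite, so $\widehat N$ has infinite index in $\widehat G$. We aim for a contradiction by showing that some $i<k$ gives a diagonal pair $(\Delta_i,\Delta_k)$.

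The same commutator argument shows that, for each $i$, either $p_i(\widehat N)$ has trivial block action (then $\widehat N\cap(\text{coordinate } i)$ may be small), or $\widehat N\ge W_i'$ on coordinate $i$. If $\widehat N\ge W_i'$ for every $i$, then $\widehat N\ge \widehat W'$, and $\widehat G/\widehat N$ is a quotient of the finite group $\widehat G/\widehat W'$, a contradiction. So pick $i$ with $\pi_{\Delta_i}(p_i(\widehat N))=\{\id\}$.

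Consider the projection $G^{(i,k)}$ of $G$ onto $W_i\times W_k$. By Lemma~\ref{lem:diagonal_or_fi_two}, either it is diagonal (and we are done), or $G^{(i,k)}\ge W_i'\times W_k'$. In the latter case, lift an element $(\id,c)$ with $c$ a $3$-cycle in $\FAlt(\Delta_k)$ to $g\in G$. Its $i$-coordinate is trivial, but other coordinates $j\ne i,k$ may be nontrivial.

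This is the main obstacle: controlling the remaining coordinates. Our plan is to choose $i$ more cleverly. Let $I$ be the set of $i<k$ with $\pi_{\Delta_i}(p_i(\widehat N))=\{\id\}$. Then $\widehat N\ge \prod_{j\notin I}W_j'$, and quotienting $\widehat G$ by this subgroup reduces to the product over $I$. There, use commutators with elements having disjoint supports in $\Delta_k$ to kill coordinates $j\in I$ one at a time. Each pair $(j,k)$ being non-diagonal gives $(\id,c')$-type elements in the $(j,k)$-projection. Taking commutators of such lifts whose $\Delta_k$-supports overlap in a single point yields an element of $G$ acting as a nontrivial $\FAlt(\Delta_k)$-element with strictly fewer nontrivial coordinates. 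Iterating, we produce an element of $\{\id\}\times N_k$ with nontrivial block action on $\Delta_k$, contradicting $\pi_{\Delta_k}(N_k)=\{\id\}$.
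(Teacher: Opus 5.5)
Your argument is correct, though its last step is only sketched, and it takes a different route from the paper in both cases.

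\textbf{Comparison with the paper.} The paper also applies Goursat to $G\le p(G)\times G_k$, but then uses the induction hypothesis a second time, on $\widehat N=p(\ker p')$.
\begin{itemize}
\item In the finite-quotient case it gets $\widehat N\ge W_1'\times\cdots\times W_{k-1}'$, because a finite-index subgroup cannot act diagonally on any pair.
\item In the infinite-quotient case it either reads off a diagonal pair $(\Delta_i,\Delta_k)$ from a coordinate $i$ on which $\widehat N$ has trivial block action, or uses induction to make $\widehat N$ diagonal on some pair and rules this out by normality of $\widehat N$ in $p(G)$.
\end{itemize}
You instead turn $p_i(\widehat N)\ge W_i'$ directly into $W_i'@i\le\widehat N$, via the identity $[a@i,n]=[a,n_i]@i$ and the perfectness $W_i''=W_i'$. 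You then dispose of the second case by a contradiction built from commutators of lifts coming from the non-diagonal pairs $(\Delta_j,\Delta_k)$.

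This is more elementary. It also sidesteps the paper's step that converts trivial block action of $p_i(\widehat N)$ into diagonality of $(\Delta_i,\Delta_k)$. You lose only the explicit identification of a diagonal pair, which the lemma does not ask for; Proposition~\ref{prop:general_infinite_orbits} decides diagonality pair by pair anyway.

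\textbf{Tightening the last step.} Drop the phrase about disjoint supports: disjoint block supports produce commutators acting trivially on $\Delta_k$. State the mechanism precisely instead. Let $p_i\colon G\to G_i$ be the coordinate projections, let $x\in G$ be trivial on a set $T$ of coordinates, and let $y\in\ker p_j$. Then:
\begin{itemize}
\item $[x,y]$ is trivial on $T\cup\{j\}$;
\item its remaining coordinates are commutators, so they lie in the corresponding $W_{j'}'$;
\item its $k$-coordinate has block action $[\pi_{\Delta_k}(x_k),\pi_{\Delta_k}(y_k)]$, which is again a $3$-cycle when both are $3$-cycles sharing exactly one point.
\end{itemize}
Such $y$ exist because $p_k(\ker p_j)\ge W_k'$ by Lemma~\ref{lem:diagonal_or_fi_two}.

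Run this over all $j<k$, not just $j\in I$. This yields $(\id,z)\in G$ with $\pi_{\Delta_k}(z)$ a $3$-cycle, contradicting $\pi_{\Delta_k}(N_k)=\{\id\}$. So neither $I$ nor the bound $\widehat N\ge\prod_{j\notin I}W_j'$ is needed.

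Pushed one step further, your idea makes the induction and Goursat unnecessary. If no pair is diagonal, then $p_j(\ker p_i)\ge W_j'$ for all $i\ne j$. The iterated commutator of the normal subgroups $\ker p_i$ ($i\ne j$) lies in $\bigcap_{i\ne j}\ker p_i$. Since $W_j'$ is perfect, its image under $p_j$ contains $W_j'$. Hence $G$ contains $W_j'$ in coordinate $j$ for every $j$, which is the conclusion of the lemma.
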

\begin{proof}    
    We prove the lemma by induction on $k$.
    For $k = 1$ the claim is obvious, and for $k = 2$ the claim is simply Lemma~\ref{lem:diagonal_or_fi_two}.
    For the induction step let $k \geq 3$.
    Take $G \leq W_1 \times \cdots \times W_k$ whose projections $G_i \leq W_i$ satisfy $G_i \geq W'_i$.
    Consider the projection maps 
    \[
    p \colon G \rightarrow W_1 \times \cdots \times W_{k-1}, \;\; \text{ and } \;\; p' \colon G \rightarrow G_k.
    \]
    Consider $p'(\ker(p)) \unlhd G_k$.
    By Goursat's Lemma, we have
    \[
    p(G)/p(\ker(p')) \cong G_k/p'(\ker(p)).
    \]
    Here, $p(G)$ is a subgroup of $W_1 \times \cdots \times W_{k-1}$, whose projection on $W_i$ is equal to $G_i$ for $i = 1, \ldots, k-1$.
    If $p(G)$ is diagonal on some pair $(\Delta_i, \Delta_j), 1 \leq i < j \leq k-1$, then $G$ is also diagonal on the same pair.
    Otherwise by the induction hypothesis we have 
    \[
    p(G) \geq W'_1 \times \cdots \times W'_{k-1},
    \]
    which yields finiteness of the index 
    \[
    [W_1 \times \cdots \times W_{k-1} \colon p(G)] \leq [W_1 \times \cdots \times W_{k-1} \colon W'_1 \times \cdots \times W'_{k-1}] < \infty.
    \]
    By Proposition~\ref{cor:N_contains_Wprime}, there are two cases: either $p'(\ker(p)) \geq W'_k$, or $\pi_{\Delta_k}\left(p'(\ker(p))\right) = \{\id\}$.

    \textbf{Case one.} If $p'(\ker(p)) \geq W'_k$ then the quotients
    $
    p(G)/p(\ker(p')) \cong G_k/p'(\ker(p))
    $
    are finite. 
    The projection of $p(\ker(p')) \unlhd p(G)$ on each component $W_i, 1 \leq i \leq k-1$, must be a finite index normal subgroup of $G_i$ and hence contains $W'_i$ (by Proposition~\ref{cor:N_contains_Wprime}).
    We can therefore apply the induction hypothesis on the subgroup $p(\ker(p'))$ of $W_1 \times \cdots \times W_{k-1}$ and obtain that either $p(\ker(p'))$ acts diagonally on some pair $(\Delta_i, \Delta_j), 1 \leq i < j \leq k-1$, or $p(\ker(p')) \geq W'_1 \times \cdots \times W'_{k-1}$.
    But since the index 
    \[
    [W_1 \times \cdots \times W_{k-1} \colon p(\ker(p'))] = [W_1 \times \cdots \times W_{k-1} \colon p(G)] [p(G) \colon p(\ker(p'))]
    \]
    is finite, the group $p(\ker(p'))$ cannot act diagonally on any pair $(\Delta_i, \Delta_j), 1 \leq i < j \leq k-1$.
    This shows that $p(\ker(p')) \geq W'_1 \times \cdots \times W'_{k-1}$.
    Therefore, $G$ contains $\ker(p) \geq \{\id\} \times  W'_k$ as well as $\ker(p') \geq W'_1 \times \cdots \times W'_{k-1} \times \{\id\}$, so $G$ contains $W'_1 \times \cdots \times W'_{k}$.

    \textbf{Case two.} If $\pi_{\Delta_k}\left(p'(\ker(p))\right) = \{\id\}$.
    Consider the normal subgroup $p(\ker(p')) \unlhd p(G)$.
    For $i = 1, \ldots, k-1$, the projection $\left(p(\ker(p'))\right)_{i}$ of $p(\ker(p'))$ on the component $G_i$ is a normal subgroup, so either $\pi_{\Delta_i}\left(p(\ker(p'))\right) = \{\id\}$ or $\left(p(\ker(p'))\right)_{i} \geq W'_i$ (by Proposition~\ref{cor:N_contains_Wprime}).
    If $\pi_{\Delta_i}\left(p(\ker(p'))\right) = \{\id\}$ for some $1 \leq i \leq k-1$, then $G$ acts diagonally on the pair $(\Delta_i, \Delta_k)$.
    Otherwise $\left(p(\ker(p'))\right)_{i} \geq W'_i$ for all $1 \leq i \leq k-1$ and we can apply the induction hypothesis on the subgroup $p(\ker(p'))$ of $W_1 \times \cdots \times W_{k-1}$.
    Since the quotient $p(G)/p(\ker(p')) \cong G_k/p'(\ker(p)$ is infinite, we cannot have $p(\ker(p') \geq W'_1 \times \cdots \times W'_{k-1}$.
    Therefore $p(\ker(p'))$ must act diagonally on the pair $(\Delta_i, \Delta_j)$ for some $1 \leq i < j \leq k-1$.
    We will show that this is impossible since $p(\ker(p'))$ is normal in $p(G)$.

    Take any $a \in p(\ker(p'))$ such that $\pi_{\Delta_i}(a) \neq \id$.
    Since $p(G) \geq W'_1 \times \cdots \times W'_{k-1}$, we can pick an element $h \in p(G)$ such that $\pi_{\Delta_i}(h)$ does not commute with $\pi_{\Delta_i}(a)$ but $\pi_{\Delta_j}(h) = \id$.
    Since $p(\ker(p'))$ is normal in $p(G)$, we have $hah^{-1} \in p(\ker(p'))$, and $hah^{-1}$ acts on $\Delta_i$ as $\pi_{\Delta_i}(h)\pi_{\Delta_i}(a)\pi_{\Delta_i}(h)^{-1} \neq \pi_{\Delta_i}(a)$, and acts on $\Delta_j$ as $\pi_{\Delta_j}(a)$.
    This means that $a, hah^{-1} \in p(\ker(p'))$ act differently on $\Delta_i$ but the same on $\Delta_j$, contradicting the claim that $p(\ker(p'))$ acts diagonally on the pair $(\Delta_i, \Delta_j)$.

    We have thus showed that in all cases, either there exist two indices $i, j$ such that $G$ acts diagonally on the pair $(\Delta_i,\Delta_j)$, or $G \geq W'_1 \times \cdots \times W'_k$.
    This concludes the induction step.
\end{proof}

The above lemmas allow us to obtain an effective characterization of $G_{\infty}$:

\begin{proposition}\label{prop:general_infinite_orbits}
    We can effectively compute a decomposition $J_{\infty} \times \Z = \bigsqcup_{i = 1}^k (\Gamma_i \times \Delta_i)$ where each $\Gamma_i$ is finite and $\Delta_i$ is infinite, as well as $H_i \leq \Sym(\Gamma_i)$,
    such that $W'_1 \times \cdots \times W'_k \leq G_{\infty} \leq W_1 \times \cdots \times W_k$, where $W_i = H_i \wr_{\Delta_i} \FSym(\Delta_i)$.
\end{proposition}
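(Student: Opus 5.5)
Start from the decomposition of $J_{\infty}\times\Z$ into the infinite $G_{\infty}$-orbits $\mO_1,\ldots,\mO_m$ given by Proposition~\ref{prop:partition_orbits}. For each orbit, apply Lemma~\ref{lem:effective_partition} and Lemma~\ref{lem:transitive_embed} to write $\mO_i=\Gamma_i\times\Delta_i$ (after the re-identification $\phi$) and compute $H_i$. Then $G_i$ acts primitively on $\Delta_i$ and embeds in $W_i=H_i\wr_{\Delta_i}\FSym(\Delta_i)$, and its block stabilizer induces exactly $H_i$. By Lemma~\ref{lem:transitive_finite}, $W_i'\le G_i\le W_i$. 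Hence $G_{\infty}\le W_1\times\cdots\times W_m$ with every projection $G_i\ge W_i'$, so Lemma~\ref{lem:diagonal_or_fi_k} applies.

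The procedure is a merging loop. While some pair $(\Delta_i,\Delta_j)$ is acted on diagonally, compute the bijection $\xi$ and replace the two factors by the single factor $(\Gamma_i\sqcup\Gamma_j)\times\Delta_i$, as at the end of Lemma~\ref{lem:diagonal_or_fi_two}. The projection onto this merged factor preserves the block partition and acts primitively on $\Delta_i$, since it acts on blocks as $\FAlt$ or $\FSym$. So Lemma~\ref{lem:transitive_embed}, applied with $H_{ij}$ the group induced by the block stabilizer, re-embeds it into $W_{ij}=H_{ij}\wr_{\Delta_i}\FSym(\Delta_i)$. Lemma~\ref{lem:transitive_finite} then gives $W_{ij}'\le$ projection, so the hypotheses of Lemma~\ref{lem:diagonal_or_fi_k} are restored. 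One caveat: that lemma is stated for transitive-on-blocks groups but uses nothing beyond transitivity on $\Delta$. Each merge lowers the number of factors, so the loop ends after fewer than $m$ steps. At that point no pair is diagonal, and Lemma~\ref{lem:diagonal_or_fi_k} yields $W_1'\times\cdots\times W_k'\le G_{\infty}\le W_1\times\cdots\times W_k$.

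The main obstacle is effectivity: deciding whether a given pair is diagonal, and computing $\xi$ as a finite object. The plan exploits $\rho$-invariance. Pass to a common power $\rho^{L}$, with $L$ a common multiple of the $\ell'$ from Lemma~\ref{lem:transitive_embed}, under which both factors are invariant and the identifications $\phi$ are periodic. Any diagonal bijection $\xi$ is then forced to commute with the induced shifts on $\Delta_i$ and $\Delta_j$, up to a finite ambiguity. The reason is that conjugating by $\rho^L$ preserves the group $\{(g,\xi g\xi^{-1})\}$, and by Lemma~\ref{lem:no_outer} the conjugating bijection is unique, since $\FAlt$ of an infinite set has trivial centralizer in $\Sym$. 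So $\xi$ is $\rho^{L}$-equivariant, and is determined by its values on a finite fundamental domain $S_i\times\{0,\ldots,L-1\}$. Moreover $\xi$ has bounded displacement: a generator with small support on $\Delta_i$ must map to one with small support on $\Delta_j$, and $G$ is generated by translates of finitely many generators. This gives finitely many candidates.

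For each candidate $\xi$, check that every generator $\rho^{z}\tau\rho^{-z}$ ($0\le z<L$) induces permutations $(b_i,b_j)$ on the blocks with $b_j=\xi b_i\xi^{-1}$. If this holds, the image of $G$ on blocks lies in the graph of $\xi$-conjugation, so the pair is diagonal. Conversely, if the pair is diagonal, the true $\xi$ is among the candidates. The test is decidable because only finitely many generators up to shift need checking. This makes the merging loop, and hence the decomposition, effectively computable.
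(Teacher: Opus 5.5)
Your proposal is correct and follows essentially the same route as the paper's proof. Both embed each orbit into $W_i$ with $W_i'\le G_i$, decide diagonality effectively using $\rho^{\ell}$-equivariance and bounded displacement of $\xi$ (which leaves finitely many candidate bijections), merge diagonal pairs via Lemmas~\ref{lem:transitive_embed} and~\ref{lem:transitive_finite}, and conclude with Lemma~\ref{lem:diagonal_or_fi_k}. Two steps the paper only states briefly are made precise in your version: the trivial-centralizer justification of $\rho^{L}$-equivariance, and the explicit generator-wise test $b_j=\xi b_i\xi^{-1}$.
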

\begin{proof}
Recall that $G_{\infty}$ acts on the set $J_{\infty} \times \Z$ in such a way that all the $G_{\infty}$-orbits $\mO_1, \ldots, \mO_m$ are infinite.
By Lemma~\ref{lem:effective_partition}, the orbit $\mO_i$ can be effectively written as a product $\Gamma_i \times \Delta_i$ for finite $\Gamma_i$ and infinite $\Delta_i$, such that $G$ acts primitively (and hence transitively) on the set of blocks $\Delta_i$.
Under this identification, the action of $G_{\infty}$ on $\mO_i$ induces a group $G_i \leq W_i \coloneq H_i \wr_{\Delta_i} \FSym(\Delta_i)$, so $G_{\infty} \leq W_1 \times \cdots \times W_m$.
By Lemma~\ref{lem:transitive_finite}, we have $G_i \geq W'_i$.

We first give a procedure to decide whether $G_{\infty}$ acts diagonally on a pair $(\Delta_i, \Delta_j)$.
Suppose $G_{\infty}$ acts diagonally on the pair $(\Delta_i, \Delta_j)$.
By Lemma~\ref{lem:diagonal_or_fi_two} we can identify the projection $G_{\{i, j\}}$ of $G_{\infty}$ on the component $W_i \times W_j$ as a subgroup of $\Sym(\Gamma_i \sqcup \Gamma_j) \wr_{\Delta_i} \FSym(\Delta_i)$, under some bijection $\xi \colon \Delta_i \rightarrow \Delta_j$. 
Let $G_i, G_j$ be respectively the projection of $G_{\{i, j\}}$ on $W_i, W_j$.
Recall that $G_i, G_j$ are respectively $\rho^{\ell_i}$ and $\rho^{\ell_j}$-invariant.
Let $\ell = \ell_i \ell_j$, so $G_{\infty}, \Delta_i, \Delta_j$ are $\rho^{\ell}$-invariant.

Since $G_{\infty}$ acts on the pair $(\Delta_i, \Delta_j)$ as $(g, \xi g \xi^{-1}), g \in \FAlt(\Delta_i)$ or $\FSym(\Delta_i)$, by $\rho^{\ell}$-invariance it also acts as $(\rho^{\ell}g\rho^{-\ell}, \rho^{\ell} \xi g \xi^{-1} \rho^{-\ell})$.
Therefore $(\rho^{\ell}g\rho^{-\ell}, \rho^{\ell} \xi g \xi^{-1} \rho^{-\ell}) = (\rho^{\ell}g\rho^{-\ell}, \xi \rho^{\ell} g \rho^{-\ell} \xi^{-1})$ for all $g \in \FAlt(\Delta_i)$ or $\FSym(\Delta_i)$.
Consequently, the identification map $\xi \colon \Delta_i \rightarrow \Delta_j$ is $\rho^{\ell}$-invariant (i.e., $\rho^{\ell} \xi = \xi \rho^{\ell}$).
Given any $\rho^{\ell}$-invariant metric of $\bigsqcup_{i=1}^k \Delta_i$ (such as the one defined by Equation~\eqref{eq:distance}), the distances $\dist(\delta_i, \xi(\delta_i)), \delta_i \in \Delta_i$, are bounded by the maximal diameter of the generators of $G_{\infty}$.
Note that there are only finitely many $\rho^{\ell}$-invariant bijections $\xi \colon \Delta_i \rightarrow \Delta_j$ with a given bound on $\sup_{\delta_i \in \Delta_i}\dist(\delta_i, \xi(\delta_i))$.
Therefore in order to decide whether $G_{\infty}$ acts diagonally on the pair $(\Delta_i, \Delta_j)$, it suffices to verify whether any of these bijections produces $G_{\{i, j\}}$.

If $G_{\infty}$ acts diagonally on some pair $(\Delta_i, \Delta_j), 1\leq i < j \leq k$, then by Lemmas~\ref{lem:transitive_embed} and \ref{lem:transitive_finite}, the group $G_{\{i, j\}}$ can be effectively embedded in a wreath product $W_{\{i,j\}} \coloneqq H \wr_{\Delta_i} \FSym(\Delta_i)$ for some effectively computable $H \leq \Sym(\Gamma_i \sqcup \Gamma_j)$, such that $G_{\{i, j\}} \geq W'_{\{i,j\}}$.
Therefore, we have $G_{\infty} \leq W_1 \times \cdots \times W_{\{i,j\}} \times \cdots \times W_m$, so we can replace $\Gamma_i$ with $\Gamma_i \sqcup \Gamma_j$ and remove the index $j$.

We can repeat the above procedure until for any two indices $i \neq j$, the group $G$ does not act diagonally on $(\Delta_i, \Delta_j)$.
Up to renaming the indices we can write $G_{\infty} \leq W_1 \times \cdots \times W_k$.
Then by Lemma~\ref{lem:diagonal_or_fi_k}, we have $G_{\infty} \geq W'_1 \times \cdots \times W'_k$.
\end{proof}

This gives us part~(ii) of Proposition~\ref{prop:structure_invariant_subgroup}.

\subsection{Membership Problem in $\rho$-invariant subgroups}\label{subsec:membership}

In this subsection we give a complete characterization for $\rho$-invariant subgroups of $\FSym(\{1, \ldots, d\} \times \Z)$.
We then give a procedure to decide their membership problem by reducing to Subgroup Membership in wreath products $H \wr \Z$.
The complete characterization for $\rho$-invariant subgroups of $\FSym(\{1, \ldots, d\} \times \Z)$ is given by Proposition~\ref{prop:structure_invariant_subgroup}, which we can now prove using Propositions~\ref{prop:general_finite_orbit} and~\ref{prop:general_infinite_orbits}:

\propstructure*
\begin{proof}
    Recall that $G_{\fin}$ denotes the group induced by $G$ on the union of finite orbits, and $G_{\infty}$ denotes the group induced by $G$ on the union of infinite orbits.
    Then $G$ is a subgroup of $G_{\fin} \times G_{\infty}$, and both projection maps $G \rightarrow G_{\fin}$ and $G \rightarrow G_{\infty}$ are surjective.
    
    By Proposition~\ref{prop:general_finite_orbit} we can effectively write $G_{\fin}$ as a $\rho$-invariant subgroup of $\bigoplus_{\Z} H$ for some finite $H$, where $\rho_{\fin}$ acts as the shift $(h_k)_{k \in \Z} \mapsto (h_{k-1})_{k \in \Z}$.
    
    If $G$ has no infinite orbit then $G_{\infty}$ is trivial.
    Otherwise by Proposition~\ref{prop:general_infinite_orbits}, we have $W'_1 \times \cdots \times W'_k \leq G_{\infty} \leq W_1 \times \cdots \times W_k$ with $W_i$ defined as in the proposition.
    Let $W = W_1 \times \cdots \times W_k$, one can easily see that $W' = W'_1 \times \cdots \times W'_k$.
    
    We now use Goursat's Lemma on $G \leq G_{\fin} \times G_{\infty}$.
    This gives us normal subgroups $N_{\fin} \unlhd G_{\fin}$, $N_{\infty} \unlhd G_{\infty}$, and an isomorphism $\varphi \colon G_{\fin}/N_{\fin} \xrightarrow{\sim} G_{\infty}/N_{\infty}$, such that 
    \[
    G = \{(g_{\fin}, g_{\infty}) \mid \varphi(g_{\fin}N_{\fin}) = g_{\infty} N_{\infty}\}.
    \]
    In particular, $G \geq \{\id\} \times N_{\infty}$.
    We now show that $N_{\infty} \geq W'$.

    The quotient $G_{\fin}/N_{\fin}$ is $\card{H}$-torsion since $G_{\fin}$ is $\card{H}$-torsion.
    For $i = 1, \ldots, k$, denote by $p_i$ the projection $W \rightarrow W_i$, then $p_i(G_{\infty}) \geq W'_i$.
    Consider the group $p_i(N_{\infty})$. 
    \medskip
    
    \begin{adjustwidth}{3mm}{3mm}
        \textbf{Claim.} We have $p_i(N_{\infty}) \geq W'_i$ for all $1 \leq i \leq k$.
        
        \medskip
        \noindent$\triangleright$ Since $p_i(N_{\infty})$ is a normal subgroup of $p_i(G_{\infty})$, so by Proposition~\ref{cor:N_contains_Wprime} it either contains $W'_i$ or $\pi_{\Delta_i}(p_i(N_{\infty})) = \{\id\}$.
        However, if $\pi_{\Delta_i}(p_i(N_{\infty})) = \{\id\}$ then all elements of $N_{\infty}$ induce trivial permutations the blocks $\Delta_i$.
        Since $G_{\infty} \ge W'$ we can take an element $g \in G_{\infty}$ that induces a cycle of length $2|H|+1$ on the blocks $\Delta_i$, so $g N_{\infty} \in G_{\infty}/N_{\infty}$ is not $\card{H}$-torsion.
        This contradicts $G_{\infty}/N_{\infty} \cong G_{\fin}/N_{\fin}$ which is $\card{H}$-torsion. \hfill $\triangleleft$
    \end{adjustwidth}
    \medskip

    \begin{adjustwidth}{3mm}{3mm}
        \textbf{Claim.} We have $N_{\infty} \geq W'$.
        
        \medskip
        \noindent$\triangleright$ By the previous claim we have $p_i(N_{\infty}) \geq W'_i$ for all $1 \leq i \leq k$. 
        Therefore we can apply Lemma~\ref{lem:diagonal_or_fi_k} on the subgroup $N_{\infty}$ of $W_1 \times \cdots \times W_k$.
        We obtain that either $N_{\infty}$ acts diagonally on some pair $(\Delta_i, \Delta_j)$, or $N_{\infty} \geq W'$.
        However, since $N_{\infty}$ is normal in $G_{\infty}$, it cannot act diagonally on some pair $(\Delta_i, \Delta_j)$.
        This can be shown using arguments similar to case two in the proof of Lemma~\ref{lem:diagonal_or_fi_k}: we can take $a \in N_{\infty}$ and $h \in W' \leq G_{\infty}$ such that $a$ and $hah^{-1}$ act the same on $\Delta_i$ but differently on $\Delta_j$.
        We therefore obtain $N_{\infty} \geq W'$.
        \hfill $\triangleleft$
    \end{adjustwidth}
    We conclude that $G \geq \{\id\} \times N_{\infty} \geq \{\id\} \times W'$.
\end{proof}

Using Proposition~\ref{prop:structure_invariant_subgroup} we can decide the membership problem in $\rho$-invariant subgroups of $\FSym(\{1, \ldots, d\} \times \Z)$:

\begin{proposition}\label{prop:decidable_invariant_membership}
    It is decidable, given elements $\tau_1, \ldots, \tau_s, g \in \FSym(\{1, \ldots, d\} \times \Z)$, whether $g \in \gen{\tau_1, \ldots, \tau_s}_{\rho}$.
\end{proposition}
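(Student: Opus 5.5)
We first apply Proposition~\ref{prop:structure_invariant_subgroup} to $G=\gen{\tau_1,\ldots,\tau_s}_{\rho}$: we compute the decomposition of $\{1,\ldots,d\}\times\Z$ into $J_{\fin}\times\Z$ and $J_\infty\times\Z=\bigsqcup_i\Gamma_i\times\Delta_i$, the finite group $H=\Sym(J_{\fin})$, and the wreath products $W_i=H_i\wr_{\Delta_i}\FSym(\Delta_i)$ together with their (periodic) identifications. We then perform cheap necessary checks on $g$: $g$ must stabilize every finite orbit $\mA_x$, every infinite orbit, and preserve the block partitions. Under the identification $\phi$ of Lemma~\ref{lem:transitive_embed}, its restriction to $J_\infty\times\Z$ must also lie in $W=W_1\times\cdots\times W_k$. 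If any check fails, we answer $g\notin G$. Otherwise $g=(g_{\fin},g_\infty)\in(\bigoplus_\Z H)\times W$.

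Since $G\geq\{\id\}\times W'$, membership of $g$ in $G$ is equivalent to membership of the image of $g$ in $G/(\{\id\}\times W')\leq(\bigoplus_\Z H)\times Q$, where $Q=W/W'$ is finite. By Lemma~\ref{lem:transitive_finite}, $Q\cong\prod_i(H_i/H_i'\times C_2)$, and the quotient map is explicit: $((f_\delta),b)\mapsto(\prod_\delta f_\delta\bmod H_i',\sgn(b))$ in each factor. This map is computable on finitely supported elements, so the image $\bar g=(g_{\fin},q)$ is computable. The quotient group $\bar G$ is generated, as a $\rho$-invariant group, by the images $\bar\tau_j$. Because $\rho$ shifts supports, it fixes the $Q$-component: conjugating by $\rho$ (in the twisted form $\phi\rho^{\ell}\phi^{-1}$, and after passing to a power $\rho^{L}$ that agrees with the genuine shift) changes neither signs nor the product of the $f_\delta$ modulo $H_i'$, because $H_i/H_i'$ is abelian. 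The twist by $g_{i,x}$ only conjugates entries inside $H_i$, which is harmless in the abelianization. So we would first replace $\rho$ by $\rho^{L}$ for a computable $L$ (e.g.\ a multiple of all $\ell_i\card{\Gamma_i}!$), enlarging the generating set to $\rho^{r}\tau_j\rho^{-r}$ for $0\le r<L$. Regrouping $\bigoplus_\Z H\cong\bigoplus_\Z H^L$ then turns $\rho^L$ into the standard shift.

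Now $\bar G$ is the $\rho$-invariant subgroup of $\bigoplus_\Z H^L\times Q$ generated by finitely many elements, with $\rho$ acting as the shift on the first factor and trivially on $Q$. We embed this in the lamplighter $(H^L\times Q)\wr\Z$, viewing $Q$ as a lamp located only at a fixed position is wrong since shifting moves it. Instead, we use the group $K=(H^L\wr\Z)\times Q$, which is a finite-index overgroup of $H^L\wr\Z$ and embeds in $(H^L\times Q)\wr\Z$ via $Q$ acting diagonally, or simply has decidable subgroup membership by Lemma~\ref{lem:finite_index_subgroup} combined with Lemma~\ref{lem:decidable_H_wreath_Z}. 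In $K$, we consider $\widehat G=\gen{\bar\tau_1,\ldots,\bar\tau_{s'},t}$, where $t$ is the generator of $\Z$ (shift) with trivial $Q$-component. By Lemma~\ref{lem:ker_equals_inv} applied to the projection $K\to\Z$, we get $\widehat G\cap\ker=\bar G$. Hence $\bar g\in\bar G$ iff $\bar g\in\widehat G$, which is an instance of subgroup membership in $K$ and is therefore decidable.

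The main obstacle is the middle step: verifying carefully that the shift acts compatibly on the quotient $W/W'$. This means checking that the twisted identification $\phi$ and the merges of diagonal pairs in Proposition~\ref{prop:general_infinite_orbits} still make a suitable power of $\rho$ act trivially on $Q$, and that the image of each generator in $Q$ is computable. We would handle this by choosing $L$ as the $\card{\Gamma}!$-period exhibited in Lemma~\ref{lem:transitive_embed} (taken over all merged blocks), where $\phi\rho^{L}\phi^{-1}=\rho^{L}$ holds exactly.
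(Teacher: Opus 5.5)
Your proposal is correct and follows the paper's proof essentially step for step. The steps are: quotient by $\{\id\}\times W'$; pass to a power of $\rho$ acting trivially on the finite group $W/W'$; regroup into $H^{L}$; and apply Lemma~\ref{lem:ker_equals_inv} inside the finite extension $(H^{L}\wr\Z)\times(W/W')$. For the power of $\rho$, the paper simply takes $\card{W/W'}!$, since $\rho$ induces some automorphism of the finite group $W/W'$, rather than tracking the twist of Lemma~\ref{lem:transitive_embed} as you do.

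One aside should be dropped: $(H^L\wr\Z)\times Q$ does not embed in $(H^L\times Q)\wr\Z$ via a diagonal copy of $Q$, because a constant diagonal element is not finitely supported. Your other justification, via Lemmas~\ref{lem:finite_index_subgroup} and~\ref{lem:decidable_H_wreath_Z}, is exactly the paper's and suffices.
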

\begin{proof}
    Let $G = \gen{\tau_1, \ldots, \tau_s}_{\rho}$.
    By Proposition~\ref{prop:structure_invariant_subgroup}, $G \leq G_{\fin} \times G_{\infty}$, with 
    \[
    \textstyle
    G_{\fin} \leq \bigoplus_{\Z} H, \;\; \text{ and } \;\; G_{\infty} \leq W,
    \]
    such that $G \geq \{\id\} \times W'$.
    First we check whether $g \in \left(\bigoplus_{\Z} H\right) \times W$: this amounts to checking preservation of the block partitions and finitely many membership problems in $H$, and can hence be done effectively.
    If $g \notin \left(\bigoplus_{\Z} H\right) \times W$, then we already conclude that $g \notin G$; otherwise, we proceed as follows to check whether $g \in G$.
    Since
    \[
    g \in G \iff g \left(\{\id\} \times W'\right) \in G/\left(\{\id\} \times W'\right),
    \]
    it suffices to decide the membership problem in the $\rho$-invariant subgroup 
    \[
    \textstyle
    G/\left(\{\id\} \times W'\right) \leq (\bigoplus_{\Z} H) \times (W/W').
    \]
    Let $\pi_{W/W'}$ denote the projection onto $W/W'$.

    Write 
    \[
    G/\left(\{\id\} \times W'\right) = \gen{\tau_1, \ldots, \tau_s}_{\rho}
    \]
    with $\tau_1, \ldots, \tau_s \in (\bigoplus_{\Z} H) \times (W/W')$. 
    Since $W/W'$ is finite, there exists $\ell \geq 1$ such that $\rho^{\ell}$ induces the trivial automorphism on $W/W'$ (for example take $\ell = \card{W/W'}!$).
    Letting $\sigma \coloneqq \rho^{\ell}$, Then 
    \[
    G/\left(\{\id\} \times W'\right) = \gen{\tau_1, \ldots, \tau_s, \rho\tau_1\rho^{-1}, \ldots, \rho\tau_s\rho^{-1}, \ldots, \rho^{\ell-1}\tau_1\rho^{-(\ell-1)}, \ldots, \rho^{\ell-1}\tau_s\rho^{-(\ell-1)} }_{\sigma}
    \]

    We now consider $\bigoplus_{\Z} H$ as the direct sum $\bigoplus_{\Z} H^{\ell}$ by grouping $\ell$ consecutive components:
    \[
    \bigoplus_{\Z} H \xrightarrow{\sim} \bigoplus_{\Z} H^{\ell}, \quad (h_k)_{k \in \Z} \mapsto \left(h_{\ell k}, h_{\ell k + 1}, \ldots, h_{\ell k + (\ell-1)} \right)_{k \in \Z}.
    \]
    On $\bigoplus_{\Z} H^{\ell}$, conjugation by $\rho^{\ell}$ induces the one-step shift $(a_k)_{k \in \Z} \mapsto (a_{k-1})_{k \in \Z}$.  
    Let $S$ denote the set $\{\tau_1, \ldots, \tau_s, \rho\tau_1\rho^{-1}, \ldots, \rho\tau_s\rho^{-1}, \ldots, \rho^{\ell-1}\tau_1\rho^{-(\ell-1)}, \ldots, \rho^{\ell-1}\tau_s\rho^{-(\ell-1)}\}$, then it suffices to decide the membership problem in 
    \[
    \textstyle
    \gen{S}_{\sigma} \leq (\bigoplus_{\Z} H^{\ell}) \times (W/W'),
    \]
    where $\sigma$ acts trivially on $W/W'$.

    We now embed $\sigma$ and $(\bigoplus_{\Z} H^{\ell}) \times (W/W')$ in $(H^{\ell} \wr \Z) \times (W/W')$, where $\sigma$ acts as element $1 \in \Z$ in $H^{\ell} \wr \Z$ and trivially on $W/W'$.
    Consider the projection
    \[
    \pi \colon (H^{\ell} \wr \Z) \times (W/W') \rightarrow \Z \times \{\id\}.
    \]
    By Lemma~\ref{lem:ker_equals_inv}, $\gen{S}_{\sigma} = \gen{S \cup \{\sigma\}} \cap \ker(\pi)$.
    Therefore given any $\overline{g} \in (\bigoplus_{\Z} H^{\ell}) \times (W/W')$, we have
    \[
    \overline{g} \in \gen{S}_{\sigma} \iff \overline{g} \in \gen{S \cup \{\sigma\}}.
    \]
    Hence the membership problem in $\gen{S}_{\sigma}$ reduces to the Subgroup Membership problem in $(H^{\ell} \wr \Z) \times (W/W')$.
    Since $H^{\ell} \wr \Z$ has decidable Subgroup Membership (Lemma~\ref{lem:decidable_H_wreath_Z}), and $(H^{\ell} \wr \Z) \times (W/W')$ is a finite extension of $H^{\ell} \wr \Z$, we conclude that $(H^{\ell} \wr \Z) \times (W/W')$ also has decidable Subgroup Membership (Lemma~\ref{lem:finite_index_subgroup}).
\end{proof}

Finally, Proposition~\ref{prop:decidable_invariant_membership} gives us decidability of Submonoid Membership in $\FSym(\Z) \rtimes \Z$:

\thmmain*

\begin{proof}
    By Lemma~\ref{lem:submonoid_to_subgroup}, Submonoid Membership in $\FSym(\Z) \rtimes \Z$ reduces to Subgroup Membership.
    By Proposition~\ref{prop:subgroup_to_invariant_subgroup}, Subgroup Membership in $\FSym(\Z) \rtimes \Z$ reduces to the membership problem in $\rho$-invariant subgroups of $\FSym(\{1, \ldots, d\} \times \Z)$, which is decidable by Proposition~\ref{prop:decidable_invariant_membership}.
\end{proof}

\bibliography{bibliography}

@article{metric_in_Houghton,
    title={Commensurations and metric properties of {H}oughton’s groups},
    author={Burillo, Jos{\'e} and Cleary, Sean and Martino, Armando and R{\"o}ver, Claas},
    journal={Pacific Journal of Mathematics},
    volume={285},
    number={2},
    pages={289--301},
    year={2016}}

@article{conjugacy_in_Houghton,
    author = {Antol{\'{\i}}n, Yago and Burillo, Jos{\'e} and Martino, Armando},
    title = {Conjugacy in {Houghton}'s groups.},
    journal = {Publicacions Matem{\`a}tiques},
    volume = {59},
    number = {1},
    pages = {3--16},
    year = {2015}}

@misc{neumann2023infinite,
  title={Infinite Permutation Groups},
  author={Neumann, Peter M.},
  note={\url{https://arxiv.org/abs/2307.11564}},
  year={2023}
}

@article{neumann1976structure,
  title={The structure of finitary permutation groups},
  author={Neumann, Peter M.},
  journal={Archiv der Mathematik},
  volume={27},
  number={1},
  pages={3--17},
  year={1976},
  publisher={Springer}
}

@article{neumann1975lawlessness,
  title={The lawlessness of groups of finitary permutations},
  author={Neumann, Peter M.},
  journal={Archiv der Mathematik},
  volume={26},
  number={1},
  pages={561--566},
  year={1975},
  publisher={Springer}
}

@book{scott1987group,
  title={Group theory},
  author={Scott, William R.},
  year={1987},
  publisher={Courier Corporation}
}

@article{holder1895bildung,
  title={Bildung zusammengesetzter {G}ruppen},
  author={H{\"o}lder, Otto},
  journal={Mathematische Annalen},
  volume={46},
  number={3},
  pages={321--422},
  year={1895},
  publisher={Springer}
}

@article{lohrey2015rational,
  title={Rational subsets and submonoids of wreath products},
  author={Lohrey, Markus and Steinberg, Benjamin and Zetzsche, Georg},
  journal={Information and Computation},
  volume={243},
  pages={191--204},
  year={2015},
  publisher={Elsevier}
}

@book{grunschlag1999algorithms,
  title={Algorithms in geometric group theory},
  author={Grunschlag, Zeph},
  year={1999},
  publisher={University of California, Berkeley}
}

@misc{bodart2025,
  title={On finite extensions of lamplighter groups},
  author={Bodart, Corentin},
  year={2025},
  note={\url{https://arxiv.org/abs/2507.13203}}
}

@inproceedings{dong2025submonoid,
  author       = {Ruiwen Dong},
  title        = {Submonoid {M}embership in $n$-Dimensional Lamplighter Groups and {S}-Unit
                  Equations},
  booktitle    = {52nd International Colloquium on Automata, Languages, and Programming (ICALP)},
  series       = {LIPIcs},
  volume       = {334},
  pages        = {154:1--154:20},
  publisher    = {Schloss Dagstuhl - Leibniz-Zentrum f{\"{u}}r Informatik},
  year         = {2025}
}

@inproceedings{lohrey2024membership,
  title={Membership problems in infinite groups},
  author={Lohrey, Markus},
  booktitle={Conference on Computability in Europe (CiE)},
  pages={44--59},
  year={2024},
  organization={Springer}
}

@inproceedings{cadilhac2020rational,
  author    = {Micha{\"{e}}l Cadilhac and
               Dmitry Chistikov and
               Georg Zetzsche},
  title     = {Rational Subsets of {B}aumslag-{S}olitar Groups},
  booktitle = {47th International Colloquium on Automata, Languages, and Programming,
               (ICALP)},
  series    = {LIPIcs},
  volume    = {168},
  pages     = {116:1--116:16},
  publisher = {Schloss Dagstuhl - Leibniz-Zentrum f{\"{u}}r Informatik},
  year      = {2020},
  doi={10.4230/LIPIcs.ICALP.2020.116}
}

@inproceedings{colcombet2019reachability,
  author    = {Thomas Colcombet and
               Jo{\"{e}}l Ouaknine and
               Pavel Semukhin and
               James Worrell},
  title     = {On Reachability Problems for Low-Dimensional Matrix Semigroups},
  booktitle = {46th International Colloquium on Automata, Languages, and Programming
               (ICALP)},
  series    = {LIPIcs},
  volume    = {132},
  pages     = {44:1--44:15},
  publisher = {Schloss Dagstuhl - Leibniz-Zentrum f{\"{u}}r Informatik},
  year      = {2019}
}

@inproceedings{potapov2017decidability,
  title={Decidability of the {M}embership {P}roblem for $2 \times 2$ integer matrices},
  author={Potapov, Igor and Semukhin, Pavel},
  booktitle={Proceedings of the Twenty-Eighth Annual ACM-SIAM Symposium on Discrete Algorithms (SODA)},
  pages={170--186},
  year={2017},
  organization={SIAM}
}

@inproceedings{babai1996multiplicative,
  title={Multiplicative equations over commuting matrices},
  author={Babai, L{\'a}szl{\'o} and Beals, Robert and Cai, Jin-yi and Ivanyos, G{\'a}bor and Luks, Eugene M.},
  booktitle={Proceedings of the Seventh Annual ACM-SIAM Symposium on Discrete Algorithms (SODA)},
  pages={498--507},
  year={1996}
}

@article{bodart2025membership,
  title={Membership problems in nilpotent groups},
  author={Bodart, Corentin},
  journal={Journal of Groups, Complexity, Cryptology},
  volume = {Volume 17, Issue 1},
  eid = 3,
  pages={1--25},
  year={2025},
  publisher={Episciences.org}
}

@article{roman2022undecidability,
	title={Undecidability of the submonoid membership problem for a sufficiently large finite direct power of the {H}eisenberg group},
	author={Roman'kov, Vitaly},
	journal={Siberian Electronic Mathematical Reports},
	volume={20},
	number={1},
	pages={293--305},
	year={2023}}

@misc{shafrir2024decidability,
  title={Is decidability of the {S}ubmonoid {M}embership {P}roblem closed under finite extensions?},
  author={Shafrir, Doron},
  note={\url{https://arxiv.org/abs/2405.12921}},
  year={2024}
}

@article{lohrey2011tilings,
  title={Tilings and submonoids of metabelian groups},
  author={Lohrey, Markus and Steinberg, Benjamin},
  journal={Theory of Computing Systems},
  volume={48},
  number={2},
  pages={411--427},
  year={2011},
  publisher={Springer}
}

@article{markov1951certain,
  title={On an unsolvable problem concerning matrices},
  author={Markov, A. A.},
  journal={Doklady Akad. Nauk SSSR},
  volume={78},
  number={6},
  pages={1089–1092},
  year={1951}
}

@article{mikhailova1958occurrence,
  title={The entrance problem for direct unions of groups},
  author={Mikhailova, K. A.},
  journal={Doklady Akad. Nauk SSSR},
  volume={119},
  number={6},
  pages={1103–1105},
  year={1958}
}

@inproceedings{lohrey2023complexity,
  title={On the complexity of diameter and related problems in permutation groups},
  author={Lohrey, Markus and Rosowski, Andreas},
  booktitle={50th International Colloquium on Automata, Languages, and Programming (ICALP)},
  pages={134--1},
  year={2023},
  organization={Schloss Dagstuhl--Leibniz-Zentrum f{\"u}r Informatik}
}

@inproceedings{sims1970computational,
  title={Computational methods in the study of permutation groups},
  author={Sims, Charles C.},
  booktitle={Computational problems in abstract algebra},
  pages={169--183},
  year={1970},
  organization={Elsevier}
}

@article{knuth1991efficient,
  title={Efficient representation of perm groups},
  author={Knuth, Donald E.},
  journal={Combinatorica},
  volume={11},
  number={1},
  pages={33--43},
  year={1991},
  publisher={Springer}
}

@inproceedings{dahan2025group,
  author       = {Anatole Dahan},
  title        = {Group Order Logic},
  booktitle    = {40th Annual {ACM/IEEE} Symposium on Logic in Computer Science (LICS)},
  pages        = {236--248},
  publisher    = {{IEEE}},
  year         = {2025}
}

@inproceedings{sims1971computation,
  title={Computation with permutation groups},
  author={Sims, Charles C.},
  booktitle={Proceedings of the second ACM symposium on Symbolic and algebraic manipulation},
  pages={23--28},
  year={1971}
}

@inproceedings{ghosh2023orbit,
  title={Orbit-finite linear programming},
  author={Ghosh, Arka and Hofman, Piotr and Lasota, S{\l}awomir},
  booktitle={38th Annual ACM/IEEE Symposium on Logic in Computer Science (LICS)},
  pages={1--14},
  year={2023},
  organization={IEEE}
}

@article{cameron1990oligomorphic,
  title={Oligomorphic permutation groups},
  author={Cameron, Peter J.},
  journal={Perspectives in mathematical sciences. II},
  volume={8},
  pages={37--61},
  year={1990},
  publisher={World Scientific}
}

@inproceedings{bojanczyk2011automata,
  title={Automata with group actions},
  author={Boja{\'n}czyk, Miko{\l}aj and Klin, Bartek and Lasota, S{\l}awomir},
  booktitle={26th Annual IEEE Symposium on Logic in Computer Science (LICS)},
  pages={355--364},
  year={2011},
  organization={IEEE}
}

@article{houghton1978first,
  title={The first cohomology of a group with permutation module coefficients},
  author={Houghton, C. H.},
  journal={Archiv der Mathematik},
  volume={31},
  number={1},
  pages={254--258},
  year={1978},
  publisher={Springer}
}

@article{brown1987finiteness,
  title={Finiteness properties of groups},
  author={Brown, Kenneth S.},
  journal={Journal of Pure and Applied Algebra},
  volume={44},
  number={1-3},
  pages={45--75},
  year={1987},
  publisher={North-Holland}
}

@article{wiegold1977transitive,
  title={Transitive groups with fixed-point-free permutations {II}},
  author={Wiegold, James},
  journal={Archiv der Mathematik},
  volume={29},
  number={1},
  pages={571--573},
  year={1977},
  publisher={Springer}
}

@article{bojanczyk2024orbit,
  title={Orbit-finite-dimensional vector spaces and weighted register automata},
  author={Boja{\'n}czyk, Miko{\l}aj and Fijalkow, Joanna and Klin, Bartek and Moerman, Joshua},
  journal={TheoretiCS},
  volume={3},
  year={2024},
  publisher={Episciences.org}
}

@article{ackerman2016invariant,
  author  = {Ackerman, Nathanael and Freer, Cameron and Patel, Rehana},
  title   = {Invariant Measures Concentrated on Countable Structures},
  journal = {Forum of Mathematics, Sigma},
  volume  = {4},
  pages   = {e17},
  year    = {2016},
  doi     = {10.1017/fms.2016.15},
  publisher = {Cambridge University Press}
}

@article{cox2025finiteness,
  title={Finiteness properties of subgroups of Houghton groups of full Hirsch length},
  author={Cox, Charles Garnet and Kropholler, Peter H. and Martino, Armando},
  journal={Groups, Geometry, and Dynamics},
  year={2026}
}

@inproceedings{haase2019presburger,
  title={Presburger arithmetic with stars, rational subsets of graph groups, and nested zero tests},
  author={Haase, Christoph and Zetzsche, Georg},
  booktitle={34th Annual ACM/IEEE Symposium on Logic in Computer Science (LICS)},
  pages={1--14},
  year={2019},
  organization={IEEE}
}

@article{choffrut2005some,
  title={Some decision problems on integer matrices},
  author={Choffrut, Christian and Karhum{\"a}ki, Juhani},
  journal={RAIRO-Theoretical Informatics and Applications-Informatique Th{\'e}orique et Applications},
  volume={39},
  number={1},
  pages={125--131},
  year={2005}
}

@inproceedings{hrushovski2018polynomial,
  title={Polynomial invariants for affine programs},
  author={Hrushovski, Ehud and Ouaknine, Jo{\"e}l and Pouly, Amaury and Worrell, James},
  booktitle={33rd Annual ACM/IEEE Symposium on Logic in Computer Science (LICS)},
  pages={530--539},
  year={2018}
}

@article{blondel2000survey,
  title={A survey of computational complexity results in systems and control},
  author={Blondel, Vincent D. and Tsitsiklis, John N.},
  journal={Automatica},
  volume={36},
  number={9},
  pages={1249--1274},
  year={2000},
  publisher={Elsevier}
}

@article{derksen2005quantum,
  title={Quantum automata and algebraic groups},
  author={Derksen, Harm and Jeandel, Emmanuel and Koiran, Pascal},
  journal={Journal of Symbolic Computation},
  volume={39},
  number={3-4},
  pages={357--371},
  year={2005},
  publisher={Elsevier}
}

@article{Higman,
    author = {Graham Higman},
    title = {On infinite simple permutation groups},
    journal={Publicationes Mathematicae Debrecen},
    year = {1954},
    volume = {3},
    pages = {221-226}}

@article{silva2024poisson,
  title={The {P}oisson boundary of lampshuffler groups},
  author={Silva, Eduardo},
  journal={Mathematische Zeitschrift},
  volume={308},
  number={3},
  pages={46},
  year={2024},
  publisher={Springer}
}

@misc{genevois2024lamplighter,
  title={Lamplighter-like geometry of groups},
  author={Genevois, Anthony and Tessera, Romain},
  note={\url{https://arxiv.org/abs/2401.13520}},
  year={2024}
}

@misc{bonnet2022twin,
  title={Twin-width VII: groups},
  author={Bonnet, {\'E}douard and Geniet, Colin and Tessera, Romain and Thomass{\'e}, St{\'e}phan},
  note={\url{https://arxiv.org/abs/2204.12330}},
  year={2022}
}

@misc{gheysens2022between,
  title={Between free and direct products of groups},
  author={Gheysens, Maxime and Monod, Nicolas},
  note={\url{https://arxiv.org/abs/2201.03625}},
  year={2022}
}

@article{Gray2020OneRelatorInverseMonoids,
  author       = {Robert D. Gray},
  title        = {Undecidability of the Word Problem for One-Relator Inverse Monoids via Right-Angled {A}rtin Subgroups of One-Relator Groups},
  journal      = {Inventiones Mathematicae},
  volume       = {219},
  number       = {3},
  pages        = {987--1008},
  year         = {2020},
  doi          = {10.1007/s00222-019-00920-2},
  publisher    = {Springer}
}

@article{foniqi2026membership,
  title={Membership problems for positive one-relator groups and one-relation monoids},
  author={Foniqi, Islam and Gray, Robert D. and Nyberg-Brodda, Carl-Fredrik},
  journal={Canadian Journal of Mathematics},
  volume={78},
  number={3},
  pages={802--841},
  year={2026},
  publisher={Canadian Mathematical Society}
}

@article{romanovskii1974some,
  title={Some algorithmic problems for solvable groups},
  author={Romanovskii, N. S.},
  journal={Algebra and Logic},
  volume={13},
  number={1},
  pages={13--16},
  year={1974},
  publisher={Springer},
  doi={10.1007/BF01462922}
}

@misc{dong2026standard,
  title={Standard bases for shift-stable groups and {S}ubgroup {M}embership in wreath products},
  author={Dong, Ruiwen},
  year={2026},
  note={\url{https://arxiv.org/abs/2608.24620}}
}

@article{GrigorchukWilson2003,
  author  = {Grigorchuk, R. I. and Wilson, J. S.},
  title   = {A structural property concerning abstract commensurability of subgroups},
  journal = {Journal of the London Mathematical Society},
  series  = {2},
  volume  = {68},
  number  = {3},
  pages   = {671--682},
  year    = {2003},
  doi     = {10.1112/S0024610703004745}
}

@article{kopytov1968solvability,
  title={Solvability of the problem of occurrence in finitely generated soluble groups of matrices over the field of algebraic numbers},
  author={Kopytov, V. M.},
  journal={Algebra and Logic},
  volume={7},
  number={6},
  pages={388--393},
  year={1968},
  publisher={Springer}
}

\end{document}